	\theoremstyle{plain}
	\newcounter{n}
	\numberwithin{n}{section}
	\newtheorem{df}[n]{Definition}
	\newtheorem{theo}[n]{Theorem}
     \newtheorem{rmq}[n]{Remark}
	\newtheorem{cor}[n]{Corollary}
	\newtheorem{lm}[n]{Lemma}
	\newtheorem{nt}[n]{Notation}
	\newtheorem{prop}[n]{Proposition}
	\renewcommand\epsilon{\varepsilon}
	\renewcommand\phi{\varphi}
	\newcommand\R{\mathbb{R}}
	\newcommand\s{\mathbb{S}}
\tikzset{
    partial ellipse/.style args={#1:#2:#3}{
        insert path={+ (#1:#3) arc (#1:#2:#3)}
    }
}
	\newcommand\Card{{\mathrm{Card}}}
	\newcommand\Int{{\mathrm{Int}}}
	\newcommand\sphereambiante{{M}}
	\newcommand\punct[1]{{#1^{\circ}}}
	\newcommand\espaceambient{\punct{M}}
	\newcommand\ambientspace\espaceambient
	\newcommandx\bouleambiante[1][1= M]{B(#1)}
	\newcommandx\voisinageinfini[1][1= ]{\ifthenelse{\equal{#1}{}}{\punct{B_\infty}}{\punct{B_\infty}(#1)}}
	\newcommand\voisinageinfinideux[1][\frac14]{{\punct{B_{\infty, #1}}}}
	\newcommandx\sommets[2][1=\Gamma,2= \empty]{\ifthenelse{\equal{#2}{}} {{V(#1)}}{{V^*(#1)}} }
	\newcommand\sommetsinternes[1][\Gamma]{{V_i(#1)}}
	\newcommand\sommetsexternes[1][\Gamma]{{V_e(#1)}}
	\newcommand\aretes[1][\Gamma]{E(#1)}
	\newcommand\aretesinternes[1][\Gamma]{E_i(#1)}
	\newcommand\aretesexternes[1][\Gamma]{E_e(#1)}
	\newcommand\graphes{\mathcal G_k}
	\newcommand\graphesnum{\widetilde{\mathcal G_k}}
	\definecolor{goldenrod}{rgb}{.85, .75, .03}
	\newcommandx\confignoeud[3][1=\Gamma, 2=\psi,3=\empty ]{C_{#1}^{#3}(#2)}
	\newcommand\configR{C_2(\R^n)}
	\newcommand\config{C_2}
	\newcommand\configM{\config(\ambientspace)}
	\newcommand\unitaire{{U}}
	\newcommand\injections{\mathcal I}
	\renewcommand\d{\mathrm d}
	\newcommand\orientationarete[1]{{\Omega_{#1}}}
	\newcommand\orientationconfig[1][\Gamma]{{\Omega(#1)}}
	\newcommand\propagateurinterne{\alpha}
	\newcommand\propint{\propagateurinterne}
	\newcommand\propagateurexterne{\beta}
	\newcommand\propext{\propagateurexterne}
	\newcommand\familleformes{{(\propagateurinterne_i,\propagateurexterne_i)_{1\leq i \leq 2k}}}
	\newcommand\Propint{{A}}
	\newcommand\Propext{{B}}
	\newcommandx\formearete[2][1=e,2=\sigma]{{\omega^F_{#1,#2}}}
	\newcommand\coeffs{\mathbb Z}
	\newcommand\I[1][F]{{I^{#1}(\Gamma,\sigma,\psi)}}
	\newcommand\Z[1][F]{{Z_k^{#1}}}
	\newcommand\forme{{\omega^{F}(\Gamma,\sigma, \psi)}}
	\newcommand\ecartinterne{{\zeta_{1}^{n-2}}}
	\newcommand\ecartexterne{{\xi_{1}^{n}}}
	\newcommand\volint{{\eta^{n-2}_{1}}}
	\newcommand\volext{{\theta^{n}_{1}}}
	\newcommandx\formeareteT[2][1=e,2=\sigma]{{\tilde\omega_{#1,#2}}}
	\newcommandx\formeT[2][1= \Gamma, 2= \sigma]{{\tilde\omega(#1,#2,\psi)}}
	\newcommand\face[1][S]{{\partial_{#1}\confignoeud}}
	\newcommand\faces[1][\Gamma]{{\mathcal F(#1)}}
	\newcommand\hfaces[1][\Gamma]{{\mathcal H_1(#1)}}
	\newcommand\Hfaces[1][\Gamma]{{\mathcal H_2(#1)}}
	\newcommand\HfaceA[1][\Gamma]{{\mathcal H_2^a(#1)}}
	\newcommand\HfaceB[1][\Gamma]{{\mathcal H_2^b(#1)}}
	\newcommandx\pes[2][1=e, 2=S]{{G_{#1, #2}}}
	\newcommand\faceinfiniU[1][S']{{C_{#1,\infty}}}
	\newcommand\faceinfiniC{{C^0_{\Gamma_{|\sommets\setminus S}}}}
	\newcommand\Esi{{E_i^{S'}(\Gamma)}}
	\newcommand\Ese{{E_e^{S'}(\Gamma)}}
	\newcommand\Es{{E^{S'}(\Gamma)}}
	\newcommand\facefinieC{{C_{\delta_S\Gamma}^0 }}
	\newcommand\facefinieU[1][S]{{\widehat{C_{#1}}}}
	\newcommand{\inj}{{\injections(\R^n,\R^{n+2})}}
	 \newcommandx\facepC[2][1= e, 2 =\Gamma]{{C_{\delta_{#1}{#2}}^0}}
	\newcommand\D{{D(\Gamma,\sigma)}}
	\newcommand\Par{{\mathrm{Par}(\ambientspace)}}
	\newcommand\indices{\{1,\ldots, 2k\}}
	\newcommand\B{{\mathbb B}}
	\newcommand{\noop}[1]{} 
	\newcommand\ray[1][j]{{r_{#1}}}
	\newcommand\confspaceAA{{C_{\Gamma_{S_1,S_2}}(\psi_{triv})}}
	\newcommand\confspaceA{{C_{\Gamma_{S_1,S_2}}}}
        \title{Generalized Bott-Cattaneo-Rossi invariants of high-dimensional long knots }
	\author{David Leturcq\footnote{Institut Fourier, Université-Grenoble-Alpes}}
	\date{ }
\begin{document}
\maketitle
\begin{abstract} Bott, Cattaneo and Rossi defined invariants of long knots $\R^n \hookrightarrow \R^{n+2}$ as combinations of configuration space integrals for $n$ odd $\geq 3$. Here, we give a more flexible definition of these invariants. Our definition allows us to interpret these invariants as counts of diagrams. It extends to long knots inside more general $(n+2)$-manifolds, called asymptotic homology $\R^{n+2}$, and provides invariants of these knots. 

 \end{abstract}

\smallskip
\noindent \textbf{Keywords:} Configuration spaces, Knots in high dimensional spaces, Knot invariants

\smallskip
\noindent \textbf{MSC:} 57Q45, 57M27, 55R80, 55S35
\section{Introduction}

In \cite{[Bott]}, Bott introduced an isotopy invariant $Z_2$ of knots $\s^n\hookrightarrow \R^{n+2}$ in odd dimensional Euclidean spaces. The invariant $Z_2$ is defined as a linear combination of configuration space integrals associated to graphs by integrating forms associated to the edges, which represent directions in $\R^n$ or in $\R^{n+2}$. The involved graphs have four vertices of two kinds and four edges of two kinds.

This invariant was generalized to a whole family $(Z_{k})_{k\in \mathbb N\setminus\{0\}}$ of isotopy invariants of long knots $\R^n\hookrightarrow \R^{n+2}$, for odd $n\geq3$, by Cattaneo and Rossi in \cite{Cattaneo2005} and by Rossi in his thesis \cite{[Rossi]}. The \emph{degree $k$ Bott-Cattaneo-Rossi} (BCR for short) \emph{invariant} $Z_{k}$ involves diagrams with $2k$ vertices.

In \cite{[Watanabe]}, Watanabe proved that, when restricted to ribbon long knots, the BCR invariants are finite type invariants with respect to some operations on ribbon knots, and he used this property to prove that the invariants $Z_{k}$ are not trivial for even $k\geq1$, and that they are related to the Alexander polynomial, for long ribbon knots.

In Theorem \ref{th1}, which is the main theorem of this article, we generalize the invariants $(Z_k)_{k\geq2}$ to long knots in the parallelized asymptotic homology $\R^{n+2}$ of Section \ref{S11} when $n\geq 3$ is odd, using the notion of \emph{propagating forms}. When the ambient space is $\R^{n+2}$, our extended definition also provides a more flexible definition for the original invariants $(Z_k)_{k\geq 2}$. In Theorem \ref{the}, we equivalently define our generalized BCR invariants as rational combinations of intersection numbers of chains in configuration spaces. In particular, our generalized invariants are rational. Theorem \ref{additivity} asserts that $Z_{k}$ is additive under connected sum. In \cite{article2}, we use our flexible definition to express our generalized $Z_2$ in terms of linking numbers or of Alexander polynomials for all long knots in parallelizable asymptotic homology $\R^{n+2}$, when $n\equiv1\mod 4$. 
In \cite{article3}, we extend the BCR invariants $Z_k$ to $1$-dimensional long knots in (rational) asymptotic homology $\R^3$, and the results of \cite{article2} express these extended invariants and the usual Alexander polynomial in terms of each other. 

Our invariants $Z_{k}$ are precisely defined in Section \ref{S1}, where the three forementioned theorems are stated. Their proofs are given in the following sections.

Our definition of $Z_{k}$ involves a parallelization of the ambient space, which is a trivialization of its tangent bundle that is standard outside a compact as precisely explained in Definition \ref{paral-def}. In Section \ref{S5}, we prove that $Z_{k}$ does not depend on the parallelization when it exists. In order to prove this result, we prove Theorem \ref{parallelization theorem}, which asserts that, up to homotopy, any two parallelizations of a parallelizable asymptotic homology $\R^{n+2}$ that are standard outside a compact coincide outside an (arbitrarily small) ball.

I do not know whether any asymptotic homology $\R^{n+2}$ admits a parallelization in the sense of Definition \ref{paral-def}. However, using the fact that the connected sum of any odd-dimensional asymptotic homology $\R^{n+2}$ with itself is parallelizable in the sense of Definition \ref{paral-def} (Proposition \ref{prop210}) and that $Z_k$ is additive (Theorem \ref{additivity}), we extend our invariants to long knots in any (possibly non-parallelizable) asymptotic homology $\R^{n+2}$ with $n$ odd $\geq3$ in Definition \ref{Zkext}.

I thank my advisor Christine Lescop for her help with the redaction of this article. I also thank the referee for her/his helpful comments. 
\section{Definition of the BCR invariants}\label{S1}
\subsection{Parallelized asymptotic homology $\R^{n+2}$ and long knots}\label{S11}

In this article, we fix an odd integer $n\geq3$, and $\sphereambiante$ denotes an $(n+2)$-dimensional closed smooth oriented manifold, such that $H_*(\sphereambiante ; \coeffs)= H_*(\s^{n+2}; \coeffs)$. Such a manifold is called a \emph{homology $(n+2)$-sphere}. 

In such a homology sphere, choose a point $\infty$ and a closed ball $B_\infty(\sphereambiante)$ around this point. Fix an identification of this ball $B_\infty(M)$ with the complement $B_\infty$ of the open unit ball of $\R^{n+2}$ in $\s^{n+2}=\R^{n+2}\cup \{\infty\}$. Let $\espaceambient$ denote the manifold $\sphereambiante \setminus\{\infty\}$ and let $\voisinageinfini[\sphereambiante]$ denote the punctured ball $B_\infty(\sphereambiante)\setminus\{\infty\}$. In all the following, this punctured ball $\voisinageinfini[\sphereambiante]$ is identified with the complement $\voisinageinfini$ of the open unit ball in $\R^{n+2}$. Let $\bouleambiante$ denote the closure of $\ambientspace\setminus\voisinageinfini$. Then, the manifold $\ambientspace$ can be seen as $\ambientspace = \bouleambiante \cup \voisinageinfini$, where $\voisinageinfini\subset \R^{n+2}$ (see Figure \ref{Fig0}). Note that such a manifold $\espaceambient$ has the same homology as $\R^{n+2}$. The manifold $\ambientspace$ equipped with the decomposition $\ambientspace = \bouleambiante \cup \voisinageinfini$ is called an \emph{asymptotic homology $\R^{n+2}$}.

\begin{figure}[!h]
\centering
\begin{tikzpicture}
\draw[white] (-1, -1) rectangle (1, 1);
\draw (0,0) circle (0.5);
\draw (-60:0.5) ++(.29,-.2) node{$M$};
\fill (0,0.5) circle (0.05);
\draw (0, 0.65) node {$\infty$};
\end{tikzpicture}
\begin{tikzpicture}
\draw[white] (-1, -1) rectangle (1, 1);
\draw (0,0) [partial ellipse=130:410:0.5cm and 0.5cm];
\draw[dotted] (0,0) [partial ellipse=50:130:0.5cm and 0.5cm];
\draw [thick] (50:.45)--(50:.55) (130:.45)--(130:.55);
\draw (-60:.5) ++(.45,-.2) node{$B(M)$};
\draw (90:.5) ++(.5,.3) node{$B_\infty(M)$};
\end{tikzpicture}
\begin{tikzpicture}
\draw[white] (-1, -1) rectangle (1, 1);
\draw (0,0) [partial ellipse=95:445:0.5cm and 0.5cm];
\draw (0,.5) circle (0.05);
\draw (-60:0.5) ++(.29,-.2) node{$\ambientspace$};
\end{tikzpicture}
\begin{tikzpicture}
\draw[white] (-1, -1) rectangle (1, 1);
\draw (0,.5) circle (0.05);
\draw (0,0) [partial ellipse=130:410:0.5cm and 0.5cm];
\draw[dotted] (0,0) [partial ellipse=50:130:0.5cm and 0.5cm];
\draw [thick] (50:.45)--(50:.55) (130:.45)--(130:.55);
\draw (90:.5) ++(.5,.3) node{$\punct{B_\infty}(M)= \punct{B_\infty}$};
\draw (-60:.5) ++(.45,-.2) node{$B(M)$};
\end{tikzpicture}
\caption{}
\label{Fig0}
\end{figure}

\emph{Long knots} of such a space $\ambientspace$ are smooth embeddings $\psi\colon \R^n\hookrightarrow \espaceambient$ such that $\psi(x)=(0,0,x)\in\voisinageinfini$ when $||x||\geq 1$, and $\psi(x) \in \bouleambiante$ when $||x||\leq 1$.

Two long knots $\psi$ and $\psi'$ are \emph{isotopic} if there exists a family $(\psi_t)_{0\leq t \leq 1}$ of long knots, such that the map $(t,x)\in[0,1]\times\R^n\mapsto \psi_t(x)\in\ambientspace$ is smooth, such that $\psi_0=\psi$ and $\psi_1=\psi'$. Such a family is called an \emph{isotopy} (between $\psi$ and $\psi'$).
\begin{df}\label{paral-def}
A \emph{parallelization} of an asymptotic homology $\R^{n+2}$ is a bundle isomorphism $\tau\colon \ambientspace\times\R^{n+2}\rightarrow T\ambientspace$ that coincides with the canonical trivialization of $T\R^{n+2}$ on $\voisinageinfini\times\R^{n+2}$. An asymptotic homology $\R^{n+2}$ equipped with such a parallelization is called a \emph{parallelized asymptotic homology $\R^{n+2}$}. 

Two parallelizations $\tau$ and $\tau'$ are homotopic if there exists a smooth family $(\tau_t)_{0\leq t \leq 1}$ of parallelizations such that $\tau_0=\tau$ and $\tau_1=\tau'$.
Given a parallelization $\tau$ and $x\in \ambientspace$, $\tau_x$ denotes the isomorphism $\tau(x,\cdot)\colon \R^{n+2} \rightarrow T_x\ambientspace$.
\end{df}

\subsection{BCR diagrams}
The definition of the BCR invariants involves the following graphs, called \emph{BCR diagrams}.\begin{df}\label{BCRdef}A \emph{BCR diagram} is an oriented connected graph $\Gamma$, defined by a set $\sommets$ of vertices, decomposed into $\sommets = \sommetsinternes\sqcup\sommetsexternes$, and a set $\aretes$ of ordered pairs of distinct vertices, decomposed into $\aretes = \aretesinternes\sqcup\aretesexternes$, whose elements are called \emph{edges}\footnote{Note that this implies that our graphs have neither loops nor multiple edges with same orientation.}, where the elements of $\sommetsinternes$ are called \emph{internal vertices}, those of $\sommetsexternes$, \emph{external vertices}, those of $\aretesinternes$, \emph{internal edges}, and those of $\aretesexternes$, \emph{external edges}, and such that, for any vertex $v$,  one of the five following properties holds: \begin{enumerate}
\item $v$ is external and trivalent, with two incoming external edges and one outgoing external edge, and one of the incoming edges comes from a univalent vertex. 
\item $v$ is internal and trivalent, with one incoming internal edge, one outgoing internal edge, and one incoming external edge, which comes from a univalent vertex.
\item $v$ is internal and univalent, with one outgoing external edge. 
\item $v$ is internal and bivalent, with one incoming external edge and one outgoing internal edge.
\item $v$ is internal and bivalent, with one incoming internal edge and one outgoing external edge.
\end{enumerate}
The external edges that come from a (necessarily internal) univalent vertex are called the \emph{legs} of $\Gamma$. The subgraph of $\Gamma$ made of all the other edges, and the non univalent vertices is called the \emph{cycle} of $\Gamma$.

Define the \emph{degree} of a BCR diagram $\Gamma$ as $\mathrm{deg}(\Gamma) = \frac12\mathrm{Card}(\sommets)$, and let $\graphes$ denote the set of all BCR diagrams of degree $k$. 
\end{df}

In the following, internal edges are depicted by solid arrows, external edges by dashed arrows, internal vertices by black dots, and external vertices by white dots (circles). This is the same convention as in \cite{[Watanabe]}, but it is the opposite of what was done in \cite{Cattaneo2005}, where the internal edges are dashed, and the external ones are solid. With these conventions, the five behaviors of Definition \ref{BCRdef} are depicted in Figure \ref{fig-BCR}.

\begin{figure}[H]
\centering
\begin{tikzpicture}
\draw (90:0.7) node {$1$};
\draw (0.265,0) node {$v$};
\draw (0, 0) circle (0.1) ;
\draw [->, >= latex, dashed](-90: 1)-- (-90: 0.1) ;
\draw [<-, >= latex, dashed](40: 1)-- (40: 0.1) ;
\draw [->, >= latex, dashed](140: 1)-- (140: 0.1) ;
\fill (140:1.1) circle (0.1);
\end{tikzpicture} \ \ \ \ \ \ \ \ 
\begin{tikzpicture}
\draw (90:0.7) node {$2$};
\fill (0, 0) circle (0.1) ;
\fill (140:1.1) circle (0.1);
\draw (0.265,0) node {$v$};
\draw [->, >= latex, thick](-90: 1)-- (-90: 0.1) ;
\draw [<-, >= latex, thick](40: 1)-- (40: 0.1) ;
\draw [->, >= latex, dashed](140: 1)-- (140: 0.1) ;
\end{tikzpicture} \ \ \ \ \ \ \ \ 
\begin{tikzpicture}
\draw (90:0.7) node {$3$};
\fill (0, 0) circle (0.1) ;
\draw (0.265,0) node {$v$};
\draw [<-, >= latex, dashed](-90: 1)-- (-90: 0.1) ;
\end{tikzpicture} \ \ \ \ \ \ \ \ 
\begin{tikzpicture}
\draw (90:0.7) node {$4$};
\fill (0, 0) circle (0.1) ;
\draw (0.265,0) node {$v$};
\draw [->, >= latex, dashed](-90: 1)-- (-90: 0.1) ;
\draw [<-, >= latex, thick](40: 1)-- (30: 0.1) ;
\end{tikzpicture}  \ \ \ \ \ \ \ 
\begin{tikzpicture}
\draw (90:0.7) node {$5$};
\fill (0, 0) circle (0.1) ;
\draw (0.265,0) node {$v$};
\draw [->, >= latex, thick](-90: 1)-- (-90: 0.1) ;
\draw [<-, >= latex, dashed](40: 1)-- (30: 0.1) ;
\end{tikzpicture}
\caption{ }\label{fig-BCR}
\end{figure}

Definition \ref{BCRdef} implies that any BCR diagram consists of one cycle with some legs attached to it, which is a cyclic sequence of pieces as in Figure \ref{fig-BCR2} with as many pieces of the first type than of the second type. In particular, the degree of a BCR diagram is an integer.

\begin{figure}[H]
\centering
\begin{tikzpicture}
\draw [->, dashed, >= latex] (-0.5, 0)-- (-0.1,0);
\fill (0,0) circle (0.1) ;
\draw  (.1, 0)-- (0.5,0);
\draw[white](0,1) circle(0.1);
\end{tikzpicture}\ \ \ \ \ \ \ \ \ \ 
\begin{tikzpicture}
\draw [->, >= latex] (-0.5, 0)-- (-0.1,0);
\fill (0,0) circle (0.1);
\draw [dashed] (.1, 0)-- (0.5,0);
\draw[white](0,1) circle(0.1);
\end{tikzpicture}\ \ \ \ \ \ \ \ \ \ 
\begin{tikzpicture}
\fill (1,0) circle (0.1) (3,0) circle (0.1);
\fill (1,1) circle (0.1) (3,1) circle (0.1);
\draw [->, dashed,>= latex] (1, 0.9)-- (1,0.1);
\draw [->,dashed,  >= latex] (3, 0.9)-- (3,0.1);
\draw [->, >= latex] (.5, 0)-- (0.9,0);
\draw  (1.1, 0)-- (1.7,0);
\draw [->, >= latex] (2.3, 0)-- (2.9,0);
\draw [dotted] (1.7, 0)-- (2.3,0);
\draw (3.1, 0)-- (3.5,0);
\end{tikzpicture}\ \ \ \ \ \ \ \ \ \ 
\begin{tikzpicture}
\draw (1,0) circle (0.1) (3,0) circle (0.1) ;
\fill (1,1) circle (0.1) (3,1) circle (0.1) ;
\draw [->,dashed, >= latex] (1, 0.9)-- (1,0.1);
\draw [->,dashed, >= latex] (3, 0.9)-- (3,0.1);
\draw [->,dashed, >= latex] (.5, 0)-- (0.9,0);
\draw [dashed] (1.1, 0)-- (1.7,0);
\draw [->, dashed,>= latex] (2.3, 0)-- (2.9,0);
\draw [dotted] (1.7, 0)-- (2.3,0);
\draw [dashed] (3.1, 0)-- (3.5,0);
\end{tikzpicture}
\caption{ }\label{fig-BCR2}
\end{figure}

For example, Figure \ref{fig-BCR3} depicts the five degree $2$ BCR diagrams, which respectively have two, two, one, one, and no leg.
\begin{figure}[H]
\centering
\begin{tikzpicture}
\fill (0,0) circle (0.1) ;
\draw [dashed, ->, >= latex] (0.1,0) -- (0.9,0) ;
\fill (1,0) circle (0.1);
\draw [<-, >= latex] (1.1,0) to[bend right] (1.9,0) ;
\draw [->, >= latex] (1.1,0) to[bend left] (1.9,0) ;
\fill (2,0) circle (0.1);
\draw [dashed, <-, >= latex] (2.1,0) -- (2.9,0) ;
\fill (3,0) circle (0.1);
\end{tikzpicture} \ \ \ 
\begin{tikzpicture}
\fill (0,0) circle (0.1) ;
\draw [dashed, ->, >= latex] (0.1,0) -- (0.9,0) ;
\draw (1,0) circle (0.1);
\draw [dashed, <-, >= latex] (1.1,0) to[bend right] (1.9,0) ;
\draw [dashed, ->, >= latex] (1.1,0) to[bend left] (1.9,0) ;
\draw (2,0) circle (0.1);
\draw [dashed, <-, >= latex] (2.1,0) -- (2.9,0) ;
\fill (3,0) circle (0.1);
\end{tikzpicture}\ \ \ 
\begin{tikzpicture}
\fill (0,0) circle (0.1) ;
\draw [<-, >=latex, dashed](0:0.1) -- (0: 0.9);
\draw [<-, >=latex] (120:0.1) -- (120:0.9);
\draw [->, >=latex] (240:0.1) -- (240:0.9) ;
\draw [->, dashed, >=latex ]  (240:1) ++(90:0.1) -- ++ (90:1.55);
\fill (240:1) circle (0.1) (120:1) circle (0.1) (0:1) circle (0.1);
\end{tikzpicture} \ \ \ 
\begin{tikzpicture}
\draw (0,0) circle (0.1) ;
\draw [<-, >=latex,dashed](0:0.1) -- (0: 0.9);
\draw [<-, >=latex,dashed] (120:0.1) -- (120:0.9);
\draw [->, dashed, >=latex] (240:0.1) -- (240:0.9) ;
\draw [->, >=latex ]  (240:1) ++(90:0.1) -- ++ (90:1.55);
\fill (240:1) circle (0.1) (120:1) circle (0.1) (0:1) circle (0.1);
\end{tikzpicture}\ \ \ 
\begin{tikzpicture}
\fill (0,0) circle (0.1) (1,0) circle (0.1) (1,1) circle (0.1) (0,1) circle (0.1);
\draw [<-, dashed, >=latex](0.1, 0) -- (0.9, 0);
\draw [<-, dashed, >=latex] (0.9,1)--(0.1,1);
\draw [<-, >=latex] (1,0.1) -- (1, 0.9);
\draw [<-, >=latex] (0,0.9) -- (0, 0.1);
\end{tikzpicture}
\caption{The degree $2$ Jacobi diagrams}\label{fig-BCR3}
\end{figure}
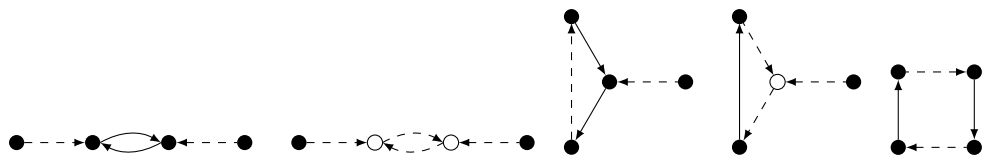

Since any vertex has exactly one outgoing edge, every BCR diagram of degree $k$ has exactly $2k$ edges. 
A \emph{numbering} of a degree $k$ BCR diagram $\Gamma$ is a bijection $\sigma \colon \aretes\rightarrow \indices$, and $\graphesnum$ denotes the set of all degree $k$ numbered BCR diagrams $(\Gamma,\sigma)$ (up to numbered graph isomorphisms).

\subsection{Two-point configuration spaces}

If $P$ is a submanifold of a manifold $Q$ such that $P$ is transverse to the boundary $\partial Q$ of $Q$ and $\partial P= P\cap \partial Q$, its \emph{normal bundle} $\mathfrak NP$ is the bundle over $P$ whose fibers are $\mathfrak N_xP = T_xQ/T_xP$. A fiber $U\mathfrak N_xP$ of the \emph{unit normal bundle $U\mathfrak NP$ of $P$} is the quotient of $\mathfrak N_xP \setminus\{0\}$ by dilations\footnote{Dilations are homotheties with positive ratio.}. 
The \emph{differential blow-up} of $Q$ along $P$ is the manifold obtained by replacing $P$ with its unit normal bundle $U\mathfrak NP$. It is diffeomorphic to the complement in $Q$ of an open tubular neighborhood of $P$. The boundary of the obtained manifold is canonically identified with $(\partial Q\setminus \partial P)\cup U\mathfrak NP$, and its interior is $Q\setminus (P\cup \partial Q)$.

Let $X$ be a $d$-dimensional closed smooth oriented manifold, let $\infty$ be a point of $X$, and set $ \punct{X}= X\setminus\{\infty\}$. Here, we give a short overview of the compactification $C_2(\punct{X})$ of the two-point configuration space defined in \cite[Section 2.2]{[Lescop]}.
Let $\config(\punct{X})$ be the space defined from $X^2$ by blowing up the point $(\infty,\infty)$, and next the closures of the sets $\infty\times \punct{X}$, $\punct{X} \times \infty$ and $\Delta_{\punct{X}}=\{(x,x)\mid x\in \punct{X}\}$.

The manifold $C_2(\punct{X})$ is compact and comes with a canonical map $p_b\colon\config(\punct{X})\rightarrow X^2$. This map induces a diffeomorphism from the interior of $C_2(\punct{X})$ to the open configuration space $C_2^0(\punct{X}) = \{(x,y) \in (\punct{X})^2 \mid x\neq y\}$, and $C_2(\punct{X})$ has the same homotopy type as $C_2^0(\punct{X})$. The manifold $C_2(\punct{X})$ is called the \emph{two-point configuration space of $\punct{X}$}.

Let $T_\infty X$ denote the tangent bundle to $X$ at $\infty$. Identify a punctured neighborhood of $\infty$ in $X$ with $\voisinageinfini$. Identify $T_\infty X\setminus\{0\}$ with $\R^{d}\setminus\{0\}$ so that $u\in \R^d \setminus\{0\}$ is the tangent vector at $0$ of the path $\gamma$ such that $\gamma(0) = \infty$ and for any $t\in \left]0,\frac1{||u||}\right] $, $\gamma(t)=\frac{tu}{||tu||^2}\in \voisinageinfini\subset \punct{X}$. Use this identification to see the unit tangent space $U_\infty X$ to $X$ at $\infty$ as $\s^{d-1}$, so that we have the following description of $\partial C_2(\punct{X})$.

\begin{nt}\label{desc-bord}
The boundary of $\config(\punct{X})$ is the union of:\begin{itemize}
\item the closed face $\partial_{\infty, \infty}C_2(\punct{X})= p_b^{-1}(\{(\infty,\infty)\})$, whose interior\footnote{The boundary of this closed face contains the three codimension $2$ faces of $\config(\punct{X})$, which we do not describe here.} is the set of all classes of pairs $(u,v)\in (\R^d\setminus \{0\})^2\cong (T_\infty X\setminus\{0\})^2 $ such that $u\neq v$, up to dilations.
\item the unit normal bundles to $\punct{X} \times\{\infty\}$ and $\{\infty\}\times \punct{X}$, which are  $\partial_{\punct{X}, \infty} C_2(\punct{X})=\punct{X} \times U_\infty X\cong \punct{X} \times \s^{d-1}$
 and $\partial_{\infty,\punct{X}}C_2(\punct{X}) =U_\infty X \times \punct{X}\cong \s^{d-1}\times \punct{X}$, 
\item the face $\partial_\Delta C_2(\punct{X})= p_b^{-1}(\Delta_{\punct{X}})$, which identifies with the unit normal bundle to the diagonal $\Delta_{\punct{X}}$, which is diffeomorphic to the unit tangent bundle $\unitaire \punct{X}$ via the map $[(u,v)]_{(x,x)}\in U\mathfrak N_{(x,x)}\Delta_{\punct{X}} \mapsto \left[v-u\right]_x\in \unitaire_x \punct{X}$.\end{itemize}
\end{nt}

The following lemma can be proved as \cite[Lemma 2.2]{[Lescop]}.
\begin{lm}\label{Gauss}
When $\punct{X}=\R^d$, the Gauss map $$\begin{array}{lll} C_2^0(\R^d) & \rightarrow & \s^{d-1}\\ (x,y) & \mapsto & \frac{y-x}{||y-x||}\end{array}$$ extends to a map $G\colon C_2(\R^d)\rightarrow \s^{d-1}$. 

Furthermore, $G$ reads as follows on the faces\footnote{Here, we do not give the expression of $G$ on the three codimension $2$ faces. It can be found inside the proof of \cite[Lemma 2.2]{[Lescop]}} of codimension $1$ of $C_2(\R^d)$:

$$G(c) =\left\{\begin{array}{lll} 
  \frac{\frac{v}{||v||^2} - \frac{u}{||u||^2}}{\big|\big| \frac{v}{||v||^2} - \frac{u}{||u||^2}\big|\big|} &\text{if $c= [u, v]$ is in the interior of $ \partial_{\infty,\infty}C_2(\R^{d})$}\\
 -u &  \text{if $c= (u, y)\in\partial_{\infty,\R^d}C_2(\R^d)= \s^{d-1}\times \R^d$}\\
u & \text{if $c=(x, u)\in\partial_{\R^d,\infty}C_2(\R^d)=\R^d \times \s^{d-1}$}\\
\frac{u}{||u||}& \text{if  $c= [u]_x\in \unitaire_x\R^d\subset\unitaire\R^d\cong \partial_{\Delta}C_2(\R^d)$}
\end{array}\right.$$

\end{lm}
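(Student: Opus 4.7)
The plan is to verify that the Gauss map $(x,y)\mapsto(y-x)/||y-x||$ on the interior $C_2^0(\R^d)$ extends continuously, hence smoothly, across each of the four codimension-$1$ faces of $C_2(\R^d)$, and to check that the extension produces the formulas in the statement. Since $C_2(\R^d)$ is constructed as an iterated blow-up of $(\s^d)^2$ (along $(\infty,\infty)$, then the closures of $\R^d\times\{\infty\}$ and $\{\infty\}\times\R^d$, and finally the diagonal), I would choose, near each face, a smooth chart in which computing the limit of $G$ reduces to algebra. Smoothness of the extension is then immediate from the smoothness of the expression in those coordinates.

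The diagonal face is easiest: a tubular neighborhood of $\Delta_{\R^d}$ in $(\R^d)^2$ is parametrized by $(x,t,u)\in\R^d\times[0,\epsilon)\times\s^{d-1}\mapsto(x,x+tu)$, and the Gauss map pulls back to $(x,t,u)\mapsto u$, which extends trivially to $t=0$. Under the identification of $U\mathfrak N_{(x,x)}\Delta_{\R^d}$ with $\unitaire_x\R^d$ given in Notation~\ref{desc-bord}, this matches the formula $u/||u||$.

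For the two asymmetric faces at infinity I would use the inversion chart $\phi^{-1}\colon y\mapsto y/||y||^2$ on a punctured neighborhood of $\infty$ in $\s^d$, which is precisely the chart realizing the identification $T_\infty\s^d\setminus\{0\}\cong\R^d\setminus\{0\}$ fixed in the paper. On the face $\partial_{\R^d,\infty}C_2(\R^d)$, a point $(x,u)$ is approached by $(x_n,u/t_n)$ with $t_n\to 0^+$, and
\[
\frac{(u/t_n)-x_n}{||(u/t_n)-x_n||}=\frac{u-t_nx_n}{||u-t_nx_n||}\xrightarrow[n\to\infty]{}u,
\]
extending smoothly. The symmetric computation gives $-u$ on $\partial_{\infty,\R^d}C_2(\R^d)$.

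The face $\partial_{\infty,\infty}C_2(\R^d)$ is the only one requiring a genuinely two-variable blow-up chart: one inverts both coordinates, introduces polar coordinates $(\tilde x,\tilde y)=r(u,v)$ with $r\to 0^+$ and $(u,v)\in(\R^d\setminus\{0\})^2$, $u\neq v$, and computes $y-x=\frac{1}{r}\bigl(\frac{v}{||v||^2}-\frac{u}{||u||^2}\bigr)$. Normalizing cancels the factor $1/r$ and yields the stated formula, which is manifestly invariant under the simultaneous dilations $(u,v)\mapsto(\lambda u,\lambda v)$ that parametrize the face. The main obstacle throughout is bookkeeping rather than analysis: one must reconcile the identifications of normal bundles with spheres in Notation~\ref{desc-bord} with the inversion charts above, and check that the four local extensions glue coherently on the codimension-$2$ strata. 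Once this is done, the argument is a direct adaptation of \cite[Lemma 2.2]{[Lescop]} from $d=3$ to arbitrary $d$.
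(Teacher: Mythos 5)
Your computations are correct and match the approach the paper points to: the paper does not give a proof but cites \cite[Lemma 2.2]{[Lescop]}, whose argument is precisely this kind of explicit verification in inversion and polar blow-up charts near each face. Your formulas on the four codimension-one faces (diagonal chart $(x,x+tu)$, inversion $y\mapsto y/\|y\|^2$ on the one-infinity faces, and double inversion plus polar coordinates on $\partial_{\infty,\infty}$) agree with the statement, and the dilation-invariance check on $\partial_{\infty,\infty}$ is the right consistency test.
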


This map $G$ exists only when $\punct{X}=\R^d$, but, if $(\ambientspace, \tau)$ is a parallelized asymptotic homology $\R^{n+2}$, it is possible to define an analogue $G_\tau$ of $G$ on the boundary of $\configM$, as in \cite[Proposition 2.3]{[Lescop]}.

\begin{df}\label{configM}Let $(\ambientspace, \tau)$ be a parallelized asymptotic homology $\R^{n+2}$.
Note that the face $\partial_{\infty, \infty}\configM$ is canonically identified with $\partial_{\infty, \infty} C_2(\R^{n+2})$.

Then, we can define a smooth map $G_\tau\colon\partial\configM \rightarrow\s^{n+1}$ by the following formula: $$G_\tau(c) =\left\{\begin{array}{lll} 
G(c) &\text{if $c\in \partial_{\infty,\infty}\configM\cong \partial_{\infty,\infty}C_2(\R^{n+2})$}\\
 -u &  \text{if $c= (u, y)\in\partial_{\infty,\punct M}\configM = \s^{n+1}\times \ambientspace$}\\
u & \text{if $c=(x, u)\in\partial_{\punct M,\infty}\configM=\ambientspace \times \s^{n+1}$}\\
\frac{\tau_x^{-1}(u)}{||\tau_x^{-1}(u)||} & \text{if  $c= [u]_x\in \unitaire_x\ambientspace\subset\unitaire\ambientspace\cong \partial_{\Delta}\configM$}
\end{array}\right.$$

\end{df}
One can think of this map as a limit of the Gauss map when one or both points approach infinity (where everything is standard), or when they are close to each other. In the latter case, the limit is defined by the parallelization.

\subsection{Configuration spaces}\label{S2.4}

Let $\Gamma$  be a BCR diagram, let $(\ambientspace,\tau)$ be a parallelized asymptotic homology $\R^{n+2}$, and let $\psi\colon \R^n \hookrightarrow \espaceambient$ be a long knot. Let $\confignoeud[\Gamma][\psi][^0]$ denote the \emph{open configuration space} \[\confignoeud[\Gamma][\psi][^0]= \{c\colon \sommets \hookrightarrow \ambientspace \mid \mbox{there exists } c_i \colon \sommetsinternes\hookrightarrow \R^n\mbox{ satisfying } c_{|\sommetsinternes} = \psi\circ c_i
\}.\]

An element $c$ of $\confignoeud[\Gamma][\psi][^0]$ is called a \emph{configuration}. By definition, the images of the vertices under a configuration are distinct, and the images of internal vertices are on the knot.

This configuration space is a non-compact smooth manifold. It admits a compactification $\confignoeud$, which is defined in \cite[Section 2.4]{[Rossi]}, and which is the closure of the image of the map $c\in \confignoeud[\Gamma][\psi][^0] \mapsto c^* \in C_{\sommets\cup\{*\}}( M)$, where ${c^*}_{|\sommets }= c$ and $c^*(*)=\infty$, and where $C_{\sommets\cup\{*\}}(M)$ is the compact configuration space defined in \cite{[Sinha]}.

\begin{theo}[Rossi, Sinha]The manifold $\confignoeud$ is a compact manifold with corners, such that: \begin{itemize}
\item The interior of $\confignoeud$ is canonically diffeomorphic to $\confignoeud[\Gamma][\psi][^0]$.
\item For any two internal vertices $v$ and $w$, the map $c\in \confignoeud[\Gamma][\psi][^0] \mapsto (c_i(v), c_i(w))\in \configR$ extends to a smooth map $p^{\psi,i}_{v,w}\colon \confignoeud\rightarrow \configR$.
\item For any two vertices $v$ and $w$, the map $c\in \confignoeud[\Gamma][\psi][^0] \mapsto (c(v), c(w))\in \configM$ extends to a smooth map $p^{\psi}_{v,w}\colon \confignoeud\rightarrow \configM$.
\end{itemize}
\end{theo}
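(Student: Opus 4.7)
The plan is to deduce the three claims from Sinha's construction \cite{[Sinha]} of $C_{\sommets\cup\{*\}}(\sphereambiante)$, together with the observation that the constraint $c_{|\sommetsinternes}=\psi\circ c_i$ defines a smooth subspace compatible with all the blow-up strata, following the strategy of Rossi \cite[Section 2.4]{[Rossi]}.

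First, I would recall that $C_{\sommets\cup\{*\}}(\sphereambiante)$ is obtained by iterated differential blow-up of $\sphereambiante^{\sommets\cup\{*\}}$ along the partial diagonals ordered by reverse inclusion, producing a compact smooth manifold with corners whose interior is diffeomorphic to the ordered open configuration space of $\sphereambiante$. Moreover, for any subset $S\subset\sommets\cup\{*\}$, the forgetful projection onto the $S$-coordinates extends to a smooth map $C_{\sommets\cup\{*\}}(\sphereambiante)\to C_S(\sphereambiante)$. The subspace $\confignoeud$ is the closure of the locally closed set of $c^*$ such that $c^*(*)=\infty$, such that the images $c^*(v)$ are pairwise distinct, and such that $c^*(\sommetsinternes)\subset \psi(\R^n)$.

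Next, I would establish the smooth corner structure on $\confignoeud$ by describing it as an \emph{intrinsic} iterated blow-up of the open space $\confignoeud[\Gamma][\psi][^0]$, viewed through $\psi$ as a locally closed subset of $(\R^n)^{\sommetsinternes}\times (\ambientspace)^{\sommetsexternes}$. The loci to blow up, in the Sinha order, are the partial diagonals of this product (collisions among internal vertices on the knot, among external vertices, or between an external vertex and $\psi(\R^n)$), together with the strata where a subset of vertices escapes to $\infty$. At each stage I would check that the current locus is a smooth submanifold meeting the previously produced corners transversely, so that the resulting object remains a manifold with corners; the canonical identification of this intrinsic model with the closure of $\{c^* : c\in\confignoeud[\Gamma][\psi][^0]\}$ inside $C_{\sommets\cup\{*\}}(\sphereambiante)$ then yields the first bullet, since blow-ups leave interiors unchanged.

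Finally, both extensions in bullets two and three follow by composing the inclusion $\confignoeud\hookrightarrow C_{\sommets\cup\{*\}}(\sphereambiante)$ with a smooth map to $\configM$ that retains only the coordinates of $v$ and $w$ together with the blow-up information at $\infty$ recorded by $*$; in the internal case one further identifies $C_2(\psi(\R^n))$ with $\configR$ via the diffeomorphism induced by $\psi^2$. The main obstacle is the transversality verification at the deeper corner strata, where a mixed subset of internal and external vertices collides along the knot or escapes to $\infty$ together: reconciling the intrinsic iterated blow-up with the ambient Sinha closure requires a careful local analysis in normal coordinates along $\psi(\R^n)$, which is exactly the content of \cite[Section 2.4]{[Rossi]} and is what I would appeal to rather than reprove.
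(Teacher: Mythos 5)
The paper does not actually prove this theorem; it is stated as a citation (attributed to Rossi and Sinha), with the compactification $\confignoeud$ defined as the closure of $\{c^* : c\in\confignoeud[\Gamma][\psi][^0]\}$ inside Sinha's compact configuration space $C_{\sommets\cup\{*\}}(\sphereambiante)$, and with \cite[Section 2.4]{[Rossi]} and \cite{[Sinha]} named as the sources. Your proposal is a correct sketch of the mechanism behind that result and, like the paper, ultimately defers the key technical verification — that the knot constraint carves out a locally closed submanifold meeting every Sinha corner stratum transversely, so that the closure is again a manifold with corners — to Rossi's Section 2.4, so the two approaches coincide in substance. One minor point worth making more explicit: for the second bullet you identify $C_2(\psi(\R^n))$ with $\configR$ via $\psi$, which is justified because $\psi$ is standard outside a compact, so $\psi$ extends to a diffeomorphism $\R^n\cup\{\infty\}\to\overline{\psi(\R^n)}\subset\sphereambiante$ and therefore induces a diffeomorphism of the corresponding Sinha/Fulton--MacPherson two-point compactifications; without that remark the reader might worry about the behaviour of the internal blow-up structure at infinity.
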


\begin{df}\label{defpe}
The manifold $\confignoeud$ is called the \emph{(compact) configuration space associated to $\Gamma$ and $\psi$}.
For any edge $e$ of $\Gamma$ going from a vertex $v$ to a vertex $w$, $C_e$ denotes the configuration space $\configR$ if $e$ is internal, and $\configM$ if $e$ is external, and $p_e^{\psi}\colon\confignoeud\rightarrow C_e$ denotes the map $p_{v,w}^{\psi,i}$ if $e$ is internal, and the map $p_{v,w}^{\psi}$ if $e$ is external. When there is no ambiguity on the knot $\psi$, $p_e^\psi$ is simply denoted by $p_e$.
\end{df}
Orient $C_\Gamma^0(\psi)$ as follows.

Let $\d Y_i^{v}$ denote the $i$-th coordinate form of the internal vertex $v$ (parametrized by $\R^n$) and 
let$ \d X_i^{v}$ denote the $i$-th coordinate form of the external vertex $v$ (in an oriented chart of $\ambientspace$).

Split each external edge $e$ in two halves: the tail $e_-$ and the head $e_+$. Define a form $\orientationarete{e_\pm}$ for these external half-edges as follows: \begin{itemize}
\item for the head $e_+$ of a leg going to an external vertex $v$, $\Omega_{e_+} = \d X_v^1$,
\item for the head $e_+$ of an edge that is not a leg, going to an external vertex $v$, $\Omega_{e_+} = \d X_v^2$,
\item for the tail $e_-$ of an edge coming from an external vertex $v$, $\Omega_{e_-} = \d X_v^3\wedge\cdots\wedge\d X_v^{n+2}$,
\item for any (external) half-edge $e_\pm$ adjacent to an internal vertex $v$, $\Omega_{e_\pm} = \d Y_v^1\wedge\ldots\wedge\d Y_v^n$.
\end{itemize}
Note that this distributes the coordinates of each vertex on the half-edges that are adjacent to it, as in Figure \ref{orfig}.
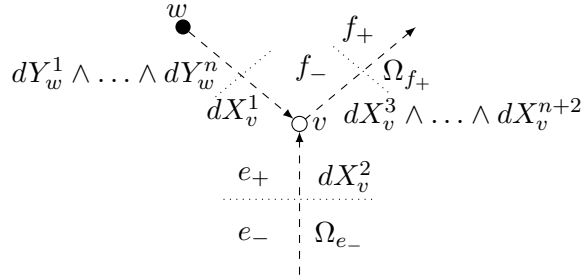
\begin{figure}[H]\centering
\begin{tikzpicture}
\draw (0.265,0) node {$v$};
\draw (0, 0) circle (0.1) ;
\draw (137: 2.2) node {$w$};
\draw [->, >= latex, dashed](-90: 2)-- (-90: 0.1) ;
\draw [<-, >= latex, dashed](40: 2)-- (40: 0.1) ;
\draw [dotted] (-1,-1) -- (1, -1) ;
\draw [dotted] (40: 1) ++(130:0.5) -- ++(-40: 1) ;
\draw [dotted] (140: 1)  ++(40:0.5) -- ++(-140:1);
\draw [->, >= latex, dashed](140: 1.9)-- (140: 0.1) ;
\draw (0.6,-0.7) node {$dX_v^2$};
\draw (-0.595,-0.7) node {$e_+$};
\draw (-0.595,-1.5) node {$e_-$};
\draw (0.395,-1.5) node {\ \  $\orientationarete{e_-}$};
\draw (40: 0.2) ++ (2,0) node {$ dX_v^3 \wedge \ldots \wedge dX_v^{n+2}$};
\draw (40: 1.4) ++ (-0.3,0.35) node {$f_+ $};
\draw (40: 1.4) ++ (0.35,-0.25) node {$\orientationarete{f_+} $};
\draw (40: 0.6) ++ (-0.3,0.35) node {$f_- $};
\draw (140: 0.7) ++ (220:0.43) node {$ dX^1_v$};
\draw (140: 1.8) ++ (205:1.15) node {$dY_w^1\wedge\ldots\wedge dY_w^n$};
\fill (140:2) circle (0.1);
\end{tikzpicture}
\caption{The forms associated to some external half-edges}
\label{orfig}
\end{figure}
Let $N_{T,i}(\Gamma)$ denote the number of internal trivalent vertices, and define the \emph{sign} of a BCR diagram as $\epsilon(\Gamma) = (-1)^{N_{T,i}(\Gamma)+\Card(\aretesexternes)}$. The orientation of $\confignoeud[\Gamma][\psi][0]$ is given by the form $\orientationconfig=\epsilon(\Gamma)\bigwedge\limits_{e\in\aretesexternes}\orientationarete{e}$, where $\orientationarete{e}= \orientationarete{e_-}\wedge\orientationarete{e_+}$ for any external edge $e$.

\subsection{Propagating forms}

Here we define the notion of propagating forms, which allows us to extend the definition of the BCR invariants to all parallelizable asymptotic homology $\R^{n+2}$. 

For any even integer $d$, an \emph{antisymmetric form} on $\s^d$ is a form $\omega$ such that $(-\mathrm{Id}_{\s^d})^*(\omega) = - \omega$, where $-\mathrm{Id}_{\s^d}$ is the antipodal map of the sphere.

\begin{df}
An \emph{internal propagating form} (or \emph{internal propagator}) is a closed $(n-1)$-form $\propagateurinterne$ on $\configR$ such that $\propagateurinterne_{| \partial\configR} = ({G}_{|\partial \configR})^*(\omega_{\propagateurinterne})$ where $\omega_{\propagateurinterne}$ is an antisymmetric volume form on $\s^{n-1}$  such that $\int_{\s^{n-1}} \omega_{\propagateurinterne} = 1$, and where $G\colon \configR \rightarrow \s^{n-1}$ is the map defined in Lemma \ref{Gauss}.

An \emph{external propagating form} (or \emph{external propagator}) of $(\ambientspace, \tau)$ is a closed $(n+1)$-form $\propagateurexterne$ on $\configM$ such that $\propagateurexterne_{| \partial\configM} = {G_\tau}^*(\omega_{\propagateurexterne})$ where $\omega_{\propagateurexterne}$ is an antisymmetric volume form on $\s^{n+1}$  such that $\int_{\s^{n+1}} \omega_{\propagateurexterne} = 1$, and where $G_\tau$ is the map of Definition \ref{configM}.

For a given integer $k$, a \emph{family $F=\familleformes$ of propagating forms of $(\ambientspace,\tau)$} is the data of $2k$ internal propagating forms $(\alpha_i)_{1\leq i \leq 2k}$ and $2k$ external propagating forms $(\beta_i)_{1\leq i \leq 2k}$ of $(\ambientspace, \tau)$.

Given such a family and a degree $k$ numbered BCR diagram $(\Gamma,\sigma)$, for each edge $e$ of $\Gamma$, set \[\formearete=\left\{
\begin{array}{lll}{p_e}^*(\alpha_{\sigma(e)}) & \text{if $e$ is internal,}\\
{p_e}^*(\beta_{\sigma(e)}) & \text{if $e$ is external.}
\end{array}
\right.\]

\end{df}

For any edge $e$, $n(e)$ denotes the integer $n-1$ if $e$ is internal, and $n+1$ if $e$ is external, so that $\formearete$ is an $n(e)$-form on $\confignoeud$. We will see in Corollary \ref{lm-hom2} that families of propagating forms exist.
 
\subsection[Definition and properties of generalized BCR invariants]{Definition and properties of generalized BCR invariants of long knots}\label{26}

Fix an integer $k\geq 2$, and a family $F=\familleformes$ of propagating forms of $(\ambientspace,\tau)$. 

Let $\psi$ be a long knot.

For any numbered BCR diagram $(\Gamma,\sigma)$ of degree $k$, define\footnote{The order of the forms inside the wedge product is not important since they have even degree.} the form $\forme$ on $\confignoeud$ as $\forme = \bigwedge\limits_{e\in \aretes}\formearete$, and set $ \I=\int_{\confignoeud}\forme$. This integral is a real number because of the following lemma.

\begin{lm}\label{dim=deg}
For any BCR diagram $\Gamma$, $\dim(\confignoeud) = \deg(\forme)$.
\end{lm}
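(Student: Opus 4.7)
The plan is a direct dimension count on both sides, reduced to enumerating vertices by the five types of Definition \ref{BCRdef} and using the balance constraints built into those types.

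On the geometric side, $\confignoeud[\Gamma][\psi][^0]$ is an open subspace of a product where each internal vertex contributes an $\R^n$-factor (through the parametrization of the knot) and each external vertex contributes an $\ambientspace$-factor; hence
\[
\dim \confignoeud = n\,\Card(\sommetsinternes) + (n+2)\,\Card(\sommetsexternes).
\]
On the form side, $\forme$ is a wedge of the $\formearete$ for $e\in\aretes$, where $\formearete$ has degree $n-1$ if $e$ is internal and $n+1$ if $e$ is external, so
\[
\deg\forme = (n-1)\,\Card(\aretesinternes) + (n+1)\,\Card(\aretesexternes).
\]

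First I would let $a_j$ ($j=1,\dots,5$) denote the number of vertices of each of the five types of Definition \ref{BCRdef}, so $a_1=\Card(\sommetsexternes)$ and $\Card(\sommetsinternes)=a_2+a_3+a_4+a_5$. Each vertex has exactly one outgoing edge, so counting outgoing edges by type gives
\[
\Card(\aretesexternes) = a_1 + a_3 + a_5,\qquad \Card(\aretesinternes) = a_2 + a_4.
\]
Counting incoming edges instead (type $1$ receives $2$ external, type $2$ receives $1$ external and $1$ internal, type $4$ receives $1$ external, type $5$ receives $1$ internal) yields
\[
\Card(\aretesexternes) = 2a_1 + a_2 + a_4,\qquad \Card(\aretesinternes) = a_2 + a_5,
\]
so comparing with the previous display gives the two identities $a_3 = a_1 + a_2$ and $a_4 = a_5$. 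The second identity is exactly the ``cyclic balance'' noted after Figure \ref{fig-BCR2} between the two types of cycle pieces.

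Substituting into both dimension expressions is then a routine algebraic check: expanding
\[
\dim\confignoeud - \deg\forme = n(a_2+a_3+a_4+a_5) + (n+2)a_1 - (n-1)(a_2+a_4) - (n+1)(a_1+a_3+a_5)
\]
collects into $a_1 + a_2 - a_3 + a_4 - a_5$ (the $n$-dependent terms cancel), which vanishes by the two identities above. I expect no real obstacle here; the only care needed is to make sure the vertex-type bookkeeping is done consistently, which is why I would split the edge count into outgoing and incoming passes to make the cancellation transparent.
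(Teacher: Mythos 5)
Your proof is correct. It reaches the same two starting expressions as the paper --- $\dim\confignoeud = n\,\Card(\sommetsinternes) + (n+2)\,\Card(\sommetsexternes)$ and $\deg\forme = (n-1)\,\Card(\aretesinternes) + (n+1)\,\Card(\aretesexternes)$ --- but the route from there is genuinely different. You enumerate vertices by the five types $a_1,\dots,a_5$ of Definition \ref{BCRdef}, compute the edge counts twice (once via out-degree, once via in-degree), extract the two linear relations $a_3 = a_1 + a_2$ and $a_4 = a_5$, and then verify the equality by explicit substitution, watching the $n$-dependent terms cancel. The paper instead assigns a local degree $d(e_\pm)$ to each half-edge, chosen so that the two per-edge weights sum to $n(e)$ and the per-vertex weights sum to $n$ or $n+2$, and then obtains the equality as a single reorganization of the double sum $\sum_e (d(e_+)+d(e_-)) = \sum_v \sum_{e_\pm \text{ at } v} d(e_\pm)$ with no linear algebra at all. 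The paper's choice has the additional virtue that the same half-edge weights $d(e_\pm)$ are reused essentially verbatim in Lemma \ref{inégalité-s*} to get the strict inequality needed for the vanishing of infinite-face contributions, so it does more work per line; your version is a perfectly sound, self-contained, and more elementary alternative for this lemma alone.
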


\begin{proof}
Split any edge $e$ of $\Gamma$ in two halves $e_-$ (the tail) and $e_+$ (the head), and let $v(e_\pm)$ denote the vertex adjacent to the half-edge $e_\pm$.
Assign an integer $d(e_\pm)$ to each half-edge as follows: \begin{itemize}
\item If $e$ is external, $d(e_+)=1$ and $d(e_-)=n$ as in $\vcenter{\begin{tikzpicture} \draw [>=latex, dashed, ->] (0,0) -- (2,0); \draw [thick,densely dotted] (1,  -0.2) -- (1,0.2) ;
\draw (0.4, 0.2) node {$n$} (1.5,0.2) node{$1$} (0.4, -0.3) node {$e_-$} (1.6, -0.3) node {$e_+$};\end{tikzpicture}}$.
\item If $e$ is internal, $d(e_+)=0$ and $d(e_-)= n-1$ as in $\vcenter{\begin{tikzpicture} \draw [>=latex, ->] (0,0) -- (2,0); \draw [thick,densely dotted] (1,  -0.2) -- (1,0.2) ;
\draw (0.4, 0.2) node {$n-1$} (1.5,0.2) node{$0$} (0.4, -0.3) node {$e_-$} (1.6, -0.3) node {$e_+$};\end{tikzpicture}}$.
\end{itemize}
Note that, with these notations: \begin{itemize}
\item for any edge $e\in\aretes$, $d(e_+) + d(e_-) = n(e)$.
\item for any vertex $v\in\sommets$, as it can be checked in Figure \ref{somme-d}, \[\sum\limits_{e_\pm, v(e_\pm) = v} d(e_\pm) = \left\{\begin{array}{lll} n & \text{if $v$ is internal,}\\ n+2 & \text{if $v$ is external.}\end{array}\right. \]
\end{itemize}
\begin{figure}[H]
\centering
\begin{tikzpicture}
\draw (0,0.27) node {$v$};
\draw (0, 0) circle (0.1) ;
\draw [->, >= latex, dashed](-90: 1)-- (-90: 0.1) ;
\draw [<-, >= latex, dashed](40: 1)-- (40: 0.1) ;
\draw [->, >= latex, dashed](140: 1)-- (140: 0.1) ;
\fill (140:1.1) circle (0.1);
\draw (140: .5) ++(50:.2)-- ++(50:-.4);
\draw (-90: .5) ++(0:.2)-- ++(0:-.4);
\draw (40: .5) ++(130:.2)-- ++(130:-.4);
\draw (-90: .25) ++ (.14, -0.1) node {\tiny $1$};
\draw (40: .25) ++ (.15, -0.1) node {\tiny $n$};
\draw (140: .25) ++ (-.18, -.15) node {\tiny $1$};
\end{tikzpicture} \ \ \ \ \ \ \ \ 
\begin{tikzpicture}
\fill (0, 0) circle (0.1) ;
\fill (140:1.1) circle (0.1);
\draw (0,0.27) node {$v$};
\draw [->, >= latex, thick](-90: 1)-- (-90: 0.1) ;
\draw [<-, >= latex, thick](40: 1)-- (40: 0.1) ;
\draw [->, >= latex, dashed](140: 1)-- (140: 0.1) ;
\draw (140: .5) ++(50:.2)-- ++(50:-.4);
\draw (-90: .5) ++(0:.2)-- ++(0:-.4);
\draw (40: .5) ++(130:.2)-- ++(130:-.4);
\draw (-90: .25) ++ (.14, -0.1) node {\tiny $0$};
\draw (40: .25) ++ (.35, -0.05) node {\tiny $n-1$};
\draw (140: .25) ++ (-.18, -.15) node {\tiny $1$};
\end{tikzpicture} \ \ \ \ \ \ \ \ 
\begin{tikzpicture}
\fill (0, 0) circle (0.1) ;
\draw (0,0.27) node {$v$};
\draw [<-, >= latex, dashed](-90: 1)-- (-90: 0.1) ;
\draw (-90: .5) ++ (-0.2,0)-- ++(0.4, 0);
\draw (-90: .25) ++(.2, 0) node {\tiny $n$};
\end{tikzpicture} \ \ \ \ \ \ \ \ 
\begin{tikzpicture}
\fill (0, 0) circle (0.1) ;
\draw (0,0.27) node {$v$};
\draw [->, >= latex, dashed](-90: 1)-- (-90: 0.1) ;
\draw [<-, >= latex, thick](40: 1)-- (30: 0.1) ;
\draw (-90: .5) ++ (-0.2,0)-- ++(0.4, 0);
\draw (40: .5) ++(130:.2)-- ++(130:-.4);
\draw (-90: .25)++(.2, 0) node {\tiny $1$};
\draw (40: .25) ++ (.33, -0.06) node {\tiny $n-1$};
\end{tikzpicture}  \ \ \ \ \ \ \ 
\begin{tikzpicture}
\fill (0, 0) circle (0.1) ;
\draw (0,0.27) node {$v$};
\draw [->, >= latex, thick](-90: 1)-- (-90: 0.1) ;
\draw [<-, >= latex, dashed](40: 1)-- (30: 0.1) ;
\draw (-90: .5) ++ (-0.2,0)--++ (0.4, 0);
\draw (40: .5) ++(130:.2)-- ++(130:-.4);
\draw (-90: .25)++(.2, 0) node {\tiny $0$};
\draw (40: .25) ++ (.15, -0.05) node {\tiny $n$};
\end{tikzpicture}
\caption{ }\label{somme-d}
\end{figure}

Then, \begin{eqnarray*}\deg(\forme) &=& \sum\limits_{e\in\aretes}( d(e_+)+d(e_-) )= \sum\limits_{v\in \sommets}\sum\limits_{e_\pm, v(e_\pm) = v} d(e_\pm)\\& = &\sum\limits_{v \in \sommetsinternes} n+\sum\limits_{v\in\sommetsexternes} (n+2) = \dim(\confignoeud).\qedhere\end{eqnarray*}

\end{proof}

\begin{theo}\label{th1}
Set \[\Z(\psi) =\frac1{(2k)!} \sum\limits_{(\Gamma,\sigma)\in \graphesnum} \I.\]
The following properties hold: 
\begin{enumerate}
\item The value of $\Z(\psi)$ does not depend on the choice of the family $F$ of propagating forms of $(\ambientspace, \tau)$.
\item The value of $Z_k(\psi)= \Z(\psi)$ does not depend on the choice of the parallelization $\tau$ of the ambient manifold $\ambientspace$.
\item For any $\phi\in \mathrm{Diffeo}^+(\ambientspace)$ that fixes $\voisinageinfini$ pointwise, and for any long knot $\psi$ of $\ambientspace$, $Z_k(\psi)= Z_k(\phi\circ \psi)$. In particular, $Z_k$ is a long knot isotopy invariant.
\item The invariant $Z_k$ takes only rational values.

\item If $k$ is odd, $Z_k$ is always zero.
\end{enumerate}
The obtained invariant $Z_k$ is called the \emph{generalized BCR invariant of degree $k$}.

\end{theo}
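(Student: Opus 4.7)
The plan is to establish the five assertions in order, with items (1), (2), and (3) all following a common Stokes-theorem pattern on the compact manifold with corners $\confignoeud$, and items (4) and (5) as shorter consequences. The main technical obstacle is the analysis of the codimension-one faces of $\confignoeud$ appearing in item (1): one must identify them, show that contributions from faces at infinity and from clusters of three or more coalescing vertices vanish, and prove that the remaining principal face contributions cancel once summed over $\graphesnum$.

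For item (1), I would connect two families $F_0$ and $F_1$ of propagating forms by a smooth path $F_t=(\propagateurinterne_{i,t}, \propagateurexterne_{i,t})_{1\leq i\leq 2k}$. Because any two propagators with the same boundary sphere volume form differ by an exact form, one can construct primitives $\eta_{i,t}$ and $\theta_{i,t}$ for $\tfrac{d}{dt}\propagateurinterne_{i,t}$ and $\tfrac{d}{dt}\propagateurexterne_{i,t}$, with their boundary restrictions still pulled back from the spheres. Differentiating $\Z(\psi)$ under the integral sign, applying the Leibniz rule edge by edge, and invoking Stokes' theorem turns $\tfrac{d}{dt}\Z(\psi)$ into a sum, over $\graphesnum$ and over the codimension-one faces of $\confignoeud$, of boundary integrals. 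These faces split into three types: (a) faces at infinity, where one or several vertices escape into $\voisinageinfini$, on which all edge forms reduce to pullbacks of the Gauss maps $G$ and $G_\tau$ of Lemma \ref{Gauss} and Definition \ref{configM}, so that the integrand is either $t$-independent or vanishes by a dimension count; (b) hidden or anomalous faces, where at least three vertices collapse, which vanish by a degree count exploiting the valences prescribed by Definition \ref{BCRdef}; (c) principal faces, where exactly two vertices linked by an edge $e$ of $\Gamma$ collapse, whose contributions organize into sums over pairs $(\Gamma,\sigma)$ related by edge contractions of $e$ and cancel in the spirit of the IHX/STU relations of classical Bott--Taubes theory. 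The detailed combinatorial cancellation on the principal faces is the main technical difficulty.

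Items (2) and (3) are variants of item (1). For (2), Theorem \ref{parallelization theorem} lets me assume that the two parallelizations $\tau_0$ and $\tau_1$ coincide outside an arbitrarily small ball; a smooth homotopy $\tau_t$ between them then induces a smooth family $\propagateurexterne_{i,t}$ of external propagators, and the Stokes analysis of (1) applies verbatim, with the simplification that the faces at infinity are trivially controlled. For (3), an isotopy $(\psi_t)_{t\in[0,1]}$ of long knots assembles the configuration spaces $C_\Gamma(\psi_t)$ into a smooth bundle over $[0,1]$; integrating the family of forms $\forme$ on this total space and applying Stokes reduces $Z_k(\psi_1)-Z_k(\psi_0)$ to boundary contributions of the three types above. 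The diffeomorphism invariance statement then follows either by interpolating between $\psi$ and $\phi\circ\psi$ through an isotopy inside $\mathrm{Diffeo}^+(\ambientspace)$, or by pulling back the propagators through $\phi$ and applying the first part of (3).

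Item (4) is an immediate consequence of Theorem \ref{the}, which rewrites $\Z(\psi)$ as a rational linear combination of integer intersection numbers of chains in $\confignoeud$. For item (5), I would exhibit a fixed-point-free involution on $\graphesnum$ that reverses the cyclic orientation of the cycle of $\Gamma$ (and adjusts $\sigma$ accordingly), and track how the integrand $\forme$ transforms under it. The antisymmetry of the sphere volume forms multiplies each reversed edge's form by $-1$, while the orientation $\orientationconfig$ of $\confignoeud$ is modified by a sign depending on $\epsilon(\Gamma)$ and on the parities tracked in the proof of Lemma \ref{dim=deg}. A careful bookkeeping then shows that the total sign is $-1$ precisely when $k$ is odd, so that involution-related contributions cancel pairwise, forcing $Z_k=0$.
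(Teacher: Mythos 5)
Your overall architecture matches the paper's: item (1) via Stokes and a codimension-one face analysis of $\confignoeud$, item (2) via Theorem \ref{parallelization theorem} plus localization of the parallelization change to a small ball, item (3) by pulling propagators back through $\phi$, item (4) from Theorem \ref{the}, and item (5) from a cycle-reversing involution on $\graphesnum$. The difference between your parametrized family $F_t$ with differentiation under the integral sign and the paper's direct use of primitives $\ecartinterne,\ecartexterne$ with $\alpha'_1-\alpha_1=\d\ecartinterne$ is cosmetic; both reduce to boundary integrals on $\partial\confignoeud$.

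The genuine gap is in step (b) of your face analysis. You claim that hidden and anomalous faces ``vanish by a degree count exploiting the valences prescribed by Definition \ref{BCRdef}.'' This is not how most of these faces actually die, and a pure dimension argument would fail. The paper's Lemma \ref{vanishing theorem} splits the hidden faces into a set $\hfaces$ that does vanish for dimension reasons (when $\Gamma_S$ is disconnected, or has a univalent vertex whose unique adjacent edge is internal, the Gauss maps factor through a quotient of strictly smaller dimension) and a set $\Hfaces$ that does \emph{not} vanish face by face. For $\Hfaces$ the paper produces, for each such face, an orientation-reversing involution $T$ of $\face$ together with a pairing $\sigma\mapsto\sigma^*$ of numberings satisfying $\delta_SI(\Gamma,\sigma^*,\psi)=-\delta_SI(\Gamma,\sigma,\psi)$ (Lemmas \ref{vanishing-h-2}--\ref{vanishing-h-4}); the contributions only cancel after summing over $\graphesnum$. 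Likewise the anomalous face $S=\sommets$ does not vanish by dimension at all: its dimension equals $\deg(\formeT)$ exactly, and the cancellation (Lemma \ref{vanishing-anomalous}) comes from the orientation-reversing reflection $u\mapsto 2u(v)-u(\cdot)$ together with the \emph{antisymmetry} of the sphere forms, i.e.\ it requires the sphere factorization property for the primitive forms. Without this involution-plus-antisymmetry mechanism, item (1) does not close.

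A related subtlety affects your item (2): you assert the Stokes analysis of (1) ``applies verbatim.'' But when the two parallelizations differ near the diagonal, the pair $(\propint'_1-\propint_1,\propext'_1-\propext_1)$ no longer has the sphere factorization property, and the anomalous face (as well as the infinite faces containing an end of $\sigma^{-1}(1)$) is not handled by the arguments of (1). The paper recovers the cancellation precisely by choosing, via Theorem \ref{parallelization theorem}, a ball $\B$ disjoint from $\psi(\R^n)$ on which $\tau_0,\tau_1$ differ, so that the primitive $\xi_1^n$ is supported near $\unitaire\B$ and all configurations appearing on those problematic faces (their colliding points lie on the knot, or at infinity) are mapped outside the support of $\xi_1^n$. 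Your sketch should make this localization step explicit rather than deferring to (1).

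Finally, in item (3) you appeal to ``the first part of (3)'' to handle general $\phi$, which is circular; the paper pulls the external propagators back through the induced diffeomorphism of $\configM$, recognizes the result as a family of propagators for a new parallelization $\tau'$, and then invokes item (2), not (3).
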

When $\ambientspace = \R^{n+2}$, and when all the propagators are pullbacks of the homogoneous unit volume form on $\s^{n-1}$ and $\s^{n+1}$ with total volume one, our definition matches the definition of the invariants\footnote{Only $\Theta_2$ and $\Theta_3$ are explicitly defined in \cite{Cattaneo2005}, but the definition for higher $k$ is mentioned.} $(\Theta_k)_{k\geq 2}$ of \cite[Section 6]{Cattaneo2005} and of the invariants $2z_k$ of \cite[Section 2.4]{[Watanabe]} (we have $Z_k  = \Theta_k = 2z_k$). Our definition allows more flexibility on the choice of the forms. It extends the invariant to an invariant for long knots in any parallelized asymptotic homology $\R^{n+2}$. In \cite[Theorem 4.1]{[Watanabe]}, Watanabe proved that $z_k$ is not trivial when $k$ is even and $\punct M = \R^{n+2}$, and he related $z_k$ to Alexander polynomial for long ribbon knots.

\subsection{Propagating chains}
Let us first fix some notations on the chains used in this article.
\begin{df}
A \emph{rational $k$-chain} $A$ of a manifold $X$ is a finite rational combination $\sum\limits_{i=1}^r q_iY_i$ of compact oriented $k$-submanifolds with corners $(Y_i)_{1\leq i \leq r}$ of $X$. The boundary $\partial A$ of $A$ is the rational $(k-1)$-chain $\partial A= \sum\limits_{i=1}^r q_i \partial Y_i$, up to the usual algebraic cancellations\footnote{These cancellations allow us to write $1.(-Y)=(-1).Y$ for a submanifold $Y$, where $-Y$ denotes the manifold $Y$ with the opposite orientation, and $1.(Y\sqcup Z) =1.Y+1.Z$ for disjoint submanifolds $Y$ and $Z$, for example.}.

If the $(Y_i)_{1\leq i \leq r}$ have pairwise disjoint interiors, $A$ is called an \emph{embedded rational $k$-chain.}\footnote{Note that any rational chain is homologous to an embedded one.} If $A$ is an embedded rational $k$-chain, $\mathrm{Supp}(A) = \bigcup\limits_{i=1}^r Y_i$ denotes the \emph{support} of $A$, $A^{(k-1)}=\bigcup\limits_{i=1}^r \partial Y_i$ denotes its $(k-1)$-skeleton, and $\Int(A)=\mathrm{Supp}(A) \setminus A^{(k-1)}$ its \emph{interior}.
\end{df}
Let us now define the notion of \emph{propagating chains}, which will give us another way of computing the invariant $Z_k$, and help us to prove the fourth assertion of Theorem~\ref{th1}.
\begin{df}
An \emph{internal propagating chain} (or internal propagator) is an embedded rational $(n+1)$-chain $\Propint$ of $\configR$ such that there exists $x_\Propint\in\s^{n-1}$ such that $\partial \Propint = \frac12(G_{|\partial\configR})^{-1}(\{-x_\Propint, x_\Propint\})$.

An \emph{external propagating chain} (or external propagator) of $(\ambientspace, \tau)$ is an embedded rational $(n+3)$-chain $\Propext$ of $\configM$ such that there exists $x_\Propext\in\s^{n+1}$ such that $\partial \Propext = \frac12 G_\tau^{-1}(\{- x_\Propext, x_\Propext\})$.

A \emph{family $F_*=(\Propint_i,\Propext_i)_{1\leq i \leq 2k}$ of propagating chains of $(\ambientspace, \tau)$} is the data of $2k$ internal propagating chains $(\Propint_i)_{1\leq i \leq 2k}$ and $2k$ external propagating chains $(\Propext_i)_{1\leq i \leq 2k}$ of $(\ambientspace, \tau)$.
\end{df}

Consider a family $F_*=(\Propint_i,\Propext_i)_{1\leq i \leq 2k}$ of propagating chains of $(\ambientspace,\tau)$. For any BCR diagram $\Gamma$, set \[ \begin{array}{llll}P_\Gamma\colon& \confignoeud &\rightarrow &\prod\limits_{e\in\aretesinternes}\configR\times\prod\limits_{e\in\aretesexternes}\configM=\prod\limits_{e\in\aretes}C_e\\
&c & \mapsto&  (p_e(c))_{e\in\aretes}.
\end{array} \]

The family $F_*$  is in \emph{general position} if, for any numbered BCR diagram $(\Gamma,\sigma)\in \graphesnum$, and for any $c\in\confignoeud$ such that $P_\Gamma(c) \in\left(\prod\limits_{e\in\aretesinternes}\mathrm{Supp}(\Propint_{\sigma(e)})\right)\times\left(
\prod\limits_{e\in\aretesexternes}\mathrm{Supp}(\Propext_{\sigma(e)})\right)$: \begin{itemize}
\item For any internal edge $e$ of $\Gamma$, $p_e(c) \in \Int(A_{\sigma(e)})$.
\item For any external edge $e$ of $\Gamma$, $p_e(c) \in \Int(B_{\sigma(e)})$.
\item We have the transversality property
\begin{align*}
&T_{P_\Gamma(c)}\left( \prod\limits_{e\in\aretes}C_e\right)\\
= \ & T_cP_\Gamma(T_c\confignoeud) + \left(\left(\prod\limits_{e\in\aretesinternes}T_{p_e(c)}\Int(\Propint_{\sigma(e)})\right)\times\left(
\prod\limits_{e\in\aretesexternes}T_{p_e(c)}\Int(\Propext_{\sigma(e)})\right)
\right).
\end{align*}
\end{itemize}
In the following, $D_{e,\sigma}^{F_*}$ denotes the chain $p_e^{-1}(\Propint_{\sigma(e)})$ if $e$ is internal, and the chain $p_e^{-1}(\Propext_{\sigma(e)})$ if $e$ is external. This is a chain of codimension $n(e)$ of $\confignoeud$.

\subsection{Computation of $Z_k$ in terms of propagating chains}
We can now give a discrete definition of our generalized BCR invariants.

\begin{theo}\label{the}
Let $F_*=(\Propint_i,\Propext_i)_{1\leq i\leq 2k}$ be a family of propagating chains of $(\ambientspace, \tau)$ in general position. 

The algebraic intersection number $I^{F_*}(\Gamma,\sigma,\psi)$ of the chains $(D_{e, \sigma}^{F_*})_{e\in\aretes}$ inside $\confignoeud$ makes sense and$$\Z[](\psi) =\frac1{(2k)!} \sum\limits_{(\Gamma,\sigma)\in\graphesnum} I^{F_*}(\Gamma,\sigma,\psi).$$

\end{theo}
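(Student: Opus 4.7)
\medskip

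\noindent\textbf{Proof proposal.} The strategy is to upgrade the family $F_*$ of propagating chains to a family $F=\familleformes$ of propagating forms whose supports are concentrated in arbitrarily small neighborhoods of the supports of the chains, and then to recognize the integral $I^F(\Gamma,\sigma,\psi)$ of Theorem \ref{th1} as the algebraic count of transverse intersection points of the $(D_{e,\sigma}^{F_*})_{e\in\aretes}$. By the first assertion of Theorem \ref{th1}, the integral formula for $Z_k$ is independent of the chosen family of propagating forms, so it suffices to exhibit one such family $F$ that is dual to $F_*$ in the above sense.

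First I would associate to each propagating chain a Thom-type propagating form. For an internal propagator $\Propint_i\subset\configR$ with $\partial\Propint_i=\frac12(G_{|\partial\configR})^{-1}(\{-x_{\Propint_i},x_{\Propint_i}\})$, choose a small $\epsilon$-neighborhood of $x_{\Propint_i}$ in $\s^{n-1}$ and an antisymmetric volume form $\omega_{\propagateurinterne_i}$ on $\s^{n-1}$ supported in this neighborhood and its antipode, with total integral $1$. Then choose a closed $(n-1)$-form $\propagateurinterne_i$ on $\configR$ that represents the Poincaré-Lefschetz dual of $\Propint_i$, is supported in a tubular neighborhood of $\mathrm{Supp}(\Propint_i)$, and restricts on $\partial\configR$ to $G^*(\omega_{\propagateurinterne_i})$. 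The matching of boundary behaviors is exactly the point of the factor $\frac12$ and the antipodal pair $\{-x_{\Propint_i},x_{\Propint_i}\}$ in the definition of propagating chains: the two sheets of $\partial\Propint_i$ correspond to the two blobs of support of $\omega_{\propagateurinterne_i}$. One constructs $\propext_i$ similarly, using $G_\tau$ in place of $G$. A standard relative de Rham argument on the compact manifold with corners $\configR$ (respectively $\configM$) produces such forms.

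Next I would compute $\I = \int_{\confignoeud}\bigwedge_{e\in\aretes}\formearete$ using these localized forms. As the neighborhoods of the supports shrink, the form $\formearete=p_e^*(\propagateurinterne_{\sigma(e)})$ (or $p_e^*(\propext_{\sigma(e)})$) becomes a Thom form of the chain $D_{e,\sigma}^{F_*}=p_e^{-1}(\Propint_{\sigma(e)})$ (resp.\ $p_e^{-1}(\Propext_{\sigma(e)})$) inside $\confignoeud$. The general position hypothesis ensures that for each $(\Gamma,\sigma)$ the chains $(D_{e,\sigma}^{F_*})_{e\in\aretes}$ meet transversally in the interior of $\confignoeud$ at finitely many points, with signs determined by the orientations of the chains and of $\confignoeud$. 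Lemma \ref{dim=deg} guarantees dimensional balance so that the intersection is zero-dimensional. A localization argument then identifies $\I$ with the signed count of these intersection points, that is, with the algebraic intersection number $I^{F_*}(\Gamma,\sigma,\psi)$. Summing over $(\Gamma,\sigma)\in\graphesnum$ and dividing by $(2k)!$ yields the formula.

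The main obstacle will be the precise construction of the propagating forms dual to the chains with the correct boundary behavior on $\partial\configR$ and $\partial\configM$, and the verification that the localization argument is valid up to the corners of $\confignoeud$. One needs to check that the transverse intersection points stay in the interior of $\confignoeud$ (which follows from the general position assumption combined with the fact that every $D_{e,\sigma}^{F_*}$ only meets the interior of $\confignoeud$ when its image under $p_e$ lies in the interior of the propagator) and that no boundary contribution appears when shrinking the supports. Once these technical points are handled, the equality of the two expressions for $Z_k$ follows from the independence statement in Theorem \ref{th1}.
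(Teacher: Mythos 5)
Your proposal takes essentially the same approach as the paper: both construct a family of propagating forms dual to the chains, supported in small tubular neighborhoods with the prescribed boundary behavior (exploiting the factor $\tfrac12$ and the antipodal pair), then use general position and a dimension count (Lemma \ref{dim=deg}) to localize each integral $\I$ to the finite set of transverse intersection points of the $(D_{e,\sigma}^{F_*})_{e\in\aretes}$, concluding via the independence statement of Theorem \ref{th1}. The only cosmetic difference is that the paper builds the forms once at a fixed small $\epsilon$ and proves exact equality $\int_{U_c}\forme=i(c)$ (Lemmas \ref{lm1} and \ref{lm2}), whereas you phrase it as a shrinking/limiting argument, but the underlying mechanism is the same.
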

This theorem is proved in Section \ref{proof-th*}, where a more precise definition of this intersection number is given.
The existence of families of propagating chains in general position is proved in Section \ref{sec_ex_ch}

\subsection{Additivity of $Z_k$ under connected sum}\label{S29}

Let $\punct{M_1}$ and $\punct{M_2}$ be two asymptotic homology $\R^{n+2}$. 
Let us define the connected sum $\punct{M_1}\sharp \punct{M_2}$. Let $\voisinageinfinideux$ be the complement in $\R^{n+2}$ of the two open balls $\mathring B_1$ and $\mathring B_2$ of radius $\frac14$ and with respective centers $\Omega_1=(0, 0, \ldots, 0, -\frac12)$ and $\Omega_2=(0, 0, \ldots, 0, \frac12)$.
For $i\in\{1,2\}$ and $x$ in $\partial B(M_i)\subset \R^{n+2}$, define the map $\phi_i(x) = \frac14x +\Omega_i $, which is a diffeomorphism from $\partial B(M_i)$ to $\partial B_i$.

Set $\punct{M_1}\sharp \punct{M_2}= \voisinageinfinideux\cup B(M_1) \cup B(M_2)$, where $B(M_i)$ is glued to $\voisinageinfinideux$ along $\partial B_i$ using the map $\phi_i$, and set $B(\punct{M_1}\sharp\punct{M_2}) = \overline{(\punct{M_1}\sharp \punct{M_2} )\setminus \voisinageinfini}$, where $\voisinageinfini$ is defined in Section \ref{S11}. 
The manifold $\punct{M_1}\sharp\punct{M_2}$ with the decomposition $\punct{M_1}\sharp\punct{M_2}= B(\punct{M_1}\sharp \punct{M_2}) \cup \voisinageinfini$ is called the \emph{connected sum} of $\punct{M_1}$ and $\punct{M_2}$.
\begin{prop}
The obtained manifold $\punct{M_1}\sharp \punct{M_2}$ is an asymptotic homology $\R^{n+2}$ with two canonical injections $\iota_i \colon B(M_i) \hookrightarrow B(\punct{M_1}\sharp \punct{M_2})\subset\punct {M_1}\sharp \punct{M_2}$ for $i\in\{1,2\}$. 

If $\punct{M_1}$ and $\punct{M_2}$ are parallelized, $\punct{M_1}\sharp \punct{M_2}$ inherits a natural parallelization, up to homotopy.

\end{prop}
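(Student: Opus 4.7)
The injections $\iota_i$ are tautological from the construction, so the content of the first part is to verify that $\punct{M_1}\sharp\punct{M_2}$ has the homology of $\R^{n+2}$. I will do this by Mayer-Vietoris. The key intermediate computations are: (a) each $B(M_i)$ is a rational homology ball, which follows from applying Mayer-Vietoris to the decomposition $M_i = B(M_i) \cup \overline{\voisinageinfini}$ along their common boundary $\s^{n+1}$, using that $M_i$ is a homology sphere and that the connecting map $H_{n+2}(M_i)\to H_{n+1}(\s^{n+1})$ is an isomorphism; and (b) $\voisinageinfinideux$ deformation retracts onto $\R^{n+2}\setminus\{\Omega_1,\Omega_2\}$, so its homology is that of $\s^{n+1}\vee\s^{n+1}$. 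I then apply Mayer-Vietoris with $U$ a small open thickening of $B(M_1)\sqcup B(M_2)$ and $V$ a small open thickening of $\voisinageinfinideux$: $U$ is a homology $2$-point space, $U\cap V$ is homotopy equivalent to $\s^{n+1}\sqcup\s^{n+1}$ (collars around $\partial B_1,\partial B_2$), and the map $H_{n+1}(U\cap V)\to H_{n+1}(V)$ sending each boundary sphere to the corresponding wedge summand is an isomorphism, while the map into $H_{n+1}(U)=0$ is trivial. The resulting long exact sequence forces $\widetilde H_*(\punct{M_1}\sharp\punct{M_2})=0$.

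For the parallelization, the main subtlety is that the gluing diffeomorphism $\phi_i(x)=\frac14 x+\Omega_i$ has differential $\tfrac14\mathrm{Id}$, so the parallelization $\tau_i$ pushed forward from $B(M_i)$ and the canonical trivialization of $T\R^{n+2}$ on $\voisinageinfinideux$ do not agree on $\partial B_i$: on the boundary, they differ by the positive scalar $\tfrac14$. I would fix this by choosing, for $i\in\{1,2\}$, a small collar neighborhood $N_i\cong \partial B_i\times[0,\epsilon]$ of $\partial B_i$ inside $\voisinageinfinideux$, disjoint from $\voisinageinfini$, and a smooth function $\lambda_i\colon N_i\to\R_{>0}$ which equals $\tfrac14$ on $\partial B_i$ and $1$ on the outer boundary. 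On $N_i$, set the trivialization to be $\lambda_i$ times the canonical trivialization of $T\R^{n+2}$; keep the pushforward of $\tau_i$ on $B(M_i)$ and the canonical trivialization on $\voisinageinfinideux\setminus(N_1\cup N_2)$. These three pieces agree on overlaps and define a smooth bundle isomorphism $\ambientspace\times\R^{n+2}\to T\ambientspace$ standard on $\voisinageinfini$, hence a parallelization in the sense of Definition \ref{paral-def}.

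Finally, I would argue the homotopy uniqueness: any two such constructions differ only in the choices of $(N_i,\lambda_i)$. Since the space of positive smooth functions on $N_i$ restricting to the correct boundary values is convex (hence contractible), any two choices of $\lambda_i$ are linked by a smooth family, which yields a smooth family of parallelizations interpolating the two constructions. The main obstacle, therefore, is entirely conceptual rather than technical: verifying that the scaling mismatch produced by $\phi_i$ lies in $GL^+(n+2)$ (indeed in the contractible subgroup $\R_{>0}\cdot\mathrm{Id}$), which is what makes the interpolation, and its homotopy uniqueness, possible.
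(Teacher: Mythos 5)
The paper offers no proof beyond ``This is immediate,'' so there is nothing to compare your argument against; but your argument is correct and is a reasonable unpacking of what the author takes for granted. The Mayer--Vietoris computation of the homology is sound, and the one genuinely nontrivial point in the parallelization claim — that the gluing map $\phi_i(x)=\tfrac14 x+\Omega_i$ forces the pushed-forward trivialization from $B(M_i)$ and the canonical trivialization of $\voisinageinfinideux$ to disagree on $\partial B_i$ by the scalar $\tfrac14$ — is correctly identified, and your resolution (interpolate by a positive scalar function, using that $\R_{>0}\cdot\mathrm{Id}$ is contractible) is exactly the right idea, as is the convexity argument for homotopy uniqueness. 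A small shortcut on the homology side: the one-point compactification of $\punct{M_1}\sharp\punct{M_2}$ is the ordinary connected sum $M_1\sharp M_2$ of two integral homology spheres, which is again an integral homology sphere by a single Mayer--Vietoris, after which $\punct{M_1}\sharp\punct{M_2}$ has the homology of $\R^{n+2}$ by the argument already in Section~\ref{S11}; this is slightly quicker than your three-piece decomposition. Two wording slips that do not affect the argument: in step (a) your Mayer--Vietoris shows $B(M_i)$ is an \emph{integral} homology ball (which is what is needed, since the paper works with $\mathbb{Z}$-homology spheres), not merely a rational one; and the deformation retract goes from $\R^{n+2}\setminus\{\Omega_1,\Omega_2\}$ \emph{onto} its subset $\voisinageinfinideux$, not the other way around.
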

\begin{proof}
This is immediate.
\end{proof}

\begin{df}
Let $\punct{M_1}$ and $\punct{M_2}$ be two asymptotic homology $\R^{n+2}$.

Let $\psi_1 \colon \R^n\hookrightarrow \punct{M_1}$ and $\psi_2 \colon \R^n\hookrightarrow \punct{M_2}$ be two long knots. The formula $$(\psi_1\sharp \psi_2)(x)= \left\{\begin{array}{lll}
\iota_2(\psi_2(4.x_1, \ldots, 4.x_{n-1}, 4.x_n-2)) & \text { if $||x-(0, \ldots, 0, \frac12)|| \leq \frac14$,}\\
\iota_1(\psi_1(4.x_1, \ldots, 4.x_{n-1}, 4.x_n+2)) & \text { if $||x-(0, \ldots, 0, -\frac12)|| \leq \frac14$,}\\
(0, 0, x)\in \voisinageinfinideux &\text{otherwise,}
\end{array}
\right.
$$
defines a long knot $\psi_1\sharp \psi_2 \hookrightarrow \punct{M_1}\sharp\punct{M_2}$, which is called the \emph{connected sum} of $\psi_1$ and $\psi_2$.
\end{df}

Let us assert the following immediate result about connected sum.
\begin{lm}\label{neutre}
Set $\psi_{triv}\colon x\in \R^n\mapsto (0,0,x)\in\R^{n+2}$. The embedding $\psi_{triv}$ is called the \emph{trivial knot}. 

For any parallelizable asymptotic homology $\R^{n+2}$ $\punct M$ and for any long knot $\psi$ in $\punct M$, there exist two diffeomorphisms $\mathcal T^{(1)}_{\punct M, \psi}\colon\R^{n+2}\sharp \punct M\rightarrow\punct M$ and $\mathcal T^{(2)}_{\punct M,\psi}\colon\punct M\sharp \R^{n+2}\rightarrow \punct M$ such that $\mathcal T^{(1)}_{\punct M,\psi}\circ(\psi_{triv}\sharp\psi)=\psi=\mathcal T^{(2)}_{\punct M,\psi} \circ (\psi\sharp \psi_{triv})$.

Similarly, the connected sum is associative and commutative up to ambient diffeomorphisms.
\end{lm}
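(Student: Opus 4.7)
The lemma follows from standard connect-sum manipulations, exploiting the fact that $\R^{n+2} = \s^{n+2}\setminus\{\infty\}$ acts as a unit for the connected sum of asymptotic homology $\R^{n+2}$'s, and that $\psi_{triv}$ runs as a straight line through the gluing region. I sketch the construction of $\mathcal T^{(1)}_{\punct M,\psi}$; the construction of $\mathcal T^{(2)}_{\punct M,\psi}$ and the associativity/commutativity claims are entirely analogous.

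First, since $B(\s^{n+2})$ is just the closed unit ball of $\R^{n+2}$, the gluing along $\partial B_1$ via $\phi_1$ is realized by extending $\phi_1^{-1}$ to the diffeomorphism $y\mapsto y/4+\Omega_1$ from $B(\s^{n+2})$ onto $\overline{B_1}\subset \R^{n+2}$. This yields a canonical identification
\[ \R^{n+2}\sharp\punct M \;\cong\; (\R^{n+2}\setminus \mathring B_2)\cup_{\phi_2}B(\punct M),\]
and a direct computation using $\phi_1(y)=y/4+\Omega_1$ shows that, under this identification, the piece $\iota_1(\psi_{triv}(4x+2e_n))$ of $\psi_{triv}\sharp\psi$ reads as $(0,0,x)$ for $\|x+\tfrac12 e_n\|\le \tfrac14$, smoothly completing the straight-line portion of the knot inside $\R^{n+2}\setminus \mathring B_2$.

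Next, I would define $\mathcal T^{(1)}_{\punct M,\psi}$ piecewise as the identity on $\voisinageinfini$ and as a diffeomorphism $\Phi\colon B(\R^{n+2}\sharp\punct M)\to B(\punct M)$ on the inner balls, requiring $\Phi$ to restrict to the identity on the common boundary sphere $\s^{n+1}$. Both inner balls are abstract closed $(n+2)$-balls, and the knot inside the source's inner ball decomposes into the straight-line piece $\{(0,0,x):\|x\|\le 1,\;\|x-\tfrac12 e_n\|\ge \tfrac14\}$ and the embedded piece $\iota_2(\psi(\overline B^n))$. The prescriptions $\Phi((0,0,x))=\psi(x)$ and $\Phi(\iota_2(\psi(y))) = \psi((y+2e_n)/4)$ agree on the interface $\partial B_2$ (where, writing $y=4x-2e_n$ with $\|y\|=1$, one checks using $\iota_2(y)=\phi_2(y)=y/4+\Omega_2=(0,0,x)$ that both formulas give $\psi(x)$) and on the outer sphere $\s^{n+1}$ (where $\|x\|=1$ forces $\psi(x)=(0,0,x)$, i.e.\ the identity). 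By the relative isotopy extension theorem, this smooth prescription on the knot locus extends to a diffeomorphism $\Phi$ of the inner balls that is the identity on $\s^{n+1}$. Gluing $\Phi$ with the identity on $\voisinageinfini$ yields $\mathcal T^{(1)}_{\punct M,\psi}$, and the equality $\mathcal T^{(1)}_{\punct M,\psi}\circ(\psi_{triv}\sharp\psi)=\psi$ holds by construction.

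Associativity and commutativity of the connected sum follow from analogous ambient-isotopy arguments in $\R^{n+2}$: commutativity is realized by an orientation-preserving involution of $\R^{n+2}$ exchanging $B_1$ and $B_2$ while preserving the line $\{(0,0,t):t\in\R\}$ (for instance, the composition of two coordinate reflections, one in the $x_{n+2}$ axis and one in $x_1$); associativity is obtained by a compactly supported diffeomorphism of $\R^{n+2}$ rearranging three disjoint small balls, matching the two possible iterated gluing patterns. I expect the only technical nuisance to be in the extension step, where the prescribed boundary and knot data must be checked to match smoothly before isotopy extension is applied, but the explicit formulas make this bookkeeping routine.
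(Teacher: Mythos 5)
The paper states this lemma without proof, calling it ``immediate'', so there is no argument of the author's to compare against. Your first two paragraphs are correct: the identification of $\R^{n+2}\sharp\punct M$ with $(\R^{n+2}\setminus\mathring B_2)\cup_{\phi_2}B(\punct M)$, the absorption of the $\iota_1$-piece into the straight line, and the consistency of your two prescriptions for $\Phi$ along $\partial B_2$ and along $\s^{n+1}$ all check out (the expression $\iota_2(y)$ in your interface verification should read $\iota_2((0,0,y))$, but this does not affect the conclusion).

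The weak point is the extension step. The relative isotopy extension theorem extends a smooth isotopy of a compact submanifold of a \emph{fixed} ambient manifold to an ambient isotopy; it does not directly produce a diffeomorphism between two a priori distinct manifolds $B(\R^{n+2}\sharp\punct M)$ and $B(\punct M)$ from a prescription on a stratified subset. To make the argument go through, you should first construct a collar-absorbing diffeomorphism $\Phi_0\colon B(\R^{n+2}\sharp\punct M)\to B(\punct M)$ equal to the identity near $\s^{n+1}$, using that the exterior annulus $\overline{B(0,1)}\setminus\mathring B_2$ is a collar; choose $\Phi_0$ so that on the knot it agrees with the concentric shrinking $y\mapsto(y+2e_n)/4$, which realizes $\Phi_0\circ(\psi_{triv}\sharp\psi)$ and $\psi$ as long knots in $B(\punct M)$ joined by an explicit isotopy rel boundary; and only then invoke isotopy extension to find $g\in\mathrm{Diffeo}(B(\punct M),\partial)$ with $g\circ\Phi_0\circ(\psi_{triv}\sharp\psi)=\psi$, setting $\mathcal T^{(1)}_{\punct M,\psi}=g\circ\Phi_0$ glued with the identity on $\voisinageinfini$. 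A similar caveat applies to your commutativity paragraph: the pair of reflections you propose does not fix $\{(0,0,t):t\in\R^n\}$ pointwise (it reverses some coordinates), so it is not the identity on $\voisinageinfini$ and cannot serve directly; one needs a compactly supported ``ball swap'' isotopy (possible since $n+2\ge 3$) or a further compactly supported correction. Your sketch has the right ingredients throughout, but each extension needs this more careful organization before the appeal to isotopy extension is valid.
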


In Section \ref{S8}, we prove the following theorem.

\begin{theo}\label{additivity}
Let $\punct{M_1}$ and $\punct{M_2}$ be two parallelizable asymptotic homology $\R^{n+2}$ and let $\psi_1 \colon \R^n\hookrightarrow \punct{M_1}$ and $\psi_2 \colon \R^n\hookrightarrow \punct{M_2}$ be two long knots.
Then, for any $k\geq 2$,
\[ Z_k(\psi_1\sharp \psi_2) = Z_k(\psi_1)+Z_k(\psi_2).\]
\end{theo}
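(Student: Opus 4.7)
The plan is to apply Theorem~\ref{the} and use propagating chains on $\punct{M_1}\sharp\punct{M_2}$ that are compatible with given families on each summand. Starting from families $F_*^{(j)}=(\Propint_i,\Propext_i^{(j)})_{1\leq i\leq 2k}$ in general position on each parallelized $(\punct{M_j},\tau_j)$ with a common choice of internal chains $\Propint_i\subset\configR$ (independent of the ambient manifold), I would build a family $F_*=(\Propint_i,\Propext_i)_{1\leq i\leq 2k}$ on the naturally parallelized $\punct{M_1}\sharp\punct{M_2}$ such that each $\Propext_i$ restricts, via $\iota_j$, to $\Propext_i^{(j)}$ on the subspace of $\config(\punct{M_1}\sharp\punct{M_2})$ consisting of pairs of points both in $B(M_j)\subset B(\punct{M_1}\sharp\punct{M_2})$ for $j\in\{1,2\}$. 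The boundary of each $\Propext_i$ lies entirely in $\partial\config(\punct{M_1}\sharp\punct{M_2})$, leaving ample freedom to extend across the ``mixed'' locus (pairs on different sides or involving the neck $\voisinageinfinideux$).

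Given such $F_*$, for each numbered BCR diagram $(\Gamma,\sigma)\in\graphesnum$ I would decompose $I^{F_*}(\Gamma,\sigma,\psi_1\sharp\psi_2)$ according to the partition $V(\Gamma)=V_1(c)\sqcup V_2(c)$ induced by whether each vertex $c(v)$ lies in $B(M_1)$ or $B(M_2)$. Internal vertices lie on the knot, whose $\R^n$-parameters lie in two disjoint balls of radius $\tfrac14$ away from the neck, and a generic perturbation excludes intersection points with an external vertex in the neck. For ``pure'' partitions (all vertices on a single side $j$), the diffeomorphism $\iota_j$ identifies the contributing configurations with those of $\psi_j$ in $\punct{M_j}$, and by our choice of propagators the pure-$j$ contribution equals $I^{F_*^{(j)}}(\Gamma,\sigma,\psi_j)$. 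Summing over $(\Gamma,\sigma)$ and dividing by $(2k)!$, pure partitions contribute exactly $Z_k(\psi_1)+Z_k(\psi_2)$.

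The main obstacle is showing that mixed partitions (with both $V_1$ and $V_2$ nonempty) contribute zero. Connectedness of $\Gamma$ forces at least one edge $e$ crossing between sides, so $p_e(c)$ lies in the mixed locus of $C_e$. I would exploit the freedom in the interior of each $\Propext_i$ (and similarly for $\Propint_i$ when a crossing edge is internal) to arrange that its support avoids the mixed locus $B(M_1)\times B(M_2)\cup B(M_2)\times B(M_1)\subset\config(\punct{M_1}\sharp\punct{M_2})$ (and its analogue in $\configR$ for internal edges). Since $\partial\Propext_i$ already lies in $\partial\config(\punct{M_1}\sharp\punct{M_2})$, away from the interior mixed locus, the expected argument is that there is no homological obstruction to rerouting $\Propext_i$ around this locus while keeping its side-pure restrictions fixed. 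With this choice, every mixed configuration fails at least one crossing-edge constraint, mixed partitions contribute $0$ individually, and summing over $(\Gamma,\sigma)$ yields $Z_k(\psi_1\sharp\psi_2)=Z_k(\psi_1)+Z_k(\psi_2)$.
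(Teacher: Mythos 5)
There is a genuine gap. Your decomposition \(V(\Gamma)=V_1(c)\sqcup V_2(c)\) is not well-defined: a configuration \(c\) can place vertices in the neck \(\voisinageinfinideux\) (not in \(B(M_1)\) or \(B(M_2)\)), and external vertices can also be near infinity. The knot \(\psi_1\sharp\psi_2\) itself runs through the neck, so internal vertices can sit there too. For such configurations the ``crossing edge lands in the mixed locus'' mechanism does not fire, since an edge from a neck vertex to \(B(M_1)\) or \(B(M_2)\) need not map to \(B(M_1)\times B(M_2)\cup B(M_2)\times B(M_1)\). Your remark that ``a generic perturbation excludes intersection points with an external vertex in the neck'' is precisely the claim that these contributions vanish, and that is the substantive core of the theorem; it cannot be dispatched by a perturbation comment. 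In the paper this is exactly what the \(\infty\)-coloured vertices are for, and Lemma \ref{87} (which rests on the auxiliary diagrams \(\Gamma_{S_1,S_2}\), the dimension count of Lemma \ref{stab-conf}, the genericity set \(\mathcal O_k\) of Corollary \ref{gras}, and the metric comparison Lemma \ref{89}) is where most of the work lies.

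Two further points. First, even restricting to ``pure'' configurations (all vertices close to one summand), the contribution does not directly identify with \(I^{F_*^{(j)}}(\Gamma,\sigma,\psi_j)\): on \(\punct{M_j}\) the configurations \(c\) can escape to the neck and to infinity, and those are excluded from your pure locus. The paper therefore only shows \(Z_k(\psi_1\sharp\psi_2)=F_1(\psi_1)+F_2(\psi_2)\) for a priori unknown functionals \(F_1,F_2\), and closes the argument using \(Z_k(\psi_{triv})=0\) (Corollary \ref{psio}) together with the neutrality of \(\psi_{triv}\) under connected sum (Lemma \ref{neutre}); that indirect step is missing from your proposal. Second, your assertion that there is ``no homological obstruction to rerouting \(\Propext_i\) around'' the mixed locus while keeping the side-pure restrictions fixed is a nontrivial homological claim about \(H_{n+2}\bigl(\configM\setminus (B(M_1)\times B(M_2)\cup B(M_2)\times B(M_1))\bigr)\) and needs an actual argument; the paper sidesteps this entirely by working with explicit forms pulled back from \(\s^{n\pm1}\) along an extended Gauss map \(G_{\tau,\eta}\) (Proposition \ref{suitable}), for which the vanishing over the mixed locus is automatic from the choice of small supports on the spheres.
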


\subsection{Extension of $Z_k$ to any asymptotic homology $\R^{n+2}$}

We prove the following proposition at the end of Section \ref{annex2}.
\begin{prop}\label{prop210}
For any odd $n\geq 1$, the connected sum of any asymptotic homology $\R^{n+2}$ with itself is parallelizable in the sense of Definition \ref{paral-def}.

\end{prop}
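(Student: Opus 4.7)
The plan is to trivialize $T(\punct{M}\sharp\punct{M})$ extending the standard trivialization of $T\R^{n+2}$ on $\voisinageinfini$, using obstruction theory. Set $X = \punct{M}\sharp\punct{M}$ and $B = \overline{X \setminus \voisinageinfini}$, a compact $(n+2)$-manifold whose boundary $\partial B \cong \s^{n+1}$ inherits a framing $\phi_\partial$ from the standard trivialization of $T\R^{n+2}$; a parallelization in the sense of Definition \ref{paral-def} is exactly an extension of $\phi_\partial$ to a trivialization of $TB$. Write $M\sharp M := X \cup \{\infty\}$ for the closed manifold obtained by one-point compactification, which is a homology $(n+2)$-sphere.

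By excision, $H^i(B,\partial B; A) \cong \tilde H^i(M\sharp M; A) = 0$ for $i \neq n+2$ and any abelian group $A$. First I would show that $\phi_\partial$ extends to a stable trivialization of $TB \oplus \epsilon^k$ for some $k \geq 0$: the unique obstruction lies in $H^{n+2}(B,\partial B;\pi_{n+1}(\mathrm{SO})) \cong \pi_{n+1}(\mathrm{SO})$, which for odd $n$ belongs to $\{0,\mathbb{Z}/2\}$ by Bott periodicity, hence is $2$-torsion (local coefficients can be ignored since $B\mathrm{SO}$ is simply connected). I would then check that this obstruction is additive under connected sum: the tangent bundle of $\punct{M_1}\sharp\punct{M_2}$ restricts to $T\punct{M_i}$ on each summand $B(M_i) \subset B$, while the neck region $\voisinageinfinideux\setminus\voisinageinfini \subset \R^{n+2}$ has trivial tangent bundle and contributes nothing. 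For $\punct{M}\sharp\punct{M}$, this gives $\bar o_{\punct{M}\sharp\punct{M}} = 2\bar o_{\punct{M}} = 0$, so the stable framing exists.

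To destabilize, I would compute the relative Euler class $e(TB) \in H^{n+2}(B,\partial B;\mathbb{Z}) \cong \mathbb{Z}$. Extending the first vector $\partial/\partial x_1$ of $\phi_\partial$ to a generic vector field $v$ on $B$ with isolated zeros gives $e(TB)$ as the signed sum of indices. Further extending $v$ over the closed ball $B_\infty$ by the same constant vector field (which vanishes nowhere) produces a vector field on $M\sharp M$; by Poincaré--Hopf, the total index sum equals $\chi(M\sharp M) = 0$, since $M\sharp M$ is closed and odd-dimensional. The intermediate destabilization obstructions---reducing one trivial summand at a time, with Stiefel fibers $\s^{n+1+j}$ for $j \geq 1$---vanish by dimension since $H^i(B,\partial B) = 0$ for $i > n+2$. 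Combined with $e(TB) = 0$, this produces the desired parallelization.

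The main obstacle is to establish the additivity of the stable obstruction class under connected sum rigorously (the vanishing of the neck contribution reflects the triviality of its tangent bundle) and to carry out the Poincaré--Hopf computation of the relative Euler class with the prescribed boundary framing; both are classical arguments but require careful attention to orientation conventions and to the gluing data in the connected sum.
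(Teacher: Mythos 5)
Your approach differs from the paper's in that it works with the \emph{stable} obstruction in $\pi_{n+1}(\mathrm{SO})$ together with the Euler class, whereas the paper works directly with the \emph{unstable} obstruction in $\pi_{n+1}(SO(n+2))$ and invokes Kervaire's computation that for odd $n$ every element of $\pi_{n+1}(SO(n+2))$ has order dividing $2$, so the obstruction for $\punct{M}\sharp\punct{M}$, which is twice that for $\punct{M}$, vanishes. Unfortunately your route has a genuine gap: the vanishing of the stable obstruction together with $e(TB)=0$ does \emph{not} imply the unstable obstruction vanishes when $n+2$ is odd. Concretely, from the fibration $SO(n+2)\to SO(n+3)\to\s^{n+2}$ one has the exact sequence
\[\pi_{n+2}(\s^{n+2})\xrightarrow{\ \partial\ }\pi_{n+1}(SO(n+2))\xrightarrow{\ i_*\ }\pi_{n+1}(SO)\to 0,\]
with $\partial(1)=[T\s^{n+2}]$, and the composite $p_*\circ\partial$ (where $p_*\colon\pi_{n+1}(SO(n+2))\to\pi_{n+1}(\s^{n+1})=\mathbb Z$ is the Euler-class map) is multiplication by $\chi(\s^{n+2})=0$ since $n+2$ is odd. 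Thus $[T\s^{n+2}]$ lies in the kernel of both $i_*$ and $p_*$, and it is \emph{nonzero} for all odd $n\geq 3$ other than $n=5$ (because $\s^{n+2}$ is not parallelizable); for instance $\pi_4(SO(5))=\mathbb Z/2$ while $\pi_4(SO)=0$ and the Euler map is zero. So your two invariants cannot distinguish this class from zero, and the claimed ``combined with $e(TB)=0$, this produces the desired parallelization'' does not follow. In fact the Euler-class computation is vacuous here: since $e(TB)=p_*(o)$ and the stable obstruction already forces $o\in\mathrm{im}(\partial)$, the identity $p_*\circ\partial=0$ shows $e(TB)=0$ automatically, so Poincar\'e--Hopf adds no new constraint. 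Relatedly, the assertion that the destabilization obstructions vanish by dimension overlooks the last step, from a trivialization of $TB\oplus\epsilon^1$ to one of $TB$: the obstruction to homotoping the section $\Phi_1^{-1}(e_{n+3})$ of $S(TB\oplus\epsilon^1)$ to the constant one lies in $H^{n+2}(B,\partial B;\pi_{n+2}(\s^{n+2}))\cong\mathbb Z$, which is not zero, and is not identified with $e(TB)$. To repair the proof along your lines one would essentially have to reintroduce the unstable computation of $\pi_{n+1}(SO(n+2))$; at that point the paper's direct argument---additivity of the unstable obstruction under connected sum plus Kervaire's $2$-torsion statement---is shorter.
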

Theorem \ref{th1}, Proposition \ref{prop210} and the additivity of $Z_k$ under connected sum of Theorem \ref{additivity} show that the following definition is consistent.

\begin{df}\label{Zkext} Let $\psi$ be a long knot in a (possibly non-parallelizable) asymptotic homology $\R^{n+2}$ with $n$ odd $\geq3$.
Define $Z_k(\psi)$ as $\frac12 Z_k(\psi\sharp \psi) $.

 \end{df}
 By construction, $Z_k$ still satisfies the three last points of Theorem \ref{th1}: it is invariant under ambient diffeomorphisms, takes rational values, and is trivial when $k$ is even.
 The associativity and commutativity of connected sum up to ambient diffeomorphisms of Lemma \ref{neutre} and Theorem \ref{additivity} show the following proposition, which extends Theorem \ref{additivity}.
 
 \begin{prop}\label{additivity2}
Let $\punct{M_1}$ and $\punct{M_2}$ be two asymptotic homology $\R^{n+2}$ and let $\psi_1 \colon \R^n\hookrightarrow \punct{M_1}$ and $\psi_2 \colon \R^n\hookrightarrow \punct{M_2}$ be two long knots.
Then, for any $k\geq 2$,
\[ Z_k(\psi_1\sharp \psi_2) = Z_k(\psi_1)+Z_k(\psi_2).\]
 \end{prop}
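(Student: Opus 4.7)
The plan is to unfold Definition \ref{Zkext} and reduce Proposition \ref{additivity2} to Theorem \ref{additivity} applied to the doubled knots, which live in parallelizable ambient spaces thanks to Proposition \ref{prop210}. Explicitly, by definition,
\[
Z_k(\psi_1\sharp\psi_2)=\tfrac12 Z_k\bigl((\psi_1\sharp\psi_2)\sharp(\psi_1\sharp\psi_2)\bigr).
\]
The idea is to reorder the four factors on the right-hand side to read $(\psi_1\sharp\psi_1)\sharp(\psi_2\sharp\psi_2)$, invoke Theorem \ref{additivity} on this parallelizable sum, and then reapply Definition \ref{Zkext} to recognise $\tfrac12 Z_k(\psi_i\sharp\psi_i)=Z_k(\psi_i)$.

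The first step is to check that the ambient manifolds behave well. By Proposition \ref{prop210}, $\punct{M_1}\sharp\punct{M_1}$ and $\punct{M_2}\sharp\punct{M_2}$ are each parallelizable (in the sense of Definition \ref{paral-def}). Using the proposition just before Definition \ref{Zkext}, the connected sum of two parallelized asymptotic homology $\R^{n+2}$ is again parallelized (up to homotopy); so $(\punct{M_1}\sharp\punct{M_1})\sharp(\punct{M_2}\sharp\punct{M_2})$ is parallelizable, and Theorem \ref{additivity} applies to any pair of long knots sitting inside it.

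The second step is to compare $(\psi_1\sharp\psi_2)\sharp(\psi_1\sharp\psi_2)$ with $(\psi_1\sharp\psi_1)\sharp(\psi_2\sharp\psi_2)$. Lemma \ref{neutre} asserts associativity and commutativity of the connected sum up to ambient diffeomorphisms fixing $\voisinageinfini$ pointwise, so there exists such a diffeomorphism sending one of these knots to the other. By the third item of Theorem \ref{th1}, which is inherited by the extended invariant (as noted right after Definition \ref{Zkext}), these two knots have the same $Z_k$:
\[
Z_k\bigl((\psi_1\sharp\psi_2)\sharp(\psi_1\sharp\psi_2)\bigr)=Z_k\bigl((\psi_1\sharp\psi_1)\sharp(\psi_2\sharp\psi_2)\bigr).
\]

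The final step is to chain the equalities. Applying Theorem \ref{additivity} to the right-hand side gives
\[
Z_k\bigl((\psi_1\sharp\psi_1)\sharp(\psi_2\sharp\psi_2)\bigr)=Z_k(\psi_1\sharp\psi_1)+Z_k(\psi_2\sharp\psi_2),
\]
and multiplying by $\tfrac12$ and using Definition \ref{Zkext} twice on the right yields
\[
Z_k(\psi_1\sharp\psi_2)=\tfrac12 Z_k(\psi_1\sharp\psi_1)+\tfrac12 Z_k(\psi_2\sharp\psi_2)=Z_k(\psi_1)+Z_k(\psi_2).
\]
No step looks subtle: the only thing to be careful about is that the rearrangement in step two is genuinely realized by an ambient diffeomorphism fixing $\voisinageinfini$ pointwise, which is exactly the content of Lemma \ref{neutre}, so invariance under such diffeomorphisms (established for parallelizable spaces in Theorem \ref{th1} and carried over to Definition \ref{Zkext}) can be used.
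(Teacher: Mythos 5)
Your proof is correct and is exactly what the paper intends: the paper merely remarks that Proposition \ref{additivity2} follows from Lemma \ref{neutre} and Theorem \ref{additivity}, and you have filled in the details in the expected way, unfolding Definition \ref{Zkext}, reordering the four factors by associativity and commutativity, using invariance of the extended $Z_k$ under ambient diffeomorphisms (carried over from the third point of Theorem \ref{th1}), applying Theorem \ref{additivity} to the parallelizable ambient spaces guaranteed by Proposition \ref{prop210}, and folding back via Definition \ref{Zkext}.
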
 

\section{Independence of the propagating forms}\label{S2}
In this section, we study the effect on $Z_k$ of a change in the family of propagating forms. Without loss of generality, it suffices to study how $Z_k$ changes when $\propint_1$ and $\propext_1$ change.
\subsection{Expression of the dependence in terms of boundary integrals}\label{S21}
For later purposes, we allow a more general context: as previously, we suppose that a family $F=(\propint_i,\propext_i)_{1\leq i\leq 2k}$ of propagating forms is given, but we allow the forms $\propext_i$ to be compatible with different parallelizations $\tau_i$ of $\ambientspace$ (which means that $(\propext_i)_{|\partial\configM} = G_{\tau_i}^*(\omega_{\propext_i})$). This will allow us to use the results of this section in the proof of the independence of the parallelization in Section \ref{s52}. For simplicity, we set $\omega_i^{n-1} = \omega_{\alpha_i}$ and $\omega_i^{n+1} = \omega_{\beta_i}$.

Let $\tau'_1$ be a parallelization of $\punct M$.
Let $F'= (\alpha'_i, \beta'_i)_{1\leq i\leq 2k}$ be a family of propagating forms such that for any $i\geq 2$, $(\alpha'_i, \beta'_i)=(\alpha_i, \beta_i)$, and such that $\beta'_1$ is an external propagating form for $\tau'_1$, and $\alpha'_1-\alpha_1$ and $\beta'_1 -\beta_1$ are exact forms. We set $(\omega^{n-1}_1)'= \omega_{\alpha'_1}$ and $(\omega^{n+1}_1)'= \omega_{\beta'_1}$.

Let $\ecartinterne$ be an $(n-2)$-form on $\configR$ and let $\ecartexterne$ be an $n$-form on $\configM$ such that $\propint'_1=\propint_1+\d\ecartinterne$ and $\propext'_1=\propext_1+\d\ecartexterne$.

We say that \emph{$(\propint'_1- \propint_1, \propext'_1-\propext_1)$ has the sphere factorization property} if we can choose the forms $(\ecartinterne,\ecartexterne)$ such that there exists an antisymmetric $(n-2)$-form $\volint$ on $\s^{n-1}$ such that $\ecartinterne_{|\partial\configR} = {G_{|\partial\configR}}^*(\volint)$ and an antisymmetric $n$-form $\volext$ on $\s^{n+1}$ such that $\ecartexterne_{|\partial\configM} = {G_{\tau_1}}^*(\volext)$. In the following, when this property is assumed, we always choose such primitives.
 
For any $(\Gamma,\sigma)\in\graphesnum$, and for any edge $e$ of $\Gamma$, define the form $$\formeareteT =\left\{\begin{array}{ll} \formearete & \text{if $\sigma(e)\neq 1$,}\\
p_e^*(\ecartinterne) & \text{if $\sigma(e)=1$ and $e$ is internal,}\\
p_e^*(\ecartexterne) & \text{if $\sigma(e)=1$ and $e$ is external,}\end{array}\right.$$ and set  $\formeT=  \bigwedge\limits_{e\in\aretes }\formeareteT$, where the order of the forms is not important since all of them except one have even degree.

\begin{lm}\label{lm31}
With these notations,
\[\displaystyle\Z[F'](\psi)-\Z(\psi) = \frac1{(2k)!}\sum\limits_{(\Gamma,\sigma)\in\graphesnum}\int_{\partial\confignoeud} \formeT.\]
\end{lm}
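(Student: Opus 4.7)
The plan is to show that, for each numbered diagram $(\Gamma,\sigma)$, the integrand difference $\forme[F'][]-\forme[F][]$ is exact, with primitive $\formeT$, so that Stokes' theorem delivers the boundary integrals. The only subtlety is checking that the signs work out thanks to the parity assumption that $n$ is odd.

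First I would fix primitives: by assumption of the section we choose $\ecartinterne$ with $\propint'_1=\propint_1+\d\ecartinterne$ and $\ecartexterne$ with $\propext'_1=\propext_1+\d\ecartexterne$, and note that these primitives have odd degrees $n-2$ and $n$ respectively, while each unchanged propagator $\propint_i,\propext_i$ (for $i\geq 2$) is closed of even degree ($n-1$ or $n+1$, both even since $n$ is odd). For a given $(\Gamma,\sigma)\in\graphesnum$, there is a unique edge $e_1$ with $\sigma(e_1)=1$; denote by $\eta$ the corresponding primitive (either $\ecartinterne$ or $\ecartexterne$). Because pullback commutes with $\d$, we have
\[\formearete[e_1][\sigma]^{F'} = \formearete[e_1][\sigma]^{F}+\d(p_{e_1}^*\eta),\qquad \formearete[e][\sigma]^{F'} = \formearete[e][\sigma]^{F}\text{ for }e\neq e_1,\]
and each $\formearete[e][\sigma]$ with $e\neq e_1$ is closed of even degree on $\confignoeud$.

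Second, I order the wedge product with the odd-degree factor first. Then
\[\forme[F'][] - \forme[F][] = \d(p_{e_1}^*\eta)\wedge \bigwedge_{e\neq e_1}\formearete[e][\sigma],\qquad \formeT = p_{e_1}^*\eta \wedge \bigwedge_{e\neq e_1}\formearete[e][\sigma].\]
Since every factor $\formearete[e][\sigma]$ for $e\neq e_1$ is closed, the graded Leibniz rule yields $\d\formeT=\d(p_{e_1}^*\eta)\wedge\bigwedge_{e\neq e_1}\formearete[e][\sigma]=\forme[F'][]-\forme[F][]$ (the sign coming from moving $\d$ past the odd-degree factor is irrelevant because the remaining derivative vanishes). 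Reorderings in $\formeT$ do not affect the result, as noted in the definition, because at most one factor has odd degree.

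Third, since $\confignoeud$ is a compact oriented manifold with corners and $\formeT$ is a smooth form on it (it is a wedge of pullbacks of smooth forms by the smooth maps $p_e$), Stokes' theorem applies and yields
\[\int_{\confignoeud}\bigl(\forme[F'][]-\forme[F][]\bigr) = \int_{\confignoeud}\d\formeT = \int_{\partial \confignoeud}\formeT.\]
Summing over $(\Gamma,\sigma)\in\graphesnum$ and dividing by $(2k)!$ produces the claimed formula. No step is genuinely hard; the only thing to watch is the parity bookkeeping that makes the single odd-degree factor $p_{e_1}^*\eta$ behave nicely under $\d$ and reordering, and this is guaranteed by $n$ being odd.
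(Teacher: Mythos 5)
Your proof is correct and follows the same route as the paper: identify $\formeT$ as a primitive of $\forme[F'][] - \forme[F][]$ and apply Stokes' theorem on the compact manifold with corners $\confignoeud$. The paper states this in one line; you usefully spell out the parity bookkeeping (exactly one odd-degree factor, all others closed of even degree) that justifies $\d\formeT = \forme[F'][] - \forme[F][]$, but the argument is the same.
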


\begin{proof}From the Stokes formula, it directly follows that \[\I[F'] -\I=\int_{\confignoeud}\d\formeT= \int_{\partial\confignoeud}\formeT.\qedhere\]

\end{proof}
\subsection{Codimension $1$ faces of $\confignoeud$}\label{ds}

The codimension $1$ open faces of $\confignoeud$ are in bijection with the subsets $S$ of cardinality at least two of $\sommets[\Gamma][*]=\sommets\sqcup\{*\}$. Let $\partial_S\confignoeud$ denote the face associated to such an $S$ and let $\faces$ denote the set of the codimension $1$ faces. There are four types of faces in $\faces$: \begin{itemize}
\item If $S$ contains $*$, $\face$ is called an \emph{infinite face}, and its elements are configurations of $\confignoeud$ that map the vertices of $S\setminus\{*\}$ to infinity, and all the other vertices to pairwise distinct points of $\ambientspace$. 
\item If $S= \sommets$, $\face$ is called the \emph{anomalous face}. Its elements are configurations that map all the vertices to one point, which is necessarily on the knot.

\item If $S$ has exactly two points, which are connected by exactly one edge, $\face$ is called a \emph{principal face} and its elements are configurations that map the two vertices of $S$ to one point $x_S$ and all the other ones to pairwise distinct vertices of $\ambientspace \setminus\{x_S\}$.
\item Otherwise, $\face$ is called a \emph{hidden face}, and its elements are configurations that map all the vertices of $S$ to one point $x_S$, and all the other ones to pairwise distinct points of $\ambientspace \setminus\{x_S\}$. 
\end{itemize}

One can find precise descriptions of these faces in Section \ref{annex} or in \cite[pp 61-62]{[Rossi]}.

A \emph{numbered (codimension $1$) face} of $\confignoeud$ is a face $\partial_S\confignoeud$ as above, together with a numbering $\sigma$ of $\Gamma$.

For any numbered face $(\partial_S\confignoeud, \sigma)$, set $\delta_S\I[]= \int_{\face}\formeT$, so that $\displaystyle\Z[F'](\psi)-\Z(\psi)= \frac1{(2k)!}\sum\limits_{(\Gamma,\sigma)\in\graphesnum}\sum\limits_{S\in\faces}\delta_S\I[]$.

\subsection{Vanishing lemma for the face contributions}

\begin{restatable}{lm}{vth}\label{vanishing theorem}
If $S\subset \sommets$, $\Gamma_S$ denotes the subgraph of $\Gamma$ whose vertices are the elements of $S$ and whose edges are the edges of $\Gamma$ that connect two vertices of $S$.

\begin{itemize}
\item For any numbered infinite face $(\face, \sigma)$, such that no end of $\sigma^{-1}(1)$ is in $S$, $\delta_S\I[]=0$.
\item The set of hidden faces splits into two sets $\hfaces$ and $\Hfaces$, such that: \begin{itemize}
\item For any hidden face $\face$ of $\hfaces$ and any numbering $\sigma$, $\delta_S\I[]=0$.
\item For any hidden face $\face$ of $\Hfaces$, we have an involution $\sigma\mapsto \sigma^*$ of the numberings of $\Gamma$ such that $\delta_SI(\Gamma,\sigma^*,\psi) = -\delta_SI(\Gamma,\sigma,\psi)$.
\end{itemize}
\item Represent the principal faces by pairs $(\Gamma,e)$ where $\Gamma\in\graphes$ and $e\in\aretes$. For any numbering $\sigma$, let $\delta_e\I[]$ denote the integral $\delta_S\I[]$ where $S$ is the set of the two ends of $e$. Let $\mathcal N_{\neq1}(\Gamma, e)$ denote the set of the numberings of $\Gamma$ such that $\sigma(e)\neq1$, and let $\mathcal N(\Gamma )$ denote the set of all the numberings of $\Gamma$. Then: 
\begin{itemize}\item There exists an involution $s\colon(\Gamma,e)\mapsto(\Gamma^*, e^*)$ of the set of principal faces such that, for any $(\Gamma, e)$, there exists a canonical map $s_{\Gamma,e}\colon \sigma\in \mathcal N_{\neq1}(\Gamma, e)\mapsto \sigma^*\in\mathcal N_{\neq1}(\Gamma^*, e^*)$, such that $\delta_{e^*}I(\Gamma^*,\sigma^*,\psi) = - \delta_eI(\Gamma,\sigma,\psi)$ and such that $s_{\Gamma,e} \circ s_{\Gamma^*, e^*}= \mathrm{Id}$.
\item If $\sigma$ is a numbering of $\Gamma$ such that $\sigma(e) =1$, and, if $e$ is internal, or if $e$ is external with at least one external end, then $\delta_e\I[]=0$.
\end{itemize}
\end{itemize}
Furthermore, if $(\propint'_1- \propint_1, \propext'_1-\propext_1)$ has the sphere factorization property:
\begin{itemize}
\item For any infinite face $\face$ such that $S$ contains an end of $\sigma^{-1}(1)$, $\delta_S\I[]=0$.
\item The anomalous faces do not contribute: for any $(\Gamma, \sigma)\in\graphesnum$, $\delta_{\sommets}\I[]=0$.
\item For the principal faces $(\Gamma, e)$ associated to an edge $e$ where $e$ is external with internal ends, the map $s_{\Gamma,e}$ above can be extended to a map $\mathcal N(\Gamma)\rightarrow \mathcal N(\Gamma^*)$ such that $\delta_{e^*}I(\Gamma^*,\sigma^*,\psi) = - \delta_eI(\Gamma,\sigma,\psi)$ and $s_{\Gamma,e} \circ s_{\Gamma^*, e^*}= \mathrm{Id}$.
\end{itemize}
\end{restatable}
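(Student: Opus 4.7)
My plan is to analyze each face type from Section \ref{ds} separately, using the common principle that $\face$ carries a fiber-bundle structure on which the propagator restrictions factor through Gauss maps into spheres. Vanishing will come from one of three mechanisms: a dimension defect in the fiber integral via Fubini, an antipodal involution exploiting antisymmetry of $\omega^{n\pm1}_i$, $\volint$ and $\volext$, or a combinatorial involution on graphs or numberings whose geometric realization reverses the orientation of $\face$. For an infinite face $\face$ with $S \ni *$, I would identify $\face$ as a bundle over a configuration space for the vertices outside $S$, with fiber parametrizing the behavior at infinity. Edges with an end in $S$ pull back to forms factoring through spheres via $G$ or $G_\tau$, and when no endpoint of $\sigma^{-1}(1)$ lies in $S$, every relevant form is antisymmetric, so the antipodal involution on the infinity sphere shows $\delta_S \I[] = 0$. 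Under the sphere factorization property, $\ecartinterne$ and $\ecartexterne$ are themselves antisymmetric pullbacks, which covers the remaining infinite faces as well as the anomalous face, whose fiber becomes sphere-factorizable after absorbing the $\psi$-pullback.

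For hidden faces, I would let $\hfaces$ consist of those $S$ whose induced subgraph $\Gamma_S$ has a vertex whose incident half-edges carry pulled-back forms of total fiber degree strictly less than the fiber dimension — a condition readable directly from the five BCR vertex types and the integers $d(e_\pm)$ of Lemma \ref{dim=deg}. Fubini then gives $\delta_S\I[] = 0$ for these faces. The complementary set $\Hfaces$ is constrained enough that $\Gamma_S$ always admits a non-trivial swap of two edges of the same type (both internal or both external); the associated involution $\sigma \mapsto \sigma^*$ on numberings, combined with the antipodal map on the corresponding sphere target, sends the fiber integrand to its negative via antisymmetry of $\omega^{n\pm1}_i$, giving $\delta_S I(\Gamma, \sigma^*, \psi) = - \delta_S I(\Gamma, \sigma, \psi)$.

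A principal face $(\Gamma, e)$ is an $\s^{n(e)}$-bundle over a configuration space for $\Gamma$ with $e$ contracted. To build the involution $s$, I would analyze locally the pair of BCR vertex types at the two endpoints of $e$: contracting $e$ produces a composite local picture that, by Figure \ref{fig-BCR}, can also be obtained from a unique distinct pair $(\Gamma^*, e^*)$ by contracting $e^*$. The map $s_{\Gamma, e}$ on numberings is then induced by matching the remaining edges, and the sign $-1$ in $\delta_{e^*}I(\Gamma^*, \sigma^*, \psi) = -\delta_e I(\Gamma, \sigma, \psi)$ comes from the antisymmetry of the sphere form attached to $e$ under the two contractions. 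When $\sigma(e) = 1$, the involution breaks because the label $1$ is on the contracted edge; direct fiber integration over $\s^{n(e)}$ of $p_e^*\ecartinterne$ (resp.\ $p_e^*\ecartexterne$) gives zero by degree count when $e$ is internal, or when $e$ is external with an external endpoint, since the restricted primitive has degree strictly below that of a top-degree form on the fiber sphere. Under the sphere factorization property, the remaining case of external $e$ with internal endpoints is itself sphere-factorized, and the involution extends to $\mathcal N(\Gamma)$.

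The main obstacle is the sign bookkeeping: verifying that the partition $\hfaces \sqcup \Hfaces$ and the involution $s$ produce a sign exactly $-1$ in every case, once one accounts for the orientation convention $\orientationconfig = \epsilon(\Gamma) \bigwedge_{e \in \aretesexternes} \orientationarete{e}$, the distribution of vertex-coordinate forms on half-edges, and the antisymmetry signs from antipodal maps on the sphere forms. I expect this to require a careful but systematic case analysis over the five BCR vertex types of Figure \ref{fig-BCR} and the four face types.
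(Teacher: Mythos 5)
Your overall plan --- treating the four face types with a mix of dimension-defect (Fubini) arguments, antipodal involutions exploiting antisymmetry of the sphere forms, and combinatorial involutions on graphs and numberings --- does match the paper's architecture, and your treatments of the anomalous face, of $\hfaces$, of the principal-face pairing, and of the $\sigma(e)=1$ degenerate cases are all close in spirit to the paper's. However, the mechanism you propose for the infinite faces does not work. For a proper infinite face $\partial_S C_\Gamma(\psi)$ with $S'=S\setminus\{*\}\neq\emptyset$, you propose that since the edges meeting $S'$ pull back antisymmetric forms, the antipodal involution on the infinity stratum $C_{S',\infty}$ forces $\delta_S I(\Gamma,\sigma,\psi)=0$. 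For that argument, the Jacobian sign of the involution and the sign the integrand picks up must multiply to $-1$. The map $u\mapsto -u$ on $C_{S',\infty}$ has Jacobian sign $(-1)^{\mathrm{Card}(S')}$ (because $n$ and $n+2$ are both odd and dilations preserve orientation), while the integrand picks up $(-1)^{\mathrm{Card}(E^{S'}(\Gamma))}$ where $E^{S'}(\Gamma)$ is the set of edges meeting $S'$. These parities need not combine to $-1$: take $S'=\{v\}$ with $v$ a trivalent vertex, so the Jacobian sign is $-1$, the integrand sign is $(-1)^3=-1$, and the overall sign is $+1$, so the involution is useless. The paper instead proves the strict dimension inequality of Lemma \ref{inégalité-s*}, which shows that the forms pulled back from edges of $E^{S'}(\Gamma)$ factor through $C_{S',\infty}$ with total degree strictly exceeding $\dim C_{S',\infty}$, hence vanish. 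You need this dimension count, not an antipodal involution, for the proper infinite faces (and also for the full infinite face $S'=V(\Gamma)$, where one additionally quotients by a translation).

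Two smaller descriptive slips. For the hidden faces in $\Hfaces$ and for the principal-face pairing, the sign $-1$ does not come from antisymmetry: the paper builds an explicit orientation-reversing diffeomorphism $T$ of the face with $p_g\circ T = p_{\rho(g)}$ for a transposition $\rho$ of two edges of the same type (or for a pairing $(\Gamma,e)\leftrightarrow(\Gamma^*,e^*)$), so that the pullback of $\tilde\omega(\Gamma,\sigma,\psi)$ equals $\tilde\omega(\Gamma,\sigma\circ\rho,\psi)$ and the sign is purely the Jacobian sign; antisymmetry plays no role in these cancellations. Also, the principal-face involution has a self-paired case (Lemma \ref{p5}, with $\Gamma^*=\Gamma$ and $e^*=e$, swapping two legs meeting the cycle on either side of $e$) which your phrase ``a unique distinct pair $(\Gamma^*,e^*)$'' overlooks. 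These last two points are repairable once the underlying diffeomorphisms are spelled out, but the infinite-face argument must be replaced.
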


The proof of this lemma is given in Section \ref{annex}.

\subsection{Cohomology groups of two-point configuration spaces}

In this section, we study the cohomology of configuration spaces. This allows us to prove the existence of families of propagating forms and the independence of $Z_k^F$ of the propagating forms (first point of Theorem \ref{th1}) up to Lemma \ref{vanishing theorem} in the next subsection.

\begin{lm}\label{homologie}
Let $(\ambientspace, \tau)$ be a parallelized asymptotic homology $\R^{n+2}$.
The relative cohomology of $\configM$ is \[H^*(\configM, \partial \configM;\mathbb R) =\begin{cases}
\R & \mbox{if $* = n+3$ or $*= 2n+4$,}\\
0 & \mbox{otherwise.}
\end{cases}\]

\end{lm}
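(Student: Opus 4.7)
The plan is to reduce the statement to an ordinary homology computation for the open configuration space $C_2^0(\punct M) = \{(x,y) \in \punct M \times \punct M \mid x \neq y\}$ via Poincaré-Lefschetz duality, and then to carry out that computation with a long exact sequence / Thom isomorphism argument.

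First I would observe that $\configM$ is a compact oriented $(2n+4)$-dimensional manifold with corners (the orientation descends from that of $\sphereambiante$ on $\punct M^2$ and survives the blow-ups). Poincaré-Lefschetz duality, which holds in this setting, gives
\[H^k(\configM, \partial\configM; \R) \;\cong\; H_{2n+4-k}(\configM; \R),\]
and the excerpt already records that $\configM$ has the homotopy type of $C_2^0(\punct M)$. So it suffices to show $H_0(C_2^0(\punct M);\R) = H_{n+1}(C_2^0(\punct M);\R) = \R$ and that all other $H_*(C_2^0(\punct M);\R)$ vanish.

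For this I would use the long exact sequence of the pair $(\punct M \times \punct M,\, C_2^0(\punct M))$. After excising away from the diagonal $\Delta_{\punct M}$ and applying the Thom isomorphism to its tubular neighborhood, whose normal bundle is the oriented rank-$(n+2)$ bundle $T\punct M \to \Delta_{\punct M}$, I obtain
\[H_k(\punct M^2, C_2^0(\punct M); \R) \;\cong\; H_{k-(n+2)}(\punct M; \R).\]
Since $\punct M$ is an asymptotic homology $\R^{n+2}$, it has the real homology of a point, so this relative group is $\R$ for $k = n+2$ and is $0$ otherwise; Künneth gives $H_*(\punct M^2; \R) = \R$ concentrated in degree $0$. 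Feeding these into the long exact sequence, every portion is zero outside two places: at $k=0$, connectedness of $C_2^0(\punct M)$ (automatic since $\punct M$ is connected and $n+2 \geq 5$) yields $H_0(C_2^0(\punct M)) = \R$, and the fragment
\[0 \to H_{n+2}(C_2^0(\punct M)) \to 0 \to \R \xrightarrow{\;\partial\;} H_{n+1}(C_2^0(\punct M)) \to 0\]
forces $H_{n+1}(C_2^0(\punct M)) = \R$ and $H_{n+2}(C_2^0(\punct M)) = 0$.

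Combining the two steps delivers exactly the claim: $H^{2n+4}(\configM,\partial\configM;\R) \cong H_0(C_2^0(\punct M)) = \R$ and $H^{n+3}(\configM,\partial\configM;\R) \cong H_{n+1}(C_2^0(\punct M)) = \R$, with all other degrees vanishing. The main point requiring care, rather than a genuine obstacle, is the orientation and corner bookkeeping so that both Poincaré-Lefschetz duality on $\configM$ and the Thom isomorphism along $\Delta_{\punct M}$ apply with $\R$-coefficients; both are routine given that $\sphereambiante$, and hence $\punct M$, is oriented.
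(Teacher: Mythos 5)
Your proof is correct and follows essentially the same route as the paper's: Poincaré--Lefschetz duality to reduce to $H_*(C_2^0(\punct M))$, the long exact sequence of the pair $(\punct M^2, C_2^0(\punct M))$, and excision together with a tubular-neighborhood computation of $H_*(\punct M^2, C_2^0(\punct M))$. The only cosmetic difference is that you invoke the Thom isomorphism for the oriented normal bundle of the diagonal, whereas the paper uses the parallelization $\tau$ to trivialize that bundle and then applies K\"unneth to $\Delta_{\punct M}\times(\mathbb D^{n+2}, \mathbb D^{n+2}\setminus\{0\})$; both give the same shift by $n+2$.
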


\begin{proof}

Since $\configM$ is a compact oriented $(2n+4)$-manifold, $$H^{2n+4}(\configM, \partial \configM; \R) = \R.$$ 

Fix $0\leq \ell \leq 2n+3$. The Poincaré-Lefschetz duality yields $$H^{\ell}(\configM,\partial\configM) = H_{2n+4-\ell}(\configM)= H_{2n+4-\ell}(C_2^0(\ambientspace)).$$ Furthermore, we have a long exact sequence \[ H_{2n+5-\ell}((\ambientspace)^2) \rightarrow H_{2n+5-\ell}((\ambientspace)^2, C_2^0(\ambientspace))\rightarrow H_{2n+4-\ell}(C_2^0(\ambientspace))\rightarrow H_{2n+4-\ell}((\ambientspace)^2)\] where $H_*((\ambientspace)^2)= H_*(\textrm{pt})$ by the Künneth formula. Then, we have an isomorphism $H_{2n+4-\ell}(C_2^0(\ambientspace)) \cong H_{2n+5-\ell}((\ambientspace)^2,C_2^0(\ambientspace))$.
The excision theorem yields \begin{eqnarray*}H_{2n+5-\ell}((\ambientspace)^2,C_2^0(\ambientspace))&=& H_{2n+5-\ell} (\mathcal N(\Delta_{\ambientspace}) , \mathcal N(\Delta_{\ambientspace})\setminus\Delta_{\ambientspace})\\&=& H_{2n+5-\ell}(\Delta_{\ambientspace}\times \mathbb D^{n+2}, \Delta_{\ambientspace}\times(\mathbb D^{n+2}\setminus\{0\})),\end{eqnarray*} where $\mathcal N(\Delta_{\ambientspace})$ is a tubular neighborhood of $\Delta_{\ambientspace}$, which can be identified with $\Delta_{\ambientspace}\times \mathbb D^{n+2}$ using the parallelization.
By Künneth's formula, \begin{eqnarray*}H_{2n+5-\ell}(\Delta_{\ambientspace} \times \mathbb D^{n+2}, \Delta_{\ambientspace} \times (\mathbb D^{n+2}\setminus\{0\}))&=& \bigoplus\limits_{i+j=2n+5-\ell}H_i(\Delta_{\ambientspace})\otimes H_j(\mathbb D^{n+2}, \s^{n+1})\\&=&H_{n+3-\ell}(\ambientspace)\otimes \R.\end{eqnarray*}
Therefore, $H^{\ell}(\configM,\partial \configM)\cong H_{n+3-\ell}(\ambientspace)$.
\end{proof}

\subsection{Existence of propagating forms. Independence of $Z_k^F$ of a choice of propagating forms}\label{sec_ex_ch}
The results of this section are applications of Lemma \ref{homologie}.

\begin{cor}\label{lm-hom2}
For any parallelized asymptotic homology $\R^{n+2}$ $(\ambientspace, \tau)$, there exist external propagating forms for $(\ambientspace, \tau)$.
\end{cor}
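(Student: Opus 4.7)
The plan is to build the propagating form from the boundary inward: first produce the required boundary datum $G_\tau^*(\omega)$ on $\partial\configM$, then extend it to a closed $(n+1)$-form on all of $\configM$ using the cohomology vanishing supplied by Lemma~\ref{homologie}.

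First I would choose an antisymmetric volume form $\omega$ on $\s^{n+1}$ with $\int_{\s^{n+1}}\omega = 1$. Since $n$ is odd, $n+1$ is even, so the antipodal map on $\s^{n+1}$ has degree $-1$; consequently the normalized round volume form automatically satisfies $(-\mathrm{Id}_{\s^{n+1}})^*\omega = -\omega$ and serves as such an $\omega$. Set $\mu_0 := G_\tau^*(\omega)$, a closed $(n+1)$-form on $\partial\configM$ (closed since $\omega$ is a top form on $\s^{n+1}$).

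The key step is to extend $\mu_0$ to a closed $(n+1)$-form on $\configM$. The obstruction lives in the long exact sequence of the pair,
\[
H^{n+1}(\configM;\R) \xrightarrow{\iota^*} H^{n+1}(\partial\configM;\R) \xrightarrow{\delta} H^{n+2}(\configM,\partial\configM;\R).
\]
Lemma~\ref{homologie} asserts that the target vanishes (relative cohomology is nonzero only in degrees $n+3$ and $2n+4$), so $\iota^*$ is surjective and $[\mu_0]$ admits a preimage. Hence there is a closed $(n+1)$-form $\tilde\beta$ on $\configM$ together with an $n$-form $\eta$ on $\partial\configM$ such that $\tilde\beta|_{\partial\configM} - \mu_0 = d\eta$.

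To finish, I extend $\eta$ to a smooth $n$-form $\tilde\eta$ on $\configM$ via a collar $\partial\configM \times [0,1)$: pull $\eta$ back along the collar projection and multiply by a cutoff equal to $1$ near $\partial\configM$ and vanishing off the collar. Then $\tilde\eta|_{\partial\configM} = \eta$ (hence $d\tilde\eta|_{\partial\configM} = d\eta$ as forms on $\partial\configM$), so $\beta := \tilde\beta - d\tilde\eta$ is a closed $(n+1)$-form on $\configM$ with $\beta|_{\partial\configM} = G_\tau^*(\omega)$, which is exactly an external propagating form for $(\ambientspace,\tau)$. The only nontrivial ingredient is the cohomology lift, which is dispatched immediately by Lemma~\ref{homologie}; the collar-extension step and the choice of antisymmetric $\omega$ are routine.
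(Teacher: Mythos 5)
Your proof is correct and follows essentially the same route as the paper: both invoke Lemma~\ref{homologie} to kill $H^{n+2}(\configM,\partial\configM)$, deduce surjectivity of the restriction $H^{n+1}(\configM)\to H^{n+1}(\partial\configM)$, lift the class of $G_\tau^*(\omega)$, and then correct by an exact form $d\tilde\eta$ built from a collar extension. The only cosmetic difference is that you spell out the long exact sequence and justify the existence of an antisymmetric $\omega$ via the degree of the antipodal map, both of which the paper leaves implicit.
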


\begin{proof}The triviality of the cohomology group $H^{n+2}(\configM, \partial \configM)$ follows from the lemma. 
The restriction map $H^{n+1}(\configM) \rightarrow H^{n+1}(\partial \configM)$ is therefore surjective. Thus, given an antisymmetric closed $(n+1)$-form $\omega^{n+1}$ on $\s^{n+1}$, there exists a closed form $\beta_0^{n+1}$ on $\configM$ such that $[(\beta_0^{n+1})_{| \partial \configM}] = [{G_\tau}^*(\omega^{n+1})]$ in $H^{n+1}(\partial \configM)$. Then, there exists a form $\rho_0^n$ on $\partial\configM$ such that $(\beta^{n+1}_0)_{| \partial \configM} = {G_\tau}^*(\omega^{n+1}) + \d \rho_0^n$. Extend $\rho_0^n$ to a form $\rho^n$ on $\configM$, and set $\beta^{n+1}=\beta^{n+1}_0 -\d\rho^n$. The form $\beta^{n+1}$ is closed, and $(\beta^{n+1})_{| \partial \configM} = {G_\tau}^*(\omega^{n+1})$. The corollary is proved.
\end{proof}

Let us now prove the first point of Theorem \ref{th1}, i. e. that $Z_k^F$ does not depend on the choice of the family $F$ of propagating forms of $(\ambientspace, \tau)$.
Fix $(\punct M, \tau)$, and choose two families $F=\familleformes$  and $F'= (\alpha'_i, \beta'_i)_{1\leq i \leq 2k}$ of propagating forms of $(\ambientspace, \tau)$. 

As previously said, it suffices to show that $Z_k^F$ does not change if $\propint_1$ and $\propext_1$ change. Therefore, we assume that for any $i\geq 2$, $(\alpha'_i, \beta'_i) = (\alpha_i,\beta_i)$, without loss of generality, and we use the notations of Section \ref{S21}.
\begin{lm}\label{lmhom}
%
The pair $(\propint'_1-\propint_1, \propext'_1-\propext_1)$ has the sphere factorisation property.
\end{lm}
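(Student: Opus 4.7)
The plan is to treat $\propint'_1 - \propint_1$ and $\propext'_1 - \propext_1$ in parallel, applying the same relative de Rham argument in each case. The only cohomological inputs I will need are that $H^{n-1}(\configR, \partial\configR; \R)=0$ and, from Lemma~\ref{homologie}, $H^{n+1}(\configM, \partial\configM; \R)=0$, both of which hold for $n\geq 3$.

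First I would construct the antisymmetric primitives on the spheres. The form $(\omega^{n-1}_1)' - \omega^{n-1}_1$ is a closed antisymmetric $(n-1)$-form on $\s^{n-1}$ whose total integral vanishes (both factors integrate to $1$); since $H^{n-1}(\s^{n-1};\R)\cong\R$ is detected by integration, it is exact. Taking the antisymmetric part $\frac12(\eta_0 - (-\mathrm{Id}_{\s^{n-1}})^*\eta_0)$ of any primitive $\eta_0$ and using that $n$ is odd (so $-\mathrm{Id}_{\s^{n-1}}$ is orientation-reversing on $\s^{n-1}$) yields an antisymmetric $(n-2)$-form $\volint$ with $\d\volint = (\omega^{n-1}_1)' - \omega^{n-1}_1$. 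The identical construction on $\s^{n+1}$ produces an antisymmetric $n$-form $\volext$ with $\d\volext = (\omega^{n+1}_1)' - \omega^{n+1}_1$.

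Next I would produce $\ecartinterne$ on $\configR$. Extend $G^*(\volint)$ from $\partial\configR$ to a smooth form $\tilde\zeta$ on $\configR$ via a collar neighborhood. Then the form $\omega := (\propint'_1 - \propint_1) - \d\tilde\zeta$ is closed on $\configR$ and restricts to $G^*\bigl((\omega^{n-1}_1)' - \omega^{n-1}_1 - \d\volint\bigr) = 0$ on $\partial\configR$, so it represents a class in $H^{n-1}(\configR,\partial\configR;\R)$. This group vanishes: by Poincaré--Lefschetz duality it is isomorphic to $H_{n+1}(\configR;\R)$, and the Gauss map exhibits $\configR$ as homotopy equivalent to $\s^{n-1}$, so $H_{n+1}(\configR) = H_{n+1}(\s^{n-1}) = 0$ for $n\geq 3$. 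Hence $\omega = \d\mu$ for some $\mu$ with $\mu_{|\partial\configR}=0$, and $\ecartinterne := \tilde\zeta + \mu$ is the required primitive.

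The same scheme, now invoking Lemma~\ref{homologie} directly (which gives $H^{n+1}(\configM,\partial\configM;\R)=0$, since $n+1\notin\{n+3,2n+4\}$), produces $\ecartexterne$ on $\configM$ with $\d\ecartexterne = \propext'_1 - \propext_1$ and $\ecartexterne_{|\partial\configM} = G_\tau^*(\volext)$, establishing the sphere factorization property. The delicate point to nail down is the vanishing $H^{n-1}(\configR,\partial\configR;\R)=0$, which is not stated in the paper; I would either include a short proof of it following the template of Lemma~\ref{homologie}, or explicitly cite the homotopy equivalence $\configR\simeq \s^{n-1}$ together with Poincaré--Lefschetz duality as above.
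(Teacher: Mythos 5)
Your proposal is correct and follows essentially the same route as the paper: build an antisymmetric primitive on the sphere from the fact that the two volume forms have equal total integral, extend its pullback over a collar, subtract the exterior derivative of that extension, and kill the resulting closed form vanishing on the boundary using the relevant relative cohomology vanishing. The paper phrases the internal case as "the same argument" without spelling out $H^{n-1}(\configR,\partial\configR)=0$, whereas you supply an explicit justification via Poincaré--Lefschetz duality and the homotopy equivalence $\configR\simeq\s^{n-1}$; one could equally run the proof of Lemma~\ref{homologie} with $\R^n$ in place of $\punct{M}$. One small superfluous remark: the orientation-reversing nature of $-\mathrm{Id}_{\s^{n-1}}$ plays no role in the antisymmetrization of the primitive, which works by the direct computation $\d\bigl(\tfrac12(\eta_0 - (-\mathrm{Id})^*\eta_0)\bigr) = \tfrac12(\omega - (-\mathrm{Id})^*\omega) = \omega$ for any antisymmetric $\omega$.
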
\begin{proof}
By construction, $(\propagateurexterne'_1-\propagateurexterne_1)_{|\partial\configM} = G_\tau^*((\omega^{n+1}_{1})'-\omega_{1}^{n+1})$. Since $\int_{\s^{n+1}}(\omega_1^{n+1})' = \int_{\s^{n+1}}\omega_1^{n+1}$, there exists an $n$-form $\volext$ on $\s^{n+1}$ such that $\d\volext = (\omega^{n+1}_{1})'-\omega^{n+1}_{1}$. Since $\omega^{n+1}_{1}$ and $(\omega^{n+1}_{1})'$ are antisymmetric, $\volext$ can be assumed to be antisymmetric. Extend the form ${G_\tau}^*(\volext)$ to a form $\rho_1^n$ on $\configM$. Then, $\propagateurexterne'_1-\propagateurexterne_1-\d\rho_1^n$ is a closed form on $\configM$, whose restriction to $\partial\configM$ vanishes. Since $H^{n+1}(\configM, \partial\configM)=0$, 
there exists an $n$-form $\rho_2^n$ on $\configM$, 
which vanishes on $\partial\configM$, 
such that $ \propagateurexterne'_1-\propagateurexterne_1-\d\rho_1^n=\d\rho_2^n$. Set $\ecartexterne=\rho_1^n+\rho_2^n$, so that $\propagateurexterne'_1 -\propagateurexterne_1 = \mathrm{d} \ecartexterne$, $\ecartexterne_{|\partial\configM} = {G_{\tau}}^*(\volext)$ and $\volext$ is antisymmetric.

The same argument on $\propint'_1-\propint_1$ concludes the proof of Lemma \ref{lmhom}.
\end{proof}

From the previous lemma and Lemma \ref{vanishing theorem}, it follows that $Z_k^{F'}-Z_k^{F} = 0$. This proves the independence of $Z_k^F(\psi)$ of the family $F$ of propagating forms of $(\ambientspace, \tau)$. This is the first point of Theorem \ref{th1}.

\section[Rationality of Z_k]{Rationality of $Z_k$}\label{S4}

\subsection{Proof of Theorem \ref{the}    }\label{proof-th*}

Fix a family $F_*= (\Propint_i,\Propext_i)_{1\leq i \leq 2k}$ of propagating chains of $(\ambientspace, \tau)$ in general position. In order to prove that $Z_k$ can be computed with these propagators, we are going to define forms dual to them, and use the definition of $Z_k$. Fix Riemannian metrics on the configuration spaces $\configM$, $\configR$, and $\confignoeud$, and denote by $N_\epsilon(X)=\{c \mid d(c,X)\leq\epsilon\}$ the closed $\epsilon$-neighborhood of a subset $X$ of any of these spaces. 
Define \[\begin{array}{lll}\D &= \{c \in \confignoeud \mid (p_e(c))_{e\in\aretes} \in \prod\limits_{e\in \aretesinternes} \mathrm{Supp}(A_{\sigma(e)}) \times \prod\limits_{e\in \aretesexternes}\mathrm{Supp}(B_{\sigma(e)})\}\\ &= \bigcap\limits_{e\in\aretes}D_{e,\sigma}^{F_*}.\end{array}\]

Let $\epsilon >0$ be such that for any internal edge $e$, $p_e(\D)\subset \mathrm{Supp}(A_{\sigma(e)})\setminus N_\epsilon(A^{(n)}_{\sigma(e)})$, and such that for any external edge $e$, $p_e(\D)\subset \mathrm{Supp}(B_{\sigma(e)})\setminus N_\epsilon(B^{(n+2)}_{\sigma(e)})$.

Set $A_i^0=A_i\setminus N_\epsilon(A_i^{(n)})$, $N_\epsilon(A_i) = N_\epsilon(\mathrm{Supp}(A_i))$,$B_i^0=B_i\setminus N_\epsilon (B_i^{(n+2)})$, and $N_\epsilon(B_i) = N_\epsilon(\mathrm{Supp}(B_i))$.
For $\epsilon$ small enough, for any $x$ in $A_i^0$, there exists an open neighborhood $V_x\subset N_\epsilon (A_i)$ of $x$ in $\configR$, which can be thought of as a tubular neighborhood of an open neighborhood $W_x$ of $x$ in $A_i^0$, so that there is a local (orientation-preserving) trivialization $V_x\rightarrow W_x\times \mathbb D^{n-1} $. This induces a local fiber projection map $p_x\colon  V_x  \rightarrow \mathbb D^{n-1}$. This construction can be made so that if $V_x\cap V_{x'}\neq \emptyset$, there exists a rotation $r_{x,x'}\in SO(\R^{n-1})$ such that $(p_x)_{|V_x \cap V_{x'}}=(r_{x,x'}\circ p_{x'})_{|V_x \cap V_{x'}}$. For any $x\in B_i^0$, similarly define an open neighborhood $V_x\subset N_\epsilon(B_i)$ of $x$ in $\configM$, and a local fiber projection map $p_x\colon V_x\rightarrow \mathbb D^{n+1}$.

Some use of linear algebra and inverse function theorem proves the following lemma.
\begin{lm}\label{lm1}
For any $c\in \D$, there exists a neighborhood $U_c$ of $c$ in $\confignoeud$ such that for any $e\in \aretes$, $p_e(U_c)\subset V_{p_e(c)}$ and \[\begin{array}{rlll} \phi_c\colon& U_c &\rightarrow &\prod\limits_{e\in E(\Gamma)} \mathbb D^{n(e)}\\&y & \mapsto & (p_{p_{e}(c)}(p_{e}(y)))_{e\in E(\Gamma)}\end{array}\] is a diffeomorphism onto its image.
\end{lm}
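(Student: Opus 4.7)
The plan is to apply the inverse function theorem after showing that $d\phi_c$ is a linear isomorphism, using general position plus the key dimension equality of Lemma \ref{dim=deg}.

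First, observe that the source $T_cC_\Gamma(\psi)$ and the target $\prod_{e\in E(\Gamma)}\mathbb R^{n(e)}$ of $d\phi_c$ have the same dimension: by Lemma \ref{dim=deg}, $\dim C_\Gamma(\psi) = \deg(\omega^F(\Gamma,\sigma,\psi)) = \sum_{e\in E(\Gamma)}n(e)$, which is precisely $\dim\prod_e\mathbb D^{n(e)}$. Hence it is enough to prove that $d\phi_c$ is injective.

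Next, I compute $\ker d\phi_c$. By construction of the local trivializations $V_x\to W_x\times \mathbb D^{n(e)}$, the kernel of the differential of the fiber projection $p_x$ at $x$ is exactly the tangent space to the chain at $x$. Therefore, setting $V:=dP_\Gamma(T_c C_\Gamma(\psi))$ and
\[
W:=\Bigl(\prod_{e\in E_i(\Gamma)}T_{p_e(c)}\mathrm{Int}(\Propint_{\sigma(e)})\Bigr)\times\Bigl(\prod_{e\in E_e(\Gamma)}T_{p_e(c)}\mathrm{Int}(\Propext_{\sigma(e)})\Bigr),
\]
we have $\ker d\phi_c = \{v\in T_c C_\Gamma(\psi) \mid dP_\Gamma(v)\in W\} = dP_\Gamma^{-1}(V\cap W)\cap T_cC_\Gamma(\psi)$. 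To conclude, it suffices to show that $V\cap W = \{0\}$ and that $dP_\Gamma|_{T_cC_\Gamma(\psi)}$ is injective.

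Both follow from the same dimension count. The general position hypothesis at $c$ reads $V + W = T_{P_\Gamma(c)}\prod_e C_e$. Since $\dim W = \sum_e(\dim C_e - n(e))$, the identity $\dim V + \dim W = \dim(V+W) + \dim(V\cap W)$ gives
\[
\dim V - \dim(V\cap W) = \sum_{e\in E(\Gamma)} n(e).
\]
On the other hand, $\dim V \le \dim T_cC_\Gamma(\psi) = \sum_e n(e)$ by Lemma \ref{dim=deg}. Combining these two bounds forces $\dim(V\cap W)\le 0$, hence $V\cap W = \{0\}$, and then $\dim V = \sum_e n(e) = \dim T_c C_\Gamma(\psi)$, so $dP_\Gamma|_{T_cC_\Gamma(\psi)}$ is injective. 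Consequently $\ker d\phi_c = 0$, so $d\phi_c$ is a linear isomorphism.

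The inverse function theorem (in its manifold-with-corners variant if $c\in\partial C_\Gamma(\psi)$) then supplies a neighborhood of $c$ on which $\phi_c$ is a diffeomorphism onto its image; by continuity of each $p_e$, this neighborhood can be shrunk so that $p_e(U_c)\subset V_{p_e(c)}$ for every $e\in E(\Gamma)$, which is what is required for $\phi_c$ to make sense as written. The only subtlety I anticipate is the bookkeeping needed to confirm that the transversality condition of general position continues to hold as a linear equality of tangent spaces even when $c$ lies on a boundary stratum of $C_\Gamma(\psi)$; this is routine given the setup but must be stated carefully.
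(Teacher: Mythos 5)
Your argument is correct and is precisely the expansion of what the paper leaves implicit (the paper only says ``some use of linear algebra and inverse function theorem proves the following lemma''). The dimension count using the transversality condition of general position together with the equality $\dim C_\Gamma(\psi)=\sum_{e}n(e)$ from Lemma \ref{dim=deg} is exactly the intended linear algebra, and you correctly extract both $V\cap W=\{0\}$ and the injectivity of $dP_\Gamma$ before applying the inverse function theorem; the remark about boundary strata is the right caveat and is handled by the paper's convention that the transversality condition is imposed at every $c\in\confignoeud$ (including corner points) with $T_cC_\Gamma(\psi)$ the full tangent space of the manifold with corners.
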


Lemma \ref{lm1} implies that $\D$ is discrete in the compact space $\confignoeud$, so it is a finite set. 

Since $n(e)$ is even for any edge, $\prod\limits_{e\in E(\Gamma)} \mathbb D^{n(e)}$ is naturally oriented, and we can define $\mathrm{sgn}(\det(\d\phi_c))$ as the sign of the Jacobian $\det(\d\phi_c)$. 
For $c\in \D$, set $i(c) = \mathrm{sgn}(\det(\d\phi_c))\prod\limits_{e\in\aretes}q(p_e(c))$, where $q(p_e(c))$ is the coefficient $q_j$ of the submanifold $Y_j$ in which $p_e(c)$ lies in the rational chain $A_{\sigma(e)}$ (if $e$ is internal) or $B_{\sigma(e)}$ (if $e$ is external), which reads $ \sum\limits_i q_iY_i$. Then, the \emph{intersection number} $I^{F_*}(\Gamma,\sigma,\psi)$ is defined as $I^{F_*}(\Gamma,\sigma,\psi) = \sum\limits_{c\in\D} i(c)$.

The following lemma, which can be obtained as in \cite[Section 11.4, Lemma 11.13]{[Lescop2]} connects this intersection number to a configuration space integral, thus to the $Z_k$ invariant. 
\begin{lm}\label{lm2}
There exists a family $F=\familleformes$ of propagating forms of $(\ambientspace,\tau)$ such that for any $(\Gamma,\sigma)\in\graphesnum$: 
\begin{itemize}
\item The support of $\forme$ is a disjoint union of some neighborhoods $U_c$ of $c\in \D$ as in Lemma \ref{lm1}.
\item For any $c\in \D$, $\int_{U_c} \forme = i(c)$.

\end{itemize}
\end{lm}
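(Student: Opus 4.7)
\medskip

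\noindent\textbf{Proof proposal.} The plan is to construct, for each propagating chain in the family $F_*$, a Poincaré-dual propagating form supported in an arbitrarily small tubular neighborhood of the chain, and then to check that the resulting family $F$ of propagating forms realizes both bullet points. The construction of such duals is a Thom form construction, carried out chain-piece by chain-piece and glued via a partition of unity. The output is a closed form which, on the boundary of the configuration space, has the required $G^*$ (resp.\ $G_\tau^*$) shape.

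\medskip

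\noindent\textbf{Step 1 (construction of the dual forms).} Fix $i$ and consider the internal propagator $A_i = \sum_j q_j Y_j$, with $\partial A_i = \tfrac12 (G_{|\partial \configR})^{-1}(\{-x_{A_i}, x_{A_i}\})$. Choose a small antisymmetric volume form $\omega_i^{n-1}$ on $\s^{n-1}$ of total mass $1$, whose support is concentrated in two antipodal $\epsilon$-balls around $\pm x_{A_i}$. On $\partial\configR$, the form $G^*(\omega_i^{n-1})$ is then supported in an arbitrarily small neighborhood of $\partial A_i$ inside $\partial \configR$. Using the tubular-neighborhood structure of $\mathrm{Supp}(A_i)$ (the trivializations $V_x \to W_x \times \mathbb{D}^{n-1}$ introduced before Lemma~\ref{lm1}, which differ on overlaps by rotations of $\R^{n-1}$), build a rotation-invariant bump $(n-1)$-form on $\mathbb{D}^{n-1}$ of mass $1$ and pull it back by the local fiber projections $p_x$; multiply by $q_j$ on the piece $Y_j$ and glue via a partition of unity. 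The rotation-invariance of the bump guarantees that the local pieces agree on overlaps, so this produces a closed $(n-1)$-form on $\configR$ supported in $N_\epsilon(A_i)$ that matches $G^*(\omega_i^{n-1})$ on $\partial\configR$ (by choosing the tubular structure near $\partial\configR$ compatibly with the $G$-fibration). This form is the propagator $\alpha_i$. The same construction, in codimension $n+1$, yields the external propagators $\beta_i$ supported in $N_\epsilon(B_i)$.

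\medskip

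\noindent\textbf{Step 2 (localization of the product).} With this choice of $F$, the form $\formearete$ is supported in $p_e^{-1}(N_\epsilon(\mathrm{Supp}(A_{\sigma(e)})))$ or $p_e^{-1}(N_\epsilon(\mathrm{Supp}(B_{\sigma(e)})))$ according to the type of $e$. Hence $\mathrm{Supp}(\forme) \subset \bigcap_{e \in \aretes} p_e^{-1}(N_\epsilon(\cdot))$, which, for $\epsilon$ small enough, is contained in the disjoint union $\bigsqcup_{c \in \D} U_c$ provided by Lemma~\ref{lm1} (shrinking the $U_c$ if necessary). This gives the first bullet.

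\medskip

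\noindent\textbf{Step 3 (local evaluation of the integral).} Fix $c \in \D$. In the chart $\phi_c \colon U_c \to \prod_{e\in \aretes} \mathbb{D}^{n(e)}$, the pullback $p_e^*(\alpha_{\sigma(e)})$ (or $p_e^*(\beta_{\sigma(e)})$) becomes the pullback of the bump $n(e)$-form on the factor $\mathbb{D}^{n(e)}$ by the projection to that factor, multiplied by the coefficient $q(p_e(c))$. Since each $n(e)$ is even, the wedge in any order of these factor forms produces the standard unit-mass volume form on $\prod_e \mathbb{D}^{n(e)}$, scaled by $\prod_e q(p_e(c))$. Pulling back by $\phi_c$ and integrating, the change of variables gives
\[
\int_{U_c} \forme = \mathrm{sgn}(\det(\d \phi_c)) \prod_{e\in \aretes} q(p_e(c)) = i(c),
\]
which is the second bullet.

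\medskip

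\noindent\textbf{Main obstacle.} The delicate point is Step~1: arranging simultaneously that (i) the locally defined bump pull-backs glue into a globally well-defined \emph{closed} form on $\configR$ (or $\configM$), and (ii) the boundary restriction is exactly $G^*(\omega_i^{n-1})$ for an antisymmetric volume form. Closedness follows from the rotation-invariance of the bump together with the $SO$-valued overlap maps $r_{x,x'}$; the boundary condition requires that the tubular structure on $A_i$ near $\partial A_i$ be chosen compatibly with the $G$-fibration over $\pm x_{A_i}$, using the factor $\tfrac12$ so that the two antipodal components of $\partial A_i$ together carry the full mass of the antisymmetric form $\omega_i^{n-1}$. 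Once these compatibilities are arranged, as in \cite[Section~11.4]{[Lescop2]}, the rest of the argument is the local computation of Step~3.
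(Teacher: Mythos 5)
Your proposal follows essentially the same approach as the paper's sketch: build each propagator as a Thom-type form supported in a tubular neighborhood of the corresponding chain by pulling back a bump form via the local fiber projections $p_x$, glue via a partition of unity using the rotation-compatible overlaps, then localize the wedge product to the neighborhoods $U_c$ and evaluate by the change-of-variables formula in the chart $\phi_c$. The paper leaves the gluing and boundary-compatibility details to the cited Lemma 11.13 of Lescop's book, whereas you spell out slightly more of the mechanism (choosing $\omega_{\alpha_i}$ concentrated near $\pm x_{A_i}$ so the boundary condition is consistent with the support constraint), but the argument is the same.
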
\begin{proof}[Sketch of proof]The main idea is to define the form $\alpha_i$ supported on $N_\epsilon(A_i)$, such that for any $x\in A_i^0$ $(\alpha_i)_{| V_x}= q(x).p_x^*(\omega^{n-1})$ where $\omega^{n-1}$ is a volum form of total volume one on $\mathbb D^{n-1}$ supported in the interior of $\mathbb D^{n-1}$ and $q(x)$ is the coefficient of the submanifold $Y_j$ in which $x$ lies in $A_i=\sum\limits_{k} q_{k}Y_{k}$. The proof of \cite[Section 11.4, Lemma 11.13]{[Lescop2]} explains how these forms can be "glued" along $N_\epsilon( A_i^{(n)})$ in order to get a closed form and how they can be defined on a collar of the boundary to get an internal propagating form. The construction of $\beta_i$ is similar. \qedhere

\end{proof}
Lemma \ref{lm2} implies Theorem \ref{the}. Indeed, with the family $F$ of propagating forms of the lemma, the integrals $\I$ of the definition of $Z_k$ in Theorem \ref{th1} are exactly the rational numbers $I^{F_*}(\Gamma,\sigma,\psi)$ of Theorem \ref{the}.

\subsection{Existence of propagating chains in general position}

Lemma \ref{homologie} and the Poincaré-Lefschetz duality imply that $H_{n}(\configR)$ and $H_{n+2}(\configM)$ are trivial groups. Therefore, propagating chains exist. 

As stated in the following theorem, these propagating chains can also be assumed to be in general position.
\begin{theo}\label{posgen}
For any family $(\Propint_i, \Propext_i)_{1\leq i \leq 2k}$ of propagating chains of $(\ambientspace, \tau)$, and any $\epsilon >0$, there exists a family $(\Propint'_i, \Propext'_i)_{1\leq i \leq 2k}$ of propagating chains of $(\ambientspace, \tau)$ in general position such that for any $1\leq i\leq 2k$, $\mathrm{Supp}(\Propint'_i)\subset N_\epsilon(\Propint_i)$ and $\mathrm{Supp}(\Propext'_i)\subset N_\epsilon(\Propext_i)$.
\end{theo}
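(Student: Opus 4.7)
The plan is to establish the theorem by iteratively perturbing the chains one at a time, applying parametric transversality at each step. Since $\graphesnum$ is finite (degree $k$ is fixed), and since for each $(\Gamma,\sigma)$ the set of configurations $c\in\confignoeud$ satisfying $P_\Gamma(c)\in\prod_{e\in\aretesinternes}\mathrm{Supp}(\Propint_{\sigma(e)})\times\prod_{e\in\aretesexternes}\mathrm{Supp}(\Propext_{\sigma(e)})$ is compact, only finitely many transversality conditions must be achieved simultaneously. I would process the chains $\Propint_1,\Propext_1,\ldots,\Propint_{2k},\Propext_{2k}$ in sequence; at step $i$, perturb $\Propint_i$ (resp.\ $\Propext_i$) to be in general position with respect to all previously fixed chains. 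If each individual perturbation can be made arbitrarily small in the $C^\infty$-topology, then the cumulative perturbation remains supported in the prescribed $N_\epsilon$-neighborhoods.

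For each step, fix a diagram $(\Gamma,\sigma)$ and let $e_0$ be an edge whose chain is currently being perturbed. I would introduce a finite-dimensional family $(h_t^\lambda)_{t\in[0,1],\,\lambda\in\Lambda}$ of diffeotopies of $C_{e_0}$ (either $\configR$ or $\configM$), with $h_0^\lambda=\mathrm{Id}$ and $h_t^\lambda$ arbitrarily $C^\infty$-close to the identity, rich enough that the evaluation map $(\lambda,c)\mapsto h_1^\lambda(p_{e_0}(c))$ is a submersion onto a neighborhood of the current support. Parametric Thom transversality then yields a generic $\lambda\in\Lambda$ for which $h_1^\lambda(\Propint_{\sigma(e_0)})$ (or analogously $\Propext_{\sigma(e_0)})$ is transverse, in the sense required by the general position definition, to the product of the other relevant chains along $P_\Gamma$, and moreover the image of $p_{e_0}$ avoids the $(n)$- or $(n{+}2)$-skeleton of the chain. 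The condition is required on every point of the compact $\confignoeud$, including the codimension-one strata of Section \ref{ds}; since those strata are also finite in number and smooth, the argument is applied stratum by stratum.

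The main obstacle is preserving the propagating chain condition during each perturbation, namely the prescribed boundary $\partial\Propint=\frac12 G^{-1}(\{\pm x_\Propint\})$ (and analogously for $\Propext$ using $G_\tau$). I would address this by imposing that, in a collar of $\partial\configR$ (resp.\ $\partial\configM$), the isotopy $h_t^\lambda$ covers via the Gauss map $G$ (resp.\ $G_\tau$) an isotopy in $SO(n)$ (resp.\ $SO(n+2)$): this guarantees that $h_1^\lambda(\partial\Propint)=\frac12 G^{-1}(\{\pm r^\lambda(x_\Propint)\})$ for the corresponding rotation $r^\lambda$, so the perturbed chain is still a propagating chain, now with basepoint $r^\lambda(x_\Propint)$. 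The parameter $\lambda$ accordingly combines a rotation factor $r^\lambda$ with a compactly supported interior perturbation, and a generic choice achieves simultaneously the interior transversality and the genericity of the new boundary $G^{-1}(\{\pm r^\lambda(x_\Propint)\})$ relative to the restrictions of the maps $p_e$ to the boundary strata. Iterating this construction over the finitely many chains, diagrams, and numberings completes the proof.
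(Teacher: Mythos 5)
Your proposal is correct in spirit, but it takes a visibly different route from the paper's sketch, and on one point it works harder than necessary. The paper cites the strategy of \cite[Section 11.3, Lemma 11.11]{[Lescop2]}: one looks at families of \emph{fiberwise} diffeomorphisms of the tubular neighborhoods $N_\epsilon(A_i)$, $N_\epsilon(B_i)$ themselves, and shows that inside this space of perturbations the general position condition is open and dense, hence nonempty. Because the perturbations live inside the tubular neighborhoods by construction, the $N_\epsilon$--bound is automatic, and there is no iteration. Your proposal instead perturbs one chain at a time by ambient diffeotopies of $C_e$ and invokes parametric Thom transversality at each step. This can work, but the iterative step as you phrase it (``perturb $A_i$ to be in general position with respect to all previously fixed chains'') is vaguer than it should be: the transversality condition of the paper is a joint condition on the tuple of chains through $T_cP_\Gamma$, not a pairwise one, so the precise inductive hypothesis has to be that after step $m$ the partial intersections $\bigcap_{\sigma(e)\leq m}p_e^{-1}(\cdot)$ are already transverse (and that the finitely many conditions achieved so far are open, so later $C^\infty$--small perturbations do not destroy them). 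A one--shot Baire argument over the whole product parameter space, as in the paper, sidesteps this bookkeeping.

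Your treatment of the boundary constraint $\partial A_i=\frac12 G^{-1}(\{\pm x_{A_i}\})$ is the most genuinely different part, and is more elaborate than needed. You insist that the perturbing diffeotopy cover a rotation of $\s^{n(e)}$ via the Gauss map near $\partial C_e$, so that the perturbed chain is again a propagating chain with rotated basepoint. This is correct for $\configR$, where the $SO(n)$--action on $\R^n$ gives an equivariant lift; for $\configM$ there is no global $SO(n+2)$--action and you must instead extend $G_\tau$ to a collar (as the paper does elsewhere) and lift the rotation vector field through this extension by a metric connection. None of this is wrong, but a codimension count makes it unnecessary: the set $\partial A_i$ has codimension $n$ in $\configR$, so $p_e^{-1}(\partial A_{\sigma(e)})\cap\bigcap_{f\neq e}p_f^{-1}(\mathrm{Supp})$ has codimension $1+\dim\confignoeud$ and is generically empty after perturbing the \emph{interiors} of the other chains alone. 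One can therefore keep $\partial A_i$ and $\partial B_i$ fixed (and all perturbations supported away from $\partial C_e$), which is what the fiberwise picture in the paper implicitly does. So: your approach is sound and fills in a detail the paper leaves to the reader, but the rotation machinery, while valid, adds complexity that the codimension count shows to be dispensable.
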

\begin{proof}[Sketch of proof]
This theorem could be proved as in \cite[Section 11.3, Lemma 11.11]{[Lescop2]}. The main idea is to look at families of diffeomorphisms $(\phi_i, \phi_i')$ isotopic to the identity of the tubular neighborhoods $N_\epsilon(A_i), N_\epsilon(B_i)$ that act only fiberwise. In the space of such diffeomorphisms, the condition of general position on $(\phi_i(A_i), \phi_i'(B_i))$ can be proved to correspond to an open dense (so non empty) subset. Therefore, there exist some diffeomorphisms such that these perturbed chains are in general position.
\end{proof}

In particular, $Z_k$ can be computed with such propagating chains. By construction, this gives a rational number. This proves the fourth assertion of Theorem \ref{th1}.

\section{Nullity of $Z_k$ when $k$ is odd}

In this section, we prove the fifth assertion of Theorem \ref{th1}. The method is the same as in \cite[Section 2.5]{[Watanabe]}, but we have to deal with some more general propagating forms, and our orientations are not the same\footnote{The orientation of our configuration spaces is $w_k(\Gamma). \Omega_{Wat}(\Gamma)$ with the notations of \cite{[Watanabe]}.}.

Let $(\punct M, \tau)$ be a parallelized asymptotic homology $\R^{n+2}$. 

Let us fix an integer $k\geq 1$, a long knot $\psi$, and a family $F= (\alpha_i,\beta_i)_{1\leq i \leq 2k}$ of propagating forms of $(\ambientspace, \tau)$.

Let  $T_\alpha\colon C_2(\R^n) \rightarrow C_2(\R^n)$ denote the extension of the map $(x,y)\in C_2^0(\R^n) \mapsto (y,x) \in C_2(\R^n)$ to $C_2(\R^n)$. Similarly define $T_\beta \colon \configM\rightarrow\configM$.

Set $F'=(\alpha'_i, \beta'_i)_{1\leq i\leq 2k}= (\frac12(\alpha_i - T_\alpha^*(\alpha_i)), \frac12(\beta_i - T_\beta^*(\beta_i)))_{1\leq i \leq 2k}$. Since the forms $\omega_{\alpha_i}$ and $\omega_{\beta_i}$ are antisymmetric for any $1\leq i\leq 2k$, $F'$ is again a family of propagating forms of $(\ambientspace, \tau)$. For any $1\leq i \leq 2k$,  $T_\alpha^*(\alpha'_i) = - \alpha'_i$ and $T_\beta^*(\beta'_i) = - \beta'_i$.
\begin{prop}\label{zimpair}
For any $(\Gamma,\sigma)\in\graphesnum$, let $(\Gamma^*, \sigma^*)$ denote the numbered BCR diagram obtained from $(\Gamma,\sigma)$ by reversing all the edges of the cycle. Then, \[ I^{F'}(\Gamma^*,\sigma^*, \psi)= (-1)^{k} I^{F'}(\Gamma,\sigma,\psi).\]

\end{prop}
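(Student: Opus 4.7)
The plan is to identify $C_{\Gamma^*}(\psi)$ with $C_\Gamma(\psi)$ as the same underlying manifold (the configuration space depends only on the vertex data) and then compare the two integrals as integrals on this common space. They differ by a relative orientation sign $\eta(\Gamma)=\orientationconfig[\Gamma^*]/\orientationconfig[\Gamma]$ and by a ratio of integrand forms. For the integrands, for each cycle edge $e$ the projection $p_{e^*}^\psi$ equals $T\circ p_e^\psi$, where $T$ is the appropriate swap map ($T_\alpha$ or $T_\beta$); combined with the antisymmetries $T_\alpha^*\alpha_i'=-\alpha_i'$ and $T_\beta^*\beta_i'=-\beta_i'$, each reversed cycle edge contributes a factor $-1$. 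Since $n$ is odd, all edge forms have even degree ($n-1$ or $n+1$), so the wedges commute and one gets $\omega^{F'}(\Gamma^*,\sigma^*,\psi)=(-1)^{c}\,\omega^{F'}(\Gamma,\sigma,\psi)$, where $c=c_e+c_i$ is the total number of cycle edges (external plus internal).

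For the orientation sign $\eta(\Gamma)$, the global factor $\epsilon(\Gamma)$ is unchanged under cycle reversal since it depends only on $\Card(\aretesexternes)$ and on the number of internal trivalent vertices. It therefore suffices to compare $\bigwedge_{e\in\aretesexternes}\orientationarete{e}$ for the two graphs, which I would do in two steps. First, within each cycle external edge, reorder the two half-edges from the $\Gamma^*$-natural tail-then-head order back to the $\Gamma$-natural one; a quick case analysis on the degrees of $\alpha(v,\pm)$ shows that, because $n$ is odd, the product of the two half-edge-form degrees is always odd, so each such swap contributes $-1$, giving a factor $(-1)^{c_e}$. Second, after this reordering the half-edges sit at the same positions as in $\Gamma$, but the forms still reflect the $\Gamma^*$-roles: internal vertices give no change since $\alpha(v,+)=\alpha(v,-)=\d Y_v^1\wedge\ldots\wedge\d Y_v^n$, while at each external vertex $v$ the two cycle-adjacent forms $\d X_v^2$ (degree $1$) and $\d X_v^3\wedge\ldots\wedge\d X_v^{n+2}$ (degree $n$) get exchanged between the cycle-in-head and cycle-out-tail positions. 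Using the identity $(-1)^{D(p+q)+pq}$ for the sign of exchanging degree-$p$ and degree-$q$ forms across an intermediate block of degree $D$, and the fact that $p+q=n+1$ is even while $pq=n$ is odd, each exchange contributes $-1$ regardless of $D$; summing over external vertices yields $(-1)^{|\sommetsexternes|}$. Hence $\eta(\Gamma)=(-1)^{c_e+|\sommetsexternes|}$.

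Combining the two contributions, the overall factor relating the two integrals is $(-1)^{|\sommetsexternes|+c_i+2c_e}=(-1)^{|\sommetsexternes|+c_i}$. A short combinatorial count finishes the argument. Denote by $V_j$ the set of type-$j$ vertices of Definition~\ref{BCRdef}: each cycle vertex has exactly one outgoing cycle edge, so the total number of cycle edges is $|V_1|+|V_2|+|V_4|+|V_5|$; counting external endpoints of cycle edges vertex by vertex gives $c_e=|V_1|+|V_4|$, where I use $|V_4|=|V_5|$ (types $4$ and $5$ realise the external-to-internal and internal-to-external transitions along the cycle), hence $c_i=|V_2|+|V_4|$. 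Together with $|V_3|=|V_1|+|V_2|$ (there is one leg per type-$1$ or type-$2$ vertex) and $2k=\sum_j|V_j|$, this yields $|\sommetsexternes|+c_i=|V_1|+|V_2|+|V_4|=k$, as desired. The main obstacle will be the orientation-sign bookkeeping, which becomes tractable precisely because the hypothesis $n$ odd decouples each swap sign from the intervening forms.
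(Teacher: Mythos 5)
Your proof is correct and takes essentially the same route as the paper's: compare the integrand forms (each reversed cycle edge contributes $-1$ via the antisymmetry of $F'$), compare the orientation forms, and finish with a combinatorial count showing the net sign is $(-1)^k$. The only cosmetic difference is the bookkeeping: you write the orientation sign directly as $(-1)^{c_e+\Card(\sommetsexternes)}$ and obtain the exact identity $\Card(\sommetsexternes)+c_i=k$ by counting the five vertex types, whereas the paper groups the external cycle edges into $r$ maximal runs to get $(-1)^r$ and then checks the parity $L+r\equiv k\pmod 2$ using the univalent/bivalent/trivalent counts; the two computations are equivalent.
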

\begin{proof}
Since the vertices and their natures are the same for $\Gamma$ and $\Gamma^*$, we have a canonical diffeomorphism $C_\Gamma(\psi) \cong C_{\Gamma^*}(\psi)$, but it may change the orientation.

It follows from the definition of the orientation of configuration spaces in Section \ref{S2.4} that the orientation $\Omega(\Gamma^*)$ can be obtained from $\Omega(\Gamma)$ as follows: first, exchange the coordinate forms $\d X_v^2$ and $\d X_v^3\wedge \cdots \wedge \d X_v^{n+2}$ for any external vertex $v$ ; next, for any external edge $e$ of the cycle, exchange the forms $\Omega_{e_-}$ and $\Omega_{e_+}$.

Set $r=0$ if there is no internal edge in $\Gamma$. In this case, there are $k$ external vertices and $k$ external edges in the cycle, so $\Omega(\Gamma^*) = (-1)^{k+k}\Omega(\Gamma)=\Omega(\Gamma)$. Otherwise, any external edge of the cycle is contained in one maximal sequence of consecutive external edges of the cycle. If such a sequence has $d$ edges, it has $d-1$ external vertices. Let us denote by $(d_1, \ldots, d_r)$ the lengths of the $r$ maximal sequences of consecutive external edges of the cycle. Then, the previous analysis yields $\Omega(\Gamma^*) = (-1)^{\sum\limits_{i=1}^r(d_i + (d_i-1))} \Omega(\Gamma)= (-1)^r \Omega(\Gamma)$.

Let $L$ denote the number of edges of the cycle of $\Gamma$. Since $F'$ is such that for any $1\leq i \leq 2k$,  $T_\alpha^*(\alpha'_i) = - \alpha'_i$ and $T_\beta^*(\beta'_i) = - \beta'_i$, we have\footnote{We consider $\omega^{F'}(\Gamma^*, \sigma^*,\psi)$ as a form on $\confignoeud$ via the canonical identification $C_\Gamma(\psi) \cong C_{\Gamma^*}(\psi)$.} $\omega^{F'}(\Gamma^*, \sigma^*,\psi)=(-1)^{L} \omega^{F'}(\Gamma, \sigma,\psi)$.
Then, $I^{F'}(\Gamma^*,\sigma^*, \psi)= (-1)^{L+r} I^{F'}(\Gamma,\sigma,\psi)$.

It remains to check that $L+ r \equiv k\mod 2$. It is direct when there is no internal edge. 

Otherwise, let $u$ (resp. $b$, resp. $t$) denote the number of univalent (resp. bivalent, resp. trivalent) vertices of $\Gamma$. By definition of the BCR diagrams, $u= t$, and $2k= u + b + t= b + 2t$.

Note that there is a bijection between maximal sequences of consecutive external edges of the cycle and bivalent vertices with an external outgoing edge. This bijection is defined by taking the source of the first edge of a sequence. Taking the head of the last edge of a sequence also gives a bijection between the maximal sequences of consecutive external edges of the cycle and the bivalent vertices with an internal outgoing edge. Then, $r = \frac{b}2= k - t$.

The cycle is composed of all the bivalent and trivalent vertices, and has as many vertices and edges. Then, $L = b+ t= 2k- t$.

Eventually, $L + r = 3k - 2t \equiv k \mod 2$. This concludes the proof of Proposition \ref{zimpair}.\qedhere

\end{proof}

Proposition \ref{zimpair} directly implies that $Z_k(\psi) = 0$ when $k$ is odd.
\section{Independence of the parallelization, invariance under ambient diffeomorphisms}\label{S5}
In this section, we prove the second and third assertions of Theorem \ref{th1}.
\subsection{Homotopy classes of parallelizations of $\ambientspace$}

Let $\ambientspace$ be a fixed parallelizable asymptotic homology $\R^{n+2}$. Let $Z_k^{\tau}$ denote the value of the invariant $Z_k$ when computed with a family of propagating forms of $(\ambientspace,\tau)$.

We recall that two parallelizations $\tau$ and $\tau'$ are homotopic if there exists a smooth family $(\tau_t)_{0\leq t \leq 1}$ of parallelizations such that $\tau_0=\tau$ and $\tau_1=\tau'$, as in Definition \ref{paral-def}.

Denote by $\Par$ the set of homotopy classes of parallelizations of $\ambientspace$.

\begin{lm}
If $\tau$ and $\tau'$ are homotopic, then $Z_k^\tau= Z_k^{\tau'}$.
\end{lm}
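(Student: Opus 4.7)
My plan is to deduce the lemma from the machinery of Section \ref{S2}, specifically from Lemma \ref{vanishing theorem}. The setup in Section \ref{S21} was designed to accommodate external propagators compatible with different parallelizations: if for each index $i \in \{1, \dots, 2k\}$ I can exhibit external propagators $\beta_i$ for $\tau$ and $\beta_i'$ for $\tau'$ such that $(0, \beta_i' - \beta_i)$ has the sphere factorization property, then swapping external propagators one index at a time and applying Lemma \ref{vanishing theorem} together with Lemma \ref{lm31} at each step yields $Z_k^\tau = Z_k^{\tau'}$ after $2k$ iterations. The internal propagators $\alpha_i$ can be fixed once and for all, since they do not depend on the parallelization.

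For the construction, I would pick antisymmetric volume forms $\omega_i$ on $\s^{n+1}$ of total integral one, chosen as the standard $SO(n+2)$-invariant volume form (antisymmetric because the antipodal map of $\s^{n+1}$ has degree $-1$ when $n$ is odd), and propagators $\beta_i$ for $\tau$ with boundary $(\beta_i)_{|\partial \configM} = G_\tau^*(\omega_i)$ via Corollary \ref{lm-hom2}. The given homotopy $(\tau_t)_{0 \leq t \leq 1}$ induces a homotopy between $G_\tau$ and $G_{\tau'}$ on $\partial \configM$ that is constant on every codimension-one face, \emph{except} on the diagonal face $\partial_\Delta \configM \cong U\ambientspace$, where it is driven by the pointwise projective action of the change-of-trivialization function $R \colon \ambientspace \to GL^+(n+2,\R)$. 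Polar decomposition $R(x) = P(x) Q(x)$ splits this action into a $Q$-part in $SO(n+2)$, which leaves $G_\tau^*(\omega_i)$ invariant by $SO(n+2)$-invariance of $\omega_i$, and a positive-definite symmetric $P$-part, for which Cartan's homotopy formula applied to the projective action yields a primitive $\eta_i$ of the boundary difference $G_{\tau'}^*(\omega_i) - G_\tau^*(\omega_i)$. Extending $\eta_i$ to an $n$-form $\xi_i$ on $\configM$ (using $H^{n+1}(\configM, \partial \configM) = 0$ from Lemma \ref{homologie} as in the proof of Corollary \ref{lm-hom2}) and setting $\beta_i' = \beta_i + \d\xi_i$ produces the desired propagator compatible with $\tau'$.

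The main obstacle is verifying that $\xi_i$ genuinely has the sphere factorization property, i.e., that $(\xi_i)_{|\partial \configM}$ factors as $G_\tau^*(\theta_i)$ for a single antisymmetric form $\theta_i$ on $\s^{n+1}$, rather than for an $x$-dependent family of forms inherited from the $x$-dependence of the polar decomposition of $R(x)$. To resolve this, I would invoke Theorem \ref{parallelization theorem} from the introduction to arrange that $\tau$ and $\tau'$ already coincide outside an arbitrarily small ball, confining the $x$-dependence of the primitive to that ball, and then combine a fiberwise averaging over the $SO(n+2)$-action with a local-to-global extension on $\configM$ to rewrite the boundary trace of $\xi_i$ as a genuine sphere pullback, at the cost of modifying $\xi_i$ by an exact form vanishing on $\partial \configM$ (which does not affect the defining boundary property of $\beta_i'$).
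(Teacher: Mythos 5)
Your approach has a genuine gap, and the paper's actual proof of this lemma is quite different from what you propose.

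The central obstacle you identify—that the boundary trace of $\xi_i$ need not factor through $G_\tau$—is not a technical nuisance to be massaged away; it is a fundamental obstruction. Suppose $\beta_i' - \beta_i = \d\xi_i$ with $(\beta_i')_{|\partial\configM} = G_{\tau'}^*(\omega_i)$ and $(\beta_i)_{|\partial\configM} = G_\tau^*(\omega_i)$. If $(\xi_i)_{|\partial\configM}$ were equal to $G_\tau^*(\theta_i)$ for some form $\theta_i$ on $\s^{n+1}$, then restricting the identity to the boundary would give $G_{\tau'}^*(\omega_i) = G_\tau^*(\omega_i + \d\theta_i)$, so $G_{\tau'}^*(\omega_i)$ would be constant along the fibers of $G_\tau$. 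On the diagonal face $\partial_\Delta\configM \cong U\ambientspace$, the fibers of $G_\tau$ and $G_{\tau'}$ are determined pointwise by $\tau_x$ and $\tau'_x$ and differ as soon as $\tau \neq \tau'$ on some open set, so this constancy fails. No fiberwise averaging, no modification by an exact form vanishing on $\partial\configM$, and no polar decomposition can save this: the defect lives in the boundary identity itself. This is precisely why the paper does \emph{not} attempt to invoke the sphere factorization property for a change of parallelization, and why the proof of the second point of Theorem \ref{th1} (Section \ref{s52}) replaces the sphere-factorization argument by a support argument on $U\B$ where $\B$ is a ball disjoint from the knot.

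Your fallback, invoking Theorem \ref{parallelization theorem} to replace $\tau$ and $\tau'$ by homotopic representatives coinciding outside a small ball, is circular here: if $\tau$ and $\tilde\tau$ are homotopic representatives of the same class, passing from $Z_k^\tau$ to $Z_k^{\tilde\tau}$ is exactly what the present lemma is supposed to establish. (The theorem is correctly used later, in Section \ref{s52}, only \emph{after} this lemma has reduced the question to homotopy classes.) The paper's own proof takes an entirely different route: it builds a smooth family $(\beta_i^s)_{0\le s\le 1}$ of external propagating forms for the interpolating parallelizations $\tau_s$, using a collar of $\partial_\Delta\configM$ and the map $p_s(t,x) = G_{\tau_{(1+t)s}}(x)$, so that $s \mapsto Z_k^{\tau_s}$ is continuous; it then invokes the already-proved rationality of $Z_k$ (fourth point of Theorem \ref{th1}, established via propagating chains in Section \ref{S4}) to conclude that a continuous rational-valued function on $[0,1]$ is constant. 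That continuity-plus-rationality step is the idea your proof is missing.
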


\begin{proof} Let $(\tau_t)_{0\leq t \leq 1}$ be a smooth homotopy of parallelizations. Assume without loss of generality that there exists $\epsilon >0$ such that $\tau_t= \tau_0$ for any $t\in [0, \epsilon]$. Let $(\propint_i, \propext_i)$ be a family of propagating forms of $(\ambientspace,\tau_0)$. For any $1\leq i\leq 2k$, there exists a form $\omega_{\propext_i}$ such that $(\propext_i)_{|\partial\configM} = G_{\tau_0}^*(\omega_{\propext_i})$.

For any $1\leq i \leq 2k$, we define a smooth family $(\beta_i^s)_{0\leq s\leq 1}$ of external propagating forms such that $(\propext_i^s)_{|\partial\configM}= G_{\tau_s}^*(\omega_{\propext_i})$ as follows. 

Let $[-1,0]\times U\ambientspace$ be a collar of $U\ambientspace=\partial_\Delta\configM$ such that $\{0\}\times U\ambientspace$ corresponds to $\partial_\Delta\configM$. Let $N(\partial \configM)$ be a regular neighborhood of $\partial\configM$ that contains $[-1,0]\times U\bouleambiante$.
Extend $G_{\tau_0}$ to a smooth map $\overline{G_{\tau_0}}$ on $N(\partial \configM)$ such that for any $(t,x)\in [-1,0]\times U\bouleambiante$, $\overline{G_{\tau_0}}(t,x) = G_{\tau_0}(x)$. 
Assume that $(\beta_i)_{|N(\partial \configM)}= \overline{G_{\tau_0}}^*(\omega_{\beta_i})$. Since $(G_{\tau_s})_{|\partial UB(M)}= (G_{\tau_0})_{|\partial UB(M)}$ for any $s\in [0,1]$, the map $$p_s\colon (t,x) \in [-1,0]\times UB(M)\mapsto G_{\tau_{(1+t)s}}(x)\in\s^{n+1}$$ coincide with $\overline{G_{\tau_0}}$ on $ ([-1,0]\times\partial UB(M))\cup (\{-1\}\times UB(M))$. The forms ${p_s}^*(\omega_{\propext_i})$ and $\propext_i$ coincide on $ ([-1,0]\times\partial UB(M))\cup (\{-1\}\times UB(M))$. This allows us to define a closed form $\beta_i^s$ such that $(\beta_i^s)_{|[-1,0]\times UB(M)} = {p_s}^*(\omega_{\propext_i})$ and $(\beta_i^s)_{|\configM\setminus ([-1,0]\times UB(M))} = \beta_i$.


Therefore, $F_s=(\propint_i, \propext_i^{s})_{1\leq i\leq 2k}$ is a family of propagating forms of $(\ambientspace,\tau_s)$, and $$Z_k^{\tau_s} =\frac1{(2k)!} \sum\limits_{(\Gamma,\sigma)\in\graphesnum}\int_{\confignoeud}\omega^{F_s}(\Gamma,\sigma,\psi).$$ By construction, $\omega^{F_s}(\Gamma,\sigma,\psi)$ depends continuously on $s$, and then, $Z_k^{\tau_s}$ depends continuously on $s$. Since it takes only rational values, it is constant, and $Z_k^{\tau_0} = Z_k^{\tau_1}$.
\end{proof}

The following theorem will allow us to obtain the independence of $Z_k$ of the parallelization in the next subsection. It is proved in Section \ref{annex2}.

\begin{theo}\label{parallelization theorem}

Let $\ambientspace$ be an asymptotic homology $\R^{n+2}$, and let $\B\subset \bouleambiante$ be a standard $(n+2)$-ball.
Let $[\tau]$ and $[\tau']$ be two homotopy classes of parallelizations of $\ambientspace$ as defined in Definition \ref{paral-def}. 

It is possible to choose representatives $\tau$ and $\tau'$ of the classes $[\tau]$ and $[\tau']$, such that $\tau$ and $\tau'$ coincide on $(\ambientspace \setminus \mathbb B) \times \R^{n+2}$.
\end{theo}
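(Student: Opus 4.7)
The plan is to translate the statement into an obstruction-theoretic question about based maps from the closed homology sphere $M = \ambientspace\cup\{\infty\}$ to the rotation group $SO(n+2)$. Given representatives $\tau$ and $\tau'$ of the two classes, the composition $g=\tau^{-1}\circ\tau'$ defines a smooth map $\ambientspace\to GL^+_{n+2}(\R)$ that is the identity on $\voisinageinfini$. Composing with the polar-decomposition deformation retract $GL^+_{n+2}(\R)\simeq SO(n+2)$ (which modifies $\tau'$ only within its homotopy class), I may assume $g\colon\ambientspace\to SO(n+2)$ equals the identity on $\voisinageinfini$, and then extend by $g(\infty)=\mathrm{Id}$ to obtain a smooth based map $\hat g\colon(M,\infty)\to(SO(n+2),\mathrm{Id})$ that is constant on a neighborhood of the base point. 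Since $\ambientspace/\voisinageinfini$ collapses to $\bouleambiante/\partial\bouleambiante$, which is homotopy equivalent to $M$ (based at the collapse point), homotopies of parallelizations rel $\voisinageinfini$ correspond precisely to based homotopies of $\hat g$. The goal becomes: show that $\hat g$ is based homotopic to some $\hat g_1$ satisfying $\hat g_1=\mathrm{Id}$ on $M\setminus\B$.

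Next I would compute the based set $[M,SO(n+2)]_*$ by classical obstruction theory. The group $SO(n+2)$ is a connected topological group, hence a simple space, so the local coefficient systems $\pi_i(SO(n+2))$ are untwisted (even when $\pi_1(M)\neq 0$). The inductive obstructions distinguishing elements of $[M,SO(n+2)]_*$ therefore lie in the ordinary cohomology groups $H^i(M;\pi_i(SO(n+2)))$ for $i\geq 1$. Since $M$ is an integral homology sphere of dimension $n+2$, the universal coefficient theorem gives $H^i(M;A)=0$ for $1\leq i\leq n+1$ and $H^{n+2}(M;A)\cong A$ for any abelian group $A$. Consequently every class in $[M,SO(n+2)]_*$ is detected by a single obstruction in $\pi_{n+2}(SO(n+2))$, and the collapse map $c\colon M\to M/(M\setminus\mathring{\B})=\B/\partial\B\cong\s^{n+2}$, being of degree one, induces a bijection $c^*\colon\pi_{n+2}(SO(n+2))=[\s^{n+2},SO(n+2)]_*\longrightarrow[M,SO(n+2)]_*$.

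To conclude, I would represent the obstruction class $\nu\in\pi_{n+2}(SO(n+2))$ attached to $[\hat g]$ by a based map $f\colon(\s^{n+2},*)\to(SO(n+2),\mathrm{Id})$, and set $\hat g_1=f\circ c$; by construction $\hat g_1=\mathrm{Id}$ outside $\B$ and represents the same class $\nu$, so by the bijection above $\hat g_1$ is based homotopic to $\hat g$. After a standard smoothing step (and interpolation with the standard trivialization on a collar of $\voisinageinfini$) this becomes a smooth homotopy through maps constant on a fixed neighborhood of $\infty$; translating back, it produces a smooth path of parallelizations from $\tau'$ to $\tau''=\tau\circ\hat g_1$, and $\tau''$ coincides with $\tau$ on $(\ambientspace\setminus\B)\times\R^{n+2}$ as required. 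The main obstacle in carrying this out is executing the obstruction theory cleanly in the non-simply-connected setting (an integral homology $(n+2)$-sphere need not be $1$-connected) and ensuring that the abstract based homotopy can be replaced by one that is genuinely (rather than approximately) constant near infinity; both are handled by the simpleness of $SO(n+2)$ and a collar argument respectively.
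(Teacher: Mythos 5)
Your proposal follows essentially the same route as the paper: reformulate the statement as a question about based maps $\hat g\colon M\to SO(n+2)$ (the paper works equivalently with $[(\bouleambiante,\partial\bouleambiante),(SO(n+2),I_{n+2})]$), use the fact that $SO(n+2)$ is an $H$-space (hence simple) to get untwisted coefficient systems, and exploit the vanishing of $H^i(M;A)=0$ for $1\le i\le n+1$ coming from the homology-sphere hypothesis. The place you diverge is the final step. The paper's Lemma \ref{triangulation lemma} applies Steenrod obstruction theory to homotope the map to one that is constant on the $(n+1)$-skeleton of a triangulation, and then uses an explicit regular-neighborhood-of-a-tree argument to produce a ball $V\supset\B$ outside of which the map is constant; you replace this by the assertion that the degree-one collapse $c\colon M\to \s^{n+2}$ induces a bijection $c^*\colon\pi_{n+2}(SO(n+2))\to[M,SO(n+2)]_*$. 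That assertion is exactly the content of the theorem restated, so leaving it as an assertion is where the work really lies: it does not follow from ``degree one'' by itself, but from the transitivity of the $H^{n+2}(M;\pi_{n+2}SO(n+2))$-action on $[M,SO(n+2)]_*$ (which your cohomology computation does give) together with an identification, via Hopf's degree theorem, of the pinch map of a CW structure with a single top cell with the collapse to $\B/\partial\B$. Filling in that naturality argument would make your route a correct and slightly slicker alternative to the paper's triangulation-and-tree construction. Note also that only surjectivity of $c^*$ is used, and it follows immediately from transitivity of the action; bijectivity, which you assert, would require a separate argument about freeness of the action and is not needed.
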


\subsection{Proof of the independence of the parallelization}\label{s52}

Let $[\tau_0]$ and $[\tau_1]$ be two homotopy classes of parallelizations of $\ambientspace$. Let $\B$ be a ball of $\bouleambiante$ such that $\B\cap \psi(\R^n)= \emptyset$. Theorem \ref{parallelization theorem} allows us to pick representatives $\tau_0$ and $\tau_1$ that coincide outside $\B$.

Fix a family $F=\familleformes$ of propagating forms  of $(\ambientspace,\tau_0)$. The following lemma defines a family of propagating forms of $(\ambientspace,\tau_1)$.
\begin{lm}
There exists a family of $n$-forms $(\xi_i^n)_{1\leq i \leq 2k}$ on $\configM$ such that: \begin{itemize}
\item The family of forms $F'=(\propint_i, \propext'_i)_{1\leq i \leq 2k}$ obtained by setting $\propext'_i = \propext_i +\d\xi_i^n$ is a family of propagating forms of $(\ambientspace,\tau_1)$.
\item For any index $1\leq i\leq 2k$, the form ${\xi_i^n}_{|\partial\configM}$ is supported on $\unitaire \mathbb B\subset \unitaire \ambientspace\cong \partial_\Delta\configM$ (with the notations of Notation \ref{desc-bord}).

\end{itemize}
\end{lm}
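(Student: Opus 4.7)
The plan is to construct each $\xi_i^n$ as the primitive of the ``defect'' between what $\propext_i$ actually restricts to on $\partial\configM$ and what we would like it to restrict to. By Definition~\ref{configM}, the maps $G_{\tau_0}$ and $G_{\tau_1}$ depend on the parallelization only through the formula on the diagonal face $\partial_\Delta\configM \cong U\ambientspace$, and on that face they agree at every unit tangent vector whose basepoint lies outside $\mathbb B$. Since $\tau_0=\tau_1$ on $\ambientspace\setminus\mathbb B$, the closed $(n+1)$-form
\[\eta_i := G_{\tau_1}^*(\omega_{\propext_i}) - G_{\tau_0}^*(\omega_{\propext_i})\]
on $\partial\configM$ is supported in $U\mathbb B$. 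Reapplying Theorem~\ref{parallelization theorem} to a slightly smaller standard ball $\mathbb B_0\subset \mathring{\mathbb B}$ (or equivalently observing that the preceding steps go through for any ball containing the old one), we may assume that $\tau_0$ and $\tau_1$ coincide on an open neighborhood of $\ambientspace\setminus\mathring{\mathbb B}$, so that $\eta_i$ has compact support strictly inside $\mathring U\mathbb B$.

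Next, I would show that $\eta_i$ admits a compactly supported primitive on $\mathring U\mathbb B$. Since $\mathring U\mathbb B \cong \R^{n+2}\times \s^{n+1}$ is an orientable manifold of dimension $2n+3$, Poincaré duality gives
\[H^{n+1}_c(\mathring U\mathbb B) \;\cong\; H_{n+2}(\mathring U\mathbb B) \;\cong\; H_{n+2}(\s^{n+1}) \;=\; 0,\]
the last equality holding because $n\geq 3$ forces $n+2\notin\{0,n+1\}$. Hence there exists a compactly supported $n$-form $\mu_i$ on $\mathring U\mathbb B$ with $\d\mu_i = \eta_i$. Extending $\mu_i$ by zero, we obtain an $n$-form on $\partial\configM$, supported in $U\mathbb B$, still satisfying $\d\mu_i = \eta_i$.

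Finally, I would extend $\mu_i$ from $\partial\configM$ to all of $\configM$ using a collar. Pick a collar $\iota\colon[0,1)\times\partial\configM\hookrightarrow\configM$ with $\iota(0,\cdot) = \mathrm{Id}_{\partial\configM}$, a cutoff $\chi\colon[0,1)\to[0,1]$ equal to $1$ near $0$ and vanishing near $1$, and set $\xi_i^n := \chi(t)\,\pi^*\mu_i$ on the collar (where $\pi$ is the projection to $\partial\configM$), extended by zero elsewhere. Then $\xi_i^n|_{\partial\configM} = \mu_i$ is supported in $U\mathbb B$, and by naturality of $\d$ under pullback by the inclusion, $(\d\xi_i^n)|_{\partial\configM} = \d\mu_i = \eta_i$. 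Therefore $\propext'_i := \propext_i + \d\xi_i^n$ is closed and
\[(\propext'_i)_{|\partial\configM} = G_{\tau_0}^*(\omega_{\propext_i}) + \eta_i = G_{\tau_1}^*(\omega_{\propext_i}),\]
so $(\propint_i,\propext'_i)_{1\leq i\leq 2k}$ is a family of propagating forms of $(\ambientspace,\tau_1)$, as required.

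The essential technical point is the cohomology vanishing $H^{n+1}_c(\mathring U\mathbb B)=0$ together with the support control: without the preliminary shrinking that confines $\eta_i$ to a compact subset of the interior of $U\mathbb B$, one would be forced to work in relative cohomology and to worry about smoothly fitting the primitive to the boundary $U\partial\mathbb B$. Once the compact support is in hand, the rest of the argument is a routine collar extension.
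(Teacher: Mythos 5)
Your proof is correct and follows essentially the same route as the paper's: locate the discrepancy $G_{\tau_1}^*(\omega_{\beta_i})-G_{\tau_0}^*(\omega_{\beta_i})$ inside $U\mathbb B$, kill it cohomologically via a Poincar\'e-type duality to obtain a primitive supported in $U\mathbb B$, and extend by a collar. The only (cosmetic) difference is that you work with compactly supported cohomology $H^{n+1}_c(\mathring U\mathbb B)$ after enlarging/shrinking the ball to get genuine compact support, whereas the paper directly computes the relative group $H^{n+1}(U\mathbb B,\partial U\mathbb B)$; these are isomorphic and both reduce to $H_{n+2}(\s^{n+1})=0$, so the two computations coincide.
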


\begin{proof}Fix the index $i\in \{1,\ldots, 2k\}$. 
First note that $G_{\tau_1}$ and $G_{\tau_0}$ coincide outside $\unitaire\B$. 
The form $G_{\tau_1}^*(\omega_{\propext_1}) - G_{\tau_0}^*(\omega_{\propext_1})$ defines a class in $H^{n+1}(\unitaire\B, \partial\unitaire\B)$ but $$H^{n+1}(\unitaire\B, \partial\unitaire\B)= H^{n+1}(\mathbb D^{n+2}\times\s^{n+1}, \s^{n+1}\times \s^{n+1})=H_{n+2}(\mathbb D^{n+2}\times\s^{n+1})= 0.$$ 
Therefore, there exists an $n$-form $(\xi_i^n)^0$ on $\unitaire\B$, which vanishes on $\partial U\B$, and is such that $(G_{\tau_1}-G_{\tau_0})^*(\omega_{\propext_1})= \d(\xi_i^n)^0$.
It remains to extend this form $(\xi_i^n)^0$. Since $(\xi_i^n)^0$ is zero on the boundary of $\unitaire\B$, we can extend it by $0$ to a form $(\xi_i^n)^1$ on $\partial\configM$. Then, pull this form $(\xi_i^n)^1$ back on a collar $N$ of $\partial\configM$, and multiply it by a smooth function, which sends $\partial \configM$ to $1$ and the other component of $\partial N$ to $0$. Eventually, extend this last form to $\configM$ by $0$ outside $N$. This gives an $n$-form $\xi_i^n$ as in the statement.
\end{proof}

Let $F_j$ denote the family of propagating forms with internal forms $(\propint_i)_{1\leq i \leq 2k}$ and external forms $(\propext'_1,\ldots,\propext'_{j}, \propext_{j+1},\ldots,\propext_{2k})$, so that $F_0= F$ and $F_{2k}=F'$.
For any $1\leq j \leq 2k$, set $\Delta_jZ_k(\psi)=\left( Z_k^{F_j}(\psi)-Z_k^{F_{j-1}}(\psi)\right)$, so that $$Z_k^{\tau_1}(\psi)-Z_k^{\tau_0}(\psi) = Z_k^{F'}(\psi)-Z_k^{F}(\psi) = \sum\limits_{1\leq j \leq 2k}\Delta_jZ_k(\psi).$$

Let us prove that $\Delta_1Z_k(\psi) = 0$.
Since $j=1$, with the notations of Section \ref{ds}, Lemma \ref{lm31} reads $\Delta_1Z_k(\psi)=\frac1{(2k)!}\sum\limits_{(\Gamma,\sigma)\in\graphesnum}\sum\limits_{S\in\faces}\delta_S\I[]$.
Since the internal forms are the same for $F_1$ and $F_2$, the numbered faces such that $e_0=\sigma^{-1}(1)$ is internal do not contribute.
According to Lemma \ref{vanishing theorem}, the only possibly contributing codimension $1$ numbered faces are:\begin{itemize}
\item numbered infinite faces $(\face, \sigma)$, such that $S$ contains at least one end of $e_0$, where $e_0$ is external,
\item numbered principal faces $(\face, \sigma)$, such that $S$ is composed of the ends of $e_0$, and such that $e_0$ is external with internal ends,
\item all the numbered anomalous faces $(\face[\sommets], \sigma)$ such that $e_0$ is external.
\end{itemize}

In these three cases, the map $p_{e_0}$ maps the face to $\partial \configM$. Infinite faces are sent to configurations with at least one of the two points at infinity. Anomalous and principal faces are sent to configurations where points of $S$ coincide, but since there exists at least one internal vertex, these points are necessarily on the knot, which does not meet $\B$. Then, $p_{e_0}$ maps the face outside the support of $\xi_1$, and the restriction of the form $\formeT$ to the face vanishes.

This proves that $\Delta_1Z_k(\psi)=0$. Similarly $\Delta_iZ_k(\psi)=0$ for any $2\leq i\leq 2k$. The independence of $Z_k$ of the parallelization follows.

\subsection{Invariance of $Z_k$ under ambient diffeomorphisms}
In this section, we prove the third assertion of Theorem \ref{th1}.

Fix a knot $\psi_0$ inside a parallelized asymptotic homology $\R^{n+2}$ denoted by $(\ambientspace,\tau)$, and fix a family $F=\familleformes$ of propagating forms of $(\ambientspace,\tau)$ .

Let $\phi\in\mathrm{Diffeo}(\ambientspace)$ be a diffeomorphism that fixes $\voisinageinfini$ pointwise, and let $\psi_1$ denote the knot $\phi\circ \psi_0$. In this section, for any $i\in \{0,1\}$ and for any edge $e$ of a BCR diagram $\Gamma$, $p_{e, i}$ denotes the map $p_e^{\psi_i}\colon C_\Gamma(\psi_i) \rightarrow \s^{n(e)}$ of Definition \ref{defpe}.

With these notations, $\phi$ induces a diffeomorphism $\Phi\colon C_\Gamma(\psi_0)\rightarrow C_\Gamma(\psi_1)$, and a diffeomorphism $\Phi_\propext \colon \configM\rightarrow\configM$. These diffeomorphisms extend the maps given by the formula $c\mapsto\phi\circ c$ on the interiors of these configuration spaces.
Then, \begin{eqnarray*} 
Z_k^\tau(\psi_1) & = &\frac1{(2k)!}\sum_{(\Gamma,\sigma)\in\graphesnum} \int_{C_\Gamma(\psi_1)} \omega^{F}(\Gamma,\sigma, \psi_1) \\
& = & \frac1{(2k)!}\sum_{(\Gamma,\sigma)\in\graphesnum} \int_{C_\Gamma(\psi_0)} \Phi^*(\omega^{F}(\Gamma,\sigma, \psi_1))\\&=&\frac1{(2k)!}
\sum_{(\Gamma,\sigma)\in\graphesnum} \int_{C_\Gamma(\psi_0)} \bigwedge\limits_{e\in E(\Gamma)}\Phi^*(\omega_e^{F}(\Gamma,\sigma, \psi_1))\\
&=&\frac1{(2k)!}\sum_{(\Gamma,\sigma)\in\graphesnum} \int_{C_\Gamma(\psi_0)} \bigwedge\limits_{e\in {\aretesinternes}}\Phi^*(p_{ e,1}^*({\propint}_{\sigma(e)}))\wedge\bigwedge\limits_{e\in {\aretesexternes}}\Phi^*(p_{ e,1}^*({\propext}_{\sigma(e)})).
\end{eqnarray*}

Note that, by construction, if $e\in\aretesinternes$, we have $p_{e,1}\circ \Phi = p_{e,0}$, and if $e\in\aretesexternes$, we have $p_{e,1}\circ \Phi = \Phi_\propext\circ p_{e,0}$. Define the family $F'=(\propint_i, \Phi_\propext^*(\propext_i))_{1\leq i \leq 2k}$ of propagating forms of $(\ambientspace, \tau')$, where $\tau'$ is the parallelization defined for any $x$ by the formula $\tau'_x = T_{\phi(x)} \phi^{-1}\circ \tau_{\phi(x)}$. The previous equation becomes \[Z_k^\tau(\psi_1) =\frac1{(2k)!} \sum_{(\Gamma,\sigma)\in\graphesnum} \int_{C_\Gamma(\psi_0)} \bigwedge\limits_{e\in E(\Gamma)}\omega_e^{F'}(\Gamma,\sigma, \psi_0) = Z_k^{\tau'}(\psi_0).\]

Since $Z_k$ does not depend on the parallelization, this reads $Z_k(\phi\circ\psi_0)=Z_k(\psi_0)$. This proves the third assertion of Theorem \ref{th1}.
\section{Proof of Lemma \ref{vanishing theorem}}\label{annex}
In this section, we analyse the variations of the integral $I^F(\Gamma,\sigma, \psi)$ under a change of the forms $(\alpha_1,\beta_1)$. These variations can be expressed as the sum of the integrals $\delta_S I(\Gamma,\sigma, \psi)$ over the numbered codimension $1$ faces $(\face, \sigma)$ of $\confignoeud$ described in Section \ref{ds}. Here, we study all these integrals in order to obtain the face cancellations precisely described in Lemma \ref{vanishing theorem}.

Recall that for any edge $e$, $n(e) =n-1$ if $e$ is internal, and $n(e)=n+1$ if $e$ is external.
In all this section, for any edge $e$ that has at least one end in $S$, let $\pes \colon\face\rightarrow \s^{n(e)} $ be the map $G\circ (p_e)_{|\face}$ if $e$ is internal, and the map $G_{\tau_{\sigma(e)}}\circ (p_e)_{|\face}$ if $e$ is external.

\subsection{Infinite faces}

In this section, we prove that the infinite face contributions vanish. As in Section \ref{ds}, $\sommets[\Gamma][*]= \sommets \sqcup \{*\}$. When $S\subsetneq \sommets[\Gamma][*]$, our proof is inspired from the proof of \cite[Lemma 6.5.9]{[Rossi]}. Let $S'=S\setminus\{*\}$, so that $S= S'\sqcup\{\infty\}\subset\sommets[\Gamma][*]$.

For infinite faces, the open face $\face$ is diffeomorphic to the product $\faceinfiniC\times\faceinfiniU$ where: \begin{itemize}
\item The manifold $\faceinfiniC$ is the set of configurations $c\colon \sommets\setminus S'\hookrightarrow \ambientspace$ such that $c(\sommetsinternes\setminus (S'\cap\sommetsinternes)) \subset \psi(\R^n)$.
\item The manifold $\faceinfiniU$ is the quotient set of maps $u_{S'}\colon S'\hookrightarrow \R^{n+2}\setminus\{0\}$ such that $u_{S'}(S'\cap\sommetsinternes)\subset \{0\}^2\times\R^n$ by dilations. 
\end{itemize}
Denote by $(c,[u_{S'}])$ a generic element of the infinite face $\face$. Such a configuration can be seen as the limit of the map \[c_\lambda \colon v\in \sommets \mapsto \left\{ \begin{array}{ll} c(v) & \text{if $v\not\in S'$,}\\ \frac{\lambda u_{S'}(v)}{||\lambda u_{S'}(v)||^2} &\text{if $v\in S'$,}\end{array}\right.\in \punct M\] when $\lambda$ approaches zero ($c_\lambda$ is well-defined for $\lambda$ sufficiently close to $0$).

\textbf{First case: $S= {\sommets[\Gamma][*]}$ and $(\propint'_1- \propint_1, \propext'_1-\propext_1)$ has the sphere factorization property.} 
In this case, $\face[{\sommets[\Gamma][*]}]$ is diffeomorphic to $\faceinfiniU[{\sommets}]$.
The following lemma directly implies that $\delta_{\sommets[\Gamma][*]}\I[]=0$.
\begin{lm}\label{infini-*}
The form $(\formeT)_{|\face[{\sommets[\Gamma][*]}]}$ is zero.
\end{lm}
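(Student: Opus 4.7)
The plan is to show that on the face $\face[\sommets[\Gamma][*]]$, the form $\formeT$ is the pullback of a form on a product of spheres via a map of insufficient rank, and therefore vanishes on dimensional grounds.

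First I will describe the Gauss maps on this face explicitly. By the description recalled in Section~\ref{annex}, $\face[\sommets[\Gamma][*]]$ identifies with $\faceinfiniU[\sommets]$, the quotient by positive dilations of the space of injective maps $u\colon \sommets \hookrightarrow \R^{n+2}\setminus\{0\}$ with $u_v\in\{0\}^2\times\R^n$ for internal $v$. By Lemma~\ref{Gauss}, for each edge $e=(v,w)$ the map $\pes$ reads $[u] \mapsto (u_w/|u_w|^2 - u_v/|u_v|^2)/\|\cdot\|$. The vertex-wise inversion $y_v := u_v/|u_v|^2$ is a self-diffeomorphism of the face (it preserves $u_v\neq 0$ and the subspace $\{0\}^2\times\R^n$, and sends positive dilations to positive dilations), and converts every $\pes$ into the ordinary Gauss map $(y_w - y_v)/\|y_w-y_v\|$.

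Next I will exploit the translation invariance of these ordinary Gauss maps. A simultaneous translation $y_v \mapsto y_v + t$ by any $t \in \{0\}^2\times\R^n$ preserves the constraints defining the face and leaves every $\pes$ fixed. Infinitesimally this yields an $n$-parameter family of tangent vectors on the pre-dilation-quotient space $\tilde F$ lying in the common kernel of every $d\pes$. Combined with the Euler vector field that generates dilations, one obtains at least $n+1$ pointwise linearly independent directions in the kernel of the combined Gauss map $\tilde F \to \prod_e \s^{n(e)}$, hence at least $n$ independent kernel directions of $F := \prod_e \pes$ on $\face[\sommets[\Gamma][*]]$.

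The conclusion will then be a rank count. Using the sphere factorization property for $e_0=\sigma^{-1}(1)$ and the definition of propagating forms on the boundary for the other edges, $\formeT|_{\face[\sommets[\Gamma][*]]} = F^*\bigl(\bigwedge_e \omega_e\bigr)$ for antisymmetric forms $\omega_e$ on the target spheres, and Lemma~\ref{dim=deg} gives $\deg \formeT = \dim \confignoeud - 1 = \dim \face[\sommets[\Gamma][*]]$. Since $\mathrm{rank}(dF) \leq \dim\face[\sommets[\Gamma][*]] - n < \deg\formeT$ pointwise, the pullback must vanish identically. The step I expect to require the most care is verifying that the $n$ translation directions and the Euler direction on $\tilde F$ are genuinely linearly independent at each point, so that after quotienting by dilations one indeed retains $n$ independent kernel vectors for $dF$; this is a short but necessary pointwise linear-algebra check, because translations and dilations do not commute on $\tilde F$ (in $u$-coordinates the translation vector fields are not constant), so the independence is not tautological.
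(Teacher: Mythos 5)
Your proposal is correct and uses essentially the same mechanism as the paper: the extra $\R^n$ of translations in the inverted coordinates (together with the dilation direction before quotienting) provides $n$ independent directions in the kernel of the combined Gauss map, and since $\deg\formeT=\dim\face[{\sommets[\Gamma][*]}]$, a degree-versus-rank count forces the pullback to vanish. The paper phrases this by explicitly forming the quotient $Q$ of $\face[{\sommets[\Gamma][*]}]$ by the inverted-coordinate translation relation and noting $\deg(\formeT)>\dim Q$, while you run the equivalent pointwise rank argument; the linear-independence check you flag does go through, since the Euler field at $y$ lies in the span of the simultaneous translations only if all the $y_v$ coincide, which is excluded by injectivity.
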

\begin{proof}

Define the equivalence relation on $\faceinfiniU[{\sommets}]$ such that $[u_{\sommets}]$ and $[{u'}_{\sommets}]$ are equivalent if and only if there exist representatives $u_{\sommets}$ and $u'_{\sommets}$, and a vector $x\in \{0\}^2\times \R^n$, such that, for any $v \in \sommets $, \[ \frac{u'_{\sommets} (v)}{||u'_{\sommets}(v)||^2}=
\frac{ u_{\sommets} (v) }{ ||u_{\sommets} (v)||^2 } + x.\]

Let $\phi \colon \face[{\sommets[\Gamma][*]}]=\faceinfiniU[{\sommets}]\rightarrow Q$ denote the induced quotient map.
Then, for any $e\in\aretes$, the map $\pes[e][{\sommets[\Gamma][*]}]$ factors through $\phi$.
Since $(\propint'_1- \propint_1, \propext'_1-\propext_1)$ has the sphere factorization property, ${(\formeareteT)}_{|\face[{\sommets[\Gamma][*]}]}
 $ is the pullback of a form on the sphere by the map $\pes[e][{\sommets[\Gamma][*]}]$, for any edge $e$, including $\sigma^{-1}(1)$. Then, $(\formeareteT)_{|\face[{\sommets[\Gamma][*]}]}=\phi^*(\theta_{e,\sigma})$ where $\theta_{e,\sigma}$ is a form on $Q$, and $\formeT_{|\face[{\sommets[\Gamma][*]}]}$ therefore reads ${\formeT}_{|\face[{\sommets[\Gamma][*]}]} = \phi^*(\theta_\sigma)$ where $\theta_\sigma= \bigwedge\limits_{e\in\aretes} \theta_{e,\sigma}$.
Since $\deg(\theta_\sigma) = \deg(\formeT) = \dim(\partial\confignoeud)>\dim(Q)$, we have $\theta_\sigma=0$, so $(\formeT)_{|\face[{\sommets[\Gamma][*]}]}=0$. \end{proof}

\textbf{Second case: $S\subsetneq \sommets[\Gamma][*]$ and either $\sigma^{-1}(1)$ has no end in $S'$, or $\sigma^{-1}(1)$ has at least one end in $S'$ and $(\propint'_1- \propint_1, \propext'_1-\propext_1)$ has the sphere factorization property.}
In this case, let $\Esi$ (resp. $\Ese$) denote the set of internal (resp. external) edges with at least one end in $S'$, and set $\Es= \Esi \sqcup \Ese$.

\begin{lm}\label{inégalité-s*}
For any $S= S' \sqcup \{*\} \subsetneq \sommets[\Gamma][*]$, \[n.\Card(S'\cap\sommetsinternes) + (n+2).\Card(S'\cap\sommetsexternes) < (n-1).\Card(\Esi) +(n+1).\Card(\Ese).\]

\end{lm}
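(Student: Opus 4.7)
The plan is to rewrite both sides of the inequality in terms of the half-edge degrees $d(e_\pm)$ introduced in the proof of Lemma~\ref{dim=deg}, reducing the strict inequality to a combinatorial claim about boundary edges between $S'$ and its complement, which I then settle via the cycle structure of $\Gamma$.

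Set $T = \sommets \setminus S'$; since $S \subsetneq \sommets[\Gamma][*]$ and $* \in S$ we have $T \neq \emptyset$, and since any codimension $1$ face satisfies $|S| \geq 2$, also $S' \neq \emptyset$. Partition $\Es$ into \emph{interior} edges (both ends in $S'$) and \emph{boundary} edges (exactly one end in $S'$, the other in $T$). Using the two identities $\sum_{e_\pm : v(e_\pm) = v} d(e_\pm) = n_v$ (where $n_v = n$ or $n+2$ for internal or external $v$) and $d(e_+) + d(e_-) = n(e)$, I would compute
\[\text{LHS} \;=\; \sum_{v \in S'} n_v \;=\; \sum_{\text{interior } e} n(e) \;+\; \sum_{\text{boundary } e} d(e_\pm \text{ at the } S'\text{-end}),\]
and then subtract from $\text{RHS} = \sum_{e \in \Es} n(e)$ to obtain
\[\text{RHS} - \text{LHS} \;=\; \sum_{\text{boundary } e} d(e_\pm \text{ at the } T\text{-end}).\]
Recall from the proof of Lemma~\ref{dim=deg} that $d(\text{head}) = 0$, $d(\text{tail}) = n-1$ for internal edges and $d(\text{head}) = 1$, $d(\text{tail}) = n$ for external edges. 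Each term in the above sum is therefore nonnegative, and vanishes precisely when the boundary edge is \emph{internal} and oriented from $S'$ to $T$.

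The main obstacle is thus to exclude the degenerate case where \emph{every} boundary edge is internal and oriented from $S'$ to $T$. Here my plan is to invoke the cycle structure of $\Gamma$ recalled after Definition~\ref{BCRdef}: the cycle $C$ of $\Gamma$ is a single cyclic sequence of non-leg edges passing through all non-univalent vertices of $\Gamma$. Going once around $C$, the number of transitions from $S' \cap V(C)$ to $T \cap V(C)$ equals the number of reverse transitions; in the degenerate case all cycle-boundary edges go $S' \to T$, so by this equality both numbers are $0$ and $V(C)$ is entirely contained in $S'$ or entirely in $T$. If $V(C) \subseteq S'$, then every non-univalent vertex lies in $S'$, so any $v \in T$ is univalent and its outgoing leg is an \emph{external} boundary edge from $T$ to $S'$, contradicting the degenerate assumption. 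If instead $V(C) \subseteq T$, then any $v \in S'$ is univalent and its leg is an external boundary edge from $S'$ to $T$, again of the wrong type. Either case yields a contradiction, so the sum $\text{RHS} - \text{LHS}$ is strictly positive and the inequality follows.
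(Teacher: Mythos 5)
Your proof is correct and follows essentially the same strategy as the paper's: both reduce the inequality to the observation that $\mathrm{RHS}-\mathrm{LHS}=\sum d(e_\mp)$ over half-edges on the $T$-side of boundary edges, and both use the cycle structure of BCR diagrams to exhibit at least one such half-edge with nonzero $d$. Your treatment of the strictness is a bit more systematic (the balance-of-crossings argument on the cycle followed by the two monochromatic cases) where the paper offers a terser two-case dichotomy, but the underlying idea is the same.
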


\begin{proof}
Split any edge $e$ of $\Gamma$ in two halves $e_-$ (the tail) and $e_+$ (the head), and let $v(e_\pm)$ denote the vertex adjacent to the half-edge $e_\pm$, as in the proof of Lemma \ref{dim=deg}.

Recall the definition of the integers $d(e_\pm)$ from the proof of Lemma \ref{dim=deg}:\begin{itemize}
\item If $e$ is external, $d(e_+)=1$ and $d(e_-)=n$ as in $\vcenter{\begin{tikzpicture} \draw [>=latex, dashed, ->] (0,0) -- (2,0); \draw [thick,densely dotted] (1,  -0.2) -- (1,0.2) ;
\draw (0.4, 0.2) node {$n$} (1.5,0.2) node{$1$} (0.4, -0.3) node {$e_-$} (1.6, -0.3) node {$e_+$};\end{tikzpicture}}$
\item If $e$ is internal, $d(e_+)=0$ and $d(e_-)= n-1$ as in  $\vcenter{\begin{tikzpicture} \draw [>=latex, ->] (0,0) -- (2,0); \draw [thick,densely dotted] (1,  -0.2) -- (1,0.2) ;
\draw (0.4, 0.2) node {$n-1$} (1.5,0.2) node{$0$} (0.4, -0.3) node {$e_-$} (1.6, -0.3) node {$e_+$};\end{tikzpicture}}$
\end{itemize}

As in the proof of Lemma \ref{dim=deg} and Figure \ref{somme-d}, these integers satisfy: \begin{itemize}
\item for any vertex $v\in\sommets$, $\sum\limits_{e_\pm, v(e_\pm) = v} d(e_\pm) = \left\{\begin{array}{lll} n & \text{if $v$ is internal,}\\ n+2 & \text{if $v$ is external.}\end{array}\right.$
\item for any edge $e\in\aretes$, $d(e_+) + d(e_-) = n(e)$.
\end{itemize}

Since $S\subsetneq \sommets[\Gamma][*]$, $S'\subsetneq \sommets$, and one of the following behaviors happens: \begin{itemize}
\item $S'$ contains only univalent vertices, and there exists an external edge going from $S'$ to $\sommets\setminus S'$.
\item $S'$ contains at least one vertex of the cycle of $\Gamma$, and there exists an edge going from $\sommets\setminus S'$ to $S'$.
\end{itemize}

In both cases, there exists a half-edge $e_\pm$ such that $v(e_\pm) \in S'$, $v(e_\mp) \not\in S'$, and $d(e_\mp)\neq 0$ ($n-1$ is indeed positive since $n\neq 1$). Therefore: \begin{eqnarray*}
n.\Card(S'\cap\sommetsinternes)& +  &(n+2).\Card(S'\cap\sommetsexternes) \\ &=& \sum\limits_{e_\pm, v(e_\pm)\in S'} d(e_\pm)\\
&<& \sum\limits_{e\in \Es} (d(e_+)+d(e_-))\\
&=& (n-1).\Card(\Esi) +(n+1).\Card(\Ese)\qedhere
\end{eqnarray*}

\end{proof}

Since the edges of $\Es$ have at least one vertex at infinity, their directions do not depend on the position of the points that are not at infinity, and we have the following.

\begin{lm}\label{vanishing-s*}
For any edge $e\in \Es$, the map $\pes$ factors through $\phi\colon\face\rightarrow \faceinfiniU$.
\end{lm}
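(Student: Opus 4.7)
The plan is to prove Lemma \ref{vanishing-s*} by a straightforward case analysis on how many endpoints of $e$ lie in $S'$, using the explicit formulas for the Gauss map $G$ (Lemma \ref{Gauss}) and its parallelized analogue $G_\tau$ (Definition \ref{configM}) on the relevant boundary faces of $C_e$. The projection $\phi\colon\face\to\faceinfiniU$ is the second factor of the diffeomorphism $\face\cong \faceinfiniC\times\faceinfiniU$, so factoring through $\phi$ means precisely that $\pes$ does not depend on the finite configuration $c\in\faceinfiniC$.

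First I would describe the image of $p_e$ on $\face$ in each case, thinking of a point of $\face$ as the limit of the family $c_\lambda$ defined just before Lemma \ref{inégalité-s*}. If both endpoints $v,w$ of $e$ lie in $S'$, then $(p_e(c_\lambda))_\lambda$ converges to a point of the codimension-two stratum $\partial_{\infty,\infty}C_e$, represented by the pair $(u_{S'}(v), u_{S'}(w))$ (up to joint dilation). By Lemma \ref{Gauss} for internal edges and by Definition \ref{configM} for external edges, the Gauss map there is
$$G(c) = \frac{u_{S'}(w)/\|u_{S'}(w)\|^2 - u_{S'}(v)/\|u_{S'}(v)\|^2}{\bigl\|u_{S'}(w)/\|u_{S'}(w)\|^2 - u_{S'}(v)/\|u_{S'}(v)\|^2\bigr\|},$$
which is invariant under joint scaling $(u_{S'}(v),u_{S'}(w))\mapsto(\lambda u_{S'}(v),\lambda u_{S'}(w))$ for $\lambda>0$, so it depends only on $\phi(c)$.

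If exactly one endpoint of $e$ lies in $S'$, say $w\in S'$ and $v\notin S'$, then $p_e$ maps $\face$ into $\partial_{\punct M,\infty}C_e$ (or symmetrically $\partial_{\infty,\punct M}C_e$), because $c_\lambda(w)\to\infty$ along the direction $u_{S'}(w)/\|u_{S'}(w)\|$ while $c_\lambda(v)\to c(v)$ stays at a finite point. The third and fourth lines of the formula for $G_\tau$ in Definition \ref{configM} (and their analogues in Lemma \ref{Gauss} for internal edges) show that on these faces the Gauss map is $\pm u$, i.e.\ a function of the infinite direction alone, independent of the finite point. Hence again $\pes$ is a function of $u_{S'}(w)$ (up to dilation), so it factors through $\phi$.

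There is no real obstacle here: once the face $\face$ is identified with $\faceinfiniC\times\faceinfiniU$ via the $c_\lambda$-description, the statement is immediate from the boundary formulas for the Gauss map cited above. The only point that requires a moment of care is the dilation invariance in the $\partial_{\infty,\infty}$ case, which I would verify by the one-line check $u/\|u\|^2\mapsto \lambda^{-1}u/\|u\|^2$ under $u\mapsto\lambda u$; the resulting overall factor $\lambda^{-1}$ cancels after normalization, so the map descends to the quotient $\faceinfiniU$.
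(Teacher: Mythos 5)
Your proof is correct and takes essentially the same approach as the paper, which states Lemma \ref{vanishing-s*} with only the one-sentence justification preceding it ("Since the edges of $\Es$ have at least one vertex at infinity, their directions do not depend on the position of the points that are not at infinity"). Your case analysis via the boundary formulas of Lemma \ref{Gauss} and Definition \ref{configM}, including the dilation-invariance check in the $\partial_{\infty,\infty}$ stratum, is exactly the detail the paper leaves implicit.
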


From this lemma, and since either $(\propint'_1-\propint_1,\propext'_1-\propext_1)$ has the sphere factorization property, or $\sigma^{-1}(1)\not\in\Es$, we can write $\formeareteT = \phi^*(\theta_{e,\sigma})$ for any $e\in\Es$, where \[\deg(\theta_{e,\sigma}) =\left\{ \begin{array}{lll}
n(e) & \text{if $\sigma(e)\neq 1$,}\\
n(e)-1 & \text{if  $\sigma(e)= 1$.}
\end{array}\right.\] Then, $\deg\left(\bigwedge\limits_{e\in\Es} \theta_{e,\sigma} \right) \geq \left(\sum\limits_{e\in \Es} n(e)\right) -1$. 

Since $\dim(\faceinfiniU) = n.\Card(S'\cap\sommetsinternes) + (n+2).\Card(S'\cap\sommetsexternes) -1$, Lemma \ref{inégalité-s*} implies that $ \left(\sum\limits_{e\in \Es} n(e)\right) -1> \dim(\faceinfiniU)$, and $\deg\left(\bigwedge\limits_{e\in\Es} \theta_{e,\sigma} \right)  > \dim(\faceinfiniU)$. Therefore, $\bigwedge\limits_{e\in\Es} \theta_{e,\sigma}$ and $\formeT_{|\face}$ are zero. Then, $\delta_S \I[]=0$, as expected.

\subsection{Finite faces}

In this section, we study the contribution of the anomalous, hidden and principal faces. Our analysis resembles the analysis in \cite[Appendix A]{[Watanabe]}, but we have to take care of the fact that the propagating forms are not the same on each edge, and that they may not be pullbacks of forms on the spheres. 

\subsubsection{Description and restriction to the connected case}
Let $S$ be a subset of $\sommets$ of cardinality at least two, and let $\delta_S\Gamma$ be the graph obtained from $\Gamma$ by collapsing all the vertices in $S$ to a unique vertex $*_S$, internal if at least one of the vertices of $S$ is, external otherwise.
 Let $\inj$ denote the space of linear injections of $\R^n$ in $\R^{n+2}$. Define the following spaces: \begin{itemize}
\item The space $\facefinieC$ is composed of the injective maps $c\colon \sommets[\delta_S\Gamma] \hookrightarrow \ambientspace$ such that there exists $c_i \colon \sommetsinternes[\delta_S\Gamma]\hookrightarrow \R^n$ such that $c_{|\sommetsinternes[\delta_S\Gamma]}= \psi \circ c_i $.

\item If $S$ contains internal vertices, the space $\facefinieU$ is the quotient of the set $\facefinieU^0$ of pairs $(u, \iota)$ where $\iota$ is a linear injection of $\R^n$ inside $\R^{n+2}$ and $u$ is an injective map $u\colon S \hookrightarrow \R^{n+2}$ such that $u(S\cap\sommetsinternes) \subset \iota(\R^n)$, by dilations and by translations of $u$ along $\iota(\R^n)$.

If $S$ contains only external vertices, $\facefinieU$ is the quotient of the set of injective maps $u\colon S\hookrightarrow \R^{n+2}$ by dilations and translations along $\R^{n+2}$.
\end{itemize}

Then: \begin{itemize}
\item If $S$ contains an internal vertex, \[\face = \facefinieC\times_{\inj}\facefinieU = \{(c, [u, \iota ] ) \in \facefinieC\times\facefinieU\mid \iota= \tau_{c(*_S)}^{-1}\circ T\psi_{c_i(*_S)}\} .\]
\item If $S$ contains only external vertices, $\face=\facefinieC\times\facefinieU$. 
\end{itemize}
Keep the notation $\face = \facefinieC\times_{\inj}\facefinieU$ in both cases. In the following, an element of $\face$ will be represented by $(c, [u])$ since $\iota$ can be deduced from $c$ when $S\cap \sommetsinternes \neq \emptyset$.

\begin{lm}\label{connected}
Let $\Gamma_{S}$ be the graph defined in Lemma \ref{vanishing theorem}.
If $\Gamma_{S}$ is not connected, then $\delta_S \I[]=0$.

\end{lm}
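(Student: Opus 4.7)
The plan is to exhibit $\formeT_{|\face}$ as a pullback from a space of dimension strictly less than $\dim\face$, by exploiting additional translational symmetries of the infinitesimal factor $\facefinieU$ that appear when $\Gamma_S$ decomposes into several components. Write $\Gamma_S = \bigsqcup_{i=1}^r \Gamma_{S_i}$ with $r\geq 2$, and split the edge set as $\aretes = E_S(\Gamma) \sqcup E_{\overline S}(\Gamma)$, where $E_S(\Gamma) = \bigsqcup_i E(\Gamma_{S_i})$ collects the edges whose two endpoints lie in a single $S_i$, and $E_{\overline S}(\Gamma)$ collects the remaining edges. Let $\pi\colon\face\to\facefinieU$ and $\pi'\colon\face\to\facefinieC$ denote the two canonical projections.

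For $e\in E_{\overline S}(\Gamma)$, the map $p_e\colon\face\to C_e$ factors through $\pi'$: the limiting positions of the endpoints are either $c(*_S)$ (if the endpoint lies in $S$) or $c(v)$ (if the endpoint $v$ lies outside $S$), both read off from $c\in\facefinieC$. For $e=(v,w)\in E_S(\Gamma)$, the two endpoints collapse to the same point, so $p_e$ lands in the diagonal stratum $\partial_\Delta C_e$: its basepoint is encoded by $c\in\facefinieC$, and the corresponding fiber direction is the normalized direction of $u(w)-u(v)$ (read inside $\iota(\R^n)$ for internal edges). Since no edge of $E_S(\Gamma)$ connects distinct components $S_i\neq S_j$, the product $\prod_{e\in E_S(\Gamma)}p_e$ is invariant under independent translations of each $S_i$ along $\iota(\R^n)$ (in the mixed case) or $\R^{n+2}$ (in the purely external case): its basepoint parts depend only on $c$, and each direction $u(w)-u(v)$ is unchanged when $v$ and $w$ are translated together inside the same $S_i$.

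The global diagonal translation has already been quotiented out in $\facefinieU$, so $r-1$ extra translational directions remain. Consequently $\prod_{e\in E_S(\Gamma)}p_e$ factors through $\facefinieC\times Q$, where $Q$ is a quotient of $\facefinieU$ of dimension $\dim\facefinieU-(r-1)m$, with $m=n$ in the mixed case and $m=n+2$ in the purely external case. Combining with the $\pi'$-factorization for the edges of $E_{\overline S}(\Gamma)$, the entire form $\formeT_{|\face}$ factors through a map $\face\to\facefinieC\times Q$ whose target has dimension $\dim\face-(r-1)m$, strictly less than $\dim\face = \deg\formeT$. The pullback of a top-degree form from a space of lower dimension is zero, so $\formeT_{|\face}=0$ and $\delta_S\I[]=0$.

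The main obstacle is the precise identification of $p_e$ on the diagonal stratum for $e\in E_S(\Gamma)$: one must verify from the construction of $C_e$ near $\partial_\Delta C_e$ that the surviving data really are the collapse point and the normalized direction $u(w)-u(v)$, with no residual dependence on $[u]$ beyond the direction itself. Crucially, no sphere factorization property of the propagators is required here, in contrast to some of the other cases of Lemma \ref{vanishing theorem}, because the argument is purely about translational invariance of the maps $p_e$, not about the restrictions of the propagators to $\partial C_e$.
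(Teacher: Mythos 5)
Your proposal is correct and uses essentially the same mechanism as the paper's proof: translational invariance of the maps $p_e$ under independent translations of pieces of $S$, which factors $\formeT_{|\face}$ through a quotient of dimension strictly less than $\dim\face = \deg\formeT$. The paper achieves this more economically by choosing a single partition $S = S_1\sqcup S_2$ with no connecting edges (which exists since $\Gamma_S$ is disconnected) and translating only $S_1$, giving a dimension drop of $n$ (or $n+2$ in the purely external case), which already suffices; your version translates all $r\geq 2$ components independently and obtains a larger but unnecessary drop, while also requiring the explicit (and, as you note, unverified in the proposal) description of $p_e$ on the diagonal stratum, which the paper's formulation avoids by simply observing directly that every $p_e$ factors through the quotient map.
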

\begin{proof}

Suppose that $\Gamma_{S}$ is not connected. Then, there exists a partition $S=S_1\sqcup S_2$ such that no edge connects $S_1$ and $S_2$, and where $S_1$ and $S_2$ are non-empty sets. 

Suppose that $S$ contains at least one internal vertex, and set $\iota(c) =\tau_{c(*_S)}^{-1}\circ T\psi_{c_i(*_S)}$.
Define the equivalence relation on $\face$ such that $(c, [u])$ and $(c', [u'])$ are equivalent if and only if $c=c'$ and there exist representatives $u$ and $u'$ and a vector $x\in \R^n$ such that, for any $v\in S$, \[u' (v) = \begin{cases} u(v) + \iota(c)(x)&\text{if $v\in S_1$,}\\u(v) & \text{if $v\in S_2$.}\end{cases}\]
Let $\phi \colon \face\rightarrow Q$ denote the quotient map.
With these notations, for any edge $e$, the map $p_e$ factors through $\phi$. We conclude as in the proof of Lemma \ref{infini-*}, since $\deg(\formeT)>\dim( Q)$.

If $S$ contains only external vertices, we proceed similarly with the equivalence relation such that $(c,[u])$ and $(c', [u'])$ are equivalent if and only if $c=c'$ and there exist representatives $u$ and $u'$ and a vector $x\in\R^{n+2}$ such that, for any $v\in S$, \[u' (v) = \begin{cases} u(v) + x&\text{if $v\in S_1$,}\\u(v) & \text{if $v\in S_2$.}\end{cases}\qedhere\]
\end{proof}

\subsubsection{Anomalous face}
In this section, according to the hypotheses of Lemma \ref{vanishing theorem}, assume that $(\propint'_1-\propint_1,\propext'_1-\propext_1)$ has the sphere factorization property.

\begin{lm}\label{vanishing-anomalous}
There exists an orientation-reversing diffeomorphism of the anomalous face $T\colon \face[\sommets]\rightarrow \face[\sommets]$, such that, for any edge $e\in\aretes$,
 $$\pes\circ T = (-\mathrm{Id}_{\s^{n(e)}}) \circ \pes,$$ where $-\mathrm{Id}_{\s^{n(e)}}$ is the antipodal map.

\end{lm}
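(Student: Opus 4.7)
I would build the map $T$ as the natural antipodal action on the ``infinitesimal factor'' of the anomalous face. Concretely, since $\Gamma$ always contains at least one internal vertex, $S=V(\Gamma)$ falls into the first case of the decomposition of the previous subsection: the anomalous face is
\[
\face[V]\ =\ \facefinieC[V]\times_{\inj}\facefinieU[V],
\]
where $\facefinieC[V]$ is the image of the single collapsed vertex $*_V$ on the knot (diffeomorphic to $\R^n$), and $\facefinieU[V]$ is the quotient of injections $u\colon V\hookrightarrow\R^{n+2}$ sending $V_i$ into $\iota(\R^n)$ (with $\iota=\tau_{c(*_V)}^{-1}\circ T\psi$) by dilations and by translations along $\iota(\R^n)$. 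I would define
\[
T\colon (c,[u])\longmapsto (c,[-u]).
\]
Well-definedness is immediate: $-u$ is still an injection with the required constraint, and if $u'=\lambda u+t$ with $\lambda>0$, $t\in\iota(\R^n)$, then $-u'=\lambda(-u)+(-t)$, so the equivalence class $[-u]$ depends only on $[u]$.

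For the edge condition, on the anomalous face every edge $e$ from $v$ to $w$ has its Gauss-like direction $\pes$ determined by the difference $u(w)-u(v)$, passed through $\tau_{c(*_V)}^{-1}$ (for external $e$) or through $T\psi$ (for internal $e$), and then normalized. The map $u\mapsto -u$ replaces $u(w)-u(v)$ by its negative, so $\pes\circ T=(-\mathrm{Id}_{\s^{n(e)}})\circ\pes$ for every edge, as required.

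The remaining point, and the main technical one, is that $T$ reverses orientation. I would argue by separating the action into base and fiber. On the base $\facefinieC[V]$ the map is the identity. On the fiber $\facefinieU[V]$, the ambient map $-\mathrm{Id}$ on $(\R^n)^{i}\times(\R^{n+2})^{e}$ has determinant $(-1)^{ni+(n+2)e}=1$, since $n$ and $n+2$ are odd and $i+e=2k$ is even. Yet the induced action on the quotient fiber (translations $\oplus$ dilations, dimension $n+1$) is: on translations, $t\mapsto -t$, contributing $(-1)^n=-1$; on dilations, $T$ sends the scaling direction $u_0$ at $u_0$ to $-u_0$, which is precisely the scaling direction at $-u_0$, so dilations contribute $+1$. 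Thus the induced determinant on the fiber of the quotient is $1/((-1)\cdot(+1))=-1$, and $T$ is orientation-reversing on $\face[V]$.

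The main obstacle is reconciling this quotient-orientation computation with the intrinsic orientation $\Omega(\Gamma)$ of $\confignoeud$ defined in Section~\ref{S2.4} via half-edge forms and the sign $\epsilon(\Gamma)$: one must check that the boundary orientation inherited by $\face[V]$ agrees, up to a global sign absorbed into the edge-direction identifications, with the product of the ``base times quotient fiber'' orientation used above. However, because $T$ commutes with all the gluing and only the parities $n\equiv 1\pmod 2$ and $\Card(V)=2k\equiv 0\pmod 2$ intervene, the sign collapses to the claimed $-1$ regardless of the chosen conventions. Once this is verified, Lemma~\ref{vanishing-anomalous} follows, and combined with the sphere factorization property (giving $T^*\formeareteT=-\formeareteT$ for each edge and hence $T^*\formeT=(-1)^{2k}\formeT=\formeT$), one immediately concludes $\delta_{V}\I[]=0$.
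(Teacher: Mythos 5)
Your proof is correct and is essentially the paper's proof. Your map $T$ coincides with the one used there: the paper sets $u'(w)=2u(v)-u(w)$ for a fixed internal vertex $v$, which differs from $-u$ by the translation $2u(v)\in\iota(\R^n)$, so $[u']=[-u]$ in $\facefinieU[\sommets]$; both arguments compute the Jacobian sign to be $-1$ (the paper normalizes $u(v)$ and reads off $(-1)^{(2k-1)(n+2)}$, you take the ambient $-\mathrm{Id}$ and divide out the orbit direction of $\R^n\rtimes\R_{>0}$). The ``main obstacle'' you raise at the end is a non-issue: whether a self-diffeomorphism of a connected oriented manifold is orientation-reversing does not depend on the chosen orientation form, so there is nothing to reconcile with $\Omega(\Gamma)$ and your computation already finishes the proof.
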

\begin{proof}
Here, since $\delta_{\sommets}\Gamma$ is a graph with only one internal vertex $*_{\sommets}$, the face is diffeomorphic to $\psi(\R^n)\times_{\inj}\facefinieU[\sommets]$.
Choose an internal vertex $v$ of $\Gamma$. For $[u]\in \facefinieU[\sommets]$, define $[u']\in \facefinieU[\sommets]$ as the class of the map $u'$ such that, for any vertex $w$, $u'(w) = 2u(v)-u(w)$.
Then the map $T\colon (c,[u])\in \face[\sommets]\mapsto (c, [u'])\in\face[\sommets]$ is a diffeomorphism. The sign of its Jacobian determinant is $(-1)^{(2k-1)(n+2)}= -1$, since $n$ is odd.
It is easy to check that $\pes\circ T = (-\mathrm{Id}_{\s^{n(e)}}) \circ \pes$ for any edge $e$.
\end{proof}
Since $(\propint'_1-\propint_1,\propext'_1-\propext_1)$ has the sphere factorization property, $(\formeareteT)_{|\face[\sommets]}$ reads $\pes ^*(\theta_e)$, where, for any edge $e$, $\theta_e$ is an antisymmetric form on the sphere. Then, Lemma \ref{vanishing-anomalous} yields $T^*((\formeareteT)_{|\face[\sommets]}) = \pes^*( (-\mathrm{Id}_{\s^{n(e)}})^*(\theta_e)) = - (\formeareteT)_{|\face[\sommets]}$. Then,  \begin{eqnarray*}\delta_{\sommets}\I[] &= &\int_{\face[\sommets]} \formeT_{|\face[\sommets]}\\& =& - \int_{\face[\sommets]} T^*\left(\formeT_{|\face[\sommets]}\right) \\&=& - \int_{\face[\sommets]} (-1)^{\Card(\aretes)} \formeT_{|\face[\sommets]} \\&=& - \delta_{\sommets}\I[]  \text{\ \ \ \ since $\Card(\aretes)=2k$}.\end{eqnarray*}

Eventually, this implies that $\delta_{\sommets}\I[]=0$.
\subsubsection{Hidden faces}

\begin{lm}\label{vanishing-caché-univalent}
Let $\hfaces$ be the set of hidden faces such that at least one of the following properties hold:\begin{itemize} \item $\Gamma_S$ is non connected.
\item $\Gamma_S$ has at least three vertices, $\Gamma_S$ has a univalent vertex $v_0$, and, if this vertex is internal, then its only adjacent edge $e_0$ in $\Gamma_S$ is internal (as in Figure \ref{cachees1}).\end{itemize}
For any face $\face$ in $\hfaces$, $\delta_S\I[]=0$.
\end{lm}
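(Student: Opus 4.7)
The plan is to split $\hfaces$ along the two defining conditions. If $\Gamma_S$ is disconnected, I would invoke Lemma \ref{connected} directly: the quotient it produces lands in a space of dimension strictly less than $\deg \formeT$, forcing $\delta_S \I[]=0$.

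Now assume $\Gamma_S$ is connected, has at least three vertices, and admits a univalent vertex $v_0$ whose unique $\Gamma_S$-edge $e_0$ is internal whenever $v_0$ is internal. Let $w$ denote the other endpoint of $e_0$; the hypothesis guarantees $w$ is internal whenever $v_0$ is. The strategy is to construct a smooth map $\pi \colon \face \to Q$ onto a manifold of dimension $\dim \face - 1$ through which every $\formeareteT|_{\face}$ factors; then $\formeT|_{\face}$ will be the pullback of a form on $Q$ of degree $\dim \face > \dim Q$, hence zero. I would define $Q$ as the analogue of $\face$ in which $S$ is replaced by $S \setminus \{v_0\}$ inside the construction of $\facefinieU$, times the target sphere $\s^{n(e_0)}$, and set
\[
\pi(c,[u]) = \bigl(c,\ [u|_{S \setminus \{v_0\}}],\ \pes[e_0][S](c,[u])\bigr).
\]
Well-definedness follows because the dilations and translations used to quotient $\facefinieU$ act on $u|_{S \setminus \{v_0\}}$ by the same formulas, and they preserve the unit direction of $u(v_0) - u(w)$.

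For the dimension count, removing $v_0$ reduces $\dim \facefinieU$ by $n$ when $v_0$ is internal (and then $w$ is internal too, so the quotient subgroup is unchanged) and by $n+2$ when $v_0$ is external. The compatibility hypothesis forces $n(e_0) = n-1$ in the first case and $n(e_0) = n+1$ in the second, so adding the sphere factor yields a net change of $-1$ in both situations, i.e. $\dim Q = \dim \face - 1$. This is precisely where the ``$e_0$ internal whenever $v_0$ is internal'' hypothesis enters; if instead $v_0$ were internal and $e_0$ external, the sphere factor would be too large and the argument would fail.

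Checking the factorization, the edges of $\Gamma$ split into three classes. Edges not incident to $v_0$ have $p_e|_{\face}$ depending only on $(c, [u|_{S \setminus \{v_0\}}])$. Edges incident to $v_0$ other than $e_0$ must, by univalence of $v_0$ in $\Gamma_S$, have their other endpoint outside $S$; on $\face$, all of $S$ coalesces to $c(*_S)$, so $p_e|_{\face}$ depends only on $c$. Finally, $p_{e_0}|_{\face}$ lands in $\partial_\Delta C_{e_0}$ and is entirely determined by the basepoint $c(*_S)$ (recovered from $c$) together with the unit direction $\pes[e_0][S](c,[u])$ (the sphere factor of $Q$); this holds uniformly whether $\sigma(e_0)=1$ or not, since $\formeareteT|_{\face}$ for $e_0$ is in either case a pullback along $p_{e_0}|_{\face}$ of a form on $\partial_\Delta C_{e_0}$. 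Combining the three factorizations yields $\formeT|_{\face} = \pi^*\theta$, giving $\delta_S\I[]=0$. I expect the main care to be the bookkeeping that identifies the correct translation subgroup defining $\facefinieU[S \setminus \{v_0\}]$ (internal if $S \setminus \{v_0\}$ still meets $\sommetsinternes$, external otherwise) and confirms that $\pi$ is genuinely a smooth map across these configurations.
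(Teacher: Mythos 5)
Your proof follows the same strategy as the paper's: for disconnected $\Gamma_S$ invoke Lemma \ref{connected}, and for connected $\Gamma_S$ factor $\formeT|_{\face}$ through the map to $Q = (\facefinieC\times_{\inj}\facefinieU[S\setminus\{v_0\}])\times \s^{n(e_0)}$ (the paper's $\phi$) and use a dimension drop. Your write-up is correct and in fact makes explicit the dimension bookkeeping and the role of the ``$e_0$ internal whenever $v_0$ internal'' hypothesis, which the paper leaves implicit behind the phrases ``as in the similar lemmas of the previous subsection'' and ``the hypotheses of the lemma imply that $\dim(Q)<\dim(\face)$.''
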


\begin{figure}[H]
\centering
\begin{tikzpicture} 
\fill [color=gray!20] (1.7, 1) rectangle (-0.3, -1) ;
\draw (0,0) circle (0.1) ;
\draw [dashed] (0:0.1) -- (0:0.9) ;
\fill [gray] (0: 1) circle (0.1);
\draw[dashed] (130:0.1)--(130:0.9);
\draw[dashed] (230:0.1)--(230:0.9);
\draw (0.5, 0.7) node {S};
\draw (0.1, 0.2) node {$v_0$} ++(1, 0) node {$v_1$};
\draw (14: 0.7) node {$e_0$};
\end{tikzpicture}
\ \ \ \ \ 
\begin{tikzpicture} 
\fill [color=gray!20] (1.7, 1) rectangle (-0.3, -1) ;
\fill (0,0) circle (0.1) ;
\draw  (0:0.1) -- (0:0.9) ;
\fill (0: 1) circle (0.1);
\draw[dashed] (130:0.1)--(130:0.9);
\draw (230:0.1)--(230:0.9);
\draw (0.5, 0.7) node {S};
\draw (0.1, 0.2) node {$v_0$} ++(1, 0) node {$v_1$};
\draw (14: 0.7) node {$e_0$};
\end{tikzpicture}
\ \ \ 
\begin{tikzpicture} 
\fill [color=gray!20] (1.7, 1) rectangle (-0.3, -1) ;
\fill (0,0) circle (0.1) ;
\draw  (0:0.1) -- (0:0.9) ;
\draw (14: 0.7 ) node {$e_0$};
\fill (0: 1) circle (0.1);
\draw[dashed] (180:0.1)--(180:0.9);
\draw (0.5, 0.7) node {S};
\draw (0.1, 0.2) node {$v_0$} ++(1, 0) node {$v_1$};
\end{tikzpicture}
\caption{The second property in the definition of $\hfaces$}\label{cachees1}
\end{figure}

\begin{proof}
If $\Gamma_S$ is not connected, this is Lemma \ref{connected}.

If $\Gamma_S$ is connected, we have a univalent vertex $v_0$ as in Figure \ref{cachees1}.
There is a natural map $\face \rightarrow \facefinieC\times_{\inj}\times \facefinieU[S\setminus\{v_0\}]$. Let $$\phi\colon \face \rightarrow Q=(\facefinieC\times_{\inj}\times \facefinieU[S\setminus\{v_0\}])\times \s^{n(e_0)} $$ denote the product of this map and the Gauss map $G_{e_0,S}$.
As in the similar lemmas of the previous subsection, for any edge whose ends are both in $S$, $\pes$ factors through $\phi$, and for any other edge, $p_e$ factors through $\phi$. Then, all the forms $\formeareteT$ are pullbacks of forms on $Q$ by $\phi$, and $\formeT$ also is. 
The hypotheses of the lemma imply that $\dim(Q) < \dim(\face)$, so $\delta_S\I[]=0$.\qedhere

\end{proof}

\begin{lm}\label{vanishing-h-2}
Let $\HfaceA$ denote the set of hidden faces that are not in $\hfaces$ and such that $\Gamma_S$ contains a bivalent vertex $v$, which is trivalent in $\Gamma$, and which has one incoming and one outgoing edge in $\Gamma_S$, which are both internal if $v$ is internal.

For any face $\face$ in $\HfaceA$, $\delta_SI(\Gamma,\sigma, \psi) = -\delta_SI(\Gamma,\sigma\circ\rho, \psi)$, where $\rho$ denotes the transposition of $e$ and $f$.\end{lm}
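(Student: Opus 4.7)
The plan is to exhibit an orientation-reversing smooth involution $T\colon \face \to \face$ that swaps the two projections $p_e$ and $p_f$ and leaves all other projections $p_g$ invariant. Such a $T$ produces $T^*(\formeT) = \formeT[\Gamma][\sigma\circ\rho]$, and the change-of-variables formula then gives
\[
\delta_S I(\Gamma, \sigma\circ\rho, \psi) = \int_\face \formeT[\Gamma][\sigma\circ\rho] = \int_\face T^*(\formeT) = -\int_\face \formeT = -\delta_S I(\Gamma, \sigma, \psi),
\]
which is the asserted identity.

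Using the description $\face = \facefinieC \times_{\inj} \facefinieU$ and writing $w_1, w_2 \in S$ for the endpoints of $e$ and $f$ distinct from $v$ (so $e$ is oriented from $w_1$ to $v$ and $f$ from $v$ to $w_2$), define $T(c, [u]) = (c, [u'])$ by $u'(w) = u(w)$ for $w \neq v$ and $u'(v) = u(w_1) + u(w_2) - u(v)$. This formula indeed lands in the relevant $\facefinieU^0$ (in particular $u'(v) \in \iota(\R^n)$ when $v$ is internal, since the hypotheses then force $w_1, w_2$ to be internal too), and it commutes with both the translation action and the dilation action that define $\facefinieU$, so $T$ descends to a smooth involution of $\face$.

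The key observation is that the identities $u'(v) - u'(w_1) = u(w_2) - u(v)$ and $u'(w_2) - u'(v) = u(v) - u(w_1)$ yield $p_e \circ T = p_f$ and $p_f \circ T = p_e$ on $\face$. Indeed, on this face both endpoints of each of $e$ and $f$ have collapsed to the single point $c(*_S)$, so both $p_e$ and $p_f$ land in the diagonal boundary $\partial_\Delta C_e$, where they are entirely determined by the common base point $c(*_S)$ and by the respective edge vectors $u(v) - u(w_1)$ and $u(w_2) - u(v)$ (read in a linearizing chart supplied by $\iota$ and $\psi$). Because $v$ is bivalent in $\Gamma_S$, the third edge of $\Gamma$ at $v$ has its other endpoint outside $S$, so $p_g$ for any $g \neq e, f$ depends only on $c$ and on the values $u(w)$ for $w \neq v$, and is thus $T$-invariant. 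Consequently $T^*(\formeareteT[e][\sigma]) = \formeareteT[f][\sigma\circ\rho]$, $T^*(\formeareteT[f][\sigma]) = \formeareteT[e][\sigma\circ\rho]$, and $T^*(\formeareteT[g][\sigma]) = \formeareteT[g][\sigma\circ\rho]$ for any other $g$. The degrees of the two swapped factors lie in $\{n-2, n-1, n, n+1\}$; since $n$ is odd, and at most one of $\sigma(e), \sigma(f)$ can equal $1$ (the other factor then having even degree), their product is always even, so no sign appears when reordering the wedge product in a common ordering, and $T^*(\formeT) = \formeT[\Gamma][\sigma\circ\rho]$.

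Finally, $T$ is orientation-reversing: it fixes $c$ and $u(w)$ for $w \neq v$, while acting on $u(v)$ as the affine reflection $x \mapsto u(w_1) + u(w_2) - x$ on a vector space of dimension $n$ (if $v$ is internal) or $n+2$ (if $v$ is external), both odd since $n$ is; the Jacobian is therefore $-1$, and it descends unchanged to the quotient $\facefinieU$ because $T$ commutes with the defining group actions. The main technical point requiring care is establishing the equality $p_e \circ T = p_f$ as maps into $C_e$ itself, rather than merely as maps into $\s^{n(e)}$ after composition with the Gauss map $G$; it is this strict equality that allows a uniform treatment of the case $\sigma(e) = 1$ or $\sigma(f) = 1$, in which the corresponding factor of $\formeT$ is a pullback of the primitive $\ecartinterne$ or $\ecartexterne$, which is not assumed to factor through the boundary Gauss map.
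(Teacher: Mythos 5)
Your proof is correct and follows essentially the same route as the paper's: you construct the same affine reflection $T$ sending $u(v)$ to $u(a)+u(b)-u(v)$ (your $w_1,w_2$), observe $p_g\circ T = p_{\rho(g)}$ for every edge, and conclude by the change-of-variables formula. Your additional remarks (that $T$ commutes with the group actions defining $\facefinieU$, that the Jacobian is $-1$ because $n$ is odd, and that strict equality $p_e\circ T = p_f$ in $C_e$ is needed so as not to assume any sphere factorization for $\ecartinterne$ or $\ecartexterne$) spell out details the paper leaves implicit but do not alter the argument; the only tiny omission is that you do not mention the degenerate possibility $a=b$, which however is harmless for the reflection formula.
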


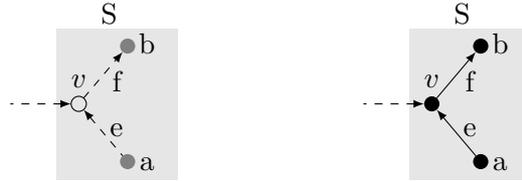
\begin{figure}[H]
 \centering
\begin{tikzpicture} 
\fill [color=gray!20] (1.3, 1) rectangle (-0.3, -1) ;
\draw (0,0) circle (0.1) ;
\draw [dashed,<-, >= latex] (180:0.1) -- (180:0.9) ;
\draw[dashed,->, >= latex] (50:0.1)--(50:0.9);
\draw[dashed,<-, >= latex] (-50:0.1)--(-50:0.9);
\fill[gray]  (50: 1) circle (0.1);
\fill[gray]  (-50: 1) circle (0.1);
\draw (0.4, 1.2) node {S};
\draw (0, 0.3) node{$v$};
\draw (0.5, -0.35) node {e};
\draw (0.5, 0.3) node {f};
\draw (0.9, -0.8) node {a};
\draw (0.9, 0.8) node {b};
\end{tikzpicture}
 \ \ \ \ \ \ \ \ \ \ \ \ \ \ \ \ \ \ 
\begin{tikzpicture} 
\fill [color=gray!20] (1.3, 1) rectangle (-0.3, -1) ;
\fill (0,0) circle (0.1) ;
\draw [dashed,<-, >= latex] (180:0.1) -- (180:0.9) ;
\draw [->, >= latex](50:0.1)--(50:0.9);
\draw [<-, >= latex] (-50:0.1)--(-50:0.9);
\fill  (50: 1) circle (0.1);
\fill  (-50: 1) circle (0.1);
\draw (0.4, 1.2) node {S};
\draw (0, 0.3) node{$v$};
\draw (0.5, -0.35) node {e};
\draw (0.5, 0.3) node {f};
\draw (0.9, -0.8) node {a};
\draw (0.9, 0.8) node {b};
\end{tikzpicture}\caption{Hypotheses of Lemma \ref{vanishing-h-2}.}
\label{Fig8}
\end{figure}

\begin{proof}
Let $e$ and $f$ denote the incoming and the outgoing edge of $v$ in $\Gamma_S$, and let $a$ and $b$ denote the other ends of $e$ and $f$, as in Figure \ref{Fig8}.\footnote{Note that we may have $a=b$.}

Let $T$ be the orientation-reversing diffeomorphism of $\face$ defined as follows: if $(c,[u])\in\face$, let $u'\colon S\rightarrow \R^{n+2}$ be the map such that, for any $w\in S$, \[u'(w) = \left\{\begin{array}{ll}
u(w) & \text{if $w\neq v$,} \\
u(b)+u(a)-u(v) &\text{if $w=v$,}
\end{array}\right.\] and set $T(c,[u]) = (c, [u'])$. 

For any $g\in\aretes$, $p_g\circ T = p_{\rho(g)}$, so $T^*((\formeT)_{|\face[S]})=(\formeT[\Gamma][\sigma\circ\rho])_{|\face[S]}$, and thus $\delta_SI(\Gamma,\sigma,\psi) =- \delta_SI(\Gamma,\sigma\circ\rho, \psi)$.\end{proof}

\begin{lm}
Let $\HfaceB$ be the set of hidden faces that are neither in $\hfaces$ nor in $\HfaceA$.
For any face $\face$ in $\HfaceB$, we have the following properties : \begin{itemize}
\item If $S$ contains the head of an external edge, then it contains its tail.
\item If $S$ contains a univalent vertex, then it contains its only adjacent vertex.
\end{itemize}
In particular, $S$ necessarily contains at least one vertex of the cycle, but cannot contain all of them, since it would imply $S=\sommets$.
\end{lm}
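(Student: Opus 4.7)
The plan is to argue by contrapositive for both bullets: assuming that one of the two properties fails at $\face[S]\in\HfaceB$, I will show that $\face[S]$ must then lie in $\hfaces$, in $\HfaceA$, or fail to be a hidden face altogether, each of which contradicts membership in $\HfaceB$.

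The second bullet is the easier one. A univalent vertex $v_0$ of $\Gamma$ has type $3$ in Definition~\ref{BCRdef}, so its unique edge is an outgoing leg. If its only neighbor lies outside $S$, then $v_0$ is isolated in $\Gamma_S$; since $|S|\geq 2$, this disconnects $\Gamma_S$ and places $\face[S]$ in $\hfaces$, a contradiction.

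For the first bullet, suppose $w\in S$ is the head of an external edge $e$ whose tail $v$ lies outside $S$. Among the five behaviors of Definition~\ref{BCRdef}, only behaviors $1$, $2$ and $4$ admit an incoming external edge, so $w$ has one of these three types. I plan to carry out a case analysis over the type of $w$ together with the possibilities for which of its other neighbors lie in $S$. The claim is that in every subcase the local picture at $w$ in $\Gamma_S$ falls into one of three patterns. Either $w$ becomes isolated in $\Gamma_S$, disconnecting $\Gamma_S$ and forcing $\face[S]\in\hfaces$. Or $w$ becomes univalent in $\Gamma_S$ through an edge that is internal whenever $w$ is internal; then either $|S|\geq 3$ and $\face[S]\in\hfaces$, or else $|S|=2$ and a direct count of the edges of $w$ remaining in $\Gamma_S$ yields exactly one, so $\face[S]$ is principal rather than hidden, still a contradiction. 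Or $w$ becomes bivalent in $\Gamma_S$ with one incoming and one outgoing edge, both internal whenever $w$ is internal, which together with the trivalence of $w$ in $\Gamma$ (in behaviors $1$ and $2$) meets the defining condition of $\HfaceA$ and forces $\face[S]\in\HfaceA$. The edge-count needed in the $|S|=2$ subcase works because $w$ has valence at most $3$ in $\Gamma$, and $e$ together with the edges of $w$ pointing to its excluded neighbors account for all but one of the edges at $w$.

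The parenthetical remark then follows quickly from the two bullets. If $S$ consisted only of univalent vertices, the second bullet would force the neighbor of any $v_0\in S$ into $S$; but such a neighbor is the head of a leg, hence trivalent and in the cycle, contradicting the hypothesis. Conversely, if $S$ contained every cycle vertex, then for every univalent vertex $v_0$ of $\Gamma$ the head of its leg would be trivalent and hence in $S$, so the first bullet would force $v_0\in S$; this gives $S=\sommets$, making $\face[S]$ the anomalous face rather than a hidden face, contradicting $\face[S]\in\HfaceB$. The main obstacle throughout is the case analysis of the first bullet: one must patiently go through the three admissible types of $w$ and all sub-possibilities for its other neighbors, paying special attention to the $|S|=2$ situation where a would-be hidden face can collapse into a principal one.
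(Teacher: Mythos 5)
Your proof is correct and follows the same contrapositive strategy as the paper: assuming a bullet fails, you show the face must land in $\hfaces$, in $\HfaceA$, or outside the class of hidden faces altogether, contradicting membership in $\HfaceB$. Your case analysis, organized by the degree of $w$ in $\Gamma_S$ (isolated, univalent, bivalent), together with the explicit treatment of the $|S|=2$ subcase where $\hfaces$ is unavailable (it requires $\Gamma_S$ to have at least three vertices) and the face is principal instead, is a slightly more systematic bookkeeping than the paper's figure-by-figure matching, but it amounts to the same exhaustive verification.
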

\begin{proof}
Let $\face$ be a face in $\HfaceB$.
The second point directly follows from the connectedness of $\Gamma_S$. Let us prove the first point. 
Let $e=(v,w)$ be an external edge with $w$ in $S$. \begin{itemize}
\item If $e$ is a leg, we have three possible cases:
\begin{itemize}
\item If the two neighbors of $w$ in the cycle are in $S$, then $v$ is in $S$. Indeed, otherwise $S$ would contain a piece as in Figure \ref{Fig8}.
\item If $S$ contains one of the neighbors of $w$ in the cycle, then $v\in S$. Indeed, otherwise $S$ would contain a piece such as in the two first pieces of Figure \ref{cachees1}.
\item  If none of the neighbors of $w$ in the cycle are in $S$, then $\Gamma_S$ is not connected, which is impossible.
\end{itemize}
\item Otherwise, $e$ is an external edge of the cycle, and we have two possible cases:
\begin{itemize}
\item If $w$ is bivalent, then it has two neighbors $v$ and $w'$. \begin{itemize}
\item If $w'\in S$, then $v\in S$: otherwise, we would have a piece as the third one of Figure \ref{cachees1}. 
\item If $w'\not \in S$, then $v\in S$ because of the connectedness of $\Gamma_S$.
\end{itemize}
\item Otherwise $w$ is trivalent, and external.\begin{itemize}
\item If its two other neighbors than $v$ are in $S$, $v$ is in $S$: otherwise, we would have a piece as the first one of Figure \ref{Fig8}. 
\item If $S$ contains one of these two neighbors, $v$ is in $S$: otherwise, we would have a piece as the first one of Figure \ref{cachees1}. 
 \item Eventually, if none of these two neighbors are in $S$, $v$ is in $S$ because of the connectedness of $\Gamma_S$.\qedhere
\end{itemize}
\end{itemize}
\end{itemize}
\end{proof}
\begin{lm}\label{vanishing-h-3}
Suppose that $\face$ is a face of $\HfaceB$ and that $S$ contains at least one external vertex.

Then, there exists a transposition $\rho$ of two edges such that $$\delta_SI(\Gamma,\sigma,\psi) = -\delta_SI(\Gamma,\sigma\circ\rho, \psi).$$
\end{lm}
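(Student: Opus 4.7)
I construct an orientation-reversing involution $T\colon \face\to\face$ that swaps two external edges $e,f$ of $\Gamma$ contained in $\Gamma_S$; the required transposition is then $\rho=(e\,f)$, and the identity $\delta_S I(\Gamma,\sigma,\psi)=-\delta_S I(\Gamma,\sigma\circ\rho,\psi)$ follows by a change-of-variable argument parallel to the proof of Lemma~\ref{vanishing-h-2}, with an external vertex of $S$ playing the role of the reflection center.

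Let $v_0\in S\cap\sommetsexternes$; such a vertex exists by hypothesis. By case~1 of Definition~\ref{BCRdef}, $v_0$ is trivalent with two incoming external edges $e,f$ of respective tails $a,b$, and one outgoing external edge $g$. By the first bullet of the preceding lemma, $a,b\in S$, so $e$ and $f$ are external edges of $\Gamma_S$. A short combinatorial argument, using the exclusion of $\HfaceA$-configurations, the connectedness of $\Gamma_S$, and $S\subsetneq \sommets$, allows me to choose $v_0$ so that moreover the outgoing edge $g$ has its head outside $S$.

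With such a choice, I define $T(c,[u])=(c,[u'])$ by
\[
u'(w)=\begin{cases} u(a)+u(b)-u(v_0)& \text{if }w=v_0,\\ u(w) & \text{otherwise.}\end{cases}
\]
This descends to the quotient $\facefinieU$ by translation- and dilation-equivariance. Since a single $\R^{n+2}$-factor is reflected and $n$ is odd, the Jacobian of $T$ is $(-1)^{n+2}=-1$, so $T$ reverses the orientation of $\face$. A direct computation using Notation~\ref{desc-bord} gives $G_\tau\circ p_e\circ T = -\,G_\tau\circ p_f$, $G_\tau\circ p_f\circ T = -\,G_\tau\circ p_e$, and $p_h\circ T = p_h$ for every other edge $h$; in particular $p_g\circ T=p_g$ because the head of $g$ lies outside $S$, so the limiting value of $p_g$ on $\face$ is $(c(*_S),c(\mathrm{head}(g)))$, independent of $[u]$. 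Combining this with the antisymmetry $(-\mathrm{Id}_{\s^{n+1}})^*\omega_{\propext_i}=-\omega_{\propext_i}$ (and the analogous identity for $\volext$ when $\sigma(e)=1$ or $\sigma(f)=1$), the two $-1$ signs coming from pulling back the two antisymmetric forms cancel, and the parity of the degrees of the forms (each of $n+1$, or $n+1$ and $n$ when the distinguished index~1 is involved) makes the wedge-reordering sign trivial. One thus obtains $T^*\formeT[\Gamma][\sigma] = \formeT[\Gamma][\sigma\circ\rho]$ on $\face$, and the orientation reversal yields the claimed identity.

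\textbf{Main obstacle.} The delicate step is the preliminary combinatorial claim that an external vertex $v_0\in S$ with outgoing cycle edge exiting $S$ always exists. If every external vertex of $S$ had its outgoing cycle edge with head in $S$, following the cycle forward from any such $v_0$ would force progressively longer arcs of the cycle to lie in $S$; combined with the $\HfaceA$-exclusion—which forbids the ``one incoming, one outgoing'' bivalent-in-$\Gamma_S$ configurations at trivalent-in-$\Gamma$ vertices needed for the cycle to transit through $S$ without trapping—and with $S\subsetneq \sommets$, this leads to a contradiction. Carrying out the case analysis rigorously along the alternation pattern of Figure~\ref{fig-BCR2} is the technical heart of the proof; once such $v_0$ is produced, the construction of $T$ and the sign bookkeeping proceed exactly as in Lemma~\ref{vanishing-h-2}.
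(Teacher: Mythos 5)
Your construction of the involution $T$ differs from the paper's, and this creates two genuine gaps. The first is the combinatorial claim you flag as ``the technical heart'': that one can choose an external vertex $v_0\in S$ whose outgoing cycle edge $g$ exits $S$ (this is needed so that $p_g\circ T=p_g$). This is false in general. Take the degree~$4$ diagram with cycle $d_1\to v_1\to v_2\to d_2\to d_3\to d_1$, where $d_1,d_2$ are bivalent internal, $v_1,v_2$ are external trivalent with legs $b_1\to v_1$ and $b_2\to v_2$, and $d_3$ is internal trivalent with leg $w_3\to d_3$, and set $S=\{d_1,v_1,v_2,d_2,b_1,b_2\}$. Then $\Gamma_S$ is connected, every univalent vertex of $\Gamma_S$ ($d_1,d_2,b_1,b_2$) is internal with an external adjacent edge, and no vertex is bivalent in $\Gamma_S$ but trivalent in $\Gamma$, so $\face\in\HfaceB$; yet the outgoing edges of both external vertices $v_1,v_2$ have their heads $v_2,d_2$ inside $S$. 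For either choice of $v_0$, your $T$ moves $u(v_0)$ but not the head of its outgoing edge, so $p_g\circ T\neq p_g$ and the pullback identity for $\formeT$ fails.

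The second gap is the reliance on antisymmetry. Your $T$ gives $G_\tau\circ p_e\circ T=-G_\tau\circ p_f$, so each of $T^*\formeareteT[e],T^*\formeareteT[f]$ acquires a factor $-1$ via $(-\mathrm{Id})^*$. You cancel these by invoking the antisymmetry of $\omega_{\propext_i}$ and ``the analogous identity for $\volext$.'' But $\volext$ is only defined, and only known to be antisymmetric, when the sphere factorization property holds; the hidden-face involution in Lemma~\ref{vanishing theorem} is asserted \emph{unconditionally} and is applied without that hypothesis in Section~\ref{s52}. If the distinguished index $1$ lies on one of the swapped edges and sphere factorization fails, your signs do not cancel and the conclusion $\delta_SI(\Gamma,\sigma,\psi)=-\delta_SI(\Gamma,\sigma\circ\rho,\psi)$ breaks. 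The paper sidesteps both issues: it runs the cycle backwards from an external vertex to find the last vertex $d$ of $S$ (necessarily internal, with incoming internal edge from outside $S$), goes forward to the first external vertex $v_0$ with incoming edges $e$ from the bivalent $a$ and $f$ from the univalent $b$, and defines $T$ by translating the arc $S_0$ from $d$ to $a$ (with its legs) by $u(b)-u(a)$ while sending $b$ to $u(a)$. This yields $p_e\circ T=p_f$ and $p_f\circ T=p_e$ exactly, with no antipodal sign and no requirement on the outgoing edge of $v_0$.
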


\begin{proof}

Choose an external vertex of $S$ and follow the cycle backwards until getting out of $S$. Let $d$ be the last met vertex in $S$. It follows from the previous lemma that $d$ is an internal vertex, with an incoming internal edge coming from $\sommets\setminus S$. From $d$, move forward along the cycle, and let $v_0$ be the first seen external vertex. There are two incoming edges in $v_0$, one coming from a univalent vertex $b$, denoted by $f$, and one coming from a bivalent vertex $a$, denoted by $e$ (we may have $a= d$). Let $S_0$ be the set of vertices of the cycle between $d$ and $a$, with their univalent adjacent vertices. The obtained situation is like in Figure \ref{Fig9}. 

\begin{figure}[H]
\centering
\begin{tikzpicture}\fill [color=gray!20] (-3, 1.5) rectangle (4, -1.5) ;
\fill (-1, 1) circle (0.1) (0,0) circle (0.1) (1,0) circle (0.1) (1,1) circle (0.1) (3,0) circle (0.1) (3,1) circle (0.1) ;
\draw (-1, 0) circle (0.1);
\draw[ ->, >= latex, dashed] (-0.1, 0) -- (-0.9, 0);
\draw[ ->, >= latex, dashed] (-1.1, 0) -- (-1.9, 0);
\draw[ ->, >= latex, dashed] (-1, 0.9) -- (-1, 0.1); 
\draw[->, >= latex]  (4.5, 0)-- (3.1, 0);
\draw[->, >= latex]  (1.5,0)--(1.1,0); 
\draw[->, >= latex]  (0.9, 0)--(0.1,0); 
\draw[->, >= latex, dashed]  (3, 0.9) -- (3, 0.1);
\draw[->, >= latex, dashed]  (1, 0.9) -- (1, 0.1);
\draw[dotted] (1.5, 0)--(2.5,0);
\draw (2.9,0)--(2.5,0);
\draw[dotted] (1.7, 0) circle (2);
\draw (-1, -0.3) node {$v_0$} (-0.8, 0.45) node {$f$} (-0.8, 1.1) node {$b$};
\draw (-0.5, -0.3) node {$e$} (0.2, 0.2) node {$a$};
\draw (3.3, 0.2) node {$d$};
\draw (3.75,-0.75) node {$S_0$};
\draw (-2.8, -1.3) node {$S$};
\end{tikzpicture}
\caption{Notations for the proof of Lemma \ref{vanishing-h-3}.}\label{Fig9}
\end{figure}
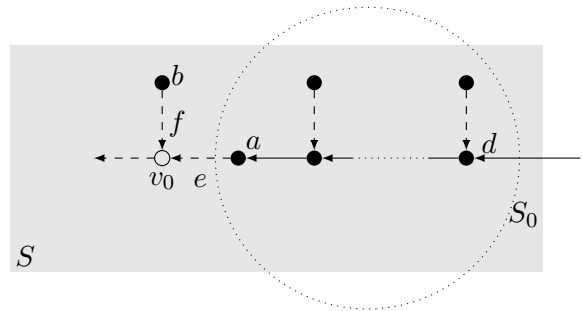
Then, if $(c,[u])\in\face$, let $u'$ denote the map such that, for any $w\in S$, \[u'(w) = \left\{
\begin{array}{lll}  u(a) & \text{if $w=b$,}\\ u(w) + u(b)-u(a) & \text{if $w\in S_0$,}\\ u(w) & \text{otherwise,}
\end{array}\right.\]and define an orientation-reversing diffeomorphism $T\colon \face \rightarrow \face$ by the formula $T(c,[u]) = (c,[u'])$. 
Thus, if $\rho$ denotes the transposition of $e$ and $f$, we have $p_g\circ T = p_{\rho(g)}$ for any $g$, and we conclude as in Lemma \ref{vanishing-h-2}.
\end{proof}

\begin{lm}\label{vanishing-h-4}
Suppose that the face $\face$ is in $\HfaceB$, and that $\Gamma_S$ contains no external vertex. Then, it contains at least one of the following pieces: \begin{itemize}
\item Two non adjacent external edges with their sources $a$ and $b$ univalent in $\Gamma_S$ (not necessarily in $\Gamma$).
\item A sequence of one external, one internal and one external edge, as in the second part of Figure \ref{Fig10}.
\item A trivalent internal vertex with all its neighbors.
\end{itemize}

In all of the above cases, we have a transposition of two edges $\rho$ such that $$\delta_SI(\Gamma,\sigma,\psi) =- \delta_SI(\Gamma,\sigma\circ\rho, \psi).$$

\end{lm}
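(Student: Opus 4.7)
The proof has two independent parts: a purely combinatorial classification and, for each case, the construction of an orientation–reversing involution realizing the sign cancellation. I will handle them in this order.

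For the combinatorial part, the main input is the sublemma preceding Lemma \ref{vanishing-h-3}: if $S$ contains the head of an external edge, it also contains its tail, and if $S$ contains a univalent vertex, it contains its adjacent vertex. Because $\Gamma_S$ has no external vertex, every element of $S$ is of type 2, 3, 4, or 5. Applied to the unique leg whose head is a type-2 vertex of $S$, the rule forces that leg to lie in $\Gamma_S$ and its type-3 source to lie in $S$. Consequently, any trivalent vertex of $\Gamma_S$ must be a type-2 vertex with all three neighbors in $S$, which is exactly the third piece in the statement. I will therefore assume that $\Gamma_S$ has no trivalent vertex and proceed by counting the legs in $\Gamma_S$, which equals the number of type-2 vertices in $S$.

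If there are at least two legs, their target type-2 vertices are distinct; either an internal edge of $\Gamma_S$ connects them (second piece) or it does not, in which case the two legs are non-adjacent external edges with univalent (in $\Gamma_S$) sources (first piece). If there is at most one leg, then $S$ contains at most one type-2 vertex. In this case, using that legs must come with their univalent source in $S$, every remaining vertex of $S$ is of type 4 or 5, and $\Gamma_S$ becomes a chain along the cycle alternating length-one external blocks and internal edges between type-4 and type-5 vertices. The exclusions defining $\HfaceB$ severely restrict this chain: a type-5 vertex univalent in $\Gamma_S$ through its internal edge, or a type-4 vertex univalent in $\Gamma_S$ through its internal edge, would place $\face$ in $\hfaces$. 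Analyzing these constraints shows that the chain must contain a subsequence external–internal–external (using two length-one external blocks separated by an internal edge), which is again the second piece; the remaining border cases either produce the first piece or contradict the connectedness of $\Gamma_S$ together with the $\hfaces$ exclusion.

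For the involution part I will copy the template of Lemma~\ref{vanishing-h-2} verbatim, choosing in each case a pair of same-type edges $e, f$ belonging to the identified piece and a transposition $\rho = (e\, f)$. For the third piece, with $v$ trivalent internal of neighbors $u$ (leg source), $a$, $b$ (internal neighbors), take $e = (a,v)$, $f = (v,b)$ and define $T(c,[u]) = (c,[u'])$ by $u'(v) = u(a)+u(b) - u(v)$, leaving the other positions untouched. A direct check shows $\pes[e]\circ T = \pes[f]$, $\pes[f]\circ T = \pes[e]$, and $p_g\circ T = p_g$ for every other edge, using that the leg, even though internal to $\Gamma_S$, only depends on the relative position of $u(u)$ and $u(v)$, which is absorbed by the translation freedom in the definition of $\facefinieU$. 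The Jacobian of $T$ is $(-1)^n = -1$ because $n$ is odd. For the second piece, which consists of a chain $u_1 \to v_1 \to v_2 \leftarrow u_2$ (or its two-external-blocks analogue) between two type-2 vertices, the analogous reflection at the internal edge between $v_1$ and $v_2$ swaps the two external (leg) edges. For the first piece, the two non-adjacent external edges share no vertex, so a reflection analogous to the one of Lemma \ref{vanishing-h-3} swapping the endpoints of the two pieces on opposite sides of the arc delimited by these edges yields $T$. In all three cases one obtains $T^*\formeT[\Gamma][\sigma] = \formeT[\Gamma][\sigma\circ\rho]$, hence $\delta_S I(\Gamma,\sigma,\psi) = -\delta_S I(\Gamma,\sigma\circ\rho,\psi)$ as required.

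The main obstacle is the combinatorial classification in the zero- or one-leg subcase: the interplay between the rigid block structure of the cycle, the constraint that the type-4 and type-5 vertices cannot lie next to a type-1 vertex of $S$, and the various $\hfaces / \HfaceA$ exclusions is delicate, and one must enumerate carefully which configurations of the internal-block/length-one-external-block chain are compatible with $\HfaceB$. Once this is done, the involutions themselves are mild variations of the one given in Lemma~\ref{vanishing-h-2} and present no new difficulty.
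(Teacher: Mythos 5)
Your overall strategy matches the paper's: classify which local piece $\Gamma_S$ contains and, in each case, exhibit an orientation-reversing involution $T$ of $\face$ together with a transposition $\rho$ of two same-type edges so that $p_g \circ T = p_{\rho(g)}$ for every edge $g$ adjacent to $S$. However, your involution for the third piece is wrong.

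Write $\ell$ for the leg source (what you call $u$). You set $u'(v) = u(a)+u(b)-u(v)$ and leave all other values, in particular $u(\ell)$, unchanged. Then the direction of the leg, namely $u(v)-u(\ell)$, is sent to $u(a)+u(b)-u(v)-u(\ell)$, which is generically different. Your appeal to "translation freedom in $\facefinieU$" does not repair this: that quotient is by translations of the \emph{entire} map $u$ along $\iota(\R^n)$, and a global translation fixes every edge direction, so it cannot compensate for changing $u(v)$ alone. Since on a hidden face every $p_e$ lands in $\partial\configM$, where the propagators are pullbacks by the Gauss map, the change of the leg direction breaks the identity $T^*\formeT = \formeT[\Gamma][\sigma\circ\rho]$ that your argument requires. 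The paper avoids this by translating the leg source along with the trivalent vertex: in its notation ($a$ leg source, $b$ trivalent vertex, $c,d$ internal neighbors) it takes $u'(b)= u(c)+u(d)-u(b)$ \emph{and} $u'(a) = u(c)+u(d)+u(a)-2u(b)$, so that $u'(b)-u'(a)=u(b)-u(a)$ and the leg is fixed; the resulting linear map in $(u(a),u(b))$ is triangular with Jacobian $(-1)^n=-1$.

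As a secondary point, your classification misidentifies the second piece: in Figure \ref{Fig10} it consists of a type-$4$ vertex joined by an internal edge to a type-$5$ vertex, together with their respective incoming and outgoing external edges, not two type-$2$ vertices with legs. The configuration you describe (two type-$2$ vertices with legs joined by an internal edge) is actually subsumed in the first piece, since the two legs are non-adjacent external edges whose sources are univalent in $\Gamma_S$. Your descriptions of $T$ for the first and second pieces are also too imprecise to verify and should be replaced by explicit formulas as in the paper.
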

\begin{proof}
Figure \ref{Fig10} describes the three possible cases of the lemma, and we use its notations.

 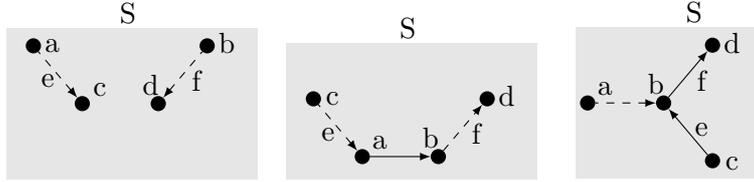
\begin{figure}[H]
 \centering
\begin{tikzpicture} 
\fill [color=gray!20] (1.3, 1) rectangle (-2, -1) ;
\fill (0,0) circle (0.1) ; 
\draw[dashed,<-, >= latex] (50:0.1)--(50:0.9);
\fill (180:1) circle (0.1);
\draw (180:1) ++(40: 0.3) node {c};
\draw[dashed, <-, >=latex] (180:1) ++(130:0.1) -- ++(130:0.9);
\fill (180:1) ++(130:1) circle (0.1);
\fill  (50: 1) circle (0.1);
\draw (-0.4, 1.2) node {S};
\draw (-0.1, 0.25) node{d};
\draw (180: 1.45) ++ (0,0.3) node {e};
\draw (0.5, 0.3) node {f};
\draw (180:0.75) ++ (130:1) node {a};
\draw (0.9, 0.8) node {b};
\end{tikzpicture}
\ \
\begin{tikzpicture} 
\fill [color=gray!20] (1.3, 1.5) rectangle (-2, -0.3) ;
\fill (0,0) circle (0.1) ;
\draw [<-, >= latex] (180:0.1) -- (180:0.9) ;
\draw[dashed,->, >= latex] (50:0.1)--(50:0.9);
\fill (180:1) circle (0.1);
\draw (180:1) ++(40: 0.3) node {a};
\draw[dashed, <-, >=latex] (180:1) ++(130:0.1) -- ++(130:0.9);
\fill (180:1) ++(130:1) circle (0.1);
\fill  (50: 1) circle (0.1);
\draw (-0.4, 1.7) node {S};
\draw (-0.1, 0.25) node{b};
\draw (180: 1.45) ++ (0,0.3) node {e};
\draw (0.5, 0.3) node {f};
\draw (180:0.75) ++ (130:1) node {c};
\draw (0.9, 0.8) node {d};
\end{tikzpicture}
\ \ \
\begin{tikzpicture} 
\fill [color=gray!20] (1.3, 1) rectangle (-1.15, -1) ;
\fill (0,0) circle (0.1) ;
\draw [dashed,<-, >= latex] (180:0.1) -- (180:0.9) ;
\draw [->, >= latex](50:0.1)--(50:0.9);
\draw [<-, >= latex] (-50:0.1)--(-50:0.9);
\fill (180:1) circle (0.1);
\fill  (50: 1) circle (0.1);
\draw (180:1) ++(40: 0.3) node {a};
\fill  (-50: 1) circle (0.1);
\draw (0.4, 1.2) node {S};
\draw (-0.1, 0.25) node{b};
\draw (0.5, -0.35) node {e};
\draw (0.5, 0.3) node {f};
\draw (0.9, -0.8) node {c};
\draw (0.9, 0.8) node {d};
\end{tikzpicture}
\caption{The three behaviors of Lemma \ref{vanishing-h-4}}
\label{Fig10}
\end{figure}

Let $\rho$ be the transposition swapping $e$ and $f$. The involution $T$ is defined by $T(c,[u]) = (c,[u'])$ where, for any vertex $w\in S$ :\begin{itemize}
\item In the first case, \[u'(w) =
 \begin{cases}
 u(c) + u(b)-u(d) & \text{if $w=a$,}\\
 u(d)+u(a)-u(c) & \text{if $w=b$,}\\
 u(w) & \text{otherwise.}
\end{cases} \]

\item In the second case, \[u'(w) =
 \begin{cases}
 u(c) + u(d)-u(b) & \text{if $w=a$,}\\
 u(c)+u(d)-u(a) & \text{if $w=b$,}\\
 u(w) & \text{otherwise.}
\end{cases} \]

\item In the third case, \[u'(w) =
 \begin{cases}
 u(c) + u(d)+u(a)-2u(b) & \text{if $w=a$,}\\
 u(c)+u(d)-u(b) & \text{if $w=b$,}\\
 u(w) & \text{otherwise.}
\end{cases} \]

\end{itemize}
As in the previous proofs, $T$ reverses the orientation, and $p_g\circ T = p_{\rho(g)}$ for any edge $g$ of $\Gamma_S$.
\end{proof}

For a given $\Gamma$, set $\Hfaces=\HfaceA\cup\HfaceB$. For any $\face$ in $\Hfaces$, define the involution $\sigma \mapsto \sigma^*$ of Lemma \ref{vanishing theorem} as follows: put a total order on non-ordered pairs of $\{1,\ldots, 2k\}$. If there is a $v$ as in Lemma \ref{vanishing-h-2}, choose the one minimizing $\{\sigma(e), \sigma(f)\}$, and set $\sigma^* = \sigma \circ \rho$ as in the lemma. Otherwise, if there is an external vertex in $S$, choose one such that the outgoing edge is of minimal $\sigma$, and proceed as in Lemma \ref{vanishing-h-3}, setting $\sigma^*= \sigma\circ \rho$. Otherwise, if there are two edges $e$ and $f$ as in the first case of Lemma \ref{vanishing-h-4}, choose the pair that minimizes $\{\sigma(e), \sigma(f)\}$. If not, and if there is a piece as in the second case, choose the one with minimal $\{\sigma(e), \sigma(f)\}$, and otherwise, there is a piece as in the third case: take the one of minimal $\{\sigma(e), \sigma(f)\}$. In these last three cases, set $\sigma^* = \sigma \circ \rho$ where $\rho$ is the transposition of $e$ and $f$.

\subsubsection{Principal faces}\label{section-principal}

It only remains to study the principal faces, which are the faces such that the ends of an edge $e$ collide, and where this edge is the only edge between its two ends. Then, $\face[e]\cong\facepC\times \s^{m(e)}$, where \[m(e)= \left\{\begin{array}{ll} n-1 &\text{if the ends of $e$ are both internal,} \\ n+1 &\text{otherwise.}\end{array}\right.\] Choose this diffeomorphism in such a way that the Gauss map reads as the second projection map $\mathrm{pr}_2$ in the product, and orient $\facepC$ in such a way that this diffeomorphism preserves the orientation.

\begin{lm}\label{vanishing-p}
If $\sigma(e)=1$, and if $e$ is either an internal edge or an external edge with at least one external end, then $\delta_e\I[]=0$.
\end{lm}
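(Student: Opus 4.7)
The plan is to exploit the product decomposition $\face[e] \cong \facepC \times \s^{m(e)}$ described at the start of Section \ref{section-principal} and integrate the sphere factor first, so that a degree count forces the fiber integral to vanish.

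First I would observe that for every edge $e' \neq e$, the form $\formeareteT[e'][\sigma]$ factors through the projection $\pi \colon \facepC \times \s^{m(e)} \to \facepC$. Indeed, since $\face[e]$ is a principal face, $e$ is the unique edge connecting its endpoints $v$ and $w$, so any other edge $e'$ has at most one endpoint in $\{v,w\}$; and on $\face[e]$ the pair $\{v,w\}$ maps to a single point of $\ambientspace$. Consequently $p_{e'}|_{\face[e]}$ depends only on the $\facepC$-component.

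Second, since $\sigma(e) = 1$, the form $\formeareteT|_{\face[e]}$ equals either $p_e^*(\ecartinterne)|_{\face[e]}$ (for $e$ internal) or $p_e^*(\ecartexterne)|_{\face[e]}$ (for $e$ external), and in both cases has degree $n(e) - 1$. Applying Fubini, one obtains
\[\delta_e \I[] \; = \; \int_{\facepC} \left(\bigwedge_{e' \neq e} \formeareteT[e'][\sigma]\right) \wedge \pi_*\!\left(\formeareteT|_{\face[e]}\right),\]
where $\pi_*$ denotes fiber integration along $\s^{m(e)}$.

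The conclusion then follows from a degree count: $\pi_*(\formeareteT|_{\face[e]})$ has degree $(n(e) - 1) - m(e)$, which is strictly negative in the cases covered by the lemma. By Definition \ref{BCRdef}, internal edges connect internal vertices only, so if $e$ is internal then $m(e) = n-1$ while $n(e) - 1 = n-2 < m(e)$; and if $e$ is external with at least one external end then $m(e) = n+1$ while $n(e) - 1 = n < m(e)$. In either case the fiber integral vanishes, yielding $\delta_e \I[] = 0$. The only delicate point is verifying the factorization through $\pi$ in the first step, which is immediate from the fact that a principal face involves exactly one edge between the two colliding vertices and hence no other edge $e'$ can join $v$ to $w$.
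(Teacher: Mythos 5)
Your proof is correct and takes essentially the same approach as the paper: exploit the product decomposition $\face[e] \cong \facepC \times \s^{m(e)}$, observe that all forms for $e' \neq e$ factor through the projection to $\facepC$, and then win by a degree count using $m(e) = n(e)$ in the cases at hand. The only cosmetic difference is that you phrase the degree count via fiber integration (so that $\pi_*(\formeareteT|_{\face[e]})$ has negative degree $(n(e)-1)-m(e) = -1$), whereas the paper's proof observes directly that $\bigwedge_{f\neq e}\theta_{f,\sigma}$ is a form on $\facepC$ of degree $\dim(\facepC)+1$ and hence already identically zero; since $\dim(\face[e]) = \dim(\facepC) + m(e)$, these two degree counts are equivalent.
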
\begin{proof}
For any edge $f\neq e$, the map $p_f$ factors through $\mathrm{pr}_1\colon\face[e]\rightarrow \facepC$. Then $\formeT_{|\face[e]} = \formeareteT[e][\sigma]\wedge \bigwedge\limits_{f\in\aretes, f\neq e} \mathrm{pr}_1^*(\theta_{f,\sigma})$, where $\theta_{f,\sigma}$ are forms on $\facepC$. 

But we have \[\deg\left(\bigwedge\limits_{f\in\aretes,f\neq e} \theta_{f,\sigma}\right) = \dim(\face) - (n(e)-1) = \dim(\facepC) +1,\] since $m(e)=n(e)$ under the hypotheses of the lemma.

Then, $\deg\left(\bigwedge\limits_{f\in\aretes,f\neq e} \theta_{f,\sigma}\right)  > \dim(\facepC)$, and $\delta_e\I[]=0$.
\end{proof}

Lemmas \ref{p2} to \ref{p5} are proved after the statement of Lemma \ref{p5}.
\begin{lm}\label{p2}
Suppose that $\Gamma$ looks as in Figure \ref{p21} around $e$.

\begin{figure}[H]
\centering
\begin{tikzpicture} 
\fill (0,0) circle (0.1) ;
\draw [<-, >= latex] (180:0.1) -- (180:0.9) ;
\draw [->, >=latex]  (0:0.1) -- (0:0.9) ;
\draw [<-,>=latex, dashed] (90:0.1)--(90:0.9);
\draw (50: 0.3) node {w};
\draw (0:0.36) ++(-65: 0.3) node {g};
\draw (180:0.66) ++(-60: 0.3) node {f};
\draw (90:0.99) ++(150: 0.3) node {v};
\draw (90:0.41) ++(150: 0.3) node {e};
\fill (90:1) circle (0.1);
\end{tikzpicture}
\caption{}
\label{p21}
\end{figure}
\noindent Let $\Gamma^*$ denote the BCR diagram where this part of $\Gamma$ is replaced as in Figure \ref{p22}.

\begin{figure}[H]
\centering
\begin{tikzpicture} 
\fill(0,0) circle (0.1) ;
\draw [<-, dashed,>= latex] (180:0.1) -- (180:0.9) ;
\draw [<-, >=latex]  (180:1.1) -- (180:1.9) ;
\draw [->,>=latex]  (0:0.1) -- (0:0.9) ;
\fill(180:1) circle (0.1);
\draw (180:1) ++(50: 0.3) node {v};
\draw (50: 0.3) node {w};
\draw (180:0.66) ++(-50: 0.3) node {$e^*$};
\draw (180:1.66) ++(-50: 0.3) node {f};
\draw (0:0.36) ++(-60: 0.3) node {g};
\end{tikzpicture}
\caption{}
\label{p22}
\end{figure}
If $(\propint'_1-\propint_1,\propext'_1-\propext_1)$ has the sphere factorization property, or if $\sigma(e)\neq 1$, then $\delta_eI(\Gamma,\sigma,\psi) =-\delta_{e^*} I(\Gamma^*,\sigma^*,\psi)$, where $\sigma^*$ is naturally induced by $\sigma$.

\end{lm}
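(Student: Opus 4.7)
My plan is to exhibit a natural orientation-reversing identification between the principal faces $\face[e]$ and $\partial_{e^*} C_{\Gamma^*}(\psi)$ that preserves the integrand forms, so that the two contributions are negatives of each other. The first step is to recognize that $C_\Gamma(\psi)$ and $C_{\Gamma^*}(\psi)$ coincide as sets, since both have the same vertex set and the same internal/external partition ($v$ and $w$ are internal in both graphs). Contracting $e$ in $\Gamma$ and $e^*$ in $\Gamma^*$ produces the same quotient graph (a bivalent vertex with incoming internal edge $f$ and outgoing internal edge $g$), so $\facepC[e][\Gamma] = \facepC[e^*][\Gamma^*]$ as well. Since both ends of $e$ (resp. of $e^*$) are internal, $m(e) = n-1$, and each principal face decomposes as $\facepC \times \s^{n-1}$, with the sphere recording the direction of collision in the tangent plane to the knot at the collision point $y$. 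Under the identification $v\leftrightarrow v$, $w\leftrightarrow w$ (same labels on both sides), the direction from $v$ to $w$ in $\Gamma$ is matched with the direction from $v$ to $w$ in $\Gamma^*$.

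Next, I would check that $\formeT$ (for $\Gamma$) and $\formeT[\Gamma^*][\sigma^*]$ (for $\Gamma^*$) restrict identically to this common face. For any edge $e'$ distinct from $e$ (resp. $e^*$), the map $p_{e'}$ on the face factors through the projection $\face[e] \to \facepC$; in particular, the maps $p_f$ and $p_g$ depend only on $y$ and on the positions of the remaining endpoints, so they coincide whether we read them off $\Gamma$ or off $\Gamma^*$. For the distinguished edge, the hypothesis that $\sigma(e)\neq 1$ or that $(\propint'_1-\propint_1,\propext'_1-\propext_1)$ has the sphere factorization property guarantees that $\formeareteT[e][\sigma]$ restricts to the face as $(G_\tau \circ p_e)^*(\theta)$ for some antisymmetric form $\theta$ on $\s^{n+1}$. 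The Gauss map $G_\tau\circ p_e$ on the face factors through the embedding $\s^{n-1}\hookrightarrow\s^{n+1}$ obtained by composing the inclusion of the knot's tangent plane at $y$ into $T_yM$ with $\tau_y^{-1}$; this embedding depends only on $y\in\facepC$, so $\formeareteT[e][\sigma]|_{\face[e]}$ and the corresponding form for $\Gamma^*$ coincide.

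Finally, I would compare orientations. With $\epsilon(\Gamma) = (-1)^{N_{T,i}(\Gamma) + \Card(\aretesexternes)}$: in passing from $\Gamma$ to $\Gamma^*$, the vertex $w$ drops from trivalent to bivalent (so $N_{T,i}$ decreases by one), while the external edge count is unchanged; and under the identification $v\leftrightarrow v$, $w\leftrightarrow w$ the wedge $\bigwedge_{e'\in\aretesexternes}\orientationarete{e'}$ is also unchanged, because both $\orientationarete{e}$ and $\orientationarete{e^*}$ read as $\d Y_v^1\wedge\cdots\wedge\d Y_v^n\wedge \d Y_w^1\wedge\cdots\wedge\d Y_w^n$ (any external half-edge adjacent to an internal vertex, regardless of its valence, contributes that internal vertex's full tuple of knot coordinates). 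Hence $\Omega(\Gamma^*) = -\Omega(\Gamma)$, so the identification $C_\Gamma(\psi) \cong C_{\Gamma^*}(\psi)$ is orientation reversing, and so is the induced identification of the two principal faces. Combined with the equality of integrands, this yields $\delta_e I(\Gamma,\sigma,\psi) = -\delta_{e^*} I(\Gamma^*,\sigma^*,\psi)$.

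The main obstacle is justifying cleanly that the Gauss map $G_\tau\circ p_e$ factors through the $\s^{n-1}$ direction in a way compatible with both $\Gamma$ and $\Gamma^*$; this is a local computation at the collision point $y$, relying on the fact that both vertices $v$ and $w$ lie on $\psi(\R^n)$ so that the normalized limit of $c(w) - c(v)$ lies in $\tau_y^{-1}(T_y\psi(\R^n))$, independently of any choice between $\Gamma$ and $\Gamma^*$.
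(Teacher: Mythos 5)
Your proposal is correct and takes essentially the same route as the paper: identify the two principal faces as the same space, verify that the restrictions of the integrands agree (using the factorization of the Gauss map for $e$ through the $\s^{n-1}$ of collision directions on the knot, which is the same for $\Gamma$ and $\Gamma^*$), and check that the sign $\epsilon$ flips while the wedge $\bigwedge\orientationarete{h}$ is unchanged, giving the orientation reversal. This matches the paper's proof, which is given inside the combined proof of Lemmas \ref{p2}--\ref{p5} together with the orientation bookkeeping in Figure \ref{table}.
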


In all the remaining cases, $\sigma(e)\neq1$, since the numbered faces with $\sigma(e)=1$ are all studied by Lemmas \ref{vanishing-p} and \ref{p2}.
\begin{lm}\label{p1}
Suppose that $\Gamma$ looks as in Figure \ref{p11} around $e$. 

\begin{figure}[H]
\centering
\begin{tikzpicture} 
\fill (0,0) circle (0.1) ;
\draw [<-, >= latex] (180:0.1) -- (180:0.9) ;
\draw [<-, >=latex, dashed]  (180:1.1) -- (180:1.9) ;
\draw [->, >=latex, dashed]  (0:0.1) -- (0:0.9) ;
\fill (180:1) circle (0.1);
\draw (180:1) ++(50: 0.3) node {v};
\draw (50: 0.3) node {w};
\draw (180:0.66) ++(-50: 0.3) node {e};
\draw (180:1.69) ++(-50: 0.3) node {f};
\draw (0:0.46) ++(-60: 0.3) node {g};
\end{tikzpicture} 
\caption{}
\label{p11}
\end{figure}

\noindent Let $\Gamma^*$ denote the BCR diagram where this part of $\Gamma$ is replaced as in Figure \ref{p12}.

\begin{figure}[H]
\centering
\begin{tikzpicture} 
\draw (0,0) circle (0.1) ;
\draw [<-, >= latex, dashed] (90:0.1) -- (90:0.9) ;
\draw [<-, >=latex, dashed]  (180:0.1) -- (180:0.9) ;
\draw [->, >=latex, dashed]  (0:0.1) -- (0:0.9) ;
\fill (90:1) circle (0.1);
\draw (90:1) ++(50: 0.3) node {v};
\draw (50: 0.3) node {w};
\draw (90:0.41) ++(150: 0.3) node {$e^*$};
\draw (0:0.46) ++(-60: 0.3) node {g};
\draw (180:0.66) ++(-60: 0.3) node {f};
\end{tikzpicture}
\caption{}
\label{p12}
\end{figure}

If $\sigma(e)\neq 1$, then $\delta_e I(\Gamma,\sigma,\psi) =-\delta_{e^*} I(\Gamma^*,\sigma^*,\psi)$, where $\sigma^*$ is naturally induced by $\sigma$.

\end{lm}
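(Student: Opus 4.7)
The plan is to reduce both principal face integrals, via Fubini along the Gauss direction of the collapsing edge, to the same integral over a common collapsed configuration space, and then to track the resulting orientation sign.

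First, the graphs $\delta_e\Gamma$ and $\delta_{e^*}\Gamma^*$ obtained by collapsing $\{v,w\}$ coincide combinatorially. In $\Gamma$, vertex $v$ is internal bivalent of type 4 in Definition \ref{BCRdef} (incoming external $f$, outgoing internal $e$) and $w$ is internal bivalent of type 5 (incoming internal $e$, outgoing external $g$); in $\Gamma^*$, $v$ is internal univalent with outgoing leg $e^*$, and $w$ is external trivalent with incoming $e^*, f$ and outgoing $g$. Collapsing $\{v,w\}$ in both cases produces a bivalent vertex carrying the incoming external edge $f$ and the outgoing external edge $g$, and this vertex is internal in both cases because $v$ is internal in both $\Gamma$ and $\Gamma^*$. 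Hence there is a canonical identification $\facepC \cong \facepC[e^*][\Gamma^*]$, and for every edge $h \neq e$ (resp. $h \neq e^*$), the map $p_h$ on the principal face factors through this common base, with matching factored maps under the identification, since $p_h$ only involves vertices other than $v, w$ together with the common collision point.

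Next, using the product structure stated before Lemma \ref{vanishing-p}, $\face[e] \cong \facepC \times \s^{n-1}$ and $\partial_{e^*} C_{\Gamma^*}(\psi) \cong \facepC[e^*][\Gamma^*] \times \s^{n+1}$, with the Gauss map on the collapsing edge given by the second projection, I apply Fubini. Since $\sigma(e) \neq 1$, the restriction of $\formeareteT$ to $\face[e]$ is the pullback of the antisymmetric volume form $\omega_{\alpha_{\sigma(e)}}$, whose fiber integral over $\s^{n-1}$ equals $1$. The same Fubini applied on the $\Gamma^*$ side, with $\omega_{\beta_{\sigma(e)}}$ integrated over $\s^{n+1}$, again gives $1$. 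The remaining factors $\formeareteT[h][\sigma]$, with $h \neq e$, pull back identically from the common base. Hence both $\delta_e I(\Gamma, \sigma, \psi)$ and $\delta_{e^*} I(\Gamma^*, \sigma^*, \psi)$ equal, up to an overall orientation sign, the same integral over $\facepC$.

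To finish, I compare orientations. The count $N_{T,i}$ of internal trivalent vertices is unchanged (neither $v$ nor $w$ is internal trivalent in either graph), while the number of external edges grows by one (the internal $e$ is replaced by the external leg $e^*$), so $\epsilon(\Gamma^*) = -\epsilon(\Gamma)$. A direct rearrangement, using that $n$ is odd and that all half-edge factors involved have odd degree, shows that $\Omega_f \wedge \Omega_g \wedge \Omega_{e^*}$ in $\Gamma^*$ is obtained from $\Omega_f \wedge \Omega_g$ in $\Gamma$ by replacing the block $\d Y_w^1 \wedge \cdots \wedge \d Y_w^n$ by $\d X_w^1 \wedge \cdots \wedge \d X_w^{n+2}$, without any additional sign. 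Polar-decomposing $\d Y_w - \d Y_v$ along the outward normal of $\face[e]$ and $\d X_w - \d\psi(Y_v)$ along the outward normal of $\partial_{e^*} C_{\Gamma^*}(\psi)$ then produces matching sign contributions on both sides in the identification of the boundary orientations with the product orientations on $\facepC \times \s^{m(e)}$. The only net discrepancy is the $-1$ from $\epsilon(\Gamma^*)/\epsilon(\Gamma)$, yielding the claimed identity.

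The main obstacle is the sign bookkeeping in the orientation comparison, in particular verifying that the polar decompositions of the collision variables in $\s^{n-1}$ and $\s^{n+1}$ combine with the outward-normal boundary convention to produce matching rather than compensating signs, so that the final cancellation reduces cleanly to the sign coming from $\epsilon$.
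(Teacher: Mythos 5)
Your proposal is correct and mirrors the paper's argument: both identify $\delta_e\Gamma \cong \delta_{e^*}\Gamma^*$, use the product structure $\partial_e C_\Gamma \cong \facepC \times \s^{n-1}$ (resp. $\partial_{e^*}C_{\Gamma^*} \cong \pm\facepC\times\s^{n+1}$) to apply Fubini and integrate the unit-mass sphere forms to $1$, and trace the net sign to $\epsilon(\Gamma^*)/\epsilon(\Gamma)=-1$, recorded in the paper's Figure \ref{table} as $\Omega(C_{\delta_e\Gamma})=\epsilon(\Gamma)\,\d Y_v\,\Omega_{f_-}\Omega_{g_+}\Omega'$ versus $\Omega(C_{\delta_{e^*}\Gamma^*})=\epsilon(\Gamma^*)\,\d Y_v\,\Omega_{f_-}\Omega_{g_+}\Omega'$. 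The orientation bookkeeping you flag as the main obstacle is the same step the paper delegates to that table; your reduction of the sign to the change of $\epsilon$ (internal $e$ becoming the external leg $e^*$, with $N_{T,i}$ unchanged) is exactly what the computation gives.
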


\begin{lm}\label{p3}
Suppose that $\Gamma$ looks as in Figure \ref{p31} around $e$.  

\begin{figure}[H]
\centering
\begin{tikzpicture} 
\fill (0,0) circle (0.1) ;
\draw [<-, >= latex] (180:0.1) -- (180:0.9) ;
\draw [<-, >=latex, dashed]  (180:1.1) -- (180:1.9) ;
\draw [->, >=latex ]  (0:0.1) -- (0:0.9) ;
\draw [<-,>=latex, dashed] (90:0.1)--(90:0.9);
\fill (180:1) circle (0.1);
\draw (180:1) ++(50: 0.3) node {v};
\draw (50: 0.3) node {w};
\draw (180:0.66) ++(-50: 0.3) node {e};
\draw (180:1.69) ++(-50: 0.3) node {f};
\draw (90:0.41) ++(150: 0.3) node {g};
\draw (90:0.99) ++(150: 0.3) node {x};
\fill (90:1) circle (0.1);
\end{tikzpicture}
\caption{}
\label{p31}
\end{figure}

\noindent Let $\Gamma^*$ denote the BCR diagram where this part of $\Gamma$ is replaced as in Figure \ref{p32}.

\begin{figure}[H]
\centering
\begin{tikzpicture} 
\fill (0,0) circle (0.1) ;
\draw [<-, >= latex, dashed] (180:0.1) -- (180:0.9) ;
\draw [<-, >=latex, dashed]  (180:1.1) -- (180:1.9) ;
\draw [->, >=latex ]  (0:0.1) -- (0:0.9) ;
\draw [<-,>=latex, dashed] (180:1) ++(90:0.1)--++(90:0.9);
\draw (180:1) circle (0.1);
\draw (180:1) ++(50: 0.3) node {v};
\draw (50: 0.3) node {w};
\draw (180:0.66) ++(-50: 0.3) node {$e^*$};
\draw (180:1.69) ++(-50: 0.3) node {f};
\draw (180:1) ++(90:0.41) ++(150: 0.3) node {g};
\draw (180:1) ++(90:0.99) ++(150: 0.3) node {x};
\fill (180:1) ++ (90:1) circle (0.1);
\end{tikzpicture}
\caption{}
\label{p32}
\end{figure}

If $\sigma(e)\neq 1$, then $\delta_eI(\Gamma,\sigma,\psi) =-\delta_{e^*} I(\Gamma^*,\sigma^*,\psi)$, where $\sigma^*$ is naturally induced by $\sigma$.

\end{lm}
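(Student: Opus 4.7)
The plan is to exhibit the two face integrals $\delta_e I(\Gamma,\sigma,\psi)$ and $\delta_{e^*} I(\Gamma^*,\sigma^*,\psi)$ as the same integral over the same reduced configuration space, up to an overall sign $-1$, by integrating out the sphere factor attached to each principal face. The argument parallels what I would do for Lemmas \ref{p1} and \ref{p2}.

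First, I would identify the underlying base. Collapsing $e$ in $\Gamma$ (resp. $e^*$ in $\Gamma^*$) merges $v$ and $w$ into a single vertex, which is internal in both graphs because $w$ is internal. In both cases the incident edges at the merged vertex are the external edge $f$, the external edge $g$ coming from the univalent internal vertex $x$, and the outgoing internal edge of $w$. Hence $\delta_e\Gamma$ and $\delta_{e^*}\Gamma^*$ are canonically isomorphic graphs, and the reduced configuration spaces $\facepC$ and $\facepC[e^*][\Gamma^*]$ are canonically identified as smooth manifolds. By the conventions of Section \ref{section-principal}, via the Gauss maps on the sphere factors, the principal faces split as $\face[e]\cong \facepC\times\s^{n-1}$ and $\partial_{e^*}C_{\Gamma^*}(\psi)\cong\facepC[e^*][\Gamma^*]\times\s^{n+1}$.

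Second, I would decompose the integrands. On $\face[e]$, the boundary property of the internal propagator $\alpha_{\sigma(e)}$ gives $\formeareteT[e][\sigma]|_{\face[e]} = \mathrm{pr}_2^*(\omega_{\alpha_{\sigma(e)}})$. For any edge $h\neq e$, the map $p_h$ does not depend on the direction of collision of $v$ and $w$, so it factors through $\mathrm{pr}_1$. An identical decomposition holds on $\partial_{e^*}C_{\Gamma^*}(\psi)$ with $\omega_{\beta_{\sigma^*(e^*)}}$ replacing $\omega_{\alpha_{\sigma(e)}}$. Applying Fubini together with the unit-volume normalizations $\int_{\s^{n-1}}\omega_{\alpha_{\sigma(e)}} = \int_{\s^{n+1}}\omega_{\beta_{\sigma^*(e^*)}} = 1$, and using that $\sigma^*$ is induced by $\sigma$ so that $\sigma^*(h) = \sigma(h)$ for all $h\neq e$ and the pulled-back propagating forms on the common edges coincide, I would reduce both integrals to the integral of the same differential form over $\facepC \cong \facepC[e^*][\Gamma^*]$.

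The last step, which I expect to be the main obstacle, is the sign bookkeeping. The orientations of $\facepC$ and $\facepC[e^*][\Gamma^*]$ are induced from $\Omega(\Gamma)$ and $\Omega(\Gamma^*)$ via the boundary-orientation-then-Gauss-factor convention. Passing from $\Gamma$ to $\Gamma^*$ converts $v$ from internal bivalent (type 4) to external trivalent (type 1), $w$ from internal trivalent (type 2) to internal bivalent (type 4), and the internal edge $e$ to the external edge $e^*$. The global sign $\epsilon(\cdot)$ is preserved because $|\aretesexternes|$ increases by one while $N_{T,i}$ decreases by one. The half-edge coordinate forms redistribute: in $\Gamma$ the block $\d Y_v^1\wedge\cdots\wedge \d Y_v^n$ sits at $f_+$, whereas in $\Gamma^*$ the factors $\d X_v^1$ (at $g_+$), $\d X_v^2$ (at $f_+$), and $\d X_v^3\wedge\cdots\wedge \d X_v^{n+2}$ (at $e^*_-$) replace it; the block $\d Y_w^1\wedge\cdots\wedge \d Y_w^n$ moves from $g_+$ in $\Gamma$ to $e^*_+$ in $\Gamma^*$. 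Combining this rearrangement with the boundary orientation and with the placement of the $\s^{n-1}$ versus $\s^{n+1}$ factor in each Gauss decomposition, a direct permutation calculation should yield a net sign of $-1$, which gives the desired identity $\delta_e I(\Gamma,\sigma,\psi) = -\delta_{e^*}I(\Gamma^*,\sigma^*,\psi)$.
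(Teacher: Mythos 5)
Your proof follows the same route as the paper's: the paper proves Lemma~\ref{p1} by splitting $\face[e]\cong\facepC\times\s^{n-1}$ and $\partial_{e^*}C_{\Gamma^*}(\psi)\cong -\facepC\times\s^{n+1}$, factoring the maps $p_h$ ($h\neq e$) through $\mathrm{pr}_1$, integrating out the unit-mass sphere form, and reading the relative orientation sign off the table in Figure~\ref{table}, then says Lemma~\ref{p3} is proved similarly. Your setup, your type-by-type bookkeeping of $v$, $w$, the half-edge forms, and your observation that $\epsilon(\Gamma^*)=\epsilon(\Gamma)$ all match the paper's; the final permutation computation you defer to is exactly the entry $\Omega(C_{\delta_e\Gamma^*})=-\epsilon(\Gamma^*)\,\d Y_v\,\d Y_x\,\Omega_{f_-}\,\Omega'$ in the paper's orientation table, so the approach is essentially identical.
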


\begin{lm}\label{p4}
Suppose that $\Gamma$ looks as in Figure \ref{p41} around $e$. 

\begin{figure}[H]
\centering
\begin{tikzpicture} 
\fill (0,0) circle (0.1) ;
\draw [<-, >= latex] (180:0.1) -- (180:0.9) ;
\draw [<-, >=latex]  (180:1.1) -- (180:1.9) ;
\draw [->, >=latex, dashed]  (0:0.1) -- (0:0.9) ;
\draw [<-,>=latex, dashed] (180:1) ++(90:0.1)--++(90:0.9);
\fill (180:1) circle (0.1);
\draw (180:1) ++(50: 0.3) node {v};
\draw (50: 0.3) node {w};
\draw (180:0.66) ++(-50: 0.3) node {e};
\draw (0:0.19) ++(-50: 0.3) node {f};
\draw (180:1) ++(90:0.41) ++(150: 0.3) node {g};
\draw (180:1) ++(90:0.99) ++(150: 0.3) node {x};
\fill (180:1) ++ (90:1) circle (0.1);
\end{tikzpicture}
\caption{}
\label{p41}
\end{figure}

\noindent Let $\Gamma^*$ denote the BCR diagram where this part of $\Gamma$ is replaced as in Figure \ref{p42}. 

\begin{figure}[H]
\centering
\begin{tikzpicture} 
\draw (0,0) circle (0.1) ;
\draw [<-, >= latex, dashed] (180:0.1) -- (180:0.9) ;
\draw [<-, >=latex]  (180:1.1) -- (180:1.9) ;
\draw [->, >=latex, dashed ]  (0:0.1) -- (0:0.9) ;
\draw [<-,>=latex, dashed] (90:0.1)--(90:0.9);
\fill (180:1) circle (0.1);
\draw (180:1) ++(50: 0.3) node {v};
\draw (50: 0.3) node {w};
\draw (180:0.66) ++(-50: 0.3) node {$e^*$};
\draw (0:0.19) ++(-50: 0.3) node {f};
\draw (90:0.41) ++(150: 0.3) node {g};
\draw (90:0.99) ++(150: 0.3) node {x};
\fill (90:1) circle (0.1);
\end{tikzpicture}
\caption{}
\label{p42}
\end{figure}

If $\sigma(e)\neq 1$, then $\delta_e I(\Gamma,\sigma,\psi)=-\delta_{e^*} I(\Gamma^*,\sigma^*,\psi)$, where $\sigma^*$ is naturally induced by $\sigma$.

\end{lm}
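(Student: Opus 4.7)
The proof follows the principal-face cancellation template of Lemmas \ref{p1}, \ref{p2}, \ref{p3}. The local modification $\Gamma \leadsto \Gamma^*$ turns the internal trivalent vertex $v$ into a bivalent internal vertex, the internal bivalent vertex $w$ into an external trivalent vertex, the internal edge $e$ into an external edge $e^*$, and redirects the leg $g$ so that it ends at $w$ instead of $v$. The numbering $\sigma^*$ is defined by $\sigma^*(e^*) = \sigma(e)$ and $\sigma^*(e') = \sigma(e')$ for every edge $e' \neq e$. Because the hypothesis $\sigma(e) \neq 1$ guarantees that $e$ and $e^*$ carry genuine propagators (not the correction forms $\ecartinterne, \ecartexterne$), the associated sphere volume forms will integrate to $1$.

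My plan is to identify the two principal faces up to their sphere factors and then integrate out those sphere factors. The contracted graphs $\delta_e \Gamma$ and $\delta_{e^*} \Gamma^*$ are canonically isomorphic: the merged vertex $*_S$ is internal in both cases (since $v$ is internal in both $\Gamma$ and $\Gamma^*$), and the surviving edges $f$, $g$, and the internal edge entering $v$ attach to $*_S$ identically. This yields a canonical diffeomorphism $\iota \colon C^0_{\delta_e \Gamma} \to C^0_{\delta_{e^*}\Gamma^*}$ intertwining the maps $p_{e'}$ for every edge $e' \neq e, e^*$. The principal faces then decompose as $\partial_e C_\Gamma(\psi) \cong C^0_{\delta_e \Gamma} \times \s^{n-1}$ (both ends of $e$ internal) and $\partial_{e^*} C_{\Gamma^*}(\psi) \cong C^0_{\delta_{e^*}\Gamma^*} \times \s^{n+1}$ ($e^*$ has an external end). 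On each face, the restricted integrand factors as a wedge of forms pulled back from the $C^0_{\delta \Gamma}$-factor together with $G^*(\omega_{\alpha_{\sigma(e)}})$ or $G_\tau^*(\omega_{\beta_{\sigma^*(e^*)}})$ on the sphere factor. Fubini, combined with $\int_{\s^{n-1}} \omega_{\alpha_{\sigma(e)}} = \int_{\s^{n+1}} \omega_{\beta_{\sigma^*(e^*)}} = 1$, reduces both boundary integrals to a common integral over $C^0_{\delta_e \Gamma}$ of a form determined by the shared propagators on the edges other than $e, e^*$, up to orientation.

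It remains to compare orientations. The combinatorial sign $\epsilon(\Gamma) = (-1)^{N_{T,i}(\Gamma) + \Card(\aretesexternes)}$ is preserved under $\Gamma \leadsto \Gamma^*$: the number $N_{T,i}$ decreases by one ($v$ becomes bivalent) while $\Card(\aretesexternes)$ increases by one ($e$ becomes $e^*$), and these two sign flips cancel. The wedge $\bigwedge_{e' \in \aretesexternes} \orientationarete{e'}$ changes only through the half-edge contributions at $v$ and $w$: in $\Gamma$ these are $\d Y_v^1 \wedge \cdots \wedge \d Y_v^n$ (from $g_+$) and $\d Y_w^1 \wedge \cdots \wedge \d Y_w^n$ (from $f_-$), while in $\Gamma^*$ they become $\d X_w^1$ (from $g_+$), $\d X_w^2$ (from $e^*_+$), $\d X_w^3 \wedge \cdots \wedge \d X_w^{n+2}$ (from $f_-$), and $\d Y_v^1 \wedge \cdots \wedge \d Y_v^n$ (from $e^*_-$). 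Reordering these wedges to match the product orientations $C^0_{\delta_e \Gamma} \times \s^{n-1}$ and $C^0_{\delta_{e^*}\Gamma^*} \times \s^{n+1}$, with the outward-normal-first convention on each sphere factor, and using that $n$ is odd, produces the claimed sign $-1$.

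The main obstacle is precisely this final sign computation. Because $w$ changes type from internal to external, its $n+2$ tangent coordinates are redistributed across three external half-edges, and the resulting permutation signs must be tracked in conjunction with the boundary-orientation convention to confirm that the net sign equals $-1$ rather than $+1$. Every other step is either a natural identification of graph-theoretic data or a standard Fubini computation.
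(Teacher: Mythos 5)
Your proposal takes essentially the same route as the paper: identify the canonical isomorphism between the contracted graphs $\delta_e\Gamma$ and $\delta_{e^*}\Gamma^*$, write the two principal faces as products of $C^0_{\delta_e\Gamma}$ with $\s^{n-1}$ (for $\Gamma$) and $\s^{n+1}$ (for $\Gamma^*$), integrate those sphere factors to $1$ by Fubini using the mass-one hypothesis on the propagator forms, and reduce the claim to a comparison of orientations of the common base. Your computation $\epsilon(\Gamma^*)/\epsilon(\Gamma)=+1$ and your inventory of the half-edge forms at $v$ and $w$ agree with the row for Lemma~\ref{p4} in the paper's Figure~\ref{table}; the residual $-1$ comes from comparing the induced orientations of $C^0_{\delta_e\Gamma}$ and $C^0_{\delta_{e^*}\Gamma^*}$, which the paper also defers to a tabulated ``one can check by computation'' entry, so your honest flag on the final sign bookkeeping leaves the argument at the same level of completeness as the published one.
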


\begin{lm}\label{p5}
Suppose that $\Gamma$ looks as in Figure \ref{p51} around $e$.

 \begin{figure}[H]
\centering
\begin{tikzpicture} 
\fill (0,0) circle (0.1) ;
\draw [<-, >= latex] (180:0.1) -- (180:0.9) ;
\draw [<-, >=latex]  (180:1.1) -- (180:1.9) ;
\draw [->, >=latex]  (0:0.1) -- (0:0.9) ;
\draw [<-,>=latex, dashed] (180:1) ++(90:0.1)--++(90:0.9);
\draw [<-,>=latex, dashed] (90:0.1)--(90:0.9);
\fill (180:1) circle (0.1);
\draw (180:1) ++(50: 0.3) node {v};
\draw (50: 0.3) node {w};
\draw (180:0.66) ++(-50: 0.3) node {e};
\draw (180:1) ++(90:0.41) ++(150: 0.3) node {f};
\draw (180:1) ++(90:0.99) ++(150: 0.3) node {x};
\draw (90:0.99) ++(150: 0.3) node {y};
\draw (90:0.41) ++(150: 0.3) node {g};
\fill (90:1) circle (0.1);
\fill (180:1) ++ (90:1) circle (0.1);
\end{tikzpicture}\ \ \ \ 
\begin{tikzpicture} 
\draw (0,0) circle (0.1) ;
\draw [<-, dashed,>= latex] (180:0.1) -- (180:0.9) ;
\draw [<-,dashed, >=latex]  (180:1.1) -- (180:1.9) ;
\draw [->, dashed,>=latex]  (0:0.1) -- (0:0.9) ;
\draw [<-,>=latex, dashed] (180:1) ++(90:0.1)--++(90:0.9);
\draw [<-,>=latex, dashed] (90:0.1)--(90:0.9);
\draw(180:1) circle (0.1);
\draw (180:1) ++(50: 0.3) node {v};
\draw (50: 0.3) node {w};
\draw (180:0.66) ++(-50: 0.3) node {e};
\draw (180:1) ++(90:0.41) ++(150: 0.3) node {f};
\draw (180:1) ++(90:0.99) ++(150: 0.3) node {x};
\draw (90:0.99) ++(150: 0.3) node {y};
\draw (90:0.41) ++(150: 0.3) node {g};
\fill (90:1) circle (0.1);
\fill (180:1) ++ (90:1) circle (0.1);
\end{tikzpicture}
\caption{}
\label{p51}
\end{figure}

If $\sigma(e)\neq 1$, then $\delta_e I(\Gamma,\sigma,\psi) =-\delta_{e} I(\Gamma,\sigma\circ\rho,\psi)$, where $\rho$ is the transposition of $f$ and $g$.

\end{lm}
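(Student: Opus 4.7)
The plan is to construct an orientation-reversing involution $T$ of the principal face $\face[e]$ that exchanges the positions assigned to the univalent vertices $x$ and $y$ while fixing every other vertex. Explicitly, for a configuration $c$, set $T(c)(x)=c(y)$, $T(c)(y)=c(x)$, and $T(c)(z)=c(z)$ for $z\notin\{x,y\}$. Since $x$ and $y$ are both internal univalent, this is a well-defined self-diffeomorphism of $\confignoeud$; it preserves $\face[e]$ because it does not affect $v$ and $w$. Under the canonical identification $\face[e]\cong\facepC\times\s^{m(e)}$, the involution acts by the corresponding swap on $\facepC$ and by the identity on $\s^{m(e)}$. The same formula handles both diagrams of Figure \ref{p51}.

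For orientation, only the factors $\Omega_f=\Omega_{f_-}\wedge\Omega_{f_+}$ and $\Omega_g=\Omega_{g_-}\wedge\Omega_{g_+}$ of the wedge defining $\Omega(\Gamma)$ are affected by $T^*$, which interchanges $\Omega_{f_-}=dY_x^1\wedge\cdots\wedge dY_x^n$ with $\Omega_{g_-}=dY_y^1\wedge\cdots\wedge dY_y^n$ while fixing $\Omega_{f_+}$ and $\Omega_{g_+}$. A direct Koszul sign computation for the transposition of these two $n$-forms across the intermediate factor $\Omega_{f_+}$ (which is either an $n$-form in the first picture or a $1$-form in the second, depending on whether $v,w$ are internal or external) gives an overall sign $(-1)^{n^2}=-1$ since $n$ is odd.

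For the forms, since $x$ is adjacent only to $f$ and $y$ only to $g$, we have $p_f\circ T=p_g$, $p_g\circ T=p_f$, and $p_h\circ T=p_h$ for every other edge $h$. Consequently $T^*\formeareteT[h][\sigma]=\formeareteT[h][\sigma]$ for $h\notin\{f,g\}$, while $T^*\formeareteT[f][\sigma]=\formeareteT[g][\sigma\circ\rho]$ and $T^*\formeareteT[g][\sigma]=\formeareteT[f][\sigma\circ\rho]$. Reassembling the wedge $\formeT$ into $\formeT[\Gamma][\sigma\circ\rho]$ requires swapping these two factors in the wedge product; the product of their degrees is either $(n+1)^2$ (when neither $\sigma(f)$ nor $\sigma(g)$ equals $1$) or $n(n+1)$ (when exactly one does), and both are even, so the swap contributes no additional sign.

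Combining these computations, $T$ is an orientation-reversing self-diffeomorphism of $\face[e]$ satisfying $T^*\formeT=\formeT[\Gamma][\sigma\circ\rho]$, and a change of variables yields $\delta_eI(\Gamma,\sigma,\psi)=-\int_{\face[e]}T^*\formeT=-\delta_eI(\Gamma,\sigma\circ\rho,\psi)$. The main technical point to verify is that $T$ extends smoothly to Rossi's compactification and restricts as described to $\face[e]$, which follows from the functoriality of this compactification with respect to the evident graph symmetry exchanging $(x,f)$ with $(y,g)$.
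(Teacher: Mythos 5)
Your proposal matches the paper's (very terse) proof, which simply cites the pattern of Lemma \ref{vanishing-h-2} and says to use ``the orientation-reversing diffeomorphism that exchanges $x$ and $y$''; you construct precisely that involution and correctly work out the orientation sign (the swap of the two degree-$n$ blocks gives $(-1)^n=-1$) and the identity $T^*\formeT=\formeT[\Gamma][\sigma\circ\rho]$, using that $p_f\circ T=p_g$ holds on $\face[e]$ because $v$ and $w$ have collided there. One small caveat worth keeping in mind: the identity $p_f\circ T=p_g$ is a statement on the face $\face[e]$ only, not on all of $\confignoeud$, since away from that face $p_f\circ T(c)=(c(y),c(v))$ while $p_g(c)=(c(y),c(w))$; your computation is on the face, so this is harmless, but the phrasing could mislead.
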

\begin{proof}Let us prove Lemma \ref{p1}, and explain why it is possible to deal with $\sigma(e)=1$ in Lemma \ref{p2}. Lemmas \ref{p3} and \ref{p4} are proved similarly. Lemma \ref{p5} is proved as Lemma \ref{vanishing-h-2} (for example), using the orientation-reversing diffeomorphism that exchanges $x$ and $y$.

In Lemma \ref{p1}, we have $\face[e]= \facepC\times\s^{n-1}$ and $\partial_{e^*}C_{\Gamma^*}(\psi)= -\facepC\times\s^{n+1}$ since the graphs $\delta_e\Gamma$ and $\delta_{e^*}\Gamma^*$ are identical, and one can check by computation that the orientations are different, as in the second row of Figure \ref{table}.
For any edge $h\neq e$ of $\Gamma$ or $\Gamma^*$ the maps $p_h\colon \face[e]\rightarrow C_e$ and $ p_h^*\colon \partial_{e^*}C_{\Gamma^*}(\psi)\rightarrow C_e$ factor through the maps $\mathrm{pr}_1\colon \face[e]\rightarrow \facepC$ and $\mathrm{pr}_{1,*}\colon \partial_{e^*}C_{\Gamma^*}(\psi)\rightarrow \facepC$. 
The maps $G\circ p_e$ and $G_{\tau_{\sigma(e)}}\circ p_{e^*}$ are exactly the maps $\mathrm{pr}_2\colon \face[e]\rightarrow \s^{n-1}$ and $\mathrm{pr}_{2,*}\colon\partial_{e^*}C_{\Gamma^*}(\psi)\rightarrow\s^{n+1}$. 
Then, one can write $\formeT = \mathrm{pr}_1^*(\lambda)\wedge\mathrm{pr}_2^*(\omega_{\propint_{\sigma(e)}})$ and $\formeT[\Gamma^*][\sigma^*] =  \mathrm{pr}_{1,*}^*(\lambda)\wedge\mathrm{pr}_{2,*}^*(\omega_{\propext_{\sigma(e)}})$  for some form $\lambda$ on $\facepC$.
This implies that \[\delta_e I(\Gamma,\sigma, \psi) = \int_{\facepC} (\lambda \int_{\s^{n-1}} \omega_{\propint_{\sigma(e)}} )=  \int_{\facepC} \lambda = \int_{\facepC}( \lambda \int_{\s^{n+1}} \omega_{\propext_{\sigma(e)}}) =- \delta_{e^*} I(\Gamma^*,\sigma^*,\psi),\] where the minus sign comes from the identification $\partial_{e^*}C_{\Gamma^*}(\psi)= -\facepC\times\s^{n+1}$. This proves Lemma \ref{p1}.

In the proof of Lemma \ref{p2}, we can similarly prove that $\formeT = \mathrm{pr}_1^*(\lambda)\wedge\mathrm{pr}_2^*(\mu_e)$ and $\formeT[\Gamma^*][\sigma^*] = \mathrm{pr}_{1,*}^*(\lambda)\wedge\mathrm{pr}_{2,*}^*(\mu_e)$ where $\lambda$ is a form on $\facepC$ and where \[\mu_e=\begin{cases}\omega_{\propext_{\sigma(e)}}& \text{if $\sigma(e)\neq 1$,} \\ \volext &\text{if $\sigma(e)=1$ and $(\propint'_1-\propint_1,\propext'_1-\propext_1)$ has the sphere factorization property,}\end{cases} \]so that $\formeT =  \formeT[\Gamma^*][\sigma^*]$. Since both faces are diffeomorphic with opposite orientations, $\delta_e I(\Gamma,\sigma, \psi) = - \delta_{e^*} I(\Gamma^*,\sigma^*, \psi)$.

Figure \ref{table} describes the different orientations used to check Lemmas \ref{p2} to \ref{p4}, where $\Omega'$ denotes the wedge products of the $\Omega_h$, on the external edges $h$ not named on the pictures, $\d Y_* = \bigwedge\limits_{i=1}^n \d Y_*^i$, and $\d X_* = \bigwedge\limits_{i=1}^{n+2} \d X_*^i$.\qedhere

\begin{figure}[H]
\centering
\begin{tabular}{|c|c|c|c|}
  \hline
  Lemma & $\epsilon(\Gamma^*)/\epsilon(\Gamma)$& $\Omega(C_{\delta_e\Gamma}) $&$ \Omega(C_{\delta_e\Gamma^*}) $ \\
  \hline
 \ref{p2} & $-1$ & $\epsilon(\Gamma)\d Y_v\Omega'$&$\epsilon(\Gamma^*)\d Y_v\Omega'$ \\
  \ref{p1} & $-1$ & $\epsilon(\Gamma)\d Y_v \Omega_{f_-}\Omega_{g_+}\Omega'$&  $\epsilon(\Gamma^*)\d Y_v \Omega_{f_-}\Omega_{g_+}\Omega'$\\
\ref{p3} & $+1$& $\epsilon(\Gamma)\d Y_v \d Y_x \Omega_{f_-} \Omega'$ &$-\epsilon(\Gamma^*)\d Y_v \d Y_x \Omega_{f_-} \Omega'$ \\
\ref{p4} &$+1$& $\epsilon(\Gamma)\d Y_v \d Y_x \Omega_{f_+} \Omega'$  &$-\epsilon(\Gamma^*)\d Y_v \d Y_x \Omega_{f_+} \Omega'$\\
  \hline
\end{tabular}\caption{Face orientations}\label{table}
\end{figure}

\end{proof}

\section{ Proofs of Theorem \ref{parallelization theorem} and Proposition \ref{prop210}}\label{annex2}
A topological pair $(X,A)$ is the data of a topological space $X$ and a subset $A\subset X$. A map $f\colon (X,A) \rightarrow (Y,B)$ between two such pairs is a continuous map $f\colon X \rightarrow Y$ such that $f(A)\subset B$.

If $(X,A)$ and $(Y,B)$ are two topological pairs, $[(X,A), (Y,B)]$ denotes the set of homotopy classes of maps from $(X,A)$ to $(Y,B)$.

\begin{lm}Let $\ambientspace$ be a parallelizable asymptotic homology $\R^{n+2}$, and fix a parallelization $\tau_0$ of $\ambientspace$.
For any map $g\colon\ambientspace\rightarrow SO(n+2)$ that sends $\voisinageinfini$ to the identity matrix $I_{n+2}$, define the map $\psi(g)\colon (x,v) \in \ambientspace\times\R^{n+2} \mapsto (x,g(x)(v))\in \ambientspace\times\R^{n+2}$.

The map $$\begin{array}{lll}[(\ambientspace, \voisinageinfini), (SO(n+2), I_{n+2})]& \rightarrow &\Par \\ \left[g\right]& \mapsto & \left[ \tau_0\circ \psi (g)\right] \end{array}$$ is well-defined and is a bijection.
\end{lm}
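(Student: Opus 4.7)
The strategy is to write an explicit inverse using the classical Gram--Schmidt deformation retraction of $GL^+(\R^{n+2})$ onto $SO(n+2)$, and then to check that both compositions are the identity.

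First, I would verify that the map is well-defined. A homotopy $(g_t)_{0\leq t\leq 1}$ of maps of pairs from $(\ambientspace,\voisinageinfini)$ to $(SO(n+2),I_{n+2})$ yields a smooth family $(\psi(g_t))$ of bundle automorphisms of $\ambientspace\times\R^{n+2}$ that equal the identity on $\voisinageinfini\times\R^{n+2}$ for every $t$; hence $(\tau_0\circ\psi(g_t))$ is a homotopy of parallelizations in the sense of Definition \ref{paral-def}.

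To build a candidate inverse, given a parallelization $\tau$ I would set $g_\tau(x)=\tau_0(x,\cdot)^{-1}\circ\tau(x,\cdot)$. Since both $\tau$ and $\tau_0$ restrict to the canonical trivialization over $\voisinageinfini\times\R^{n+2}$, we have $g_\tau(x)=I_{n+2}$ for $x\in\voisinageinfini$. Connectedness of $\ambientspace$ together with the fact that $\det g_\tau$ is continuous, everywhere nonzero, and equal to $1$ near $\infty$ forces $g_\tau$ to land in $GL^+(n+2)$. The Gram--Schmidt procedure provides a smooth deformation retraction $H\colon GL^+(n+2)\times[0,1]\to GL^+(n+2)$ from the identity to a retraction $R=H(\cdot,1)$ onto $SO(n+2)$, with $H(A,t)=A$ for all $A\in SO(n+2)$ and all $t\in[0,1]$. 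I would then define $\Phi(\tau)=[R\circ g_\tau]\in[(\ambientspace,\voisinageinfini),(SO(n+2),I_{n+2})]$; the same continuity argument shows $\Phi$ passes to homotopy classes.

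Finally, I would check that the two compositions are the identity. Starting from $[g]$, a direct computation gives $g_{\tau_0\circ\psi(g)}(x)=g(x)$, which already lies in $SO(n+2)$, so $R\circ g_{\tau_0\circ\psi(g)}=g$. Conversely, starting from a parallelization $\tau$, the family $s\mapsto\tau_0\circ\psi(H(\cdot,s)\circ g_\tau)$ consists of parallelizations (each $H(\cdot,s)\circ g_\tau$ equals $I_{n+2}$ on $\voisinageinfini$), and interpolates from $\tau$ at $s=0$ to $\tau_0\circ\psi(R\circ g_\tau)$ at $s=1$, which is the image of $\Phi(\tau)$ under the map in the statement. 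The only mildly delicate point is to ensure that the Gram--Schmidt retraction can be chosen to be smooth so that the intermediate parallelizations really are smooth, but this is classical.
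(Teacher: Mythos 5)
Your proof is correct and follows the same route the paper indicates in its one-line justification (the lemma is immediate with $GL^+_{n+2}(\R)$ in place of $SO(n+2)$, which is a deformation retract of it); you have simply spelled out the details of defining $g_\tau$, checking it lands in $GL^+$, and using the Gram--Schmidt retraction to pass to $SO(n+2)$.
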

\begin{proof}
The lemma would be direct with $GL_{n+2}^+(\R)$ instead of $SO(n+2)$, and $SO(n+2)$ is a deformation retract of $GL_{n+2}^+(\R)$.
\end{proof}

A \emph{homology $(n+2)$-ball} is a compact smooth manifold that has the same integral homology as a point, and whose boundary is the $(n+1)$-sphere $\s^{n+1}$.

We are going to prove the following theorem, which implies Theorem \ref{parallelization theorem}.

\begin{theo}\label{71}
Let $\B$ be a standard $(n+2)$-ball inside the interior of a homology $(n+2)$-ball $\bouleambiante$.
For any map $f\colon (\B,\partial \B) \rightarrow (SO(n+2), I_{n+2})$, define the map $I(f)\colon (\bouleambiante,\partial\bouleambiante)\rightarrow (SO(n+2), I_{n+2})$ such that \[I(f)(x) = \left\{\begin{array}{lll} f(x) & \text{if $x\in \B$,}\\ I_{n+2} & \text{otherwise.}\end{array}\right. \]

Then, the induced map  $$\begin{array}{lll} [(\B, \partial \B) , (SO(n+2), I_{n+2})]&\rightarrow& [(\bouleambiante, \partial\bouleambiante), (SO(n+2), I_{n+2})]\\ \  [f] & \mapsto& [I(f)] \end{array}$$ is surjective.
%
%

\end{theo}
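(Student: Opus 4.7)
The plan is to reduce the surjectivity of $[f]\mapsto[I(f)]$ to the vanishing of an obstruction-theoretic cohomology group. Set $W = \overline{B(M) \setminus \mathring{\mathbb{B}}}$, a compact cobordism with $\partial W = \partial B(M) \sqcup \partial \mathbb{B}$. For any $g \colon (B(M), \partial B(M)) \to (SO(n+2), I_{n+2})$, producing $f$ with $g \simeq I(f)$ rel $\partial B(M)$ is equivalent to homotoping $g$ rel $\partial B(M)$ to a map equal to $I_{n+2}$ on $W$. I would do this in two stages: first, null-homotope the restriction $g|_W \colon (W, \partial B(M)) \to (SO(n+2), I_{n+2})$ as a map of pairs (the values on $\partial \mathbb{B}$ being free to move), producing $H \colon W \times [0,1] \to SO(n+2)$ with $H_0 = g|_W$, $H_1 \equiv I_{n+2}$ and $H_t|_{\partial B(M)} \equiv I_{n+2}$ for every $t$; second, use the homotopy extension property of the cofibration $\partial \mathbb{B} \hookrightarrow \mathbb{B}$ to extend $H|_{\partial \mathbb{B} \times [0,1]}$ to a homotopy of $g|_{\mathbb{B}}$ starting from $g|_{\mathbb{B}}$. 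Glueing the two pieces gives a homotopy from $g$ to a map $g_1$ with $g_1|_W \equiv I_{n+2}$, and $f := g_1|_{\mathbb{B}}$ then satisfies $f(\partial \mathbb{B}) = \{I_{n+2}\}$ and $I(f) = g_1 \simeq g$ rel $\partial B(M)$.

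The heart of the proof is the first stage, which I would carry out by obstruction theory on the relative CW pair $(W, \partial B(M))$. Since $SO(n+2)$ is a Lie group and hence an H-space, $\pi_1(SO(n+2))$ acts trivially on every $\pi_k(SO(n+2))$: the target is simple, and the obstructions to the relative null-homotopy live in the untwisted cohomology groups $H^k(W, \partial B(M); \pi_k(SO(n+2)))$ for $k \geq 1$. To show these vanish I would apply Mayer--Vietoris to $B(M) = \mathbb{B} \cup W$ with $\mathbb{B} \cap W = \partial \mathbb{B} \cong \s^{n+1}$: the contractibility of $\mathbb{B}$ and the acyclicity of the homology ball $B(M)$ force $H_*(W; \mathbb{Z}) \cong H_*(\s^{n+1}; \mathbb{Z})$, with the class of $\partial \mathbb{B}$ generating $H_{n+1}(W)$. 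The long exact sequence of the pair $(W, \partial W)$ applied to the fundamental class $[W, \partial W] \in H_{n+2}(W, \partial W)$ then yields $[\partial B(M)] = [\partial \mathbb{B}]$ in $H_{n+1}(W)$, so the inclusion $\partial B(M) \hookrightarrow W$ is itself a homology isomorphism. Consequently $H_*(W, \partial B(M); \mathbb{Z}) = 0$, and by the Universal Coefficient Theorem $H^k(W, \partial B(M); A) = 0$ for every abelian group $A$ and every $k$; in particular all obstructions vanish and the desired null-homotopy $H$ exists.

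The main obstacle is the potential non-triviality of $\pi_1(B(M)) = \pi_1(W)$ (as occurs, for instance, when $M$ is a Poincar\'e-type homology sphere), which could a priori twist the obstruction cohomology by a non-trivial local coefficient system on $\pi_k(SO(n+2))$. The H-space structure of $SO(n+2)$ is precisely what trivializes this twist, and without it one would have to struggle with twisted coefficients on a pair whose integral homology already vanishes. Once $H$ has been produced, the HEP extraction of $f$ described in the first paragraph finishes the proof of surjectivity.
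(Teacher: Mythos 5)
Your proof is correct, but it takes a genuinely different route than the paper's. The paper runs obstruction theory directly on the pair $(\bouleambiante,\partial\bouleambiante)$: since $H^q(\bouleambiante,\partial\bouleambiante;\mathbb Z)=0$ only for $q\le n+1$ (the top group $H^{n+2}$ is $\mathbb Z$), it can only conclude that $g$ is homotopic to a map constant on the $(n+1)$-skeleton of a triangulation. It then needs a second, purely geometric, argument --- constructing a closed ball $V\supset\B$ as a regular neighborhood of a tree with one vertex per top cell, joined to $\B$ by a cylinder, so that $g$ is constant on $\bouleambiante\setminus V$ --- and finally uses the hands-on homotopy $V\setminus\B\to y_0$. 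Your approach instead works on the cobordism $W=\overline{\bouleambiante\setminus\mathring\B}$ and the pair $(W,\partial\bouleambiante)$: the Mayer--Vietoris computation showing $H_*(W)\cong H_*(\s^{n+1})$, followed by the fundamental-class identity $[\partial\bouleambiante]=[\partial\B]$ in $H_{n+1}(W)$, gives $H_*(W,\partial\bouleambiante;\mathbb Z)=0$ in \emph{all} degrees. As a result every obstruction group $H^k(W,\partial\bouleambiante;\pi_k(SO(n+2)))$ vanishes, the relative null-homotopy exists outright, and the HEP for $\partial\B\hookrightarrow\B$ finishes the job. This cleanly eliminates the paper's regular-neighborhood step. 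You also correctly identify that the simplicity of $SO(n+2)$ (via its H-space structure) is what ensures untwisted coefficients, which is essential because the vanishing of integral homology of the pair does not by itself control twisted cohomology; the paper's stated hypothesis (abelian $\pi_1(Y)$) is the analogous, if slightly less standard, guard against the same issue. One small imprecision: the potential twisting comes from $\pi_1(SO(n+2))$ acting on $\pi_k(SO(n+2))$, pulled back along some extension of the map on the $1$-skeleton of $W$ --- not directly from $\pi_1(W)$ --- but since $SO(n+2)$ is simple the pulled-back system is trivial regardless, so the conclusion stands.
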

In order to prove this theorem, we are going to build a right inverse to this map. To a map $f\colon (\bouleambiante, \partial\bouleambiante)\rightarrow (SO(n+2), I_{n+2})$, we will associate a map $g$ homotopic to $f$, such that $g(M\setminus \B) = \{I_{n+2}\}$.

\begin{lm}\label{triangulation lemma}
Let $(Y, y_0)$ be a path-connected pointed space with abelian fundamental group, and let $\bouleambiante$ be a homology $(n+2)$-ball.
Let $f\colon(\bouleambiante, \partial \bouleambiante) \rightarrow (Y, y_0)$ be a continuous map.

Then, $f$ is homotopic to a map $g$ that sends the complement of $\B$ to $y_0$, among the maps that send $\partial\bouleambiante$ to $y_0$. 
\end{lm}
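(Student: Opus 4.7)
The plan is to show that $f|_L\colon L\to Y$ is null-homotopic rel $\partial\bouleambiante$, where $L=\overline{\bouleambiante\setminus\mathrm{int}(\B)}$, and then extend this null-homotopy to all of $\bouleambiante$ by the homotopy extension property. First, I would choose a triangulation of $\bouleambiante$ in which $\partial\bouleambiante$ and $\partial\B$ are subcomplexes, so that $L$ is a CW subcomplex and $(\bouleambiante,L)$ is a CW pair.

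I would then compute $H_*(L,\partial\bouleambiante)=0$. The Mayer--Vietoris sequence for the decomposition $\bouleambiante=L\cup\B$ (with intersection homotopy equivalent to $\partial\B\cong\s^{n+1}$), combined with $H_*(\bouleambiante)=H_*(\B)=H_*(\mathrm{pt})$, gives $H_*(L)\cong H_*(\s^{n+1})$. The long exact sequence of the pair $(L,\partial\bouleambiante)$, together with the fact that $[\partial\bouleambiante]=\pm[\partial\B]$ in $H_{n+1}(L)\cong\mathbb Z$ (since both classes cobound in $L$), then forces $H_*(L,\partial\bouleambiante)=0$ in every degree.

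Since $\pi_1(Y)$ is abelian, the induced map $f_*\colon\pi_1(L)\to\pi_1(Y)$ factors through $\pi_1(L)^{\mathrm{ab}}=H_1(L)=0$ and is therefore trivial. Hence $f|_L$ lifts to the universal cover $\tilde Y$, yielding $\tilde f\colon L\to\tilde Y$; as $\pi_1(\partial\bouleambiante)=\pi_1(\s^{n+1})=0$, this lift can be chosen to be constantly equal to the basepoint $\tilde y_0$ on $\partial\bouleambiante$. The obstructions to null-homotoping $\tilde f$ rel $\partial\bouleambiante$ lie in $H^k(L,\partial\bouleambiante;\pi_k(\tilde Y))$ with trivial local coefficient system (as $\tilde Y$ is simply connected), and these all vanish by universal coefficients together with the previous computation. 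Projecting back to $Y$, $f|_L$ is null-homotopic rel $\partial\bouleambiante$.

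Finally, the homotopy extension property for the CW pair $(\bouleambiante,L)$ promotes this null-homotopy to a homotopy $(f_t)$ of $f$ on all of $\bouleambiante$, ending at a map $g$ with $g|_L\equiv y_0$ and with $f_t(\partial\bouleambiante)=\{y_0\}$ throughout. The main obstacle I anticipate is dealing with the possibly non-simply-connected $L$ (and $L/\partial\bouleambiante$); the abelian-$\pi_1$ hypothesis on $Y$ is precisely what lets me lift to the simply-connected cover $\tilde Y$, so that obstruction theory can be carried out with ordinary cohomology rather than twisted coefficients.
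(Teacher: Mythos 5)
Your proof is correct, but it takes a genuinely different route from the paper's. The paper applies obstruction theory directly to the pair $(B(M),\partial B(M))$: since $H^{q}(B(M),\partial B(M);\mathbb Z)=0$ for $q\leq n+1$, the map $f$ can be homotoped rel $\partial B(M)$ to one that is constant on the $(n+1)$-skeleton of a triangulation. The top-dimensional cells cannot be handled this way (since $H^{n+2}(B(M),\partial B(M))\neq 0$), so the paper finishes with a geometric argument: it takes a regular neighborhood $U$ of the $(n+1)$-skeleton and a ball $V$ (built from a regular neighborhood of a tree with one vertex per top cell, together with $\B$ and a connecting cylinder) so that $U\cup V=B(M)$ and $\B\subset V$, and then collapses the remaining non-constancy from $V$ into $\B$. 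You instead apply obstruction theory to the pair $(L,\partial B(M))$ with $L=\overline{B(M)\setminus\mathrm{int}(\B)}$, and your Mayer--Vietoris plus long-exact-sequence computation shows $H_*(L,\partial B(M))=0$ in \emph{all} degrees, including the top one; this makes the geometric shrinking argument unnecessary, at the price of a more involved homology computation (the paper only needs $H^q(B(M),\partial B(M))=0$ for $q\leq n+1$, which is immediate from Lefschetz duality). You also make explicit something the paper leaves implicit: the passage from integral to $\pi_q(Y)$-coefficients requires the relevant local system to be untwisted, which is where the abelian-$\pi_1(Y)$ hypothesis enters; you handle this cleanly by observing that $f_*\colon\pi_1(L)\to\pi_1(Y)$ factors through $H_1(L)=0$ and then lifting to the universal cover, whereas the paper states the coefficient vanishing without comment. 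Both proofs are valid; yours is somewhat slicker on the topological side but does more homological bookkeeping, and it has the virtue of isolating exactly where the abelian hypothesis is used.
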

\begin{proof} In this proof, "homotopic" will always mean "homotopic among the maps that send $\partial \bouleambiante $ to $y_0$."

Fix a triangulation $T$ of $(\bouleambiante, \partial\bouleambiante)$, and denote by $T^{(k)}$ its $k$-skeleton. The first projection map $p\colon \mathcal B=\bouleambiante\times Y \rightarrow \bouleambiante$ defines a trivial bundle over $(\bouleambiante, \partial\bouleambiante)$.
Set $f_0\colon x\in \bouleambiante \mapsto (x, f(x))\in \mathcal B$ and $f_1\colon x\in \bouleambiante \mapsto (x, y_0)\in \mathcal B$. Since $H^q(\bouleambiante, \partial\bouleambiante, \mathbb Z)=0$ for any $0\leq q\leq n+1$, the groups $H^q(\bouleambiante, \partial\bouleambiante,\pi_q(Y, y_0))$ are also trivial. Obstruction theory defined by Steenrod in \cite{Steenrod}, or more precisely in Theorem 34.10, therefore guarantees the existence of a homotopy between $f_0$ and a map $f_2$ such that $(f_2)_{|T^{(n+1)} }= {(f_1)}_{|T^{(n+1)}}$ among maps from $\bouleambiante$ to $\mathcal B$ that coincide with $f_1$ on $\partial\bouleambiante$. This implies that $f$ is homotopic to a map $g$ that maps $T^{(n+1)}$ to $y_0$.

It remains to prove that $g$ is homotopic to a map that sends the complement of $\B$ to $y_0$.
Let $U$ be a regular neighborhood of $T^{(n+1)}$. Up to a homotopy, assume that $g$ sends $U$ to $y_0$. 
There exists a closed ball $V$ such that $U\cup V = B(M)$ and such that $V$ contains $\B$:
indeed, it suffices to take a closed regular neighborhood $V_0$ of a tree with exactly one vertex in each $(n+2)$-cell of $T$ ; up to homotopy, we can assume that $\B$ is contained in one $(n+2)$-cell of $T$ and that $\B$ does not meet the tree neither its neighborhood $V_0$, and we let $V$ be the union of $V_0$, $\B$ and a cylinder between these two disjoint closed balls. 

 Then, $g$ maps the complement of $V$ to $y_0$. Since $V$ is a ball, and $\B$ a ball inside $V$, $g_{|V}$ is homotopic to a map that sends $V\setminus \B$ to $y_0$, among the maps that send $\partial V$ to $y_0$. This implies Lemma \ref{triangulation lemma}.\qedhere

\end{proof}

Then, any element of $[(\bouleambiante, \partial\bouleambiante), (SO(n+2), I_{n+2})]$ can be represented by a map $f\colon \ambientspace\rightarrow SO(n+2)$, such that $f(\bouleambiante\setminus \B) = \{I_{n+2}\}$. This proves Theorem \ref{71}, and therefore Theorem \ref{parallelization theorem}.

\begin{proof}[Proof of Proposition \ref{prop210}]\label{pf210}
We are going to prove that the connected sum of any asymptotic homology $\R^{n+2}$ with itself is parallelizable in the sense of Definition \ref{paral-def}.

As in the previous proof, obstruction theory shows that for any ball $\B$ inside the interior of $B(M)$, there exists a parallelization on $B(M)\setminus \B$ that coincides with the standard one on $\partial B(M)=\partial \voisinageinfini\subset \R^{n+2}$, and that the obstruction to extending it to a parallelization as in Definition \ref{paral-def} lies in $H^{n+2}(B(M), \partial B(M), \pi_{n+1}(SO(n+2), I_{n+2}))\cong \pi_{n+1}(SO(n+2),I_{n+2})$.

This group is known (see for example \cite{[Kervaire]}) and, for any odd $n\geq 1$, any element of $\pi_{n+1}(SO(n+2), I_{n+2})$ is of order $1$ or $2$.
This proves Proposition \ref{prop210}.\qedhere
\end{proof}
\begin{rmq}
Any asymptotic homology $\R^3$ or $\R^7$ is parallelizable in the sense of Definition \ref{paral-def}.
\end{rmq}
\begin{proof}
This follows from the same arguments as above 
and from the fact that $\pi_2(SO(3), I_3)$ and $\pi_6(SO(7), I_7)$ are trivial (see again for example \cite{[Kervaire]}).
\end{proof}
\section{Proof of Theorem \ref{additivity}: additivity of $Z_k$}\label{S8}

Recall that $G$ is the Gauss map $\configR \rightarrow \s^{n-1}$. In this section, $G_{ext}$ denotes the Gauss map $C_2(\R^{n+2})\rightarrow \s^{n+1}$. 

The proof in this section is an adaptation to the higher dimensional case of the method developed in \cite[Sections 16.1-16.2]{[Lescop2]}. Important differences appear in Section \ref{S82}.

\subsection{Definition of extended BCR diagrams}\label{S82}

Fix an integer $k\geq 2$, and let $\psi_{triv}\colon x\in \R^n \hookrightarrow (0, 0, x)\in \R^{n+2}$ be the trivial knot.

For any $(\Gamma,\sigma)\in\graphesnum$, and any $S_1\sqcup S_2 \subsetneq \sommets$, define the graph $\Gamma_{S_1,S_2}$ as follows: remove the edges of $\Gamma$ between two vertices of $S_1$ or two vertices of $S_2$. Next, remove the isolated vertices. 
Eventually blow up the obtained graph at each vertex of $S_1\sqcup S_2$, by replacing such a vertex with a univalent vertex for each adjacent half-edge on the corresponding half-edge. 
Note that the corresponding half-edges do not meet anymore in $\Gamma_{S_1,S_2}$. 
Let $\overline S_i$ denote the set of all the vertices in $\Gamma_{S_1, S_2}$ coming from a (possibly blown-up) vertex of $S_i$ in $\Gamma$. 
The graph $\Gamma_{S_1,S_2}$ is endowed with a partition $\overline S_1\sqcup \overline S_2 \sqcup( \sommets\setminus (S_1\sqcup S_2) )$, and its edges are the edges of $\Gamma$ that do not have both ends in $S_1$ or both ends in $S_2$. We set $V_i(\Gamma_{S_1,S_2})= (V_i(\Gamma) \setminus(S_1\sqcup S_2))\sqcup (\overline S_1\sqcup \overline S_2)$ so that $V_e(\Gamma_{S_1,S_2}) = \sommetsexternes\setminus(S_1\sqcup S_2)$.

Figure \ref{FigS} shows an example of the obtained graph $\Gamma_{S_1,S_2}$. This graph can be thought of as a modified version of $\Gamma$ where we only look at the directions between vertices which are not in the same $S_i$.
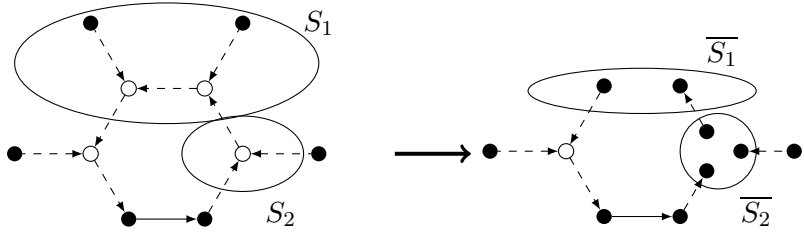
\begin{figure}[H]\centering
\begin{tikzpicture}
\draw (60:1) circle (0.1) (120:1) circle (0.1) (180:1) circle (0.1) (0:1) circle (0.1);
\fill (60:2) circle (0.1) (120:2) circle (0.1) (180:2) circle (0.1) (240:1) circle (0.1) (300:1) circle (0.1) (0:2) circle (0.1);
\draw [dashed, >= latex, ->] (0:1) ++(120:0.1) -- ++(120:0.8);
\draw [dashed, >= latex, ->] (60:1) ++(180:0.1) -- ++(180:0.8);
\draw [dashed, >= latex, ->] (120:1) ++(240:0.1) -- ++(240:0.8);
\draw [dashed, >= latex, ->] (180:1) ++(300:0.1) -- ++(300:0.8);
\draw [ >= latex, ->] (240:1) ++(0:0.1) -- ++(0:0.8);
\draw [dashed, >= latex, ->] (300:1) ++(60:0.1) -- ++(60:0.8);
\draw [dashed, >= latex, <-] (0:1) ++(0:0.1) -- ++(0:0.8);
\draw [dashed, >= latex, <-] (60:1) ++(60:0.1) -- ++(60:0.8);
\draw [dashed, >= latex, <-] (120:1) ++(120:0.1) -- ++(120:0.8);
\draw [dashed, >= latex, <-] (180:1) ++(180:0.1) -- ++(180:0.8);
\draw  (90: 1.2) ellipse (2 and 0.8);
\draw (0: 1) ellipse (0.8 and 0.5);
\draw (60:2) ++(1, 0) node {$S_1$};
\draw (1.5, -0.8) node {$S_2$};
\draw[line width = 1.5,->] (3, 0) --(4,0) ;
\end{tikzpicture}
\begin{tikzpicture}
\draw  (180:1) circle (0.1);
\fill (300:1) circle (0.1) ;
\fill  (180:2) circle (0.1) (240:1) circle (0.1) ;
\fill (60:1) circle (0.1) (120:1) circle (0.1) ;
\fill (2, 0) circle (0.1);
\fill (0:1) ++ (0: 0.3) circle (0.1);
\fill (0:1) ++ (120: 0.3) circle (0.1);
\fill  (0:1) ++ (240: 0.3) circle (0.1);
\draw [dashed, >= latex, ->] (0:1) ++(120:0.4) -- ++(120:0.5);
\draw [dashed, >= latex, ->] (120:1) ++(240:0.1) -- ++(240:0.8);
\draw [dashed, >= latex, ->] (180:1) ++(300:0.1) -- ++(300:0.8);
\draw [>= latex, ->] (240:1) ++(0:0.1) -- ++(0:0.8);
\draw [dashed, >= latex, ->] (300:1) ++(60:0.1) -- ++(60:0.5);
\draw [dashed, >= latex, <-] (0:1) ++(0:0.4) -- ++(0:0.5);
\draw [dashed, >= latex, <-] (180:1) ++(180:0.1) -- ++(180:0.8);
\draw  (90: 0.8) ellipse (1.5 and 0.3);
\draw (0: 1) ellipse (0.5 and 0.5);
\draw (60:1.3) ++(0.4, 0.2) node {$\overline{S_1}$};
\draw (1.5, -0.8) node {$\overline{S_2}$};
\end{tikzpicture}
\caption{Construction of $\Gamma_{S_1,S_2}$ for some degree $5$ BCR diagram}
\label{FigS}
\end{figure}
Since any edge of $\Gamma_{S_1, S_2}$ comes from an edge of $\Gamma$, the numbering $\sigma$ induces a map $\sigma_{S_1,S_2} \colon E(\Gamma_{S_1,S_2})\hookrightarrow \{1,\ldots, 2k\}$. 
Set $\Omega_i = (0, 0, \ldots, 0, \frac{(-1)^i}2)$ as in Section \ref{S29}.
One can associate the configuration space \[C^0_{\Gamma_{S_1,S_2}}(\psi_{triv})=\left\{c\colon \sommets[\Gamma_{S_1,S_2}] \rightarrow \R^{n+2} \ \left| \ \begin{array}{l} c_{|\sommets[\Gamma_{S_1,S_2}]\setminus (\overline S_1\sqcup \overline S_2)} \text{ is injective}\\ \text{ and does not take the values $\Omega_1$ or $\Omega_2$, }\\ c(\sommetsinternes[\Gamma_{S_1,S_2}])\subset \psi_{triv}(\R^n),\\ c(\overline S_1)=\{\Omega_1\}, c(\overline S_2)=\{\Omega_2\}\end{array}\right.\right\}\] to the obtained graph $\Gamma_{S_1,S_2}$. As before, $c_i\colon \R^n \hookrightarrow \R^{n+2}$ denotes the map such that $c_{|\sommetsinternes[\Gamma_{S_1,S_2}]} = \psi_{triv} \circ c_i$.
This space admits a compactification $\confspaceAA$ as in Section \ref{S2.4} such that for any $e=(v,w)\in E(\Gamma_{S_1,S_2})$ the map \[G_e^0 \colon c\in C^0_{\Gamma_{S_1,S_2}}(\psi_{triv})\mapsto \begin{cases} \frac{c_i(w)-c_i(v)}{||c_i(w)-c_i(v)||} & \text{ if $e$ is internal,} \\ \frac{c(w)-c(v)}{||c(w)-c(v)||} & \text{if $e$ is external,}\end{cases} \in \s^{n(e)}\] extends to a smooth map  $G_e\colon \confspaceAA \rightarrow \s^{n(e)}$. For simplicity, we will simply denote this compact space by $\confspaceA$ in the following.

\begin{lm}\label{stab-conf}
For any $(\Gamma, \sigma)\in \graphesnum$ and any $S_1\sqcup S_2\subsetneq \sommets$, 
 \[\dim(\confspaceA)\leq \sum\limits_{e\in E(\Gamma_{S_1,S_2}) } n(e).\] 
Furthermore, this inequality is an equality if and only if $S_1=S_2=\emptyset$.
\end{lm}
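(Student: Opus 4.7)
The plan is to expand both sides of the inequality, reduce to an edge-by-edge comparison, and then apply the half-edge accounting from the proof of Lemma \ref{dim=deg}. First I would compute $\dim(\confspaceA)$ by summing the natural target dimensions for each vertex: a vertex in $\overline{S_1}\sqcup\overline{S_2}$ has fixed image ($\Omega_1$ or $\Omega_2$) and contributes $0$, an internal vertex outside $S_1\sqcup S_2$ lies on $\psi_{triv}(\R^n)$ and contributes $n$, and an external vertex outside $S_1\sqcup S_2$ contributes $n+2$. Using Lemma \ref{dim=deg} to rewrite the corresponding vertex sums as $\sum_{e\in\aretes}n(e)$, this yields
\[\dim(\confspaceA)=\dim(\confignoeud)-n\cdot|\sommetsinternes\cap(S_1\sqcup S_2)|-(n+2)\cdot|\sommetsexternes\cap(S_1\sqcup S_2)|.\]

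Letting $E_{S_i}$ denote the set of edges of $\Gamma$ with both ends in $S_i$, one has $E(\Gamma_{S_1,S_2})=\aretes\setminus(E_{S_1}\sqcup E_{S_2})$, hence by Lemma \ref{dim=deg} again
\[\sum_{e\in E(\Gamma_{S_1,S_2})}n(e)=\dim(\confignoeud)-\sum_{i=1,2}\sum_{e\in E_{S_i}}n(e).\]
Subtracting, the target inequality is equivalent to
\[\sum_{i=1,2}\sum_{e\in E_{S_i}}n(e)\ \leq\ \sum_{i=1,2}\Bigl(n\cdot|\sommetsinternes\cap S_i|+(n+2)\cdot|\sommetsexternes\cap S_i|\Bigr).\]

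To prove this inequality, I would reuse the half-edge device from the proof of Lemma \ref{dim=deg}: for each vertex $v\in\sommets$, the nonnegative integers $d(e_\pm)$ attached to the half-edges incident to $v$ sum to $n$ if $v$ is internal and $n+2$ if $v$ is external. Restricting this sum to the half-edges of edges in $E_{S_i}$ produces a nonnegative quantity bounded above by the total, with equality if and only if every half-edge of $v$ belongs to $E_{S_i}$, i.e.\ every neighbor of $v$ in $\Gamma$ also lies in $S_i$. Summing over $v\in S_i$ and then over $i\in\{1,2\}$ yields the inequality.

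For the equality case, the above analysis shows that equality forces each $S_i$ to be closed under adjacency in $\Gamma$, hence a union of connected components of $\Gamma$. Since $\Gamma$ is connected by Definition \ref{BCRdef}, each $S_i$ must be either $\emptyset$ or $\sommets$; together with the disjointness of $S_1$ and $S_2$ and the hypothesis $S_1\sqcup S_2\subsetneq\sommets$, this forces $S_1=S_2=\emptyset$. The converse implication is immediate from Lemma \ref{dim=deg}. I do not expect any serious obstacle beyond careful vertex-and-half-edge bookkeeping; the main technical ingredient is already encoded in the half-edge accounting of Lemma \ref{dim=deg}, and the equality analysis is cleanly handled by the connectedness of BCR diagrams.
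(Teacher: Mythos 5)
Your reduction to an inequality on the edges deleted from $\Gamma$ is a valid reformulation of the paper's argument (the paper works directly on $\Gamma_{S_1,S_2}$ with a modified assignment $\tilde d$ that zeroes out half-edges at $\overline S_1\sqcup\overline S_2$; subtracting both identities from those for $\Gamma$ gives exactly your inequality, so the two viewpoints are equivalent). The inequality half of the lemma is correctly established by your half-edge accounting.

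The gap is in the equality analysis. You assert that equality at a vertex $v\in S_i$ forces \emph{every} half-edge at $v$ to lie in $E_{S_i}$, and hence that $S_i$ is closed under adjacency. This is not true, because the half-edge weights $d(e_\pm)$ are not all positive: the head of an \emph{internal} edge carries $d(e_+)=0$. Consequently, equality at $v$ only forces the half-edges at $v$ with $d(e_\pm)>0$ to lie in $E_{S_i}$; a vertex $v\in S_i$ is perfectly allowed to be the head of an internal edge whose tail lies outside $S_i$ without contributing a strict drop. So "closed under adjacency" does not follow, and the appeal to connectedness of $\Gamma$ is not enough. The paper handles exactly this case explicitly: if the only boundary edges are internal edges entering $S_1\sqcup S_2$, then the cycle of $\Gamma$ meets both $S_1\sqcup S_2$ and its complement, so following the cycle there must also be an edge \emph{leaving} $S_1\sqcup S_2$, whose tail half-edge has $d>0$ and produces the strict inequality. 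Equivalently, one can note that in a BCR diagram every vertex has exactly one outgoing edge and the tail half-edge always has $d>0$, so equality forces $S_i$ to be closed under the "outgoing edge" relation; combined with the presence of a cycle and the external legs (heads with $d=1>0$) this forces $S_i\in\{\emptyset,\sommets\}$. You should replace the "closed under adjacency" step with one of these arguments; without it the equality claim is not justified.
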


\begin{proof}
We use the same method as in the proof of Lemmas \ref{dim=deg} and \ref{inégalité-s*}.
Split any edge of $\Gamma_{S_1, S_2}$ into two halves $e_-$ and $e_+$, and assign an integer $\tilde{d}(e_\pm)$ to each half-edge $e_\pm$ as follows: \begin{itemize}
\item if $e_\pm$ is adjacent to a vertex of $\overline S_1\sqcup \overline S_2$, $\tilde{d}(e_\pm)=0$,
\item otherwise, $\tilde{d}(e_\pm)$ is the integer $d(e_\pm)$ of Lemma \ref{dim=deg}.
\end{itemize}

Note that for any vertex $v$, \[\sum\limits_{e_\pm\text{ adjacent to } v} \tilde{d}(e_\pm) = \begin{cases} 0 & \text {if $ v \in \overline S_1\cup \overline S_2$},\\
n & \text{if $v$ is internal and $ v \not\in \overline S_1\cup \overline S_2$,}\\
n+2 & \text{if $v$ is external and $ v \not\in \overline S_1\cup \overline S_2$.}
\end{cases}\] This implies that $\sum\limits_{e\in E(\Gamma_{S_1,S_2})} (\tilde{d}(e_+) + \tilde{d}(e_-)) = \dim(\confspaceA) $. This construction also ensures that $\tilde{d}(e_-) + \tilde{d}(e_+) \leq n(e)$ for any edge $e=(v,w)$, with equality if and only if $(v,w)\in (V(\Gamma_{S_1,S_2})\setminus (\overline S_1\sqcup\overline S_2))^2$ or if $e$ is an internal edge coming from $V(\Gamma_{S_1,S_2})\setminus (\overline S_1\sqcup\overline S_2)$ and going to $\overline S_1\sqcup\overline S_2$. This proves the inequality of the lemma.

 Let us prove that the inequality is strict when $S_1\sqcup S_2\neq \emptyset$. In this case, $\overline S_1\sqcup \overline S_2\neq \emptyset$, so there exists an edge $e$ with one end in $\overline S_1\sqcup \overline S_2$ and the other one in $V(\Gamma_{S_1,S_2})\setminus( \overline S_1\sqcup\overline S_2)$. If there exists such an edge that is not an internal edge going from $V(\Gamma_{S_1,S_2})\setminus( \overline S_1\sqcup\overline S_2)$ to $\overline S_1\sqcup \overline S_2$, it satisfies $\tilde{d}(e_-) + \tilde{d}(e_+) < n(e)$, and the inequality of the lemma is strict. 
But if there is an internal edge from $V(\Gamma_{S_1,S_2})\setminus( \overline S_1\sqcup\overline S_2)$ to $\overline S_1\sqcup \overline S_2$, neither $S_1\sqcup S_2$ nor $\sommets\setminus(S_1\sqcup S_2)$ contains the whole cycle of $\Gamma$. This implies that there is at least one edge from $\overline S_1\sqcup\overline S_2$ to $V(\Gamma_{S_1,S_2})\setminus( \overline S_1\sqcup\overline S_2)$, and concludes.

If $S_1\sqcup S_2= \emptyset$, the inequality of the lemma is an equality, since $\Gamma_{\emptyset, \emptyset} = \Gamma$.
\qedhere

\end{proof}

\begin{cor}\label{gras}
For any $(\Gamma, S_1, S_2)$ as in Lemma \ref{stab-conf} and any numbering $\sigma$ of $\Gamma$, define the maps \[\begin{array}{llll} G_{\Gamma_{S_1,S_2}}\colon & \confspaceA & \rightarrow &  \prod\limits_{e\in E(\Gamma_{S_1, S_2})} \s^{n(e)} \\  & c & \mapsto& (G_e(c))_{e\in E(\Gamma_{S_1,S_2})}\end{array}\] and \[\begin{array}{llll}\pi_{\Gamma_{S_1,S_2}, \sigma} \colon& (\s^{n-1}\times\s^{n+1})^{2k} & \rightarrow & \prod\limits_{e\in E(\Gamma_{S_1,S_2})}\s^{n(e)}\\ & (X_i^{n-1}, X_i^{n+1})_{1\leq i \leq 2k} & \mapsto & (X_{\sigma(e)}^{n(e)})_{e\in E(\Gamma_{S_1,S_2})}\end{array} \]

For any maps $\hat\epsilon, \hat\epsilon'\colon \{1,\ldots, 2k\} \rightarrow\{\pm1\} $, set \[\begin{array}{llll}T_{\hat\epsilon, \hat\epsilon'}  \colon& (\s^{n-1}\times\s^{n+1})^{2k} & \rightarrow & (\s^{n-1}\times\s^{n+1})^{2k} \\ & (X_i^{n-1}, X_i^{n+1})_{1\leq i \leq 2k} & \mapsto & (\hat\epsilon (i)X_i^{n-1},\hat\epsilon'(i) X_i^{n+1})_{1\leq i \leq 2k}\end{array} .\]
For any $(\Gamma, \sigma, S_1, S_2, \hat\epsilon, \hat\epsilon')$, the set ${T_{\hat\epsilon, \hat\epsilon'}}^{-1}\left( {\pi_{\Gamma_{S_1,S_2},\sigma}}^{-1}\left(G_{\Gamma_{S_1,S_2}}\left(\confspaceA \right)\right)\right)$ is a closed subset with empty interior of $(\s^{n-1}\times \s^{n+1})^{2k}$.

Then, $\mathcal O_k= \bigcap\limits_{\Gamma,S_1,S_2, \sigma,\hat\epsilon,\hat\epsilon'}\left( (\s^{n-1}\times\s^{n+1})^{2k}\setminus{T_{\hat\epsilon, \hat\epsilon'}}^{-1}\left({\pi_{\Gamma_{S_1,S_2},\sigma}}^{-1}\left(G_{\Gamma_{S_1,S_2}}\left(\confspaceA \right)\right)\right)\right)$ is an open dense set of $(\s^{n-1}\times \s^{n+1})^{2k}$.

\end{cor}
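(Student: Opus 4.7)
The plan is to prove the two assertions of the corollary in turn: first, that for each tuple $(\Gamma, \sigma, S_1, S_2, \hat\epsilon, \hat\epsilon')$ the set ${T_{\hat\epsilon,\hat\epsilon'}}^{-1}({\pi_{\Gamma_{S_1,S_2},\sigma}}^{-1}(G_{\Gamma_{S_1,S_2}}(\confspaceA)))$ is closed with empty interior, and second, that $\mathcal O_k$ is open and dense. The second statement follows from the first because the indexing family is finite: there are finitely many BCR diagrams of degree $k$, finitely many numberings $\sigma$ of each, finitely many disjoint pairs of subsets of $V(\Gamma)$, and finitely many sign maps $\{1,\ldots,2k\}\to\{\pm1\}$. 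Hence $\mathcal O_k$ is a finite intersection of complements of closed nowhere-dense sets in the Baire space $(\s^{n-1}\times\s^{n+1})^{2k}$, and is therefore open and dense.

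Closedness is the easy half. The space $\confspaceA$ is compact as a compactification built as in Section \ref{S2.4}, and the Gauss maps $G_e$ extend continuously to this compactification by the same kind of argument as in Lemma \ref{Gauss}. Hence $G_{\Gamma_{S_1,S_2}}(\confspaceA)$ is compact, so closed in $\prod_{e \in E(\Gamma_{S_1,S_2})} \s^{n(e)}$. The coordinate projection $\pi_{\Gamma_{S_1,S_2},\sigma}$ is continuous, so its preimage of a closed set is closed, and the diffeomorphism $T_{\hat\epsilon,\hat\epsilon'}$ preserves closedness.

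For the empty interior assertion, I would first reduce to showing that $G_{\Gamma_{S_1,S_2}}(\confspaceA)$ has empty interior in $\prod_{e \in E(\Gamma_{S_1,S_2})} \s^{n(e)}$. Indeed, $\pi_{\Gamma_{S_1,S_2},\sigma}$ is a surjective submersion — a product of projections $\s^{n-1}\times\s^{n+1}\to\s^{n(e)}$ — hence an open map, so if its preimage of a set $A$ contained a non-empty open $U$ then $\pi_{\Gamma_{S_1,S_2},\sigma}(U)$ would be a non-empty open subset of $A$; and the diffeomorphism $T_{\hat\epsilon,\hat\epsilon'}$ preserves empty interiors. By Lemma \ref{stab-conf}, $\dim \confspaceA \leq \sum_{e\in E(\Gamma_{S_1,S_2})} n(e) = \dim \prod_e \s^{n(e)}$, with strict inequality whenever $(S_1, S_2) \neq (\emptyset, \emptyset)$; in the strict case the image of a smooth map from a compact manifold with corners of strictly smaller dimension has Lebesgue measure zero in the target (a baby Sard argument via a finite chart cover), hence empty interior.

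The main obstacle I anticipate is the equality case $(S_1, S_2) = (\emptyset, \emptyset)$ of Lemma \ref{stab-conf}, in which source and target of $G_\Gamma$ have the same dimension and the pure dimension-counting argument fails. This case must be handled by a dedicated argument exploiting the specific constraints of the trivial knot $\psi_{triv}$ (internal vertices forced onto the slice $\{0\}^2 \times \R^n$) and the combinatorial structure of the BCR diagram $\Gamma$, to ensure that $G_\Gamma(\confspaceA)$ is still contained in a proper closed subset of $\prod_e \s^{n(e)}$, so that its preimage remains nowhere dense.
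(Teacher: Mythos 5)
Your proposal matches the paper's overall architecture: closedness via compactness of $\confspaceA$, reduction of the empty-interior statement to $G_{\Gamma_{S_1,S_2}}(\confspaceA)$ itself via openness of $\pi_{\Gamma_{S_1,S_2},\sigma}$ and the diffeomorphism $T_{\hat\epsilon,\hat\epsilon'}$, Morse--Sard in the strict-inequality case of Lemma~\ref{stab-conf}, and the Baire category theorem (or even just finiteness of the indexing set) for the final intersection. All of that is correct and is essentially what the paper does.

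However, you leave a genuine gap exactly where you flag the difficulty: the case $(S_1,S_2)=(\emptyset,\emptyset)$, where $\dim\confspaceA = \dim\prod_e\s^{n(e)}$ and the raw dimension count fails. You correctly sense that the trivial knot's rigid structure is what must save the argument, but you do not supply the argument, so as written the proof is incomplete. The paper's resolution is short and you should know it: because $\psi_{triv}$ is the flat embedding $\{0\}^2\times\R^n\subset\R^{n+2}$, the group $\R^n$ acts on the open configuration space $C^0_{\Gamma_{\emptyset,\emptyset}}(\psi_{triv})$ by simultaneous translation of all vertices along $\{0\}^2\times\R^n$ (this preserves the constraint that internal vertices lie on the knot), and every Gauss map $G_e$ is invariant under this action since it only depends on differences $c(w)-c(v)$. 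Hence $G_{\Gamma_{\emptyset,\emptyset}}$ factors through the quotient by this $\R^n$-action, a manifold of dimension $\dim\confspaceA - n < \dim\prod_e\s^{n(e)}$, and Morse--Sard applies to the factored map. Combined with the fact that the boundary strata of $\confspaceA$ have lower dimension than the target, this shows $G_{\Gamma_{\emptyset,\emptyset}}(\confspaceA)$ has empty interior. This $\R^n$-translation-invariance observation is the one idea your proposal is missing.
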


\begin{proof}Since $\confspaceA$ is compact, $G_{\Gamma_{S_1,S_2}}\left(\confspaceA \right)$ is compact and therefore closed. Let us prove that its interior is empty. 

If $S_1\sqcup S_2\neq \emptyset$, Lemma \ref{stab-conf} and the Morse-Sard theorem ensure that the image of $G_{\Gamma_{S_1,S_2}}$ has empty interior, since the target of this map has greater dimension than its source.

If $S_1\sqcup S_2= \emptyset$, $G_{\Gamma_{\emptyset,\emptyset}}$ is a map between two manifolds of same dimension. Let $\R^n$ act by translations along $\{0\}^2\times\R^n\subset \R^{n+2}$ on $C_{\Gamma_{\emptyset,\emptyset}}(\psi_{triv})$. The map $G_{\Gamma_{\emptyset,\emptyset}}$ factors through the quotient map of this action. Using the Morse-Sard theorem, this again implies that the image of $G_{\Gamma_{\emptyset,\emptyset}}$ has empty interior. 

Then, $G_{\Gamma_{S_1,S_2}}\left(\confspaceA \right)$ is always closed with empty interior. This implies that ${\pi_{\Gamma_{S_1,S_2},\sigma}}^{-1}\left(G_{\Gamma_{S_1,S_2}}\left(\confspaceA \right)\right)$ is also closed with empty interior since $\pi_{\Gamma_{S_1,S_2},\sigma}$ is an open map. Since $T_{\hat\epsilon, \hat\epsilon'} $ is a diffeomorphism, the first assertion of the lemma follows. Then, $\mathcal O_k$ is a finite intersection of open dense sets in the complete metric space $(\s^{n-1}\times \s^{n+1})^{2k}$. The lemma follows from the Baire category theorem.\qedhere

\end{proof}

Lemma \ref{gras}, which is used in Section \ref{S9.3} to prove Theorem \ref{additivity}, also yields a proof (but not the simplest one) of the following result. 

\begin{cor}\label{psio}
For the trivial knot $\psi_{triv}$, $Z_k(\psi_{triv})=0$.
\end{cor}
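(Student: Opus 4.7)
The plan is to compute $Z_k(\psi_{triv})$ via the intersection-theoretic description of Theorem \ref{the}, with a family of propagating chains chosen so that each pullback intersection inside $C_\Gamma(\psi_{triv})$ is manifestly empty. Corollary \ref{gras} provides exactly the flexibility needed.

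First I would pick a point $(X_i^{n-1},X_i^{n+1})_{1\leq i\leq 2k}\in\mathcal O_k$, simultaneously arranged so that each $\pm X_i^{n-1}$ is a regular value of $G$ and of $G_{|\partial C_2(\R^n)}$, and each $\pm X_i^{n+1}$ is a regular value of $G_{ext}$ and of its boundary restriction --- all these conditions being open and dense, they can be imposed together. Then the rational chains
\[A_i = \tfrac12\bigl(G^{-1}(X_i^{n-1})+G^{-1}(-X_i^{n-1})\bigr), \qquad B_i = \tfrac12\bigl(G_{ext}^{-1}(X_i^{n+1})+G_{ext}^{-1}(-X_i^{n+1})\bigr)\]
are smooth of codimensions $n-1$ and $n+1$ in $C_2(\R^n)$ and $C_2(\R^{n+2})$, and their boundaries as rational chains are exactly $\tfrac12(G_{|\partial})^{-1}(\{\pm X_i^{n-1}\})$ and $\tfrac12(G_{ext\,|\partial})^{-1}(\{\pm X_i^{n+1}\})$. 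Hence $F_*=(A_i,B_i)_i$ is a family of propagating chains of $\R^{n+2}$ equipped with its standard parallelization.

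The key observation is that on $C_\Gamma(\psi_{triv})=C_{\Gamma_{\emptyset,\emptyset}}(\psi_{triv})$, the maps $G\circ p_e$ for internal $e$ and $G_{ext}\circ p_e$ for external $e$ are exactly the Gauss maps $G_e$ of Section \ref{S82}, so that $G_\Gamma=G_{\Gamma_{\emptyset,\emptyset}}$. Consequently,
\[\bigcap_{e\in\aretes}\mathrm{Supp}(D^{F_*}_{e,\sigma}) \ = \ G_{\Gamma_{\emptyset,\emptyset}}^{-1}\biggl(\prod_{e\in\aretes}\{X_{\sigma(e)}^{n(e)},-X_{\sigma(e)}^{n(e)}\}\biggr),\]
and every tuple in the product set on the right is of the form $\pi_{\Gamma_{\emptyset,\emptyset},\sigma}(T_{\hat\epsilon,\hat\epsilon'}(X))$ for suitable sign functions $\hat\epsilon,\hat\epsilon'$. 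By the defining property of $\mathcal O_k$ applied with $S_1=S_2=\emptyset$, none of those tuples lies in the image of $G_{\Gamma_{\emptyset,\emptyset}}$, so the intersection is empty. The general position hypothesis of Theorem \ref{the} is then vacuously satisfied, every $I^{F_*}(\Gamma,\sigma,\psi_{triv})$ vanishes, and summing over $\graphesnum$ yields $Z_k(\psi_{triv})=0$.

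The only delicate point is the verification, in the first step, that $\tfrac12(G^{-1}(X)+G^{-1}(-X))$ really satisfies the precise boundary condition of a propagating chain (with the factor $1/2$, and with no unexpected interior boundary). This is a short transversality check using the regular-value hypotheses, and is the single nontrivial piece of bookkeeping in the whole argument.
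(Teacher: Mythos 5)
Your proof follows essentially the same route as the paper's: pick $(X_i^{n-1},X_i^{n+1})\in\mathcal O_k$, take the propagating chains $A_i=\tfrac12 G^{-1}(\{\pm X_i^{n-1}\})$ and $B_i=\tfrac12 G_{ext}^{-1}(\{\pm X_i^{n+1}\})$, identify the Gauss maps $G\circ p_e$ with $G_e$ on $C_\Gamma(\psi_{triv})=C_{\Gamma_{\emptyset,\emptyset}}(\psi_{triv})$, and conclude from the defining property of $\mathcal O_k$ with $S_1=S_2=\emptyset$ that the intersections in Theorem \ref{the} are empty. The paper states this in three terse sentences; you supply the supporting details (regularity of the values so that the $A_i$, $B_i$ are genuine propagating chains, the $T_{\hat\epsilon,\hat\epsilon'}$ bookkeeping, and the observation that general position holds vacuously when the intersection is empty), all of which are correct.
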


\begin{proof}Because of Corollary \ref{gras}, $\mathcal O_k$ is non empty. Fix $(X_i^{n-1},X_i^{n+1})_{1\leq i \leq 2k}\in \mathcal O_k$. 
Compute $Z_k$ with the propagating chains $A_i= \frac12G^{-1}(\{-X^{n-1}_i, + X^{n-1}_i\})$ and $B_i= \frac12G_{ext}^{-1}(\{-X^{n+1}_i, + X^{n+1}_i\})$. The definition of $\mathcal O_k$ implies that the intersection numbers in Theorem \ref{the} are all zero.
\end{proof}

\subsection{An extension of the Gauss map}\label{S91}

Let $(\punct{M_1}, \tau_1)$ and $(\punct{M_2}, \tau_2)$ be two parallelized asymptotic homology $\R^{n+2}$. Fix two knots $\psi_1\colon \R^n \hookrightarrow\punct{M_1}$ and $\psi_2\colon \R^n \hookrightarrow \punct{M_2}$, and an integer $k\geq 2$.

Fix $\eta\in (0, \frac 12)$, and let $\voisinageinfinideux[\eta]$ be the complement in $\R^{n+2}$ of the open balls $\mathring{B^1_\eta}$ and $\mathring{B^2_\eta}$ of respective centers $\Omega_1=(0, \ldots, 0,-\frac12)$ and $\Omega_2=(0, \ldots, 0, \frac12)$ and radius $\eta$.

Glue $\voisinageinfinideux[\eta]$ and the two closed balls $B(M_1)$ and $B(M_2)$ along $\partial B^1_\eta$ and $\partial B^2_\eta$. In this setting, $B_\eta(M_1)$ and $B_\eta(M_2)$ denote the images of $B(M_1)$ and $B(M_2)$, since they "replace" the balls $B^1_\eta$ and $B^2_\eta$. The obtained manifold $\ambientspace$ identifies with $\punct{M_1}\sharp \punct{M_2}$ and comes with a decomposition $\voisinageinfinideux[\eta]\cup B_\eta(M_1)\cup B_\eta(M_2)$ and a parallelization $\tau$ naturally induced by $\tau_1$, $\tau_2$, and the standard parallelization of $\voisinageinfinideux[\eta]\subset \R^{n+2}$ up to homotopy. For $\eta < r < \frac12$, $B_r(M_i)$ denotes the union of $B_\eta(M_i)$ with $\{x \in \voisinageinfinideux[\eta] \mid d(x, \Omega_i) \leq r\}$.

\begin{df}
Let $\chi_\pi\colon [0, 3\eta] \rightarrow \R_+$ be a smooth increasing map such that $\chi_\pi^{-1}(\{0\})=[0,\eta] $ and $\chi_\pi([2\eta,3\eta])=\{1\}$.
Let $\pi\colon\punct{M_1}\sharp \punct{M_2}\rightarrow \R^{n+2}$ be the smooth map such that, for any $x \in \punct{M_1}\sharp\punct{M_2}$,

\[\pi(x) = \left\{
\begin{array}{lll}
x & \text{if $x\in \voisinageinfinideux[2\eta]$, }\\
\Omega_1 & \text{if $x\in B_\eta(M_1)$,}\\
\Omega_2 & \text{if $x\in B_\eta(M_2)$,}\\
\Omega_1+ \chi_\pi(||x - \Omega_1||).(x-\Omega_1) & \text{if $x\in B_{2\eta}(M_1)\setminus B_{\eta}(M_1)$,} \\
\Omega_2+ \chi_\pi(||x - \Omega_2||).(x-\Omega_2) & \text{if $x\in B_{2\eta}(M_2)\setminus B_{\eta}(M_2)$.} 
\end{array}
\right. \]

 Set $C_2(B_{2\eta}(M_i)) = p_b^{-1}(B_{2\eta}(M_i)^2)$, and set $$D(G_{\tau,\eta})= ( \configM \setminus (C_2(B_{2\eta}(M_1)) \cup C_2(B_{2\eta}(M_2)) ) )\cup U\punct M.$$ Define the analogue $G_{\tau, \eta}\colon D(G_{\tau, \eta}) \rightarrow \s^{n+1}$ of the Gauss map as the map such that for any $c\in D(G_{\tau, \eta})$,
 \[  G_{\tau,\eta}(c) = \left\{\begin{array}{lll}
\frac{\pi(y) -\pi(x)}{||\pi(y)-\pi(x)||} & \text{if $c=(x,y)\not\in C_2(B_{2\eta}(M_1)) \cup C_2(B_{2\eta}(M_2))\cup U\punct M$,}\\ 
G_\tau(c) & \text{if $c\in U\punct M$.}
\end{array}
\right.\]
Note that $(G_{\tau, \eta})_{| C_2(B_{\infty, 2\eta})} = (G_{ext})_{| C_2(B_{\infty, 2\eta})}$ and $(G_{\tau,\eta})_{|\partial\configM} = G_\tau$.
\end{df}

\subsection{Proof of the additivity}\label{S9.3}

Define the distance on $(\s^{n-1}\times \s^{n+1})^{2k}$ given by the maximum of the Euclidean distances on each spherical factor.
For $d= n \pm 1$, set $\s_h^d = \{X \in \s^d \mid {X_{d+1}}^2 < \frac12\}$. Let $\mathcal O_k'$ denote the intersection $\mathcal O_k\cap (\s_h^{n-1}\times \s_h^{n+1})^{2k}$. Corollary \ref{gras} ensures that $\mathcal O_k'$ is a non-empty open set.

Fix $(X^{n-1}_i, X^{n+1}_i)\in \mathcal O'_k$, and $\frac14>\delta>0$ such that the ball of radius $9\delta$ and center $(X^{n-1}_i, X^{n+1}_i)$ in $(\s^{n-1}\times \s^{n+1})^{2k}$ is contained in $\mathcal O'_k$.
Choose $\eta>0$ in Section \ref{S91} such that $\eta < \frac18(\frac\delta2)^{2k}$.
\begin{prop}\label{suitable}For any $1\leq i \leq 2k$, fix a closed antisymmetric $(n+1)$-form $\omega_{\beta_i}$ on $\s^{n+1}$ with total mass one, and with support contained in the union of the two balls of center $\pm X^{n+1}_i$ and radius $\delta$.

For any $1\leq i \leq 2k$, there exists an external propagating form $\beta_i$ of $(\ambientspace, \tau)$ such that $(\beta_i)_{|D(G_{\tau,\eta})}= G_{\tau, \eta}^*(\omega_{\beta_i})$. 
Furthermore, ${\beta_i}_{| B_{\frac14}(M_1)\times B_{\frac14}(M_2)}=0$ and ${\beta_i}_{| B_{\frac14}(M_2)\times B_{\frac14}(M_1)}=0$.

For any $1\leq i \leq 2k$, fix a closed antisymmetric $(n-1)$-form $\omega_{\alpha_i}$ on $\s^{n-1}$ with total mass one, with support contained in the union of the two balls of center $\pm X^{n-1}_i$ and radius $\delta$, and set $\alpha_i= G^*(\omega_{\alpha_i})$. 
These forms satisfy ${\alpha_i}_{| \psi^{-1}(B_{\frac14}(M_1))\times\psi^{-1}( B_{\frac14}(M_2))}=0$ and ${\alpha_i}_{| \psi^{-1}(B_{\frac14}(M_2))\times \psi^{-1}(B_{\frac14}(M_1))}=0$, where $\psi= \psi_1\sharp \psi_2$.
\end{prop}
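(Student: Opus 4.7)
The argument splits into two parts of quite different character: for the internal forms, $\alpha_i = G^*(\omega_{\alpha_i})$ is already fixed and is automatically an internal propagator, so only the vanishing condition needs verifying; for the external forms, $\beta_i$ has to be constructed and its three properties checked. In both cases, the vanishing is geometric in nature: the Gauss direction on the ``mixed'' product sits near the poles of $\s^{n(e)}$, while the support of $\omega_{\alpha_i}$ (resp.\ $\omega_{\beta_i}$) is confined to the horizontal band $\s^{n-1}_h$ (resp.\ $\s^{n+1}_h$) by the defining inclusions of $\mathcal O_k'$.

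For the internal vanishing, suppose $\psi(x) \in B_{1/4}(M_1)$ and $\psi(y) \in B_{1/4}(M_2)$. Reading off the piecewise definition of $\psi = \psi_1\sharp \psi_2$, this forces $x$ to lie in the closed Euclidean ball of radius $1/4$ around $(0,\ldots,0,-1/2) \in \R^n$ and $y$ in the closed ball of radius $1/4$ around $(0,\ldots,0,1/2)$. Hence $(y-x)_n \geq 1/2$ while $\sum_{i<n}(y-x)_i^2 \leq 1/4$, which gives
\[ |G(x,y)_n| = \frac{|(y-x)_n|}{\|y-x\|} \geq \frac{1/2}{\sqrt{1/4+1/4}} = \frac{1}{\sqrt 2}. \]
By the choice of $(X^{n-1}_i, X^{n+1}_i) \in \mathcal O_k'$ with $9\delta$-neighborhood contained in $\mathcal O_k' \subset (\s_h^{n-1} \times \s_h^{n+1})^{2k}$, the $\delta$-balls around $\pm X_i^{n-1}$ lie in $\s_h^{n-1} = \{X : X_n^2 < 1/2\}$, which is disjoint from the region $\{X_n^2 \geq 1/2\}$ containing $G(x,y)$. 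Therefore $G(x,y) \notin \mathrm{supp}(\omega_{\alpha_i})$ and $\alpha_i = G^*(\omega_{\alpha_i})$ vanishes at $(x,y)$; the other side is symmetric.

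For the external case, the plan is to first set $\beta_i^{\mathrm{can}} := G_{\tau,\eta}^*(\omega_{\beta_i})$ on the domain $D(G_{\tau,\eta})$ and then extend to a closed form on $\configM$. The form $\beta_i^{\mathrm{can}}$ is closed, and because $G_{\tau,\eta}|_{\partial\configM} = G_\tau$ and $\partial\configM \subset D(G_{\tau,\eta})$, it already has the required propagator boundary value on $\partial\configM$. The complement of $D(G_{\tau,\eta})$ sits inside the two disjoint closed subsets $C_2(B_{2\eta}(M_j))$ for $j=1,2$, and $D(G_{\tau,\eta}) \cap C_2(B_{2\eta}(M_j)) = UB_{2\eta}(M_j)$, so it suffices to extend $\beta_i^{\mathrm{can}}$ over each $C_2(B_{2\eta}(M_j))$ independently, matching it along a collar of the diagonal face $UB_{2\eta}(M_j)$.

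The main technical step is this extension, which I would obtain as a relative version of the argument of Corollary \ref{lm-hom2}: since $B_{2\eta}(M_j)$ is a homology ball, a Poincaré--Lefschetz and excision computation parallel to Lemma \ref{homologie} gives the vanishing of the relevant relative cohomology of $(C_2(B_{2\eta}(M_j)), UB_{2\eta}(M_j))$, so that the closed form $G_\tau^*(\omega_{\beta_i})|_{UB_{2\eta}(M_j)}$ extends to a closed form on $C_2(B_{2\eta}(M_j))$; a cutoff near $UB_{2\eta}(M_j)$ followed by an exact correction then arranges equality with $\beta_i^{\mathrm{can}}$ on a collar. The vanishing of $\beta_i$ on $B_{1/4}(M_1) \times B_{1/4}(M_2)$ (and on the swap) is then automatic: this product lies entirely inside $D(G_{\tau,\eta})$ since $B_{1/4}(M_j)$ and $B_{2\eta}(M_k)$ are disjoint for $j \neq k$ (using $2\eta < 1/2$), so no configuration there has both points in a single $B_{2\eta}(M_\ell)$; on this product $\beta_i = G_{\tau,\eta}^*(\omega_{\beta_i})$, and since $\pi$ moves each point by at most $1/4$, the points $\pi(x)$ and $\pi(y)$ stay within Euclidean balls of radius $1/4$ around $\Omega_1, \Omega_2 \in \R^{n+2}$, and the same inequality as in the internal case forces $|G_{\tau,\eta}(x,y)_{n+2}| \geq 1/\sqrt 2$, placing the direction outside $\s^{n+1}_h \supset \mathrm{supp}(\omega_{\beta_i})$.
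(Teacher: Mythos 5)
Your proof follows the same two-step architecture as the paper's: (i) construct $\beta_i$ by setting it equal to $G_{\tau,\eta}^*(\omega_{\beta_i})$ on $D(G_{\tau,\eta})$ and extending over each $C_2(B_{2\eta}(M_j))$ by a relative cohomology vanishing, and (ii) deduce the vanishing of $\alpha_i$ and $\beta_i$ on the mixed products from the confinement of the supports of $\omega_{\alpha_i}$ and $\omega_{\beta_i}$ to the equatorial bands $\s_h^{n-1}$ and $\s_h^{n+1}$. On point (ii) you actually supply the quantitative estimate $|G(x,y)_n|\geq 1/\sqrt2$ (and its external analogue on the $(n+2)$-nd coordinate) that the paper leaves as a one-line remark, which is a welcome addition and is correct: the paper's observation that $\mathrm{supp}(\omega_{\beta_i})\subset\s_h^{n+1}$ is exactly what makes this inequality relevant, and your bound $\|\frac{y-x}{\|y-x\|}\|_n \geq 1/\sqrt{1+b^2/a^2}$ with $a\geq\frac12$, $b^2\leq\frac14$ does give $\geq 1/\sqrt2$, the boundary of $\s_h^{n\pm1}$.

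Two small remarks on the extension step. First, the paper's formulation of what one must match is slightly more precise than yours: you say it suffices to match along a collar of the diagonal face $UB_{2\eta}(M_j)$, whereas the relevant boundary along which the extension must agree with the ambient form is the whole $\partial C_2(B_{2\eta}(M_j))$ viewed as a manifold with corners, including the outer faces where one or both points approach $\partial B_{2\eta}(M_j)$; your cohomological tool handles this correctly anyway since the vanishing group is $H^{n+2}(C_2(B_{2\eta}(M_j)),\partial C_2(B_{2\eta}(M_j)))$, but the phrasing understates what the argument secures. Second, where you invoke ``a Poincaré--Lefschetz and excision computation parallel to Lemma \ref{homologie}'', the paper takes a slightly shorter path by directly noting the diffeomorphism $C_2(B_{2\eta}(M_j))\cong C_2(\punct{M_j})$, so that Lemma \ref{homologie} applies verbatim; both routes reach the same relative vanishing, and yours has the mild advantage of not needing the diffeomorphism. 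Overall the approach is the same as the paper's.
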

\begin{proof}

Let us first construct the forms $\beta_i$.
First note that the condition on the restriction is compatible with the property of being a propagating form since $(G_{\tau, \eta})_{|  \partial\configM} = G_\tau$.
It remains to prove that the closed form $G_{\tau, \eta}^*(\omega_{\beta_i})$ on $D(G_{\tau, \eta})$ extends to a closed form on $\configM$. It suffices to prove that the restrictions to $\partial C_2(B_{2\eta}(M_1))$ and to $\partial C_2(B_{2\eta}(M_2))$ extend to $C_2(B_{2\eta}(M_1))$ and to $C_2(B_{2\eta}(M_2))$ as closed $(n+1)$-forms.
Note that $C_2(B_{2\eta}(M_i))$ is diffeomorphic to $C_2(\punct{M_i})$. Then, Lemma \ref{homologie} yields $H^{n+2}(C_2(B_{2\eta}(M_i)), \partial C_2(B_{2\eta}(M_2)))=0$ and implies the existence of the form $\beta_i$.
Since the support of $\omega_{\beta_i}$ is contained in $ \s_h^{n+1}$, the restriction ${\beta_i}_{| B_{\frac14}(M_1)\times B_{\frac14}(M_2)}=0$ vanishes. The same argument proves the similar assertion about $\alpha_i$.
\qedhere

\end{proof}

We are going to prove the following proposition, which implies Theorem \ref{additivity}.
\begin{prop}\label{prop-add}

Fix propagating forms $(\alpha_i)_{1\leq i \leq 2k}$ and $(\beta_i)_{1\leq i \leq 2k}$ as in Proposition \ref{suitable}, and set $F = (\alpha_i, \beta_i)_{1\leq i \leq 2k}$. Then, for any $(\Gamma,\sigma)\in \graphesnum$,

$$I^{F}(\Gamma,\sigma,\psi_1\sharp \psi_2) = I^{F}(\Gamma,\sigma,\psi_1) + I^{F}(\Gamma,\sigma,\psi_2).$$
\end{prop}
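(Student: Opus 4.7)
The plan is to stratify the configuration space $\confignoeud[\Gamma][\psi]$ (where $\psi = \psi_1 \sharp \psi_2$) by the pair
\[
(S_1(c), S_2(c)) = \bigl(\{v \in \sommets \mid c(v) \in B_{\frac14}(M_1)\},\ \{v \in \sommets \mid c(v) \in B_{\frac14}(M_2)\}\bigr),
\]
and to write
\[
I^F(\Gamma,\sigma,\psi) = \sum_{(S_1,S_2)} \int_{\{c \,\mid\, S_i(c) = S_i\}} \forme.
\]
I would then show that only the two ``pure'' strata $(S_1,S_2) = (\sommets,\emptyset)$ and $(S_1,S_2) = (\emptyset,\sommets)$ contribute, yielding $I^F(\Gamma,\sigma,\psi_1)$ and $I^F(\Gamma,\sigma,\psi_2)$ respectively.

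On the pure stratum $S_2(c) = \sommets$, all vertices map into $B_{\frac14}(M_2)$. Using the canonical identification of $B_{\frac14}(M_2) \cup \voisinageinfinideux[\frac14]$ with a neighborhood of infinity in $\punct{M_2}$ (via $\iota_2$ extended by the standard piece), this stratum is canonically diffeomorphic to a full-measure open subset of $\confignoeud[\Gamma][\psi_2]$. By Proposition~\ref{suitable}, the restrictions of the $\alpha_j, \beta_j$ to the relevant configuration spaces are propagating forms for $(\punct{M_2},\tau_2)$, so the contribution of this stratum to the sum equals $I^F(\Gamma,\sigma,\psi_2)$. The symmetric argument handles the stratum $S_1(c) = \sommets$.

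For any remaining stratum there are two cases. First, if $S_1 \sqcup S_2 = \sommets$ with both parts non-empty, then the connectedness of $\Gamma$ produces an edge $e$ going between $S_1$ and $S_2$; then $p_e(c) \in B_{\frac14}(M_1) \times B_{\frac14}(M_2)$ or the reverse, on which $\alpha_{\sigma(e)}$ or $\beta_{\sigma(e)}$ vanishes identically by Proposition~\ref{suitable}, so $\forme$ vanishes pointwise on the stratum. Second, if $S_1 \sqcup S_2 \subsetneq \sommets$, then at least one vertex is mapped into the intermediate region $\voisinageinfinideux$. Here the map $\pi\colon \punct M \to \R^{n+2}$ and the extended Gauss map $G_{\tau,\eta}$ allow one to factor the composition $G_e \circ p_e$, for each edge $e$ of $\Gamma$, through the Gauss map $G_{\Gamma_{S_1,S_2}}$ on $\confspaceA$. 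Corollary~\ref{gras}, together with the sign-flip invariance built into $\mathcal{O}_k$ and the radius-$9\delta$ buffer around the chosen point $(X^{n-1}_j, X^{n+1}_j) \in \mathcal{O}'_k$, ensures that the supports of the $\omega_{\alpha_j}$ and $\omega_{\beta_j}$ avoid every image $G_e(\confspaceA)$; hence $\forme$ vanishes pointwise on such a stratum too.

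The main obstacle will be the delicate bookkeeping in this second vanishing case, which requires verifying that for every edge $e$ of $\Gamma$, the composition $G_e \circ p_e$ restricted to a mixed stratum genuinely coincides with (a sign-twisted version of) the Gauss map on $\confspaceA$ composed with an appropriate projection from the stratum. One must handle separately the situations where an endpoint of $e$ lies in $B_\eta(M_i)$ (which $\pi$ collapses to $\Omega_i$, matching the constraint $c(\overline{S_i}) = \{\Omega_i\}$ in the definition of $C_{\Gamma_{S_1,S_2}}(\psi_{triv})$), in the transition shell $B_{2\eta}(M_i)\setminus B_\eta(M_i)$, or in $\voisinageinfinideux[2\eta]$. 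The choice $\eta < \tfrac18(\delta/2)^{2k}$ will be used precisely to prevent the transition-shell directions from escaping the $9\delta$-neighborhoods of the $\pm X^{n\pm1}_j$ where Corollary~\ref{gras} provides the required avoidance.
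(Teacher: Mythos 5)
Your central idea (stratify $\confignoeud[\Gamma][\psi]$ by which vertices sit near $\Omega_1$, which near $\Omega_2$, and which are ``at large'', then use the support conditions of Proposition~\ref{suitable} together with Corollary~\ref{gras} to kill the mixed strata) matches the spirit of the paper's proof. However, there is a genuine gap in the way you set up the stratification, and a secondary gap at the end.

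The paper does \emph{not} use the flat bipartition $(S_1(c), S_2(c)) = (\{v : c(v)\in B_{\frac14}(M_1)\}, \{v : c(v)\in B_{\frac14}(M_2)\})$. It uses a multi-scale ``coloring'' $\chi\colon \sommets \to (\{1,2\}\times\{1,\ldots,2k\})\cup\{\infty\}$, with a chain of radii $\ray[j]=\frac14(\frac\delta2)^{2k+1-j}$ and conditions that force a vertex colored $(i,j)$ to satisfy $\|c(v)-\Omega_i\| < 2\delta\ray[j+1]$ while any $\infty$-colored vertex $w$ adjacent to $v$ has $\|c(v)-c(w)\|>\ray[j+1]$. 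This metric separation is exactly what is needed for the estimate $\bigl\|\tfrac{c(w)-c(v)}{\|c(w)-c(v)\|}-\tfrac{c(w)-\Omega_i}{\|c(w)-\Omega_i\|}\bigr\|\le \tfrac{2\|c(v)-\Omega_i\|}{\|c(w)-c(v)\|}<4\delta$, which is what makes the comparison to $c_0$ (your $c_0$) work. Under your flat bipartition there is no such control: a vertex $v$ with $c(v)\in B_{\frac14}(M_1)$ could lie at distance $\approx\frac14$ from $\Omega_1$, and an adjacent vertex $w$ could lie just outside $B_{\frac14}(M_1)$ with $\|c(v)-c(w)\|$ arbitrarily small. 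In that case $G_e(p_e(c))$ bears no relation to $G_e(c_0)$, so the avoidance conclusion ``$\forme$ vanishes on the stratum'' is false without the multi-scale refinement. Your remark that $\eta < \frac18(\delta/2)^{2k}$ ``prevents transition-shell directions from escaping'' only controls the neck region $B_{2\eta}$; it does nothing for vertices deep in $B_{\frac14}(M_i)\setminus B_{2\eta}(M_i)$, which is where your argument breaks.

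Two smaller issues. First, Corollary~\ref{gras} gives avoidance at the level of the \emph{joint} Gauss map $G_{\Gamma_{S_1,S_2}}\colon\confspaceA\to\prod_e \s^{n(e)}$, not of the individual $G_e$; an individual $G_e(\confspaceA)$ may well hit the support of $\omega_{\alpha_{\sigma(e)}}$ or $\omega_{\beta_{\sigma(e)}}$, so your sentence ``the supports of the $\omega_{\alpha_j}$ and $\omega_{\beta_j}$ avoid every image $G_e(\confspaceA)$'' is not what the corollary says (and is false as stated). Second, your claim that the pure stratum $S_2(c)=\sommets$ contributes exactly $I^F(\Gamma,\sigma,\psi_2)$ would require the restrictions of the $\beta_j$ to be propagating forms of $(\punct{M_2},\tau_2)$ on $C_2(\punct{M_2})$, which Proposition~\ref{suitable} does not provide (the restriction does not obviously satisfy the boundary condition on all of $\partial C_2(\punct{M_2})$). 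The paper sidesteps this by noting only that the $U_i$-integral depends solely on $\psi_i$, then pins down the two functions $F_1, F_2$ by evaluating on the trivial knot (Corollary~\ref{psio} together with Lemma~\ref{neutre}); you would need a comparable step to close the argument.
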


\begin{proof}

Fix $(\Gamma,\sigma)\in\graphesnum$. 
For $1\leq j \leq 2k+1$, set $\ray= \frac14(\frac\delta2)^{2k+1-j}$, and note that $\ray[1]+\cdots +\ray< \frac{\delta}{2-\delta} \ray[j+1] < \delta\ray[j+1]$ and that $\ray[2k+1] = \frac14$.

A \emph{coloring} is a map $\chi\colon \sommets\rightarrow \{(1,1), \ldots, (1,2k)\} \cup \{(2,1), \ldots,(2,2k)\}\cup \{\infty\}$. For a given coloring $\chi$, define $U(\chi)$ as the set of configurations in $C_\Gamma(\psi_1\sharp \psi_2)$ such that:
\begin{itemize}
\item If $\chi(v) =(1, 1)$, then $c(v)$ is in $\mathring B_{2\ray[1]}(M_1)$, and if $\chi(v) = (2,1)$, $c(v)$ is in $\mathring B_{2\ray[1]}(M_2)$.
\item If $\chi(v)\not\in\{(1,1), (2,1)\}$, then $c(v)$ is neither in $B_{\ray[1]}(M_1)$ nor in $B_{\ray[1]}(M_2)$. In particular, since $2\eta<\ray[1]$, $c(v) \in \voisinageinfinideux[{\ray[1]} ]\subset\voisinageinfinideux[2\eta]$, and it makes sense to use the Euclidean norm of $\R^{n+2}$ for such vertices.
\item If $\chi(v) = (i, 2)$ (for some $i\in \{1,2\}$), then $c(v) \in \mathring B_{2\ray[2]}(M_i)$, and there exists a vertex $w$, adjacent\footnote{i. e. such that there is an edge that connects $v$ to $w$.} to $v$, such that $\chi(w)=(i,1)$.
\item If $\chi(v) = (i, j+1)$ for some $2\leq j \leq 2k-1$, then there exists a vertex $w$ adjacent to $v$, such that $\chi(w)=(i, j)$ and $|| c(v) - c(w) || < 2\ray[j+1]$.

\item If $\chi(v) = \infty$, and if there exists a vertex $w$ adjacent to $v$ such that $\chi(w) = (i, 1)$, then $||c(v) - \Omega_i || > r_2$.

\item If $\chi(v) = \infty$, and if there exists a vertex $w$ adjacent to $v$ such that $\chi(w)= (i, j)$ with $j>1$, then $|| c(v) - c(w) || > \ray[j+1]$.
\end{itemize}

Note that if $c\in U(\chi)$, and if $\chi(v)=(i, j)$, $c(v) \in \mathring B_{2\ray[1]+\ldots + 2\ray[j]}(M_i)\subset\mathring  B_{2\delta\ray[j+1]}(M_i)$. In the following, if $e$ is an edge which connects two vertices $v$ and $w$, such that $\chi(v), \chi(w)\not\in \{(1,1), (1,2)\}$, the distance $||c(v)-c(w)||$ is called the \emph{length} of $e$.

\begin{lm}
The family $(U(\chi))_{\chi \text{coloring}}$ defines an open cover of $C_\Gamma(\psi_1\sharp \psi_2)$.

\end{lm}
\begin{proof}
The fact that the $U(\chi)$ are open subsets is immediate. Let us prove that any configuration is in at least one of these sets. Fix a configuration $c$. 

First color all the vertices $v$ such that $c(v) \in \mathring B_{2\ray[1]}(M_i)$ with $\chi(v) = (i, 1)$. 

Next, for $i\in\{1,2\}$, color with $\chi(w)=(i, 2)$ the vertices $w$ adjacent to those of color $(i, 1)$ such that $c(w)\in \mathring B_{2\ray[2]}(M_i)$.

Next, for any $2\leq j \leq 2k-1$, define the vertices of color $(i,j+1)$ inductively: when the vertices of color $(i, j)$ are defined, color with $(i,j+1)$ the vertices $v$ which are not already colored, and such that there exists an edge of length less than $2\ray[j+1]$ between $v$ and a vertex $w$ colored by $(i, j)$.

With this method, no vertex can be simultaneously colored by $(1,j)$ and $(2,j')$. Indeed, the construction above ensures that any vertex colored by $(i, j)$ is in $B_{2\delta\ray[j+1]}(M_i)$. Since $2\delta\ray[j+1] = \delta\frac12(\frac\delta2)^{2k-j}\leq \frac12\delta <\frac14$, we have $B_{2\delta\ray[j+1]}(M_1)\cap B_{2\delta\ray[j'+1]}(M_2)= \emptyset$, which concludes.

Setting $\chi(v) = \infty$ for all the vertices that remain still uncolored after this induction gives a coloring such that $c\in U(\chi)$. \qedhere

\end{proof}
We are going to use the following two lemmas in the proof of Theorem \ref{additivity}.

\begin{lm}\label{86}
If $\chi$ is a coloring such that there exists an edge between a vertex colored by some $(1, j)$ and a vertex colored by some $(2,j')$, then $\omega^F(\Gamma, \sigma,\psi_1\sharp \psi_2)_{|U(\chi)}=0$.
\end{lm}

\begin{lm}\label{87}
If $\chi$ is a coloring such that at least one vertex is colored by $\infty$, then $\omega^F(\Gamma, \sigma,\psi_1\sharp \psi_2)_{|U(\chi)}=0$.
\end{lm}
\emph{Proof of Proposition \ref{prop-add} assuming Lemmas \ref{86} and \ref{87}.}

First note that these two lemmas imply that $I^F(\Gamma,\sigma,\psi_1\sharp \psi_2) = \int_{U}\omega^F(\Gamma,\sigma, \psi_1\sharp \psi_2)$ where $U$ is the union of all the $U(\chi)$ where $\chi$ is a coloring such that no vertex is colored by $\infty$, and no edge connects two vertices colored by some $(1, j)$ and $(2, j')$. By construction, since $\Gamma$ is connected, such a coloring $\chi$ takes only values of the form $(1, j)$ or only values of the form $(2, j)$. Let $U_1$ be the union of the $U(\chi)$ such that $\chi$ takes only values of the form $(1, j)$ and similarly define $U_2$, so that $U = U_1 \sqcup U_2$. This implies that \[ I^F(\Gamma,\sigma,\psi_1\sharp \psi_2) = \int_{U_1}\omega^F(\Gamma,\sigma, \psi_1\sharp \psi_2)+\int_{U_2}\omega^F(\Gamma,\sigma, \psi_1\sharp \psi_2).\]

Note that the form $(\omega^F(\Gamma,\sigma, \psi_1\sharp \psi_2))_{|U_i}$ does not depend on the knot $\psi_{3-i}$, since $U_i$ is composed of configurations which send all vertices in $B_{\frac12}(M_i)$.
This implies that $Z_k(\psi_1\sharp \psi_2) = F_1(\psi_1) + F_2(\psi_2)$ for some functions $F_1$ and $F_2$. For the trivial knot $\psi_{triv}$, Corollary \ref{psio} directly implies that $F_1(\psi_{triv})+F_2(\psi_{triv})=0$. Lemma \ref{neutre} implies that: $$Z_k(\psi_1)=Z_k(\psi_1\sharp \psi_{triv}) =  F_1(\psi_1) +F_2(\psi_{triv})$$
 $$Z_k(\psi_2)=Z_k(\psi_{triv}\sharp \psi_2) = F_1(\psi_{triv}) +F_2(\psi_2)$$
The sum of these two equalities gives $Z_k(\psi_1)+Z_k(\psi_2) = F_1(\psi_1)+F_2(\psi_2) = Z_k(\psi_1\sharp \psi_2)$. This concludes the proof of Proposition \ref{prop-add}, hence of Theorem \ref{additivity}.
\end{proof}

\begin{proof}[Proof of Lemma \ref{86}.]

Lemma \ref{86} directly follows from Proposition \ref{suitable}, since it implies that if $c$ is in the support of $\omega^F(\Gamma,\sigma, \psi_1\sharp \psi_2)$, no edge of $\Gamma$ can connect a vertex of $B_\frac14(M_1)$ and a vertex of $B_\frac14(M_2)$.\qedhere\end{proof}

\begin{proof}[Proof of Lemma \ref{87}.]

Fix a coloring $\chi$ that maps at least one vertex to $\infty$. 
For $j\in\{1,2\}$, let $S_j$ be the set of the vertices of $\Gamma$ colored by a color of $\{j\}\times\{1,\ldots, 2k\}$.

Take $c\in U(\chi)$ and suppose that $c$ is in the support of $\omega^F(\Gamma,\sigma,\psi_1\sharp \psi_2)$. For any external edge $e=(v,w)$ of $ \Gamma_{ S_1, S_2}$, since $p_e(c) \in D(G_{\tau, \eta})$, there exists a sign $\epsilon_\sigma(e)$ such that $|| G_{\tau, \eta}(c(v), c(w)) - \epsilon_{\sigma}(e) X^{n+1}_{\sigma(e)}|| <\delta $, and for any internal edge $e=(v,w)$, there exists a sign $\epsilon_\sigma(e)$ such that  $|| G( c_i(v), c_i(w) ) - \epsilon_\sigma(e) X^{n-1}_{\sigma(e)}|| < \delta$.

\begin{lm}\label{89}Endow the spheres $\s^{n(e)}$ with the usual distance coming from the Euclidean norms $|| \cdot || $ on $\R^{n(e)+1}$.

Let $\chi$ be a coloring that maps at least one vertex to $\infty$, and let $c\in U(\chi)$. Define a configuration $c_0$ of $\confspaceAA$ from $c$ as follows: 
\begin{itemize}
\item If $v$ is a vertex of $\overline S_1$ in $\Gamma_{S_1, S_2}$, $c_0(v) = \Omega_1=(0, 0, \ldots, -\frac12)$.
\item If $v$ is a vertex of $\overline S_2$ in $\Gamma_{S_1, S_2}$, $c_0(v) =\Omega_2= (0, 0, \ldots, \frac12)$.
\item If $v$ is a vertex of $V(\Gamma_{S_1, S_2})\setminus (\overline S_1\sqcup\overline  S_2)= \sommets\setminus(S_1\cup S_2)$, $c_0(v) = c(v)$.
\end{itemize}
Then, $d( G_e( c_0) ,\epsilon_{\sigma}(e)X^{n(e)}_{\sigma(e)}) <9 \delta$ for any edge $e$ of $\Gamma_{S_1,S_2}$.
\end{lm}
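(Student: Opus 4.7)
The strategy is to transfer everything to $\R^{n+2}$ via the projection $\pi$ and compare, for each edge $e=(v,w)$ of $\Gamma_{S_1,S_2}$, the vector $c_0(w)-c_0(v)$ that defines $G_e(c_0)$ with the vector $\pi(c(w))-\pi(c(v))$ that defines $G_{\tau,\eta}(p_e(c))$ (with the analogous statement in $\R^n$ for internal edges). By the support constraint on $\omega_{\beta_{\sigma(e)}}$ from Proposition \ref{suitable} and the hypothesis that $c$ lies in the support of $\omega^F(\Gamma,\sigma,\psi_1\sharp\psi_2)$, there is already a sign $\epsilon_\sigma(e) \in \{\pm 1\}$ with
\[
d\bigl(G_{\tau,\eta}(p_e(c)),\,\epsilon_\sigma(e)X^{n(e)}_{\sigma(e)}\bigr) < \delta ;
\]
the triangle inequality then reduces the lemma to the estimate $d(G_e(c_0),G_{\tau,\eta}(p_e(c))) < 8\delta$ for every edge of $\Gamma_{S_1,S_2}$.

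The core displacement estimate, already observed just before the statement of Lemma \ref{89}, is that if $\chi(v)=(i,j)$ then $c(v) \in \mathring B_{2r_1+\cdots+2r_j}(M_i) \subset \mathring B_{2\delta r_{j+1}}(M_i)$, so that $|\pi(c(v))-\Omega_i| < 2\delta\, r_{j+1}$. Since $c_0(v)=\Omega_i$ for $v\in \overline S_i$ and $\pi(c(v))=c(v)=c_0(v)$ for $\chi(v)=\infty$, this yields the uniform bound $|\pi(c(v))-c_0(v)| < 2\delta\, r_{j_v+1}$ for an $\overline S_i$-vertex of color $(i,j_v)$. The same estimate transfers to the $\R^n$ coordinates of internal vertices via the piecewise-affine reparametrization defining $\psi_1\sharp\psi_2$ on each $B_{1/4}(M_i)$, up to an inessential scaling factor absorbed in our constants.

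The argument then proceeds by case analysis on the colors of the two endpoints of $e$, noting that edges joining two vertices of the same $S_i$ have been removed in $\Gamma_{S_1,S_2}$. If $\chi(v)=\chi(w)=\infty$, there is nothing to prove since $c_0$ and $\pi\circ c$ agree on $\{v,w\}$. If $v\in\overline S_1$ and $w\in\overline S_2$ (or vice versa), the reference vector $\Omega_2-\Omega_1$ has norm $1$ while the total displacement is at most $\delta$, giving an angular error below $3\delta$. The critical case is $\chi(v)=(i,j_v)$ with $\chi(w)=\infty$: the coloring condition forces $|c(w)-\Omega_i|>r_2$ if $j_v=1$ and $|c(w)-c(v)|>r_{j_v+1}$ if $j_v>1$, which combined with the displacement bound makes both $|c_0(w)-c_0(v)|$ and $|\pi(c(w))-\pi(c(v))|$ of size at least $(1-O(\delta))\,r_{j_v+1}$, while the difference of the two vectors has norm at most $2\delta\, r_{j_v+1}$. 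The standard unit-vector estimate $\bigl|\tfrac{u}{|u|}-\tfrac{v}{|v|}\bigr| \leq 2|u-v|/\min(|u|,|v|)$ then produces an angular discrepancy below $8\delta$ for $\delta$ small enough, which is guaranteed by the setup $\delta<\tfrac14$.

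The main obstacle is precisely this last case: the geometric scaling $r_{s+1}/r_s=2/\delta$ has been calibrated so that the displacement-to-length ratio at every color scale is $O(\delta)$ uniformly in $j_v$. Once this uniformity is verified, combining the three cases with the $\delta$-slack from Proposition \ref{suitable} gives $d(G_e(c_0),\epsilon_\sigma(e)X^{n(e)}_{\sigma(e)}) < 9\delta$ for every edge $e$ of $\Gamma_{S_1,S_2}$, which is the claim.
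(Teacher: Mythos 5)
Your proposal is correct and follows essentially the same approach as the paper: reduce by the triangle inequality to bounding $d(G_e(c_0),G_{\tau,\eta}(p_e(c)))$, then run a case analysis on the colors of the endpoints of $e$ using the elementary unit-vector estimate and the geometric calibration of the scales $r_j$. Your constants are a bit coarser than the paper's (which denominates by the larger of the two lengths and gets bounds of at most $5\delta$ rather than barely under $9\delta$), but they close in the same way, and like the paper you defer the internal-edge case to the reparametrization defining $\psi_1\sharp\psi_2$ near the connect-sum region.
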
\begin{proof}

The edges of $\Gamma_{S_1, S_2}$ are of four types: 
\begin{itemize}
\item Those joining two vertices $v$ and $w$ of $V(\Gamma_{S_1, S_2})\setminus (\overline S_1\sqcup\overline  S_2)$.
\item Those joining one vertex $v$ of $V(\Gamma_{S_1, S_2})\setminus (\overline S_1\sqcup\overline  S_2)$ and one vertex $w$ of $\overline S_1$.
\item Those joining one vertex $v$ of $V(\Gamma_{S_1, S_2})\setminus (\overline S_1\sqcup\overline  S_2)$ and one vertex $w$ of $\overline S_2$.
\item Those joining one vertex $v$ of $\overline S_1$ and one vertex $w$ of $\overline S_2$.
\end{itemize}

We have to check that in any of these four cases, the direction of the edge $e$ between $c_0(v)$ and $c_0(w)$ is at distance less than $9\delta$ from $\epsilon_{\sigma}(e)X_{\sigma(e)}^{n(e)}$. We prove this for external edges, the case of internal edges can be proved with the same method. Assume that $e$ goes from $v$ to $w$ (the proof is similar in the other case). In this case, the construction of $G_e$ implies that the direction to look at is $G_{ext}(c_0(v), c_0(w))$. Since $c$ is in the support of $\omega^F(\Gamma,\sigma, \psi_1\sharp \psi_2)$, \[|| G_{\tau, \eta}(c(v), c(w)) -\epsilon_{\sigma}(e) X_{\sigma(e)}^{n+1}||= \left|\left| \frac{\pi(c(w))-\pi(c(v))}{||\pi(c(w))-\pi(c(v))||} - \epsilon_\sigma(e)X_{\sigma(e)}^{n+1}\right|\right|  <\delta.\]

Note the following easy lemma.
\begin{lm}
For any $a$ and $h$ in $\R^{n+2}$ such that $a$ and $a+h$ are non zero vectors:
$$\left|\left| \frac{a}{||a||}-\frac{a+h}{||a+h||}\right|\right|\leq \frac{2||h||}{||a||}.$$
\end{lm}

Now, let us study the previous four cases:

\begin{itemize}

\item In the first case, $c(v)$ and $c(w)$ are in $\voisinageinfinideux[2\eta]$, then the direction of the edge is $G_{ext} (c_0(v), c_0(w) )=G_{ext} (c(v), c(w) )= G_{\tau, \eta}(c(v), c(w)) $. Therefore, it is at distance less than $\delta$ from $\epsilon_{\sigma}(e)X_{\sigma(e)}^{n+1}$.

\item In the second case, $w$ comes from a vertex $w_0$ of $\Gamma$ with $\chi(w_0)=(1, j)$, so $c_0(w) = \Omega_1$ and $c_0(v)=c(v)$.
First suppose $j=1$. This implies that $||\pi(c(w)) - \Omega_1|| <2 \ray[1]$. Since $\chi(v)=\infty$, we have $||\Omega_1-c(v)|| > \ray[2]$.
Then, using the previous lemma and triangle inequalities:
\begin{align*}&\left|\left| \frac{c_0(v)-c_0(w)}{||c_0(v)-c_0(w)||} -\epsilon_{\sigma}(e) X_{\sigma(e)}^{n+1}\right|\right|= \left|\left|  \frac{c(v)-\Omega_1}{||c(v)-\Omega_1||} -\epsilon_{\sigma}(e) X_{\sigma(e)}^{n+1}
\right|\right| \\
&\leq \left|\left|  \frac{c(v)-\pi(c(w))}{||c(v)-\pi(c(w))||} -\epsilon_{\sigma}(e) X_{\sigma(e)}^{n+1}
\right|\right|+  \left|\left|\frac{c(v)-\Omega_1}{||c(v)-\Omega_1||} - \frac{c(v)-\pi(c(w))}{||c(v)-\pi(c(w))||}\right|\right| \\
&<\delta +  2\frac{||\Omega_1-\pi(c(w))||}{||\Omega_1-c(v)||} \\
&\leq \delta + 2\frac{2\ray[1]}{\ray[2]}= 3\delta < 9 \delta
\end{align*}

Suppose now $j>1$. Then $||\Omega_1-c(w)||< 2\delta\ray[j+1]$, and $\pi(c(w)) = c(w)$. Since $\chi(v)=\infty$, we have $|| c(v)-c(w)|| > \ray[j+1]$. 
As in the previous computation, and since $\delta < \frac14$, we get
\begin{align*}&\left|\left| \frac{c_0(v)-c_0(w)}{||c_0(v)-c_0(w)||} -\epsilon_{\sigma}(e) X_{\sigma(e)}^{n+1}\right|\right|\\ 
&\leq \left|\left| \frac{c(v)-c(w)}{||c(v)-c(w)||} - \epsilon_{\sigma}(e) X_{\sigma(e)}^{n+1}\right|\right| + \left|\left| \frac{c_0(v)-c_0(w)}{||c_0(v)-c_0(w)||}-  \frac{c(v)-c(w)}{||c(v)-c(w)||}\right|\right|\\
& < \delta +  2\frac{||\Omega_1-c(w)||}{|| c(w)-c(v)||}\\
&\leq \delta + 2\frac{2\delta\ray[j+1]}{\ray[j+1] } <9 \delta.
\end{align*}

\item The third case, can be studied exactly like the second one.

\item In the last case, note that $c(v) \in B_{2\delta\ray[2k+1]}(M_1)=B_{\frac\delta2}(M_1)$ and $c(w)\in B_{\frac\delta2}(M_2)$.
The direction we look at is $G_{ext}(c_0(v), c_0(w)) = G_{ext}(\Omega_1, \Omega_2) = (0, \ldots, 0,1)$. But, we have $ \left|\left| \frac{\pi(c(v))-\pi(c(w))}{||\pi(c(v))-\pi(c(w))||} - \epsilon_\sigma(e)X_{\sigma(e)}^{n+1}\right|\right|  <\delta$. The previous method yields

\begin{align*}&\left|\left| \frac{c_0(v)-c_0(w)}{||c_0(v)-c_0(w)||} -\epsilon_{\sigma}(e) X_{\sigma(e)}^{n+1}\right|\right|\\
&\leq \left|\left|  \frac{\pi(c(v))-\pi(c(w))}{||\pi(c(v))-\pi(c(w))||}-\epsilon_{\sigma}(e) X_{\sigma(e)}^{n+1}
\right|\right|+  \left|\left| \frac{\Omega_1-\Omega_2}{||\Omega_1-\Omega_2||} - \frac{\pi(c(v))-\pi(c(w))}{||\pi(c(v))-\pi(c(w))||}\right|\right| \\
&<\delta+  \left|\left| \frac{\Omega_1-\Omega_2}{||\Omega_1-\Omega_2||} - \frac{\pi(c(v))-\Omega_2}{||\pi(c(v))-\Omega_2||}\right|\right|+\left|\left| \frac{\pi(c(v))-\Omega_2}{||\pi(c(v))-\Omega_2||}- \frac{\pi(c(v))-\pi(c(w))}{||\pi(c(v))-\pi(c(w))||}\right|\right|\\
&\leq \delta +2\frac{||\pi(c(v))-\Omega_1||}{||\Omega_1-\Omega_2||} + 2\frac{||\pi(c(w))-\Omega_2||}{||\pi(c(v))-\Omega_2||}\\
& \leq \delta+ 2\frac{\frac\delta 2}{1} + 2\frac{\frac\delta2}{1 - \frac\delta2}\leq \left(1+1+\frac87\right)\delta< 9\delta.
\end{align*}

\end{itemize}
This concludes the proof of Lemma \ref{89}\qedhere

\end{proof}
For any $1\leq i \leq 2k$, set \[\hat \epsilon(i) = \hat\epsilon'(i) = \begin{cases} \epsilon_{\sigma}(\sigma^{-1}(i)) & \text{if $e\in \sigma_{S_1,S_2}(E(\Gamma_{S_1,S_2}))$,} \\
1 & \text{otherwise.}\end{cases}\] 
For any $1\leq i\leq 2k$, also set \[ Y_i^{n-1} = \begin{cases} G_{\sigma(e)}(c_0) & \mbox{if $i \in\sigma(E_i(\Gamma_{S_1,S_2}))$,} \\ X_i^{n-1} & \mbox{otherwise,}\end{cases} \mbox{ and } Y_i^{n+1} = \begin{cases} G_{\sigma(e)}(c_0) & \mbox{if $i \in\sigma(E_e(\Gamma_{S_1,S_2}))$,} \\ X_i^{n+1} & \mbox{otherwise.}\end{cases}\]

Lemma \ref{89} implies that $\overline Y= T_{\hat\epsilon, \hat\epsilon'}((Y_i^{n-1}, Y_i^{n+1})_{1\leq i \leq 2k})$ is at distance less than $9\delta$ from $(X_i^{n-1}, X_i^{n+1})_{1\leq i \leq 2k}$. So it belongs to $\mathcal O'_k$ and then to the set $\mathcal O_k$ of Corollary \ref{gras}, which is a contradiction since $\pi_{\Gamma_{S_1,S_2},\sigma}(T_{\hat\epsilon, \hat\epsilon'}(\overline Y))=G_{\Gamma_{S_1,S_2}}(c_0)$. This concludes the proof of Lemma \ref{87}.\qedhere\end{proof}

	\bibliographystyle{alpha}
	\bibliography{Article1}
\end{document}